\documentclass{article}
\usepackage{preamble} 

\newcommand{\sd}{\operatorname{sd}}
\newcommand{\dd}{\operatorname{d}}
\newcommand{\phic}{{\phi^\circ}}

\newcommand{\Et}{E^{\tau}}
\newcommand{\Etn}{E^{\tau_n}}

\newcommand{\lt}{\lambda^{\tau}}
\newcommand{\kp}{\kappa^\phi}
\newcommand{\okp}{\ol{\kappa}^\phi}
\newcommand{\sdp}{\sd^\phic}
\newcommand{\nup}{\nu^\phi}
\newcommand{\tI}{\tilde{I}}
\newcommand{\tN}{\tilde{\cN}}
\newcommand{\co}{\operatorname{co}}
\newcommand{\tGamma}{\tilde{\Gamma}}

\title{Area-preserving crystalline curvature flow in two dimensions}
\author{Eric Kim}

\begin{document}

\maketitle

\begin{abstract}
    We study the dynamics of planar sets under area-preserving crystalline curvature flow. We prove under mild assumptions on $\phi$ that the flat flow solution from regular initial data coincides with a classical ODE evolution, extending the results of \cite{Almgren1995} to the area-preserving setting. We also show that for arbitrary initial data, the flat flow converges exponentially in time to a disjoint union of Wulff shapes, and under a non-bubbling assumption, the flow eventually becomes regular. Both of these results are novel for area-preserving crystalline flow of general sets, \emph{i.e.} without assuming geometric properties such as convexity or star-shapedness. A key ingredient of independent interest is that planar almost-minimizers are Lipschitz $\phi$-regular, which we prove by exploiting a sharp minimality estimate for distinguished line segments, as opposed to the excess decay argument given in \cite{Ambrosio2002regularity}. The novelty of our approach lies in the application of $\phi$-minimal barriers for energy competition arguments, both for the geometric rigidity of the discretized flat flow and for the regularity of almost-minimizers.
\end{abstract}

\section{Introduction}
We study the dynamics of \emph{volume-preserving $\phi$-mean curvature flow}, where $\phi$ is a convex, positively 1-homogeneous function giving rise to an anisotropic perimeter functional
\begin{equation*}
    P_\phi(E) := \int_{\partial^* E} \phi(\nu_E) d\cH^{n-1}.
\end{equation*}
Formally, we can express the flow as an evolution of sets $E_t\sb\R^n$ given by
\begin{equation}
    \label{eq:cr flow}
    V_t := \phi(\nu_{E_t}) (-\kappa^\phi_{E_t} + \okp_{E_t})
\end{equation} 
where
\begin{itemize}
    \item $V_t$ denotes the normal velocity along $\partial E_t$, and $\nu_{E_t}$ is the outer normal vector
    \item $\kappa^\phi_{E_t}$ is the \emph{$\phi$-mean curvature} of $\partial E_t$
    \item $\phi(\nu_{E_t})$ encodes that the mobility of interface is proportional to the surface energy density $\phi$
    \item $\okp_{E_t} := \inv{P_\phi(E_t)} \int_{\partial^* E_t} \kp_{E_t} dP_\phi$ is a Lagrange multiplier enforcing that the areas $|E_t|$ remain constant
\end{itemize}
Our work solely treats the case where $\phi$ is \emph{crystalline} (\emph{i.e.} piecewise linear) in the ambient dimension $n=2$ (in which case we call the flow \emph{area}-preserving), although we consider general $\phi$ and $n$ for the sake of discussing prior works. For technical reasons, and for all the results in our paper, we assume 
\begin{align}
    \label{eq:phi assumption}
    \tag{A1}
    \text{adjacent vectors in $\cN_\phi$ differ by less than 90 degrees}
\end{align}
where $\cN_\phi$ is the set of outer normals to the Wulff shape $W_\phi$ (see \eqref{eq:wulff}). Notably $\phi$ does not need to be even.

\begin{figure}
    \centering
    \includegraphics[width=0.6\linewidth]{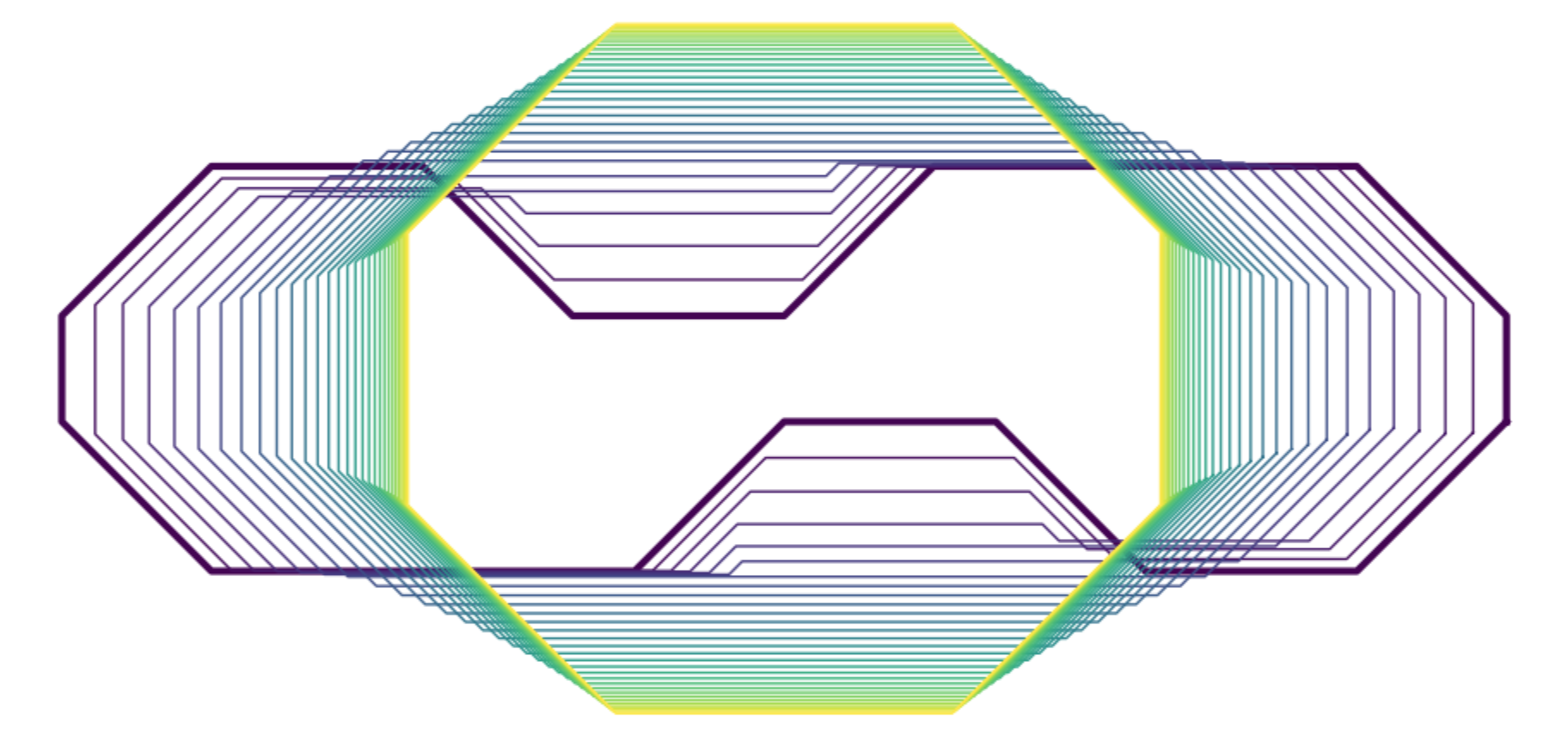}
    \caption{Area-preserving crystalline curvature flow converging to an octagonal Wulff shape}
    \label{fig:example flow}
\end{figure}

The mean curvature $\kp_E$ represents the first variation of $P_\phi$ with respect to volume perturbations. Thus the flow \eqref{eq:cr flow} can be interpreted as a (formal) $L^2$-gradient flow of the energy functional $P_\phi$ within sets of prescribed volume. As such, it is natural to expect that the flow eventually converges to a critical point of $P_\phi$. Volume-constrained minimizers of $P_\phi$ are uniquely determined (up to scaling and translation) by the Wulff shape 
\begin{equation}
    \label{eq:wulff}
    W_\phi := \{x\in\R^n: x\cdot\nu\leq \phi(\nu)\ \forall \nu\in\R^n\}
\end{equation}
while critical points are disjoint unions of equally sized Wulff shapes. 

It should be noted that the definition of $\phi$-mean curvature is much more complicated in the crystalline setting. For smooth $\phi$, one can define $\kp_E = \div_\tau \nabla \phi(\nu_E)$ in the distributional sense. For crystalline $\phi$, the first variation of $P_\phi$ is no longer a linear functional on vector field perturbations due to $\nabla\phi(\nu_E)$ being replaced by the subdifferential $\partial\phi(\nu_E)$, and thus $\kp_E$ is not defined in the distributional sense. Moreover, when defined, $\kp_E$ is a nonlocal quantity that only bears a loose interpretation as a first variation of $P_\phi$. Nonetheless, we will ignore this subtlety for the introduction.

Solutions to \eqref{eq:cr flow}, even with regular initial data, are known to encounter topological singularities such as vanishing, pinching, and collision of boundary components. Thus, in order to study long-time behavior, one requires a suitable notion of weak solution that exists globally in time. There is a long history of works establishing the well-posedness of anisotropic and crystalline mean curvature flows with prescribed forcing, such as level-set viscosity solutions \cite{GigaGiga1996, GigaGiga1998, GigaGiga2001, Giga2009facetbending, GigaPozar2020nonuniformforcing}; distance function formulations using minimizing movement schemes \cite{ATW1993, ChambolleNovaga2015planar, Chambolle2017crystalline, Chambolle2019anisotropic, Chambolle2019, Chambolle2020meanconvex}; and distributional BV solutions via diffuse-interface approximation \cite{Laux2024}. We also refer the interested reader to the survey \cite{GigaPozar2021survey} on crystalline flows. Most of these results rely heavily on the comparison principle. However, the volume-preserving flow \eqref{eq:cr flow} does not satisfy a comparison principle due to the nonlocality of the forcing term $\okp_E$. In the isotropic case ($\phi=|\cdot|$), the works \cite{Takasao2023} and \cite{Poiatti2026} established the existence of weak varifold solutions for the volume-preserving flow by using a diffuse-interface approximation.

The notion of solution we consider for \eqref{eq:cr flow} is the \emph{flat flow}, which naturally incorporates the gradient flow structure and yields evolutions which persist globally in time (in particular, past topological singularities). The scheme was first introduced by Almgren-Taylor-Wang \cite{ATW1993} and Luckhaus-Sturzenhecker \cite{Luckhaus1995} for the unforced mean curvature flow, then adapted to the volume-preserving curvature flow in the isotropic case by \cite{Mugnai2016} and in the anisotropic case with smooth elliptic $\phi$ by \cite{KimKwon2024}. In short, the flat flow is defined as a limit of minimizing movement schemes which approximate \eqref{eq:cr flow} by iterating the minimization problem $E \in \argmin \cF_\tau(\cdot,F)$, where 
\begin{equation}
    \label{eq:flat flow energy}
    \cF_\tau(E,F) := P_\phi(E) + \inv{\tau} \underbrace{\int_{E\Delta F} \dd_F^\phic(x) dx}_{\cD(E,F)} + \inv{\sqrt{\tau}}||E|-|E_0||.
\end{equation}
Here $\dd_F^\phic(x)$ is the distance of $x$ from $\partial F$ with respect to the dual anisotropy $\phic$, which encodes the $\phi$ mobility factor. The dissipation term $\cD(E,F)$ roughly represents a squared $\phi$-weighted $L^2$ distance between $E$ and $F$, while the last term penalizes deviations in volume.

\subsection{Weak-strong uniqueness}

A natural question is whether the flat flow coincides with a classical solution to \eqref{eq:cr flow} for regular initial data. In the isotropic case, \cite{JulinNiinikoski2023consistency} showed the consistency of the flat flow with the classical evolution for $C^{1,1}$ initial data. A similar result is shown for smooth elliptic $\phi$ in $n=2$ by \cite{Kubin2025} for a slight variant of the minimizing movement scheme considered in \eqref{eq:flat flow energy}. 

First let us consider what is a suitable notion of ``regularity" in the crystalline setting. Since the Wulff shape has corners, we cannot expect any better than Lipschitz regularity. However, \cite{Bellettini2001} introduced the notion of Lipschitz $\phi$-regularity, which is analogous to $C^{1,1}$ regularity in the isotropic setting, and defined a suitable analogue of $\kp_E$ for such sets. A Lipschitz set $E$ is \emph{Lipschitz $\phi$-regular} if there exists a Lipschitz vector field $X$ on $\partial E$ satisfying $X \in \partial\phi(\nu_E)$ almost everywhere. For $n=2$, the boundaries of Lipschitz $\phi$-regular sets decompose into curves along which $\kp_E$ is constant, which we call \emph{edges}. One can express the flow \eqref{eq:cr flow} as a system of ODEs tracking the evolution of each edge $I$, which we roughly state as
\begin{equation}
    \label{eq:intro ODE flow}
    \phic(N_I) = - \kp_E(I) + \okp_E
\end{equation} 
where $N_I$ is the normal velocity of $I$. Similar ODE evolutions have been considered for polygons in 
\cite{Taylor1993, Almgren1995, Yazaki2007, Giga2022nonadmissible}. We also note that \eqref{eq:intro ODE flow}, with a prescribed forcing term rather than $\okp_E$, is essentially equivalent to the evolution developed in \cite{ChambolleNovaga2015planar}.

We now informally state our first main result, whose precise statement is given in \cref{thm:weak strong uniqueness}.

\begin{theorem}[Weak-strong uniqueness]
    \label{thm:weak strong informal}
    Let $E_0\sb\R^2$ be a Lipschitz $\phi$-regular set. Then any flat flow solution to \eqref{eq:cr flow} starting from $E_0$ generically coincides with the ODE evolution \eqref{eq:intro ODE flow} for as long as the latter exists. 
\end{theorem}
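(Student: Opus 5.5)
The plan is to compare the discrete minimizing-movement scheme with the ODE evolution \eqref{eq:intro ODE flow} edge by edge, following the strategy of \cite{Almgren1995} but with two additions: the nonlocal multiplier $\okp_E$ and the volume penalty $\tau^{-1/2}||E|-|E_0||$. Let $E(t)$ denote the ODE solution on $[0,T)$, and fix a subinterval $[0,T']$ on which all edges keep length bounded below and the Lipschitz $\phi$-regular structure is uniform. Here ``generically'' should mean that no degenerate events occur, such as an edge shrinking to zero length or a new facet nucleating.

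The core step is one-step geometric rigidity of the scheme. Suppose $F$ is a Lipschitz $\phi$-regular set lying $C^0$-close (in the $\phi$-regular sense) to $E(t)$. I will show that any minimizer $E\in\argmin\cF_\tau(\cdot,F)$ has the same edge structure as $F$. Each edge $I$ of $F$ should be translated by a normal displacement $h_I$ satisfying the discrete analogue
\begin{equation*}
\phic(N_I)\frac{h_I}{\tau} = -\kp_F(I)+\lambda + O(\tau),
\end{equation*}
where $\lambda$ is a single Lagrange constant. To prove this I will use $\phi$-minimal barriers. For each edge, I build the competitor obtained by cutting or adding a thin region bounded by segments parallel to the Wulff facets. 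These segments are $\phi$-calibrated, so the energy comparison pins down where $\partial E$ lies, up to error $O(\tau^2)$, near each edge. Between edges, the sharp minimality estimate for distinguished line segments forces $\partial E$ to be a single straight segment with the correct normal; this uses assumption \eqref{eq:phi assumption}. I will also use the density and regularity estimates for almost-minimizers, which say that $E$ is Lipschitz $\phi$-regular with controlled constants, to exclude spurious small components and extra facets.

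The Lagrange constant needs separate treatment. Comparing with competitors that shift all edges uniformly gives $\big||E|-|F|\big|\lesssim \tau^{3/2}$ per step. The first-order condition obtained from uniform dilation then identifies $\lambda$ as $\okp_F+O(\sqrt\tau)$. After iterating, the volume drift stays $o(1)$, and $\lambda$ converges to $\okp_{E(t)}$.

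With the one-step lemma in hand, iteration gives a discrete ODE scheme: an explicit Euler method for \eqref{eq:intro ODE flow} with consistent error $O(\tau^{3/2})$ per step. A Gronwall argument, using Lipschitz dependence of $\kp(I)$ and $\okp$ on the edge positions while the edge lengths stay bounded below, keeps the discrete sets $E^\tau_k$ within $o(1)$ of $E(k\tau)$ uniformly on $[0,T']$. This closeness is exactly what allows the one-step lemma to be reapplied at the next step. Passing to the limit $\tau\to0$ shows that every flat flow agrees with $E(t)$, and exhausting $[0,T)$ finishes the proof.

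I expect the main obstacle to be the one-step rigidity, specifically controlling minimizers near the vertices where two edges meet. There the barrier construction must handle both adjacent facet directions at once, and the nonlocal volume term couples all edges. My first attempt will be to localize with a cutoff of size $\sqrt\tau$ around each vertex, use the 90-degree condition \eqref{eq:phi assumption} to build a wedge-shaped calibrated competitor, and absorb the coupling through the volume estimate above.
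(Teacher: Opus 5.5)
Your overall architecture (one-step rigidity via $\phi$-minimal barriers, Euler--Lagrange equation, multiplier estimate, iteration with a stability estimate) matches the paper's, but there are genuine gaps, and the first is in the barrier step. Cutting or adding a thin region behind a calibrated barrier is the Almgren--Taylor device, and with the volume penalty it pins nothing down. Suppose $|E|=|E_0|$ and you remove a region $R$ on which $\sdp_F\ge M-\delta$. The energy changes by at most $|R|\,\big(\tau^{-1/2}-(M-\delta)/\tau\big)$, so minimality only tells you $M\le\tau^{1/2}$; additions likewise give only $m\ge-\tau^{1/2}$. This is the same order as the $L^\infty$ bound and carries no rigidity. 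The unforced dichotomy ``$M>0$ or $m<0$'' that drives corner barriers is lost, and a per-step volume bound does not restore it (that bound in any case presupposes the structure of $E$).

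The missing idea is to move mass \emph{in pairs}. Remove a region where $\sdp_F\ge M-\delta_+$ behind one $\phi$-minimal barrier, and add the same area where $\sdp_F\le m+\delta_-$ behind another. This keeps $|E|$ fixed, does not raise $P_\phi$, and strictly lowers $\int_E\sdp_F$ whenever $m<M$. It therefore forces the relevant piece of $\partial E$ onto a single level set without knowing $\lambda$ in advance.

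You also treat $F$ as a polygon. A Lipschitz $\phi$-regular set generally has fuzzy edges, which are arbitrary curves satisfying a $p_i$ cone condition. So $\partial E$ between two regular edges need not be a straight segment; sharp minimality only concerns segments with normal in $\mathcal N_\phi$. What must be shown is that this piece equals $J+\lambda\tau p_J$ or has vanished. For type II fuzzy edges you need an extra argument (e.g. rearranging slices along a direction $\nu$ with $\nu\cdot p_J>0$) to exclude the two parallel edges overshooting each other. Small spurious components are not excluded by density estimates either, since those only give perimeter $\gtrsim\tau^{1/2}$. Instead, translate such a component along $\pm p_i$, which lowers the dissipation at no cost in area or perimeter.

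Fuzzy edges also break your Euler--Gronwall step. A regular edge sliding along a fuzzy neighbor gains length in jumps whenever its line passes a piece of that neighbor parallel to it. So $\kp(I)$ is only monotone (BV) in the edge positions, not Lipschitz, and the ODE must be read in the Filippov sense. Your relation $h_I/\tau=-\kp_F(I)+\lambda+O(\tau)$ then fails at such steps, because the Euler--Lagrange equation involves the new length $|I^E|$, which can differ from $|I|$ by $O(1)$.

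What works instead has two parts. First, a one-sided Lipschitz bound (the monotonicity has the favorable sign) gives uniqueness and $e^{Ct}$-stability of the flow map in translate coordinates. Second, the $O(\tau^2)$ local error comes not from Euler's method but from the area swept by each edge. Along the ODE, $\int_0^{u_j(t)}(\text{length of }I_j\text{ at position }s)\,ds=-\alpha_{I_j}t$, and the Euler--Lagrange equation, read as a local area balance, gives the same identity over one step up to $O(\tau^2)$. Similarly, $\lambda-\okp_F=O(\tau)$ follows from $|E|-|F|=\int_{\partial E}\sdp_F\,dP_\phi+O(\tau^2)$ together with Gauss--Bonnet.

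Finally, ``generically'' does not mean that no edge ever shrinks to zero. Type I fuzzy edges may disappear within a translate, and type II fuzzy edges may vanish at distinct times, after which the ODE flow is restarted. Your restriction to $[0,T']$ with edge lengths bounded below stops at the first such event, so it does not reach the maximal existence time. Crossing a vanishing time needs three further facts:
\begin{enumerate}
\item the scheme cannot vanish a type II edge unless the ODE does so within time $O(\tau^{1/2})$;
\item only boundedly many steps can have an inactive volume constraint;
\item the ODE flow is stable in Hausdorff distance between rough translates.
\end{enumerate}
The last does not follow from Lipschitz dependence on data, since the two vector fields are only $L^1$-close; it uses that edges of nonzero curvature move with speed bounded below.
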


By the two solution notions coinciding for ``generic" initial data, we mean that inflection-type edges (\emph{type II fuzzy edges} defined in \cref{sec:lipreg}) which may vanish in the ODE evolution must have distinct vanishing times. A similar assumption is made in \cite{Almgren1995}.

Existing results in the literature that are most similar to \cref{thm:weak strong informal} are \cite{Almgren1995}, which proves the consistency of the flat flow and ODE evolution for the unforced curvature flow of admissible polygons, and \cite{Cicalese2024}, which shows this consistency for the area-preserving flat flow of rectangular initial data with $\phi=|\cdot|_1$. For initial data satisfying an interior/exterior $rW_\phi$-property (which is equivalent to Lipschitz $\phi$-regularity in $n=2$), it has also been shown that this property is preserved for a short amount of time by the area-preserving flow of convex sets \cite{Bellettini2006} and the planar flow with prescribed forcing \cite{ChambolleNovaga2015planar}. Our result is the first such consistency result for the area-preserving flow with general initial data. 

Our proof strategy for \cref{thm:weak strong informal} is similar to that of \cite{Almgren1995}, wherein the authors use corner barriers to show that the minimizing movement scheme coincides with translates of the initial data. We adopt a similar approach by extending the class of barriers to those with \emph{$\phi$-minimal} boundaries. The key difficulty in our setting is that we are limited to area-preserving energy competitors, complicating the setup for barrier arguments: rather than deleting discrepancies between $E$ and $F$, we must shift mass from regions further from $F$ to regions closer to $F$.  

A further difficulty introduced by considering Lipschitz $\phi$-regular sets rather than admissible polygons is that the vector field associated to \eqref{eq:intro ODE flow} is only of BV regularity rather than Lipschitz. For instance, a necessary ingredient for \cref{thm:weak strong informal} is the stability of the ODE evolution with respect to Hausdorff distance (\cref{prop:ODE stability hausdorff}). From the perspective of DiPerna-Lions theory, this amounts to showing $L^\infty$ stability of flow maps with respect to $L^1$ proximity of BV vector fields, which of course is not true in general. To show the stability, we rely on the fact that nonzero curvature edges have relative velocity (i.e. modulo $\okp_E$) bounded away from zero.

\subsection{Long-time behavior}

Prior works have determined for essentially arbitrary bounded initial data that flat flow solutions converge exponentially to a disjoint union of Wulff shapes for the isotropic case $\phi=|\cdot|$ in dimensions $n=2,3$ \cite{Julin2022, JulinNiinikoski2023qat, Julin2025} and for smooth elliptic $\phi$ in $n=2$ \cite{KimKwon2024}. 

In the crystalline setting, convergence results for \eqref{eq:cr flow} are known only in special cases, for instance when the initial set is convex \cite{Bellettini2009, Yazaki2007} or satisfies certain reflection symmetries \cite{KimKwonPozar2022}.
In these cases, the geometric property is preserved over time, prohibits the onset of topological singularities, and is an important ingredient for controlling the Lagrange multiplier to yield existence for \eqref{eq:cr flow}: the authors in \cite{Bellettini2009, KimKwonPozar2022} initially consider the flow with prescribed forcing, and then bootstrap the forcing to preserve volume. It is unclear whether one can apply this bootstrapping approach for general initial data.

Our second main result establishes convergence of the flat flow for any bounded initial data for crystalline $\phi$ in $n=2$:
\begin{theorem}[Long-time behavior]
    \label{thm:convergence flat flow cr}
    Let $E_0\sb\R^2$ be a set of finite perimeter and $(E(t))_{t\in[0,\infty)}$ a flat flow solution to \eqref{eq:cr flow} with initial data $E_0$. There exists a disjoint union of equally sized Wulff shapes $E_\infty := \cup_{j=1}^d (x_j + rW_\phi)$ satisfying $|E_\infty| = |E_0|$ and some constant $C>0$ such that
    \begin{equation}
        \label{eq:exp L1 convergence}
        |E(t) \Delta E_\infty| \leq Ce^{-t/C}.
    \end{equation}
    Moreover, if there is a positive distance between the Wulff shapes in $E_\infty$ (non-bubbling), then there exists $T>0$ such that $E(t)$ satisfies the ODE flow \eqref{eq:intro ODE flow} for $t\geq T$, and one has the stronger convergence
    \begin{equation}
        \label{eq:exp hausdorff convergence}
        d_H(E(t), E_\infty) \leq Ce^{-t/C}.
    \end{equation}
\end{theorem}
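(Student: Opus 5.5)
The proof follows the strategy of \cite{Julin2022, KimKwon2024}, adapted to the crystalline setting through the Lipschitz $\phi$-regularity of planar almost-minimizers (announced in the abstract) and the weak--strong uniqueness result, \cref{thm:weak strong informal}.

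\textbf{Discrete dissipation estimates.} The first step takes place at the level of the minimizing movement scheme. Let $E^\tau_k$ be the discrete iterates. Comparing $\cF_\tau(E^\tau_{k+1},E^\tau_k)$ with $\cF_\tau(E^\tau_k,E^\tau_k)$ gives $P_\phi(E^\tau_{k+1}) + \tau^{-1}\cD(E^\tau_{k+1},E^\tau_k)\le P_\phi(E^\tau_k)$, so the energy decreases and the total dissipation is summable. Each iterate $E^\tau_{k+1}$ is an almost-minimizer of $P_\phi$ at scales small relative to $\sqrt\tau$. From the Euler--Lagrange inequality, obtained through area-preserving competitors (shifting mass between regions), I extract a discrete curvature $\kp_{E^\tau_{k+1}}$ comparable to $\dd^\phic_{E^\tau_k}/\tau$. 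This yields the discrete dissipation inequality
\[
P_\phi(E^\tau_{k+1}) + c\,\tau\,\|\kp_{E^\tau_{k+1}}-\okp_{E^\tau_{k+1}}\|_{L^2}^2 \le P_\phi(E^\tau_k).
\]
For general sets this is only available in a weak, approximate sense, so I would work with the Lipschitz $\phi$-regular structure given by the almost-minimizer regularity. That structure gives edges on which the curvature is constant and quantitatively controlled.

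\textbf{Quantitative Alexandrov / {\L}ojasiewicz inequality.} This is the main obstacle. In two dimensions I would prove a crystalline quantitative Alexandrov theorem for Lipschitz $\phi$-regular sets with bounded perimeter. If the edge curvatures are $L^2$-close to the constant $\okp_E$, then $E$ is $L^1$-close to a disjoint union of equal Wulff shapes $rW_\phi$, with
\[
P_\phi(E) - P_\phi(\text{union}) \le C\|\kp_E-\okp_E\|_{L^2}^2 .
\]
I would first try to argue component by component. For each boundary curve, the edges are parallel to facets of $W_\phi$ because of assumption \eqref{eq:phi assumption} and the $\phi$-regularity. The curvature of each edge is roughly $\pm(\text{Wulff facet length})/(\text{edge length})$. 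Near-constant curvature therefore forces edge lengths to be nearly proportional to the facets of a Wulff shape of radius $1/\okp_E$, and it forces inflection (fuzzy) edges to be short. Summing over the edges then gives the energy gap. Components with small area must have large curvature, which contradicts the $L^2$ bound. Combining this inequality with the dissipation inequality, the energy gap $D(t):=P_\phi(E(t))-P_\infty$ satisfies a Gr\"onwall-type inequality $D(t+\tau)\le(1-c\tau)D(t)$, uniformly in $\tau$. The limit $P_\infty$ is one of finitely many levels $d^{1/2}P_\phi(W_\phi)\,(|E_0|/(d|W_\phi|))^{1/2}$, and a discreteness argument identifies it. This gives exponential decay of the energy gap and of the dissipation. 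Summing the dissipation $\sum \cD^{1/2}$ gives a Cauchy estimate for $|E(t)\Delta E(s)|$ decaying exponentially in time, which yields \eqref{eq:exp L1 convergence} with a limit $E_\infty$ of the stated form. Preservation of area passes to the limit because of the penalty term.

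\textbf{Non-bubbling case.} Suppose the Wulff shapes in $E_\infty$ are at positive distance. Almost-minimality together with $L^1$ closeness upgrades, via density estimates, to Hausdorff closeness. Uniform Lipschitz $\phi$-regularity of the iterates at scale $r$ then shows that for large $t$ each component of $E(t)$ is Lipschitz $\phi$-regular with edges close to the facets of $x_j+rW_\phi$. In particular there are no type II fuzzy edges, so the genericity assumption holds trivially. Applying \cref{thm:weak strong informal} from a time $T$ onward shows that the flat flow is the ODE flow \eqref{eq:intro ODE flow}, and that this flow exists for all later times since no edge vanishes near a Wulff shape. Finally, I would linearize the ODE near $E_\infty$: the facet lengths satisfy a system whose Lagrange-multiplier constraint makes the Wulff shape an exponentially stable equilibrium modulo translation. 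The translation parameter converges because the velocities decay exponentially, which gives \eqref{eq:exp hausdorff convergence}.
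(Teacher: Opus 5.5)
Your first half follows the paper's argument. It uses the $L^2$ curvature bound from the averaged Euler--Lagrange equation (\cref{lem:L2 dev bound}), the crystalline QAT as a {\L}ojasiewicz inequality (\cref{cor:QAT}), exponential decay of the energy gap/dissipation, and an $L^1$ Cauchy estimate. Two omissions there are fixable:
\begin{itemize}
\item The scheme has only a soft area penalty. At steps with $|\Et_k|\neq|E_0|$ (where $\lambda=\pm\tau^{-1/2}$), neither the QAT nor the comparison with $P_d$ is available. So $D(t+\tau)\le(1-c\tau)D(t)$ must be restricted to the complement of a set of times of measure $O(\tau)$ (\cref{prop:lambda L2 bound}(iii)).
\item Identifying $E_\infty\in\cC(d,m)$ needs an extra step. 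You select times with small curvature deviation by Markov's inequality, apply the QAT there, and use that $\cC(d,m)$ is closed in $L^1$.
\end{itemize}

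The genuine gap is the non-bubbling step. The iterates are $(\Lambda,r_0)$-minimizers only with $\Lambda\sim\tau^{-1/2}$ and $r_0\sim\tau^{1/2}$. So density estimates and the cone conditions of \cref{thm:lipreg} hold only at scale $\sim\sqrt\tau$.
\begin{itemize}
\item The $L^1$ error $e^{-t/C}$ does not shrink with $\tau$, so density estimates give no Hausdorff bound uniform in $\tau$. A stray component of area $\sim\tau$ is not ruled out by $L^1$ closeness or by the energy bounds.
\item There is no ``uniform Lipschitz $\phi$-regularity at scale $r$'', and the limit $E(t)$ is not known to be an almost-minimizer of anything. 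This is exactly the obstruction mentioned in the introduction.
\end{itemize}
The paper gets Hausdorff closeness and macroscopic structure only through the QAT, and only at good times $t_n\in[t_0-e^{-t_0/4C_0},t_0]$. At those times $\Etn(t_n)$ is a rough translate of $E_\infty$ whose type I fuzzy edges have length $O(e^{-t_0/4C_0})$. Arzel\`a--Ascoli then gives Hausdorff convergence to $E(t')$. The flow is propagated by applying \eqref{eq:MM ODE approx} to the discrete flows restarted at $\Etn(t_n)$, together with \cref{cor:ODE stability hausdorff 2}.

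You also cannot simply ``apply weak--strong uniqueness from time $T$''. The approximate flows at time $T$ are $\Etn(T)\neq E(T)$, so $E(\cdot+T)$ is not a priori a flat flow from $E(T)$. That is why stability uniform over nearby Lipschitz $\phi$-regular initial data is needed. Letting $t_0$ vary then gives eventual regularity.

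Your final linearization step is false for $d\ge 2$. For $d$ Wulff shapes of radii $r_i$, Gauss--Bonnet gives $\okp=d/\sum_j r_j$, so the ODE reduces to $\dot r_i=-1/r_i+d/\sum_j r_j$. Taking $r_1=r+\epsilon$ and $r_2=r-\epsilon$ gives $\dot\epsilon\approx\epsilon/r^2$ (Ostwald ripening).
\begin{itemize}
\item The equal-size configuration is therefore linearly unstable, not exponentially stable modulo translations.
\item In particular, the ODE by itself does not keep the flow near $E_\infty$ ``for all later times''; convergence is forced by the energy argument.
\end{itemize}
The paper obtains the Hausdorff rate without any linearization. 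The QAT gives $d_H(E(t'),E_\infty)\le Ce^{-t/4C_0}$ for some $t'\in[t-e^{-t/4C_0},t]$, and ODE solutions are Lipschitz in time.
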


A key difference between \cref{thm:convergence flat flow cr} and the statement in \cite{KimKwon2024} is the eventual regularity of the flat flow under a non-bubbling assumption, which remains open in the case of smooth elliptic $\phi$. In the isotropic setting, this was recently proven by \cite{Julin2026} for $n=2,3$, also under a non-bubbling assumption, using an adaptation of Brakke's regularity theorem to the flat flow. Moreover, without the non-bubbling assumption, we only establish $L^1$ convergence, whereas the results in \cite{KimKwon2024, Julin2022, Julin2025} guarantee Hausdorff convergence by proving an equicontinuity result when the flat flow is close to a critical point. We faced an obstruction in adapting this equicontinuity result to the crystalline setting because the Euler-Lagrange equation for minimizers of \eqref{eq:flat flow energy} holds only in an averaged sense along edges and thus does not convey pointwise information about the disretized velocity.

The main dynamical ingredient for \cref{thm:convergence flat flow cr}, as in the prior works \cite{Julin2022, Julin2025, KimKwon2024}, is a \emph{Quantitative Alexandrov Theorem} (QAT) which establishes that sets with almost constant $\phi$-mean curvature are quantifiably close to a disjoint union of Wulff shapes. Our QAT roughly states that if $E\sb\R^2$ is a bounded Lipschitz $\phi$-regular set with $\kp_E$ close in $L^2$ to a constant, then $E$ is a small perturbation of a disjoint union of Wulff shapes $F$ satisfying
    \begin{equation}
        \label{eq:Lojasiewicz}
        \abs{P_\phi(E) - P_\phi(F)} \lesssim \|\kappa_E^\phi - \ol{\kappa}_E^\phi\|_{L^2(\partial E)}^2.
    \end{equation}
The inequality \eqref{eq:Lojasiewicz} plays the role of a {\L}ojasiewicz inequality, which is classically used to quantify convergence rates of gradient flows to a critical point. The proof of the crystalline QAT follows the same ideas as in \cite{Julin2022, KimKwon2024} and is in fact simpler due to a small $L^2$ deviation of $\kp_E$ prohibiting the existence of negatively curved edges. We also mention the recent work \cite{Bellettini2025elastic} which shows the convergence of the crystalline elastic flow also using a \L ojaziewicz inequality.

\subsection{Almost-minimizers of $P_\phi$}
\label{sec:almost min}

A key ingredient in our analysis is that almost-minimizers of $P_\phi$ are Lipschitz $\phi$-regular (\cref{thm:lipreg}). We recall that $E\sb\R^2$ is an \emph{almost-minimizer} of $P_\phi$ if there exist constants $\Lambda, r_0 > 0$ such that 
\begin{equation}
    \label{eq:almost minimal}
    P_\phi(E) \leq P_\phi(G) + \Lambda |E\Delta G| \qquad \text{whenever } E\Delta G \Subset B(x,r_0) \text{ for some }x\in\partial E.
\end{equation}
In particular, minimizers of \eqref{eq:flat flow energy} are almost-minimizers and thus are Lipschitz $\phi$-regular. This regularity is crucial to our analysis: it implies the discretized flat flow satisfies an Euler-Lagrange equation and is necessary for employing the crystalline QAT to prove \cref{thm:convergence flat flow cr}.

It has been known from \cite{Ambrosio2002regularity} that whenever $|\cN_\phi| > 4$, minimizers of the energy
\begin{equation}
    P_\phi(E) + \int_E g(x) dx
\end{equation}
are Lipschitz $\phi$-regular for any bounded potential $g$, and their arguments can be adapted to minimizers of \eqref{eq:flat flow energy}. Nonetheless, we provide an independent proof relying on a different set of ingredients. The authors in \cite{Ambrosio2002regularity} consider an anisotropic notion of excess, a quantity measuring the local ``flatness" of the boundary, and proceed via a conventional excess decay argument. We instead exploit a sharp minimality result (\cref{prop:sharp minimality}) for line segments with outer normal in $\cN_\phi$, which states that any deviation from such a line segment incurs an increase in surface energy that is linear in the height of the deviation. The proof of \cref{prop:sharp minimality} is a short calibration argument and yields a simpler proof of Lipschitz $\phi$-regularity than the argument in \cite{Ambrosio2002regularity}; in particular, there is no blow-up analysis. 

\subsection{Open questions}

One advantage of the simplicity of the ODE evolution is that one has a concrete picture on how to restart the flow past singular events. For instance, in the event of a collision between boundary components, the resulting set will no longer be Lipschitz $\phi$-regular due to the formation of sharp (non-admissible) corners. For the unforced curvature flow, \cite{Giga2022nonadmissible} show that one can define the evolution from non-admissible corners by inserting zero-length edges to enforce admissibility. We expect the consistency between the flat flow and ODE flow to hold past such collision events, but we open this line of inquiry to future work.

For general mobilities, our methods still imply the exponential $L^1$ convergence of the flat flow to a critical point. However, the weak-strong uniqueness and eventual regularity statements are likely false without a compatibility condition between the mobility and $\phi$ (see $\phi$-regular mobilities in \cite{Chambolle2019anisotropic}) ensuring that corners move at least as fast as their neighboring facets.

In dimensions $N\geq 3$, crystalline curvature flow is known to encounter facet bending and breaking \cite{Bellettini2004facetbreaking}. Moreover, the regularity of almost-minimizers is not well-understood, which is a significant obstacle to adapting the minimizing movement scheme to approximate the evolution. Nonetheless, we anticipate that $\phi$-minimal barriers may still be used to obtain some form of weak-strong uniqueness for regular initial data.

\subsection{Organization}
In \cref{sec:crystalline geometry} we introduce the notions of Lipschitz $\phi$-regular sets and $\phi$-minimal paths, and prove that almost-minimizers of $P_\phi$ are Lipschitz $\phi$-regular. \cref{sec:flow solutions} develops existence and essential estimates for the ODE flow and flat flow solutions. Theorems \ref{thm:weak strong informal} and \ref{thm:convergence flat flow cr} are proven in Sections \ref{sec:weak strong uniqueness} and \ref{sec:long time}, respectively.

\subsection{Acknowledgements}
The author thanks Inwon Kim for many helpful discussions and Dohyun Kwon for suggesting this problem. The author was partially supported by the National Science Foundation grant DMS-2452649.

\section{Notation/Preliminaries}
\textbf{Sets of finite perimeter.} We recall that a Lebesgue measurable set $E\sb\R^2$ is a \emph{set of finite perimeter} if its distributional gradient $D\chi_E$ is a Radon measure. We refer to the reduced boundary of $E$ as $\partial^*E$ and the surface measure $\mu_E := -D\chi_E = \nu_E\,\cH^1|_{\partial^*E}$. For $\theta\in[0,1]$, we let $E^{(\theta)}$ denote the points of Lebesgue density $\theta$, and recall the theorem of Federer that $\R^2\setminus(E^{(0)}\cup E^{(1)})$, $E^{(1/2)}$, $ \partial^*E$ are all equivalent up to a $\cH^1$-null set. We will always identify a set of finite perimeter with its set of Lebesgue points. For further background on sets of finite perimeter, we refer to \cite{Maggi2012}. 

Given an anisotropy $\phi$, a set of finite perimeter $E$, and a Borel set $A\sb\R^2$, we define the anisotropic surface energy of $E$ relative to $A$ by
\[ P_\phi(E;A) := \int_{\partial^*E\cap A} \phi(\nu_E)\,d\cH^1. \]
We abbreviate $P_\phi(E) := P_\phi(E;\R^N)$.

We let $R:\R^2\to\R^2$ denote counterclockwise rotation by 90 degrees. For a simple oriented rectifiable curve $\Gamma\sb\R^2$, we define its outer normal vector to be $\nu_\Gamma(\gamma(t)) = R\gamma'(t)$ where $\gamma$ is an arclength parametrization of $\Gamma$, and let $\Gamma^*$ denote the set of points in $\Gamma$ at which $\nu_\Gamma$ is defined. We thus define 
\begin{equation}
    P_\phi(\Gamma) := \int_{\Gamma^*} \phi(\nu_\Gamma)d\cH^1.
\end{equation}
All curves considered in this paper are \emph{oriented}. In particular, for points $x,y\in\R^2$, $[x,y]$ will denote the oriented line segment from $x$ to $y$.

We use $|E|$ to denote Lebesgue measure in $\R^2$. However, when there is no risk of confusion, we also denote $|I| := \cH^1(I)$ when $I$ is a line segment.

When $E,F\sb\R^2$ are two sets of finite perimeter, we adopt the shorthand \[ \{\nu_E = \pm \nu_F\} := \set{x\in\partial^*E\cap \partial^*F: \nu_E(x) = \pm \nu_F(x)}. \]
    We recall the surface measures for basic set operations \cite[Theorem 16.3]{Maggi2012} between two sets of finite perimeter $E,F$:
    \begin{align}
        \label{eq:intersection}
        \mu_{E\cap F} &= \mu_E|_{F^{(1)}} + \mu_F|_{E^{(1)}} + \nu_E \cH^{N-1} |_{\{\nu_E=\nu_F\}}\\
        \label{eq:setminus}
        \mu_{E\setminus F} &= \mu_E|_{F^{(0)}} - \mu_F|_{E^{(1)}} + \nu_E \cH^{N-1} |_{\{\nu_E=-\nu_F\}}
    \end{align} 

\medskip 

\noindent \textbf{Crystalline anisotropy.}
We fix $\phi$ to be a \emph{crystalline} anisotropy on $\R^2$, i.e. for some finite set of nonzero vectors $\tN_\phi := \{p_1, \dots, p_N\} \sb \R^2$, 
\begin{equation}
    \label{eq:phi def}
    \phi(\nu) := \max_{1\leq i\leq N} \nu\cdot p_i. 
\end{equation}
We fix $\{p_i\}$ to be arranged in counterclockwise order. For any $\nu\in\S^1$, we denote $\nu^\phi := \frac{\nu}{\phi(\nu)}$.

The \emph{Wulff shape} associated to $\phi$ is defined as $W_\phi := \set{\phic < 1}$, where 
\[ \phic(x) := \sup \{x\cdot \nu: \phi(\nu)\leq 1\} \]
is the dual anisotropy to $\phi$. Moreover, $W_\phi$ is a polygon with facets given by $[p_{i-1}, p_i]$. We define $\cN_\phi := \{\nu_i\}_{i=1}^N$ to be the corresponding set of outer normal vectors to $W_\phi$, such that $\nu_i$ is orthogonal to $[p_{i-1}, p_i]$. Then one can re-express
\[ \phic(x) = \max_{1\leq i\leq N} x\cdot \nu_i^\phi. \]
Throughout the paper, we make the assumption \eqref{eq:phi assumption} that adjacent vectors in $\cN_\phi$ differ in angle by less than 90 degrees.

We recall that $W_\phi$ solves the isoperimetric problem for $P_\phi$, in that any set of finite perimeter $E\sb\R^2$ satisfies the \emph{Wulff inequality} 
\begin{equation}
    \label{eq:wulff ineq}
    P_\phi(E) \geq 2|W_\phi|^{1/2} |E|^{1/2}.
\end{equation}
We also denote the scaled and translated Wulff shape \[ W_\phi(x,r) := x + rW_\phi = \set{y\in\R^N: \phic(y-x) < r}. \]

For a set $E\sb\R^2$, we define the anisotropic \emph{signed distance function} 
    \[ \sdp_E(x) := \begin{cases}
        \inf_{y\in E} \phic(x-y) &x\not\in E\\
        -\inf_{y\not\in E} \phic(y-x) &x\in E
    \end{cases} \]
and set $\dd^\phic_E(x) := |\sdp_E(x)|$. We also denote $d_H(E,F)$ as the Hausdorff distance between sets $E,F\sb\R^2$. When $E,F$ are sets of finite perimeter, $\sdp_E$ and $d_H(E,F)$ are still well-defined by the convention that $E$ is identified with its Lebesgue representative.

We close with some miscellaneous notations:
\begin{itemize}
    \item We define $L_\phi$ to be the smallest constant such that \[ L_\phi^{-1}|\nu| \leq \phi(\nu) \leq L_\phi|\nu| \qquad\forall \nu\in\R^2\setminus\{0\}. \]
    
    \item For an oriented line segment $I\sb\R^2$, we let $\nu_I$ denote its outer normal vector. We also define $s_I$ to be the constant such that $I\sb \{x\cdot\nup_I = s_I\}$.  
    
    \item Given vectors $\nu, \nu' \in\mathbb S^1$, we denote $A[\nu,\nu']\sb\mathbb S^1$ to be the closed arc running counterclockwise from $\nu$ to $\nu'$. We similarly let $A(\nu,\nu')$ denote the open arc.

    \item For a set $E\sb\R^2$ with Lipschitz boundary, we let $\chi(E)$ denote its Euler characteristic.
    
    \item $x\lesssim y$ means $x\leq Cy$ for some universal constant $C$, i.e. a value which does not depend on any of the parameters at hand, and otherwise denote additional dependencies using subscripts. $x\sim y$ means $x\lesssim y$ and $x\gtrsim y$. 

    \item Given a measure $\mu$, we denote $\dashint_X fd\mu := \inv{\mu(X)} \int_X fd\mu$.
\end{itemize}

\section{Crystalline geometry in $\R^2$}
\label{sec:crystalline geometry}

\subsection{Lipschitz $\phi$-regularity}
\label{sec:lipreg}
Here we discuss Lipschitz $\phi$-regularity (first introduced in \cite{Bellettini2001}), a regularity class of sets for which one can define a suitable notion of $\phi$-mean curvature. 

\begin{definition}
    \label{def:lipreg}
    We say that a set $E\sb\R^2$ is \textbf{\emph{Lipschitz $\phi$-regular}} if $E$ is Lipschitz and there exists a Lipschitz vector field $X:\partial E \to\R^2$ such that $X\in\partial\phi(\nu_E)$ $\cH^1$-a.e.
\end{definition}
Note that when $\phi$ is smooth and uniformly elliptic, then \cref{def:lipreg} coincides with $C^{1,1}$ regularity. For crystalline $\phi$ in two dimensions, Lipschitz $\phi$-regularity is equivalent to a local cone condition. 

\begin{definition}
    \label{def:cone condition}
    Given $p_i\in\tN_\phi$, we say that an oriented rectifiable curve $\Gamma\sb\R^2$ satisfies the \textbf{\emph{$p_i$ cone condition}} if $p_i \in \partial \phi(\nu_\Gamma(x))$ for $\cH^1$-a.e. $x\in\Gamma^*$.
\end{definition}
\noindent Note that $p_i\in\partial\phi(\nu_\Gamma)$ is equivalent to $\nu_\Gamma(x) \in A[\nu_i, \nu_{i+1}]$, since we can express the subdifferential
    \begin{equation}
        \label{eq:phi subdiff}
        \partial\phi(\nu) = 
        \begin{cases}
            p_i &\nu\in A(\nu_i,\nu_{i+1})\\
            [p_{i-1},p_i] &\nu = \nu_i.
        \end{cases}
    \end{equation}
Thus we are justified in calling \cref{def:cone condition} a ``cone condition" since it is equivalent to $\Gamma$ satisfying 
\begin{equation}
    \Gamma \sb x_0 + C(\nu_i, \nu_{i+1}) \ \ \forall x_0 \in\Gamma \quad\text{where}\quad C(\nu_i, \nu_{i+1}) := \{x\cdot\nu_i \leq 0 \leq x\cdot \nu_{i+1}\} \cup \{ x\cdot \nu_{i+1}\leq 0 \leq x\cdot \nu_i\}.
\end{equation}

\begin{proposition}
    \label{prop:cone lipreg}
    Let $E\sb\R^2$ be a set of finite perimeter such that $\partial E$ is the finite disjoint union of simple curves. Then the following are equivalent: 
    \begin{itemize}
        \item[(i)] $E$ is Lipschitz $\phi$-regular.
        \item[(ii)] For all $x \in \partial E$, $\partial E \cap B(x,r)$ satisfies the $p_i$ cone condition for some $p_i\in\tN_\phi$ and $r>0$.
    \end{itemize}
\end{proposition}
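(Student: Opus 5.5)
We prove the two implications separately. Both reduce to local statements along each simple boundary curve, parametrized by arclength and oriented so that $\nu_E = R\gamma'$. Throughout we use the subdifferential formula \eqref{eq:phi subdiff}: $\partial\phi(\nu)$ is the single vertex $p_i$ when $\nu\in A(\nu_i,\nu_{i+1})$, and the facet $[p_{i-1},p_i]$ when $\nu=\nu_i$.

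\emph{(ii)$\Rightarrow$(i).} Covering the compact boundary by finitely many balls $B(x,r)$ in which a cone condition holds, every point has a neighborhood on which $\nu_E$ lies a.e. in some closed arc $A[\nu_i,\nu_{i+1}]$.

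First, $E$ is Lipschitz. Assumption \eqref{eq:phi assumption} says this arc has length less than $90^\circ$, so the curve is locally a Lipschitz graph over the direction bisecting $-R\nu_i$ and $-R\nu_{i+1}$.

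Next we build $X$. On a ball where the $p_i$ cone condition holds we would like to set $X\equiv p_i$, which is Lipschitz there and lies in $\partial\phi(\nu_E)$ a.e. The difficulty is that different points may use different vertices. Suppose $p_i$ and $p_j$ are both admissible on overlapping portions. Then on the overlap $\nu_E$ lies in $A[\nu_i,\nu_{i+1}]\cap A[\nu_j,\nu_{j+1}]$, which (for $i\ne j$) is a single normal, $\nu_i$ with $j=i-1$ say. So on such a stretch the curve is a straight segment with normal $\nu_i$. There $\partial\phi(\nu_E)=[p_{i-1},p_i]$, and we can interpolate $X$ linearly in arclength from $p_{i-1}$ to $p_i$.

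To make this rigorous we partition each boundary curve into finitely many maximal arcs. On each arc a single vertex $p_i$ is admissible, and consecutive arcs are joined by segments of normal $\nu_i$, the \emph{facet} pieces. The key point is that each such joining segment has positive length. Indeed, by (ii) every point has a neighborhood with one admissible vertex, so a switch from $p_{i-1}$ to $p_i$ cannot occur at a single point without a straight stretch of positive length. Compactness then gives finitely many switches and a positive lower bound on the lengths of the joining segments. The resulting piecewise-linear $X$ is therefore Lipschitz.

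\emph{(i)$\Rightarrow$(ii).} Fix $x\in\partial E$ and let $X$ be Lipschitz with constant $L$. The extreme points of $\partial\phi(\nu)$ are among the $p_k$, which are separated by some $\delta>0$. Choose $r$ with $Lr<\delta/2$, so that $X(B(x,r)\cap\partial E)\subset B(X(x),\delta/2)$.

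If $X(x)=p_i$, we claim that a.e. point $y$ near $x$ satisfies $p_i\in\partial\phi(\nu_E(y))$. Since $X(y)\in\partial\phi(\nu_E(y))$, which is either a single vertex or a facet $[p_{k-1},p_k]$, and $X(y)$ is $\delta/2$-close to $p_i$, convexity of $W_\phi$ forces the facet (or vertex) to contain $p_i$. This step needs $r$ small enough that no other facet passes within $\delta/2$ of $p_i$.

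If instead $X(x)$ lies in the interior of a facet $[p_{i-1},p_i]$, the set $\partial\phi(\nu_E(y))$ must meet a small ball around $X(x)$. The only such sets are $[p_{i-1},p_i]$ itself, corresponding to $\nu_E(y)=\nu_i$. So the boundary is locally a segment with normal $\nu_i$, which satisfies both the $p_{i-1}$ and the $p_i$ cone conditions.

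\emph{Main obstacle.} The main obstacle is the construction step in (ii)$\Rightarrow$(i): showing that switches between admissible vertices happen only along straight stretches of positive length, and that there are finitely many of them. Only then is the interpolated $X$ genuinely Lipschitz, rather than merely BV.
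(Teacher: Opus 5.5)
Your proposal is correct and follows essentially the same route as the paper. For (i)$\Rightarrow$(ii) you use continuity of $X$ at a vertex $p_i$ versus at a point interior to a facet $(p_{i-1},p_i)$, and for (ii)$\Rightarrow$(i) you set $X$ constant on arcs with a fixed admissible vertex and interpolate linearly along the straight switching segments. The paper makes your compactness step precise: it takes a Lebesgue number $r_0$ for the open cover of $\partial E$ by the sets $U_i$ on which the $p_i$ cone condition holds locally, notes that a switching (regular) edge shorter than $2r_0$ would have both endpoints in a common $U_i$, and then uses a supremum argument to show that each piece between consecutive regular edges lies in a single $U_i$.
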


By the proof of \cref{prop:cone lipreg}, which we postpone to the end of this section, the boundary of a Lipschitz $\phi$-regular set $E\sb\R^2$ can be partitioned into regular and fuzzy edges, defined as follows: 
\begin{itemize}
    \item A \textbf{regular edge} is a maximal line segment $I\sb\partial E$ with normal $\nu_I\in \cN_\phi$, such that for some open neighborhood $U\sp I$, $\partial E \cap U$ lies on one side of the line $\{x\cdot\nup_I = s_I\}$. We say that $I$ is \emph{positive} if $\partial E \cap U \sb \{x\cdot\nup_I \leq s_I\}$ and \emph{negative} if $\partial E \cap U \sb \{x\cdot\nup_I \geq s_I\}$, and let $\sigma_I \in \{\pm1\}$ denote the sign of $I$.
    \item A \textbf{fuzzy edge} is a path $J \sb \partial E$ connecting two regular edges $I_1$ and $I_2$. We assign fuzzy edges to have sign $\sigma_J := 0$. Moreover, we define $J$ to be \textbf{type I} if $\nu_{I_1} \neq \nu_{I_2}$, and \textbf{type II} if $\nu_{I_1} = \nu_{I_2}$. 
    
    Although $J$ need not be a line segment, it satisfies the $p_i$ cone condition for a unique $p_i\in \tN_\phi$. Thus we may loosely consider $J$ as an edge in the sense that $x\in J\mapsto p_i \in \partial \phi(\nu_E)$ is a constant section of the anisotropic normal bundle. For this reason, we denote $p_J := p_i$ as the anisotropic outer normal vector of $J$. 
\end{itemize}
We define an \textbf{edge} $I\sb \partial E$ to be a regular edge or fuzzy edge. 

\begin{definition}
    \label{def:curvature}
    For a Lipschitz $\phi$-regular set $E\sb\R^2$, its \textbf{\emph{$\phi$-mean curvature $\kp_E$}} is defined as the function on $\partial E$ that on each edge $I\sb\partial E$ is equal to the constant
    \begin{equation}
        \label{eq:curvature}
        \kp_E(I) := \frac{\sigma_I L_I}{\cH^1(I)}
    \end{equation}
    where $L_I$ is the length of the facet of $W_\phi$ with outer normal $\nu_I$. We will often abbreviate $\alpha_I := \sigma_I L_I$. 
\end{definition}

Informally, one can justify \eqref{eq:curvature} as the ratio between the first variations of $P_\phi$ and $|E|$ upon translation of the edge $I$. We remark it is not straightforward to define $\kp_E$ for a general set of finite perimeter $E$, since the first variation of $P_\phi$ is not a linear functional on vector field perturbations, prohibiting the usual distributional definition. However, the authors in \cite{Bellettini2001} used an $L^2$ projection framework to define a suitable notion of $\kp_E$ for Lipschitz $\phi$-regular sets, that in the case of $n=2$ coincides with \eqref{eq:curvature}.

\begin{proof}[Proof of \cref{prop:cone lipreg}]
    Suppose there exists a Lipschitz vector field $X:\partial E\to \R^2$ such that $X\in\partial\phi(\nu_E)$ on $\partial^*E$. If $x\in \partial E$ satisfies $X(x) \in (p_{i-1},p_i)$, then by continuity and \eqref{eq:phi subdiff}, one has
    $X \in (p_{i-1},p_i) \implies \nu_E = \nu_i$ along $\partial^* E \cap B(x,r)$ for some $r>0$. Similarly, if $X(x) = p_i$ for some $i$, then $X \in (p_{i-1},p_i]\cup [p_i,p_{i+1})$ on some neighborhood $\partial^* E \cap B(x,r)$, on which it holds that $\nu_E \in A[\nu_i, \nu_{i+1}]$ and thus $p_i \in \partial\phi(\nu_E)$. This proves $(i) \Rightarrow (ii)$.

    Now assume (ii), which surely implies that $E$ has Lipschitz boundary. We wish to find a Lipschitz vector field $X$ on $\partial E$ such that $X\in\partial\phi(\nu_E)$. For each $p_i \in\tN_\phi$, we define the set 
        \[ U_i := \{x\in\partial E: \ \partial^* E\cap B(x,r)\text{ satisfies the }p_i\text{ cone condition for some }r>0 \}. \]
    Note that $U_i$ is open in $\partial E$ and $\partial E \sb \cup_{\nu_i\in\cN_\phi} U_i$. By compactness, there exists $r_0>0$ so that for each $x\in \partial E$, $\partial E\cap B(x,r_0) \sb U_i$ for some $i$.

    We observe that $U_{i-1} \cap U_i$ is the union of line segments in $\partial E$ with outer normal $\nu_i$. Moreover, any maximal line segment $(x,y)\sb\partial E$ with normal $\nu_i$ is maximal in $U_{i-1} \cap U_i$, so each endpoint $x,y$ lies in exactly one of $U_{i-1}$ and $U_i$. Note that $[x,y]$ is a regular edge if and only if $x,y$ belong to opposite sets. It follows that any regular edge $I\sb\partial E$ satisfies $|I| \geq 2r_0$; indeed, if $|I| < 2r_0$, then the endpoints of $I$ belong to a common $U_i$. In particular, $\partial E$ has finitely many regular edges.

    We now fix a connected component $\Gamma \sb \partial E$ and exhaustively list its regular edges as $I_1, \dots, I_m$ where $I_j := [x_j, y_j]$, in clockwise order. Let $J_j\sb\Gamma$ be the closed segment (possibly a point) from $y_j$ to $x_{j+1}$, so that we have partitioned $\Gamma$ into interlacing segments $I_1 \cup J_1 \cup \cdots I_m \cup J_m$.

    We claim that each $J_j$ is contained in some $U_{i_j}$. Without loss of generality, fix $J_1$ and let $y_1\in U_1$. Let $\gamma:[0,1]\to J_1$ be a parametrization such that $\gamma(0) = y_1$ and consider \[ t_0 := \sup\{t\in [0,1]: \gamma(s) \in U_1 \text{ for all }s\leq t\}. \]
    We wish to show that $t_0=1$. Suppose $t_0<1$. Then $\gamma(t_0) \not\in U_1$ by the openness of $U_1$, so $\gamma(t_0)\in U_i$ for $i\in\{0,2\}$. For small enough $\eps>0$, $\gamma((t_0-\eps,t_0))$ is a line segment in $U_1\cap U_i$. Letting $I'$ be the maximal line segment extending $\gamma((t_0-\eps,t_0))$, we find that the terminal endpoint of $I'$ is $\gamma(t_0)\in U_i$, while the initial endpoint is in $U_1$. Thus $I'$ is a regular edge, contradicting that the list $I_1, \dots, I_m$ was exhaustive. 

    Therefore, there exists a consecutively adjacent sequence $p_{i_1},\dots, p_{i_m}\in\tN_\phi$ such that $J_j \sb U_{i_j}$ and $\nu_E(I_j) = \nu_{\max\{i_{j-1},i_j\}}$. We can thus define $X:\Gamma \to\R^2$ by setting $X(x) = p_{i_j}$ along $J_j$ and linearly interpolating along $I_j$. Proceeding as above for every component $\Gamma \sb\partial E$, we conclude the proof.
\end{proof}

\subsection{$\phi$-minimal paths}

In subsequent sections of the paper, we will frequently use barriers to perturb sets of finite perimeter without increasing $P_\phi$. Such barriers will require that their (oriented) boundaries are minimal paths with respect to $P_\phi$, motivating the following definition.

\begin{definition}
    \label{def:minimal path}
    We call an oriented rectifiable curve $\Gamma_0\sb\R^2$ a \textbf{\emph{$\phi$-minimal path}} if $P_\phi(\Gamma_0) \leq P_\phi(\Gamma)$ for all oriented rectifiable curves $\Gamma$ with the same (oriented) endpoints as $\Gamma_0$.
\end{definition}

It turns out that $\phi$-minimality is characterized by the $p_i$ cone condition.

\begin{lemma}
    \label{lem:minimal path}
    An oriented rectifiable curve $\Gamma\sb\R^2$ is $\phi$-minimal if and only if it satisfies the $p_i$ cone condition for some $p_i\in\tN_\phi$.
\end{lemma}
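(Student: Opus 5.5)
The plan is a calibration argument. For any oriented rectifiable curve $\Gamma$ from $x$ to $y$ with arclength parametrization $\gamma$, we have $\nu_\Gamma = R\gamma'$. Since $\phi(\nu)=\max_i \nu\cdot p_i$, for each fixed $p_i\in\tN_\phi$
\begin{equation*}
P_\phi(\Gamma) = \int_{\Gamma^*}\phi(\nu_\Gamma)\,d\cH^1 \geq \int_0^{\cH^1(\Gamma)} R\gamma'(t)\cdot p_i\,dt = R(y-x)\cdot p_i,
\end{equation*}
with equality if and only if $\nu_\Gamma\cdot p_i=\phi(\nu_\Gamma)$, i.e. $p_i\in\partial\phi(\nu_\Gamma)$, for $\cH^1$-a.e. point of $\Gamma^*$. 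Taking the maximum over $i$ gives the lower bound
\begin{equation*}
P_\phi(\Gamma)\geq \phi(R(y-x)),
\end{equation*}
which depends only on the endpoints. It is attained by the straight segment $[x,y]$, since that segment has constant normal $R(y-x)/|y-x|$ and $\phi$ is $1$-homogeneous. So the minimal value among curves with endpoints $x,y$ is exactly $\phi(R(y-x))$.

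For the ``if'' direction: if $\Gamma_0$ satisfies the $p_i$ cone condition, the equality case above gives $P_\phi(\Gamma_0)=R(y-x)\cdot p_i\leq \phi(R(y-x))\leq P_\phi(\Gamma)$ for every competitor $\Gamma$. Hence $\Gamma_0$ is $\phi$-minimal, and a posteriori $R(y-x)\cdot p_i=\phi(R(y-x))$.

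For the ``only if'' direction: suppose $\Gamma_0$ is $\phi$-minimal. Comparing with the segment gives $P_\phi(\Gamma_0)=\phi(R(y-x))$. Choose $p_i$ attaining the maximum in $\phi(R(y-x))=\max_j R(y-x)\cdot p_j$; this is possible because $\tN_\phi$ is finite. Then
\begin{equation*}
\int_{\Gamma_0^*}\bigl(\phi(\nu_{\Gamma_0})-\nu_{\Gamma_0}\cdot p_i\bigr)\,d\cH^1 = P_\phi(\Gamma_0)-R(y-x)\cdot p_i = 0,
\end{equation*}
and the integrand is nonnegative, so it vanishes $\cH^1$-a.e. That is, $p_i\in\partial\phi(\nu_{\Gamma_0})$ a.e., which is the $p_i$ cone condition (using the subdifferential description \eqref{eq:phi subdiff}).

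The main obstacle is the degenerate case $x=y$, e.g. closed curves. There every $p_i$ attains the maximum, and the argument forces $\nu_{\Gamma_0}\cdot p_i=\phi(\nu_{\Gamma_0})$ a.e. for every $i$ simultaneously. I would check this case against the intended convention; for a simple curve it should force $\cH^1(\Gamma_0^*)=0$. A minor technical point is justifying $\int_{\Gamma^*} \nu_\Gamma\,d\cH^1=R(y-x)$ for a rectifiable, possibly non-simple curve. This follows from the arclength parametrization being Lipschitz with $|\gamma'|=1$ a.e., where overlapping pieces are counted with multiplicity as in the definition of $P_\phi(\Gamma)$.
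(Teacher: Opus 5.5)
Your proof is correct and is essentially the paper's calibration argument with the constant vector field $p_i$. Your identity $\int_0^{\cH^1(\Gamma)} R\gamma'(t)\cdot p_i\,dt = R(y-x)\cdot p_i$ is the paper's divergence-theorem step for $\partial T=[\Gamma]-[\Gamma_0]$. Choosing $p_i$ to maximize $R(y-x)\cdot p_j$ is the paper's choice of the cone containing $x_2-x_1$; the only difference is that you compare with the segment $[x,y]$ rather than a polygonal path. Your worry about $x=y$ is unnecessary: every $p_i$ then attains the maximum $0$, so your argument already gives the $p_i$ cone condition verbatim.
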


\begin{remark}
    Prior works have shown that tangent cones to the Wulff shape $W_\phi$ are locally $\phi$-minimal; it is proven in \cite{Figalli2011} using a very short calibration argument, as well as in \cite{Taylor1978} for all dimensions using the Wulff inequality. Here we present the calibration argument, as it clearly highlights the equality condition.
\end{remark}

\begin{proof} 
    Suppose $\Gamma_0$ is an oriented rectifiable curve satisfying the $p_i$ cone condition. Let $\Gamma$ be any oriented rectifiable curve with the same oriented endpoints as $\Gamma_0$. Treating $\Gamma_0$ and $\Gamma$ as integral 1-currents, it follows that $[\Gamma] - [\Gamma_0] = \partial T$ for some integral 2-current $T$. We recall that $\phi(\nu) = p_i\cdot\nu$ for $\nu\in A[\nu_i, \nu_{i+1}]$ and $\phi(\nu) < p_i\cdot\nu$ for $\nu\in \mathbb{S}^1\setminus A[\nu_i, \nu_{i+1}]$. Applying the divergence theorem to the constant vector field $p_i$, we obtain 
    \begin{align}
        0 = T(\div p_i) = (\partial T)(p_i) &= \int_{\Gamma^*} p_i\cdot\nu_\Gamma\,d\cH^1 - \int_{\Gamma_0^*} p_i\cdot\nu_{\Gamma_0}  \,d\cH^1 \notag \\
        &= \int_{\Gamma^*} p_i\cdot\nu_{\Gamma} \,d\cH^1 - P_\phi(\Gamma_0) \notag\\
        &\leq P_\phi(\Gamma) - P_\phi(\Gamma_0). \label{eq:calibration}
    \end{align}
    Since $\Gamma$ is arbitrary, $\Gamma_0$ is $\phi$-minimal, showing the backward direction.
    
    For the forward direction, if $\Gamma$ is $\phi$-minimal, then there exists a curve $\Gamma_0$ with the same endpoints as $\Gamma$ which satisfies the $p_i$ cone condition. Indeed, if $x_1$ and $x_2$ are the initial and terminal endpoints of $\Gamma$, then $x_2-x_1$ belongs to the cone $\{x\cdot\nu_{i+1} \leq 0 \leq x\cdot \nu_i\}$ for some $i$, so there exists a polygonal path from $x_1$ to $x_2$ consisting of segments with outer normals $\nu_i, \nu_{i+1}$. Thus, we may apply the previous divergence theorem argument, and since $P_\phi(\Gamma) = P_\phi(\Gamma_0)$, we are done by observing that equality holds in \eqref{eq:calibration} iff $\phi(\nu_\Gamma) = \nu_\Gamma \cdot p_i$ for $\cH^1$-a.e. $x\in \Gamma^*$. 
\end{proof}

\begin{corollary}[Perturbation by $\phi$-minimal barrier]
    \label{cor:phi min perturbation}
    Let $E\sb\R^2$ be a set of finite perimeter, $U\sb\R^2$ a bounded open set, and $\Sigma\sb U$ such that $\partial \Sigma \cap U$ is a $\phi$-minimal path (when oriented by $\nu_\Sigma$).
    \begin{enumerate}
        \item If $(E \setminus \Sigma) \cap U \Subset U$, then $P_\phi(E \cap \Sigma; U) \leq P_\phi(E; U)$. 
        \item If $\Sigma\setminus E \Subset U$, then $P_\phi(E \cup \Sigma; U) \leq P_\phi(E; U)$.
    \end{enumerate}
\end{corollary}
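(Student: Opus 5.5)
The plan is to reduce both parts to the calibration inequality \eqref{eq:calibration} from \cref{lem:minimal path}, applied to the constant field $p_i$ given by the cone condition on $\Gamma_0 := \partial\Sigma\cap U$. By \cref{lem:minimal path}, $\Gamma_0$ satisfies the $p_i$ cone condition for some $i$. Hence $\phi(\nu_\Sigma) = p_i\cdot\nu_\Sigma$ $\cH^1$-a.e. on $\partial^*\Sigma\cap U$, while $p_i\cdot\nu\le\phi(\nu)$ for every $\nu$. Part 2 should follow from part 1 by complementation: replace $E,\Sigma$ with $E^c,\Sigma^c$, note that $P_\phi(\cdot)$ of the complement corresponds to the reflected anisotropy $\phi(-\cdot)$, and that the reversed boundary path is minimal for it. Alternatively, repeat the argument with signs flipped. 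So I focus on part 1.

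For part 1, I would compare $E$ with $E\cap\Sigma$ inside $U$ and write
\[ P_\phi(E;U) - P_\phi(E\cap\Sigma;U) \]
using \eqref{eq:intersection}. Let $G := (E\setminus\Sigma)\cap U$, which is compactly contained in $U$. The boundary of $E$ in $U$ splits into three pieces: $\partial^*E\cap\Sigma^{(1)}$, $\partial^*E\cap\Sigma^{(0)}$, and the common part $\{\nu_E=\nu_\Sigma\}\cup\{\nu_E=-\nu_\Sigma\}$. The boundary of $E\cap\Sigma$ keeps $\partial^*E\cap\Sigma^{(1)}$ and $\{\nu_E=\nu_\Sigma\}$, and adds $\partial^*\Sigma\cap E^{(1)}$. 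After cancelling the shared pieces, the claim reduces to
\[ \int_{\partial^*\Sigma\cap E^{(1)}\cap U}\phi(\nu_\Sigma) \le \int_{\partial^*E\cap\Sigma^{(0)}\cap U}\phi(\nu_E) + \int_{\{\nu_E=-\nu_\Sigma\}\cap U}\phi(\nu_E). \]
Replacing $\phi(\nu_\Sigma)$ by $p_i\cdot\nu_\Sigma$ on the left, and bounding $\phi(\nu_E)\ge p_i\cdot\nu_E$ on the right, it suffices to show
\[ \int_{\partial^* G} p_i\cdot\nu_G\,d\cH^1 = 0. \]
This is the divergence theorem for the constant field $p_i$ on the set $G$, which is compactly supported in $U$. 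Indeed, by \eqref{eq:setminus}, $\mu_{E\setminus\Sigma}$ restricted to $U$ equals
\[ \nu_E|_{\Sigma^{(0)}} - \nu_\Sigma|_{E^{(1)}} + \nu_E|_{\{\nu_E=-\nu_\Sigma\}}, \]
and the total mass of $\mu_G$ tested against the constant $p_i$ vanishes because $G\Subset U$, since $\int_G \div p_i = 0$. Substituting gives exactly the needed identity, with $-\nu_\Sigma$ on $\partial^*\Sigma\cap E^{(1)}$.

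The main obstacle I expect is bookkeeping. First, the decomposition must hold up to $\cH^1$-null sets inside $U$, which Federer's theorem provides. Second, I must check that the cone condition for $\Gamma_0$, oriented by $\nu_\Sigma$, applies precisely on $\partial^*\Sigma\cap U$. A subtlety is that $\Sigma\subset U$ with only $\partial\Sigma\cap U$ controlled; any boundary of $\Sigma$ on $\partial U$ does not matter, because $G\Subset U$ keeps all the relevant sets away from $\partial U$. To justify using the divergence theorem for $G$ and not only for $E\setminus\Sigma$, I would localize with a cutoff $\eta\in C_c^\infty(U)$ with $\eta\equiv1$ near $\overline G$, and test $D\chi_{E\setminus\Sigma}$ against $\eta p_i$.
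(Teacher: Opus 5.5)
Your proposal is correct and essentially matches the paper's proof. You calibrate with the constant field $p_i$ from the cone condition on $\partial\Sigma\cap U$, apply the divergence theorem on $(E\setminus\Sigma)\cap U$ via \eqref{eq:setminus}, compare perimeters via \eqref{eq:intersection}, and obtain part 2 by the symmetric argument. Your handling of the common-boundary term on $\{\nu_E=-\nu_\Sigma\}$ is in fact cleaner than the paper's display, which writes $\{\nu_\Sigma=\nu_E\}$ where $\{\nu_\Sigma=-\nu_E\}$ is meant; this is harmless there, since that term is non-positive and gets discarded.
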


\begin{proof}
    We prove only the first statement, as a symmetric argument shows the second statement. By \cref{lem:minimal path}, $\partial \Sigma\cap U$ satisfies the $p_i$ cone condition for some $p_i\in \tN_\phi$, so $\phi(\nu_\Sigma) = p_i\cdot\nu_\Sigma$ $\cH^1$-a.e. along $\partial^*\Sigma \cap U$. Applying the divergence theorem with the constant vector field $p_i$ and invoking \eqref{eq:setminus}, we obtain
    \begin{align*}
        0 = \int_{(E\setminus \Sigma)\cap U} \div p_i &= \int_{\partial^*(E\setminus \Sigma) \cap U} p_i \cdot \nu_{E\setminus \Sigma} \,d\cH^1\\
        &= \int_{\partial^*E \cap \Sigma^{(0)} \cap U} p_i\cdot\nu_E \,d\cH^1 - \int_{\partial^*\Sigma \cap (E^{(1)} \cup \{\nu_\Sigma = \nu_E\}) \cap U} \phi(\nu_\Sigma) \,d\cH^1\\
        &\leq P_\phi(E; \Sigma^{(0)}\cap U) - P_\phi(\Sigma; E^{(1)}\cap U).
    \end{align*}
    By \eqref{eq:intersection}, the claim follows: 
    \begin{align*}
        P_\phi(E\cap \Sigma; U) - P_\phi(E; U) &= P_\phi(E; \Sigma^{(1)}\cap U)  + P_\phi(\Sigma; E^{(1)}\cap U) + P_\phi(\{\nu_E = \nu_\Sigma\}\cap U) - P_\phi(E; U)\\
        &\leq P_\phi(\Sigma;E^{(1)} \cap U) - P_\phi(E; \Sigma^{(0)}\cap U)\\
        &\leq 0. \qedhere
    \end{align*}
\end{proof}

By \cref{lem:minimal path}, $\phi$-minimal paths between fixed endpoints are highly non-unique unless they are a straight line segment with outer normal in $\cN_\phi$. In this case, straight line geodesics satisfy a sharp minimality estimate:

\begin{proposition}
    \label{prop:sharp minimality}
    Let $\Gamma\sb\R^2$ be a simple rectifiable oriented curve from $x_1$ to $x_2$, such that $[x_1,x_2]$ has outer normal $\nu_i\in\cN_\phi$. Then \begin{equation}
        \label{eq:sharp minimality}
        P_\phi(\Gamma) - P_\phi([x_1,x_2]) \geq |p_i-p_{i-1}|h \qquad\text{where}\quad h :=\sup_{z\in \Gamma} |\nu_i\cdot(z-x_1)|.
    \end{equation} 
\end{proposition}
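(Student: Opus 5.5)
The idea is to calibrate the two halves of $\Gamma$ with two different constant vector fields, $p_{i-1}$ and $p_i$. Both calibrate the segment $[x_1,x_2]$, since $\phi(\nu_i)=p_i\cdot\nu_i=p_{i-1}\cdot\nu_i$. Splitting $\Gamma$ at the point of maximal height and switching calibrations there should produce exactly the gain $|p_i-p_{i-1}|h$. No minimality theory beyond the calibration in \cref{lem:minimal path} is needed.

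\emph{Step 1 (a linear identity).} For any oriented rectifiable curve $\gamma$ from $a$ to $b$ with arclength parametrization and outer normal $\nu_\gamma=R\gamma'$, and any constant $p\in\R^2$, we have
\[ \int_{\gamma^*} p\cdot\nu_\gamma\,d\cH^1=p\cdot R(b-a). \]
This is just integration of $\gamma'$. Since $\phi(\nu)=\max_j p_j\cdot\nu\ge p\cdot\nu$ for every $p\in\tN_\phi$, it follows that $P_\phi(\gamma)\ge p\cdot R(b-a)$. For the segment, $\nu_{[x_1,x_2]}=\nu_i$ and $p_{i-1},p_i\in\partial\phi(\nu_i)$ give
\[ P_\phi([x_1,x_2])=p_i\cdot R(x_2-x_1)=p_{i-1}\cdot R(x_2-x_1). \]

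\emph{Step 2 (splitting).} By compactness of $\Gamma$, pick $z\in\Gamma$ attaining $h=|\nu_i\cdot(z-x_1)|$. Split $\Gamma$ at $z$ into $\Gamma_1$ (from $x_1$ to $z$) and $\Gamma_2$ (from $z$ to $x_2$). For any choice $(p,q)\in\{(p_{i-1},p_i),(p_i,p_{i-1})\}$, Step 1 gives
\[ P_\phi(\Gamma)-P_\phi([x_1,x_2])\ge p\cdot R(z-x_1)+q\cdot R(x_2-z)-q\cdot R(x_2-x_1)=(p-q)\cdot R(z-x_1). \]
Here $P_\phi([x_1,x_2])$ was written using $q$, which is allowed by Step 1.

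\emph{Step 3 (evaluating the gain).} The vector $p_i-p_{i-1}$ spans the facet $[p_{i-1},p_i]$ of $W_\phi$, which is orthogonal to $\nu_i$. Hence $R^{T}(p_i-p_{i-1})=\pm|p_i-p_{i-1}|\nu_i$, and therefore
\[ (p_i-p_{i-1})\cdot R(z-x_1)=\pm|p_i-p_{i-1}|\,\nu_i\cdot(z-x_1). \]
Choosing the ordered pair $(p,q)$ in Step 2 so that this quantity is nonnegative yields $|p_i-p_{i-1}|h$, which proves \eqref{eq:sharp minimality}.

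The only delicate point is the sign bookkeeping in Step 3, and it is removed by the freedom to swap the two calibrations. The argument does not use simplicity of $\Gamma$ or assumption \eqref{eq:phi assumption}.
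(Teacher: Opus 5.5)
Your proof is correct. It rests on the same idea as the paper's: at a point $z$ of maximal height, switch the calibration from one endpoint of $\partial\phi(\nu_i)=[p_{i-1},p_i]$ to the other, using that both endpoints calibrate $[x_1,x_2]$.

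The execution differs, though.
\begin{itemize}
\item The paper first reduces to the polygonal path $[x_1,z]\cup[z,x_2]$ via \cref{lem:minimal path}. It then builds a $C^1$ field that interpolates between $p_{i-1}$ and $p_i$ along lines parallel to $[x_1,x_2]$, on the trapezoid obtained by truncating the triangle $x_1 z x_2$ near $z$. The gain $|p_i-p_{i-1}|h$ comes out of the divergence theorem as the (negative) divergence of that field, and finally $\varepsilon\to0$.
\item You observe that no region and no interpolation are needed. Each half of $\Gamma$ is calibrated separately by a constant vector through the fundamental theorem of calculus, and the segment is charged with the other constant vector.
\end{itemize}
Your route handles a general $\Gamma$ directly, with no reduction step and no regularization near the corner $z$, and the sign bookkeeping is explicit. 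The paper's route yields an exact divergence identity on trapezoids, which it reuses in \cref{lem:translate perimeter} to compute $P_\phi$ of translates.

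One small caveat concerns your closing remark that simplicity is not used. With the paper's definition of $P_\phi(\Gamma)$ as an $\mathcal{H}^1$-integral over $\Gamma^*$, two of your steps rely on the arclength parametrization being injective (multiplicity one in the area formula): the identity in Step 1 and the additivity $P_\phi(\Gamma)=P_\phi(\Gamma_1)+P_\phi(\Gamma_2)$. The remark is only accurate if you read $P_\phi$ parametrically as $\int_0^\ell\phi(R\gamma')\,dt$. Since the statement assumes $\Gamma$ is simple, this does not affect your proof.
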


\begin{remark}
    The linear dependence of the righthand side on $h$ is special to crystalline $\phi$, as for smooth elliptic $\phi$ such an estimate holds only with the righthand side replaced by $\min\{h,h^2\}$. This is explained heuristically by the fact that $W_\phi$ separates linearly, rather than quadratically, from its tangent planes. We further remark that the estimate \eqref{eq:sharp minimality} is reminiscent of the linear sharp stability estimate in \cite{Figalli2022} and can be thought of as a local version of this estimate in two dimensions. 
\end{remark}

\begin{proof}
    Without loss of generality, let $\nu_{[x_1,x_2]} = \nu_1 = (0,1)$ and $h>0$. Letting $z$ attain $h$, we may also assume $z \in \{(x-x_1)\cdot\nu_1 < 0\}$, for a symmetric argument holds in the alternative. By translation, we may express $z = (0,0)$, $x_1 = (a,h)$, and $x_2 = (b,h)$. By \cref{lem:minimal path}, it suffices to show \eqref{eq:sharp minimality} for $\Gamma = [x_1,z]\cup[z,x_2]$.
    
    Fix $\eps\in(0,h)$, denote the paths $\Gamma_1^\eps := [x_1,\eps x_1]$ and $\Gamma_2^\eps := [\eps x_2, x_2]$, and let $U^\eps$ be the trapezoidal region with vertices $x_1,x_2,\eps x_1,\eps x_2$. We may consider a vector field $X\in C^1_c(\R^2)$ such that $X(x) = p_0$ for $x\in \Gamma_1^\eps$, $X(x) = p_1$ for $x\in\Gamma_2^\eps$, and $X$ is given on $U^\eps$ by linear interpolation along horizontal lines. Explicitly, for $(x,y)\in U^\eps$, we can express 
    \begin{equation*}
        X(x,y) = p_0 + s(p_1 - p_0) \qquad\text{where}\qquad s = \inv{b-a}\Big(\frac{hx}{y} - a \Big).
    \end{equation*}
    It is straightforward to compute that on $U^\eps$,
    \[ 
        \div X = -|p_1-p_0|\partial_x s = -\frac{|p_1-p_0|}{b-a}\cdot\frac{h}{y}
    \]
    and thus by the divergence theorem,
    \begin{equation}
        \label{eq:sharp minimality div}
        \int_{[x_1,x_2]} X\cdot\nu_1 \,d\cH^1 - \int_{\Gamma_1^\eps \cup [\eps x_1,\eps x_2] \cup \Gamma_2^\eps} X\cdot \nu \,d\cH^1 = \int_{U^\eps} \div X\, dx dy = -(1-\eps)|p_1-p_0|h.
    \end{equation}
    Note that $X\in[p_0,p_1]$ along $\partial U^\eps$ by construction. Thus $X\cdot\nu \leq \phic(X)\phi(\nu) = \phi(\nu)$ for all $\nu\in\S^1$, and $X\cdot\nu_1 = \phi(\nu_1)$. By \eqref{eq:sharp minimality div}, we obtain the bound
    \begin{align*}
        P_\phi(\Gamma) &\geq \int_{\Gamma_1^\eps \cup \Gamma_2^\eps} \phi(\nu) d\cH^1 \geq \int_{\Gamma_1^\eps \cup \Gamma_2^\eps} X\cdot\nu\,d\cH^1 \\
        &= (1-\eps)(P_\phi([x_1,x_2]) + |p_1-p_0|h).
    \end{align*}
    Since $\eps$ is arbitrary, we obtain \eqref{eq:sharp minimality}.
\end{proof}

\subsection{Regularity of almost-minimizers}
Here we prove that almost-minimizers of $P_\phi$ are Lipschitz $\phi$-regular. 
\begin{definition}
    \label{def:almost min}
    Given constants $\Lambda,r_0>0$ such that $\Lambda r_0 \leq 1$, we say a set of finite perimeter $E\sb\R^2$ is a \textbf{\emph{$(\Lambda, r_0)$-minimizer}} of $P_\phi$ (or \textbf{\emph{almost-minimizer}}) if whenever $G$ is such that $E\Delta G \Subset B(x,r_0)$ for some $x\in\partial E$, 
    \begin{equation*}
        P_\phi(E) \leq P_\phi(G) + \Lambda|E\Delta G|.
    \end{equation*}
\end{definition}

\begin{theorem}
    \label{thm:lipreg}
    Let $E\sb\R^2$ be a $(\Lambda,r_0)$-minimizer of $P_\phi$. Then $E$ is Lipschitz $\phi$-regular. In particular, there exists $c = c(\phi)>0$ such that $\partial E \cap B(x,cr_0)$ is a $\phi$-minimal path for all $x\in \partial E$. 
\end{theorem}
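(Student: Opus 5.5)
By \cref{prop:cone lipreg} and \cref{lem:minimal path}, it suffices to show the following. There is $c=c(\phi)>0$ such that for every $x\in\partial E$, the set $\partial E\cap B(x,cr_0)$ is a single simple curve satisfying the $p_i$ cone condition for some $p_i\in\tN_\phi$. Once this holds, $\partial E$ is locally a finite union of simple curves, and \cref{prop:cone lipreg} gives Lipschitz $\phi$-regularity.

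\emph{Step 1 (density estimates).} I will first establish the standard consequences of almost-minimality. The usual comparison with $E\setminus B(x,r)$ and $E\cup B(x,r)$ for $r<r_0$, combined with the Wulff inequality \eqref{eq:wulff ineq} and $\Lambda r_0\le 1$, gives uniform volume density bounds $|E\cap B(x,r)|\sim r^2$ and $|B(x,r)\setminus E|\sim r^2$. It also gives perimeter bounds $P_\phi(E;B(x,r))\sim r$, with constants depending only on $\phi$. In two dimensions, the perimeter upper bound together with density and slicing arguments shows that $\partial E$ (the closure of $\partial^*E$) is, inside $B(x,cr_0)$, a finite union of rectifiable arcs. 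The density bounds also rule out small components of $E$ or of $E^c$ inside $B(x,cr_0)$.

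\emph{Step 2 (flatness via sharp minimality).} Fix $x\in\partial E$ and a scale $r\le cr_0$. Consider an arc $\Gamma\sb\partial E\cap B(x,r)$ joining two boundary points $x_1,x_2$ on $\partial B(x,r)$. Choose $i$ so that $x_2-x_1$ lies in the cone $\{y\cdot\nu_{i+1}\le 0\le y\cdot\nu_i\}$, and let $\Gamma_0$ be a $\phi$-minimal polygonal path from $x_1$ to $x_2$ with normals $\nu_i,\nu_{i+1}$. Replacing $\Gamma$ by $\Gamma_0$ (one of the perturbations in \cref{cor:phi min perturbation}, i.e. intersecting or uniting with a region bounded by $\Gamma_0$) changes the volume by at most $\lesssim r\,h$. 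Here $h$ is the deviation of $\Gamma$ from $\Gamma_0$. Almost-minimality then gives
\[ P_\phi(\Gamma)-P_\phi(\Gamma_0)\le \Lambda\,|E\Delta G|\lesssim \Lambda r h. \]
When $\Gamma_0$ degenerates to a single segment with normal $\nu_i\in\cN_\phi$, \cref{prop:sharp minimality} gives the reverse inequality $P_\phi(\Gamma)-P_\phi(\Gamma_0)\ge |p_i-p_{i-1}|h$. Since $\Lambda r\le c\Lambda r_0\le c$ is small, this forces $h=0$.

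In the general case I will apply \cref{prop:sharp minimality} separately to each straight segment of $\Gamma_0$. The goal is to show that the portion of $\Gamma$ between the corresponding endpoints has no excess beyond the cone condition. More precisely, if $\Gamma$ violates the $p_i$ cone condition, then it has a sub-arc whose endpoint chord lies in an $\cN_\phi$-direction, or which overshoots a chord with an $\cN_\phi$ normal. Comparing on that sub-arc gives a linear energy gain $\gtrsim h'$ against a volume cost $\lesssim \Lambda r h'$, which is a contradiction.

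\emph{Step 3 (conclusion).} I will then show that $\partial E\cap B(x,cr_0)$ is a single arc. If two arcs were present, the density bounds and a connection argument would produce a perturbation that decreases $P_\phi$ by an amount $\gtrsim r$ at volume cost $\lesssim \Lambda r^2$, contradicting almost-minimality. Combined with Step 2, this shows that the arc satisfies a $p_i$ cone condition, and hence is $\phi$-minimal by \cref{lem:minimal path}.

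I expect the main obstacle to be the general case of Step 2. There the chord direction is not in $\cN_\phi$, so $\phi$-minimal competitors are non-unique and \cref{prop:sharp minimality} does not apply directly. I would first try to localize to the maximal sub-arcs of $\Gamma$ that leave the cone $x_0+C(\nu_i,\nu_{i+1})$. At the exit point, the chord of such a sub-arc, or a supporting line parallel to a facet of $W_\phi$, supplies an $\cN_\phi$-segment to compare against. Assumption \eqref{eq:phi assumption} should guarantee that the volume enclosed between the sub-arc and this segment is controlled by (length) $\times$ (height), so that the linear gain from \cref{prop:sharp minimality} dominates.
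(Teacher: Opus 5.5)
Your Step 2 is essentially the paper's \cref{prop:local straight} and \cref{cor:locally minimal components}: sharp minimality against a chord with normal in $\cN_\phi$, then an intermediate-value argument showing that the coordinates of a short boundary arc in the basis $R^{-1}\nu_i,R^{-1}\nu_{i+1}$ are monotone. That part is sound, and it is not the hard part.

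The genuine gap is Step 3. The competitor you describe, decreasing $P_\phi$ by $\gtrsim r$ at volume cost $\lesssim\Lambda r^2$, is not available. Two arcs of $\partial E$ in $B(x,r)$ may be at distance $d\ll r$. They may even touch at a single point with four alternating sectors of $E$ and $E^c$. Both configurations are fully compatible with your density bounds, so any energy gain must come from the local geometry at scale $d$.

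More decisively, the conclusion of Step 3 is false without \eqref{eq:phi assumption}. As noted in the remark after \cref{prop:no intersection}, for $\phi=|\cdot|_\infty$ two Wulff squares touching at a corner are $(0,r_0)$-minimizing, yet near the contact point $\partial E$ consists of two arcs. So Step 3 must use \eqref{eq:phi assumption} essentially, and your proposal never says how. Instead you invoke \eqref{eq:phi assumption} in Step 2 to bound an area by length times height. That bound holds for every $\phi$; none of the cone-condition arguments need the assumption.

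What is missing is the content of \cref{prop:no intersection}. Your ``two arcs'' may be two passes of a single Jordan curve (a thin neck), so the paper first shows that Euclidean-close points of the same boundary component are close in intrinsic $\phi$-distance along it. Take a closest pair at distance $d$; the arcs through them satisfy cone conditions by Step 2. If the cones are compatible, another piece of $\partial E$ must lie between the arcs, contradicting minimality of $d$. Otherwise \eqref{eq:phi assumption} supplies a normal $\nu_3$ between the cones. A segment with normal $\nu_3$ at height $\sim d$ then cuts both arcs, and \cref{prop:sharp minimality} gives a gain $\gtrsim d$ against volume cost $\lesssim\Lambda d^2$.

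Next, distinct components cannot meet. At a common point, the $2m\ge4$ half-arcs, taken in cyclic order, alternate in orientation. They bound pairwise disjoint sectors $\{y\cdot\nu_{i_j}\ge0,\ y\cdot\nu_{i_j+1}\ge0\}$, each of opening strictly larger than $90^\circ$ by \eqref{eq:phi assumption}, which is impossible. The same wedge argument then gives separation $\gtrsim r_0$.

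Relatedly, your Step 1 needs a genuine structure theorem rather than an unspecified slicing argument. Cone conditions and junctions only make sense once you have the oriented-curve structure. The paper gets it from the decomposition of \cite{Ambrosio2001} into Jordan curves with $\nu_E=\nu_{\Gamma_j}$, together with a Wulff-inequality lower bound on each curve that leaves only finitely many.
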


First we establish some basic topological structure for almost-minimizers, relying on a structure theorem from \cite{Ambrosio2001} for sets of finite perimeter in two dimensions.

\begin{lemma}
    \label{lem:finite curve}
    Let $E\sb\R^2$ be a $(\Lambda,r_0)$-minimizer of $P_\phi$. Then $\partial E$ is the finite union of oriented rectifiable simple closed curves $\Gamma_j$ such that for each $j$, $\nu_E = \nu_{\Gamma_j}$ holds $\cH^1$-a.e. along $\Gamma_j$.
\end{lemma}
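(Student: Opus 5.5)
Our plan combines the structure theorem of \cite{Ambrosio2001} with the density estimates that hold for almost-minimizers. By \cite{Ambrosio2001}, every planar set of finite perimeter $E$ decomposes, up to null sets, into countably many indecomposable components. The essential boundary of each component is in turn a countable union of Jordan curves $\Gamma_j$, with
\[ \partial^* E = \bigcup_j \Gamma_j \quad \text{up to an } \cH^1\text{-null set}, \qquad P_\phi(E) = \sum_j P_\phi(\Gamma_j), \]
and with $\nu_E = \nu_{\Gamma_j}$ $\cH^1$-a.e. on $\Gamma_j$ once each curve is suitably oriented. Two things remain to show: only finitely many curves occur, and after normalizing $E$ to its Lebesgue representative, the topological boundary $\partial E$ coincides with $\bigcup_j \Gamma_j$.

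First, we establish uniform upper and lower density estimates for $(\Lambda,r_0)$-minimizers. For $x\in\partial E$ and $r<r_0$, use the competitors $G = E\setminus B(x,r)$ and $G = E\cup B(x,r)$. Compare with $L_\phi$ and the isoperimetric inequality (the Wulff inequality \eqref{eq:wulff ineq}), and use $\Lambda r_0\le 1$ to absorb the volume term. The standard ODE argument for $m(r)=|E\cap B(x,r)|$ then gives
\[ c r^2 \le |E\cap B(x,r)| \le (\pi - c)r^2, \qquad P(E;B(x,r)) \ge c r \]
for some $c=c(\phi)>0$. Consequences:
\begin{itemize}
\item the topological boundary of the Lebesgue representative equals $\overline{\partial^*E}$;
\item $\cH^1(\partial E\setminus\partial^*E)=0$.
\end{itemize}

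Second, we show there are only finitely many curves. The main claim is that each $\Gamma_j$ has diameter bounded below, say $\operatorname{diam}\Gamma_j \ge c r_0$. Suppose instead that $\Gamma_j$ is a small Jordan curve, of diameter less than a small multiple of $r_0$. Then $\Gamma_j$ bounds a region $D_j$ (either a component of $E$ or a hole of $E$) contained in a small ball. Let $G$ be $E\setminus D_j$ in the first case and $E\cup D_j$ in the second. This deletes $P_\phi(\Gamma_j)$ at volume cost $\Lambda|D_j|$. By the isoperimetric inequality,
\[ \Lambda |D_j| \le \Lambda\, C\, P_\phi(\Gamma_j)\operatorname{diam}(\Gamma_j) \le \Lambda\, C\, P_\phi(\Gamma_j)\, \epsilon r_0 < P_\phi(\Gamma_j), \]
so $G$ would be a strictly better competitor, a contradiction. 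Combined with the lower perimeter density bound, a curve of diameter $\ge cr_0$ carries perimeter $\gtrsim r_0$. Hence there are at most $\lesssim P_\phi(E)/r_0$ curves; we assume $E$ is bounded, or at least locally so. The same density bounds also exclude accumulation of curves and show that $\partial E$ is exactly $\bigcup_j\Gamma_j$.

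The step we expect to be the main obstacle is the nesting issue in the comparison above. The curves from \cite{Ambrosio2001} may touch one another at points. We must also make sure that removing $D_j$, or filling it, does not interact with other boundary pieces lying inside $D_j$. Our approach is to pick the offending curve innermost: by the Ambrosio et al. nesting structure, the set $D_j$ enclosed by a minimal-diameter small curve contains no further boundary of positive length. Then $G\Delta E = D_j$ up to null sets, and $P_\phi(G) = P_\phi(E)-P_\phi(\Gamma_j)$. If finitely many curves touch, the simplicity of each closed curve is preserved by the decomposition. So the statement follows: $\partial E$ is the finite union of simple closed rectifiable curves $\Gamma_j$, with orientation matching $\nu_E$ a.e.
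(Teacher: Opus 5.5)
Your route is essentially the paper's: the decomposition of \cite{Ambrosio2001}, then ruling out short curves by deleting or filling the region $D_j$ enclosed by $\Gamma_j$, and weighing the perimeter saved against $\Lambda|D_j|$. The paper bounds $P_\phi(\Gamma_j)\gtrsim\Lambda^{-1}\ge r_0$ via $P_\phi(E_j)\le\Lambda|E_j|$ and the Wulff inequality; your bound $|D_j|\lesssim P_\phi(\Gamma_j)\operatorname{diam}\Gamma_j$ works equally well.

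The step that does not hold up is your treatment of nesting. Picking ``a minimal-diameter small curve'' presupposes that one exists. If there are infinitely many small curves (for example nested ones with diameters decreasing to $0$, which is exactly what you are trying to exclude), neither a minimal-diameter nor an innermost curve need exist. The structure theorem also does not assert that such a curve encloses no further boundary.

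The obstacle is in fact illusory. Suppose $\nu_{D_j}=\nu_E$ on $\Gamma_j$. Then $\Gamma_j\cap E^{(1)}$ and $\{\nu_E=-\nu_{D_j}\}$ are $\cH^1$-null, so \eqref{eq:setminus} gives
\[
P_\phi(E\setminus D_j)=P_\phi(E;D_j^{(0)})\le P_\phi(E)-P_\phi(\Gamma_j)
\]
whatever lies inside $D_j$. Moreover $|E\Delta(E\setminus D_j)|=|E\cap D_j|\le|D_j|$. The opposite orientation is handled symmetrically with $E\cup D_j$. This is exactly the paper's computation, and with it your argument closes for any small curve, with no selection needed.

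The density estimates are also more than you need.
\begin{itemize}
\item A closed curve of diameter $d$ has length at least $2d$. It is finiteness of $P_\phi(E)=\sum_jP_\phi(\Gamma_j)$, not boundedness of $E$, that bounds the number of curves.
\item With the paper's Lebesgue-point representative, $\partial E=\operatorname{spt}\mu_E=\overline{\partial^*E}$ holds for every set of finite perimeter.
\item Once there are finitely many curves, $\overline{\partial^*E}=\bigcup_j\Gamma_j$ follows because $\cH^1(\partial^*E\,\Delta\,\bigcup_j\Gamma_j)=0$. Every $x\in\partial^*E$ has $\cH^1(\partial^*E\cap B(x,r))>0$ for all $r>0$, and every arc of a $\Gamma_j$ has positive length.
\end{itemize}
The paper leaves this identification implicit, so your attention to it is worthwhile, but it comes essentially for free.
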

\begin{remark}
    We refer to any such $\Gamma_j$ as a \textbf{boundary component} of $E$, which may be distinct from a connected component of $\partial E$ in the topological sense. The decomposition of $\partial E$ into boundary components may not be unique, but this will not affect future arguments.
\end{remark}
\begin{proof}
    By \cite{Ambrosio2001}, there exist countably many oriented rectifiable simple closed curves $\Gamma_j$ such that
    \begin{align*}
        \partial^*E &= \bigcup_{j=1}^\infty \Gamma_j \quad \text{up to a $\cH^1$-null set}\\
        \nu_E &= \nu_{\Gamma_j} \qquad \cH^1\text{-a.e. along }\Gamma_j.
    \end{align*}
    We need only show there are finitely many $\Gamma_j$.
    
    Consider such a curve satisfying $P_\phi(\Gamma_j) < cr_0$, where $c=c(\phi)>0$ is sufficiently small so that $\Gamma_j$ must be contained in some ball $B(x,r_0)$. There exists some bounded set of finite perimeter $E_j$ and $\sigma_j\in\{\pm1\}$ such that $\Gamma_j$ is $\cH^1$-equivalent to $\partial^*E_j$ and $\nu_{\Gamma_j} = \sigma_j \nu_{E_j}$. First we assume $\sigma_j=+1$ and consider the modified set $\tilde{E} := E\setminus E_j$. By \eqref{eq:setminus}, 
    \begin{align}
        \label{eq:finite comp}
        P_\phi(\tilde{E}) &= P_\phi(E;E_j^{(0)}) + P_\phi(E_j^c;E^{(1)}) + P_\phi(E;\{\nu_{E} = -\nu_{E_j}\}) \notag \\
        &= P_\phi(E;E_j^{(0)})\notag \\
        &\leq P_\phi(E) - P_\phi(E_j).
    \end{align} 
    Because $E$ is a $(\Lambda,r_0)$-minimizer, we deduce \begin{equation}
        \label{eq:finite comp 2}
        P_\phi(E_j) \leq P_\phi(E) - P_\phi(\tilde{E}) \leq \Lambda|E\setminus \tilde{E}| \leq \Lambda |E_j|.
    \end{equation}
    By the Wulff inequality, we obtain $2|W_\phi|^{1/2} |E_j|^{1/2} \leq P_\phi(E_j) \leq \Lambda |E_j|$ and hence $|E_j|\gtrsim_{\phi} \Lambda^{-2}$, implying $P_\phi(E_j) \gtrsim_{\phi}\Lambda^{-1} \geq r_0$. In the case that $\sigma_j = -1$, one can obtain the same inequality by considering $\tilde{E} := E\cup E_j$ and applying the Wulff inequality for the reflected integrand $\tilde{\phi}(\nu) = \phi(-\nu)$, since then $P_\phi(E_j^c) = P_{\tilde{\phi}}(E_j)$. It follows that every $\Gamma_j$ satisfies $P_\phi(\Gamma_j) \geq cr_0$ for some sufficiently small $c$, and thus there are finitely many $\Gamma_j$.
\end{proof}

Next we invoke the sharp minimality result from \cref{prop:sharp minimality} to prohibit the boundaries of almost-minimizers from locally oscillating in the directions $\cN_\phi$.
\begin{proposition}
    \label{prop:local straight}
    There exists $c_0 = c_0(\phi)>0$ such that the following holds: Let $E$ be a $(\Lambda,r_0)$-minimizer of $P_\phi$ and $\Gamma \sb \partial E$ an oriented path from $x$ to $y$ such that $\nu_\Gamma = \nu_E$ a.e. along $\Gamma$. If $P_\phi(\Gamma) \leq c_0r_0$ and $[x,y]$ has outer normal in $\cN_\phi$, then $\Gamma = [x,y]$. 
\end{proposition}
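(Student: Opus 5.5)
The plan is to play the sharp minimality estimate of \cref{prop:sharp minimality} against the almost-minimality inequality. Replacing $\Gamma$ by the segment $[x,y]$ lowers the energy by an amount linear in the height $h := \sup_{z\in\Gamma}|\nu_i\cdot(z-x)|$, where $\nu_i$ is the normal of $[x,y]$. The volume changes by at most $\diam(\Gamma)\, h$. Since $\diam(\Gamma)\lesssim_\phi P_\phi(\Gamma)\le c_0 r_0$ and $\Lambda r_0\le 1$, the volume penalty $\Lambda\,\diam(\Gamma)\, h$ is at most $C_\phi c_0 h$. For $c_0$ small this is beaten by the linear gain, forcing $h=0$.

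\textbf{Step 1 (localization).} Since $P_\phi(\Gamma)\le c_0r_0$ and $\phi\ge L_\phi^{-1}|\cdot|$, the curve $\Gamma$ lies in $B(x, L_\phi c_0 r_0)$. Taking $c_0\le 1/(4L_\phi)$, any modification of $E$ inside the convex hull of $\Gamma$, thickened slightly, is compactly contained in $B(x,r_0)$, so \cref{def:almost min} applies to it. The region $R$ swept between $\Gamma$ and $[x,y]$ lies in the strip $\{|\nu_i\cdot(z-x)|\le h\}$ intersected with a ball of radius $\lesssim_\phi c_0r_0$. Hence $|R|\lesssim_\phi c_0 r_0 h$.

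\textbf{Step 2 (competitor and energy gain).} Let $H := \{z : \nu_i\cdot(z-x)\le 0\}$, the half-plane behind $[x,y]$, and let $U$ be a small neighborhood of the convex hull of $\Gamma$. I would define $G$ by locally flattening $E$ in $U$: remove the part of $E$ near $\Gamma$ lying outside $H$ and fill in the part of $E^c$ near $\Gamma$ lying inside $H$. Concretely, $G$ is $E\,\Delta\, R$, where $R$ is the set of points of $U$ with odd winding number relative to the closed cycle $\Gamma - [x,y]$; this is the integral $2$-current $T$ from the proof of \cref{lem:minimal path}. Using \eqref{eq:intersection}--\eqref{eq:setminus}, I then want
\[ P_\phi(G) \le P_\phi(E) - P_\phi(\Gamma) + P_\phi([x,y]). \]
This is the main obstacle. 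First, other parts of $\partial E$ may pass through $R$. Second, the new boundary along $[x,y]$ must carry the normal $+\nu_i$ rather than $-\nu_i$, and $\phi(-\nu_i)\ne\phi(\nu_i)$ in general. I would first try the calibration route instead of tracking the boundary measures directly: rerun the divergence-theorem computation of \cref{prop:sharp minimality} with the interpolating field $X$, now applied to $\partial^* E$ inside $U$ rather than to $\Gamma$ alone. The bound $X\cdot\nu\le\phi(\nu)$ then absorbs all extra boundary pieces, exactly as in \cref{cor:phi min perturbation}. The condition $\nu_\Gamma=\nu_E$ along $\Gamma$ is what makes the orientations consistent, so that the boundary of $R$ along $[x,y]$ is oriented by $\nu_i$. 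The outcome should be the gain
\[ P_\phi(E)-P_\phi(G) \ge |p_i-p_{i-1}|\,h. \]
If $\Gamma$ crosses the line on both sides, I would treat the part above and the part below separately, each with its own barrier.

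\textbf{Step 3 (conclusion).} Combining Steps 1 and 2 with \cref{def:almost min} gives
\[ |p_i-p_{i-1}|\,h \le \Lambda|E\Delta G| \le \Lambda |R| \le C_\phi\,\Lambda c_0 r_0 h \le C_\phi c_0 h. \]
Choosing $c_0 < |p_i-p_{i-1}|/C_\phi$ for every $i$ forces $h=0$, so $\Gamma$ lies on the line through $x$ and $y$. It remains to exclude backtracking along that line. Any portion of $\Gamma$ traversed in the direction opposite to $[x,y]$ has normal $-\nu_i$. Such a portion strictly increases $P_\phi(\Gamma)$ over $P_\phi([x,y])$ by the equality case of \cref{lem:minimal path}, since $-\nu_i\notin A[\nu_i,\nu_{i+1}]\cup A[\nu_{i-1},\nu_i]$ under \eqref{eq:phi assumption}. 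Excising it, using the same competitor with $h=0$, reduces the perimeter at zero volume cost, which contradicts almost-minimality. Hence $\Gamma=[x,y]$.
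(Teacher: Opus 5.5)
Your overall plan matches the paper's: play the linear gain from \cref{prop:sharp minimality} against the volume cost $\Lambda|U|\lesssim \Lambda\,\cH^1(\Gamma)\,h\lesssim_\phi c_0 h$, using $\Lambda r_0\le 1$. Steps 1 and 3 are fine. The backtracking paragraph in Step 3 is unnecessary: $\Gamma$ is a subarc of a simple closed boundary curve, so once $h=0$ it is a simple arc in a line from $x$ to $y$, i.e.\ $[x,y]$.

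\textbf{The gap is Step 2}, which you flag yourself but do not close. The concrete competitor $G=E\Delta R$ is the wrong one. Any piece of $\partial^*E$ lying inside $R$ survives in $\partial^*G$ with reversed normal; such pieces could be another boundary component or an island of $E^c$, and nothing at this stage excludes them. Since $\phi$ need not be even, $\int\phi(-\nu_E)$ can exceed $\int\phi(\nu_E)$ on these pieces. For instance, flipping islands shaped like dilates of $-W_\phi$ strictly raises the energy when $W_\phi$ is not centrally symmetric. So $P_\phi(G)\le P_\phi(E)-P_\phi(\Gamma)+P_\phi([x,y])$ fails in general, by an amount unrelated to $h$. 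The calibration route does not repair this. The inequality $X\cdot\nu\le\phi(\nu)$ only bounds energies from below, so it can be applied to pieces of $\partial E$ that get deleted. It computes the energy of new pieces of $\partial G$ exactly only where $X$ calibrates them (the chord), and reversed interior pieces are not of that kind.

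\textbf{The fix is to delete or fill the swept region rather than flip it.} The paper restricts to a subarc on one side of the line and lets $U$ be the region between that subarc and its chord. It takes $E\setminus U$, or $E\cup U$ if $\nu_U=-\nu_E$ on the subarc, and \eqref{eq:setminus} gives
\[ P_\phi(E)-P_\phi(E\setminus U)=P_\phi(E;\{\nu_E=\nu_U\})+P_\phi(E;U^{(1)})-P_\phi(U^c;E^{(1)})\ge P_\phi(\Gamma)-P_\phi([x,y]). \]
Interior pieces of $\partial E$ enter with the favorable sign $P_\phi(E;U^{(1)})\ge 0$, and the only new boundary lies on the chord. \cref{prop:sharp minimality} then supplies the gain with no further calibration.

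Your global version can also be made to work. Let $w$ be the winding number of $\Gamma$ followed by $[y,x]$, and take $G:=(E\setminus\{w<0\})\cup\{w>0\}$. Since $E$ lies on the side of $\Gamma$ where $w$ is smaller by one, $\Gamma$ drops out of $\partial G$. The only new boundary lies on $[x,y]$ with normal $\nu_i$, and $E\Delta G$ lies in the convex hull of $\Gamma$. Hence $P_\phi(E)-P_\phi(G)\ge P_\phi(\Gamma)-P_\phi([x,y])\ge |p_i-p_{i-1}|h$ by \cref{prop:sharp minimality} applied to all of $\Gamma$, which also settles your orientation worry.

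If instead you treat one-sided pieces separately, you must pick a piece traversed in the same direction as $[x,y]$. A backward piece has chord normal $-\nu_i$, which need not lie in $\cN_\phi$, and then \cref{prop:sharp minimality} gives no gain.
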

\begin{proof}
    Let $P_\phi(\Gamma)\leq c_0r_0$ and suppose on the contrary that $\Gamma\neq [x,y]$. Let $\cL$ be the line through $x,y$. By restricting to a subsegment of $\Gamma$ if necessary, we may assume that $\Gamma$ lies on one side of $\cL$. 

    Let $U\sb\R^2$ be the region bounded between $\Gamma$ and $\cL$, and assume without loss of generality that $\nu_U = \nu_E$ a.e. along $\Gamma$; a similar argument works if $\nu_U = -\nu_E$. We consider the energy competitor $\tilde{E} = E\setminus U$. Applying \eqref{eq:setminus} and \cref{prop:sharp minimality}, we obtain
    \begin{align*}
        P_\phi(E) - P_\phi(\tilde{E}) &= P_\phi(E;\{\nu_E = \nu_U\}) - P_\phi(U^c;E^{(1)}) + P_\phi(E;U^{(1)}) \\
        &\geq P_\phi(\Gamma) -P_\phi(U^c;\cL) \\
        &= P_\phi(\Gamma) - P_\phi([x,y]) \gtrsim_\phi h.
    \end{align*}
    For $c_0$ sufficiently small, $U\Subset B(x,r_0)$, and thus almost-minimality of $E$ implies \begin{align*}
        h \lesssim_\phi P_\phi(E) - P_\phi(\tilde{E}) \leq \Lambda|U| \leq \Lambda \cH^1(\Gamma) h.
    \end{align*}  
    It follows that $P_\phi(\Gamma) \gtrsim_\phi \cH^1(\Gamma) \gtrsim_\phi \Lambda^{-1} \geq r_0$. Thus, we obtain a contradiction for $c_0$ sufficiently small. 
\end{proof}

\begin{corollary}
    \label{cor:locally minimal components}
    Let $c_0$ be as in \cref{prop:local straight}, suppose $E\sb\R^2$ is a $(\Lambda,r_0)$-minimizer of $P_\phi$, and let $\Gamma\sb\partial E$ be an oriented path such that $\nu_\Gamma = \nu_E$ a.e. along $\Gamma$. If $P_\phi(\Gamma) \leq c_0r_0$, then $\Gamma$ is $\phi$-minimal. 
\end{corollary}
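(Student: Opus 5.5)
The plan is to verify the $p_i$ cone condition for $\Gamma$ and then conclude by \cref{lem:minimal path}. The only input needed is \cref{prop:local straight}, applied to subarcs of $\Gamma$. Every subarc $\Gamma'\sb\Gamma$ still satisfies $\nu_{\Gamma'}=\nu_E$ a.e. and $P_\phi(\Gamma')\leq P_\phi(\Gamma)\leq c_0r_0$. So the proposition gives the following dichotomy: if $a,b\in\Gamma$ and the chord $[a,b]$ has outer normal in $\cN_\phi$, then the subarc of $\Gamma$ from $a$ to $b$ equals the segment $[a,b]$.

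\textbf{Step 1: chords from a fixed point.} Fix $a\in\Gamma$ and let $b$ run along $\Gamma$ after $a$. The outer normal $n(a,b)$ of the oriented chord $[a,b]$ is then a continuous map into $\S^1$. I claim it stays in a single closed arc $A[\nu_i,\nu_{i+1}]$ between consecutive normals of $W_\phi$. Suppose $n(a,b)$ crosses some $\nu_j$ strictly, i.e.\ passes from $A(\nu_{j-1},\nu_j)$ to $A(\nu_j,\nu_{j+1})$. By the intermediate value theorem there is a $b_0$ with $n(a,b_0)=\nu_j$, and by the dichotomy the arc from $a$ to $b_0$ is the segment $[a,b_0]$. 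Following $b$ slightly past $b_0$, the chord normal can only leave $\nu_j$ through the subsequent boundary direction. One checks that the chord normal can change sides of $\nu_j$ only if the curve returns to the line through $[a,b_0]$. But that creates a new chord with normal $\pm\nu_j$, and the dichotomy applied to it forces $\Gamma$ to be a straight segment, which is a contradiction. Here assumption \eqref{eq:phi assumption} ensures that consecutive arcs are shorter than a half-circle, so the case of chord normal $-\nu_j$ is excluded or handled in the same way. The same argument applies to points preceding $a$.

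\textbf{Step 2: local to global.} Near a point $a$ of differentiability of $\Gamma$, the chord normals $n(a,b)$ converge as $b\to a$ to $\nu_\Gamma(a)$. Hence the arc $A[\nu_i,\nu_{i+1}]$ from Step 1 contains $\nu_\Gamma(a)$. Now take two base points $a,a'$. The chord $[a,a']$ is common to both families, so their arcs share a normal. If that normal lies in an open arc, the two arcs coincide. If it equals some $\nu_j$, the dichotomy makes the arc between $a$ and $a'$ a segment, and then the arcs for $a$ and $a'$ must still agree: otherwise chords from $a$ to points beyond $a'$ would cross $\nu_j$, which Step 1 forbids. This yields one index $i$ with $\nu_\Gamma\in A[\nu_i,\nu_{i+1}]$ a.e., that is, the $p_i$ cone condition. \cref{lem:minimal path} then gives $\phi$-minimality.

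\textbf{Main obstacle.} I expect the hardest part to be the topological bookkeeping in Step 1: showing rigorously that a continuous chord-normal function which touches a direction $\nu_j$ only along straight pieces cannot switch sectors. The difficulty is the degenerate situations, namely when the chord normal sits exactly at $\nu_j$ or passes through $-\nu_j$. My first attempt would be to parametrize the angle of $n(a,\cdot)$ by a continuous lift. I would then use simplicity of $\Gamma$ together with \cref{prop:local straight} to show that the preimage of each $\nu_j$ is an interval consisting of a straight subarc.
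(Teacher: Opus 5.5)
Your argument is correct in substance but organized differently from the paper's. The paper first disposes of the case $\nu_{[x_0,x_1]}\in\cN_\phi$ with \cref{prop:local straight}, which fixes the target sector $A[\nu_i,\nu_{i+1}]$ from the global chord at the outset. It then writes $\gamma(t)=x_0+a(t)R^{-1}\nu_i+b(t)R^{-1}\nu_{i+1}$ and shows that $a$ and $b$ are nondecreasing, using the intermediate value theorem on these scalar coordinates. You instead follow the $\mathbb{S}^1$-valued chord direction $n(a,\cdot)$ from every base point, use connectedness, and then stitch base points together. The paper's route is shorter because the sector is known from the start. Yours has the advantage of being insensitive to chord orientation, which matters because $\phi$ is not assumed even. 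A level-set chord with $a(t')=0$ is only known to have normal $\pm\nu_{i+1}$, and $-\nu_{i+1}$ need not lie in $\cN_\phi$, so the coordinate argument tacitly needs that chord to point forward. In your formulation, a continuous direction moving from outside a closed sector into its interior must pass through $\nu_i$ or $\nu_{i+1}$, so the issue never arises.

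Two places need tightening.

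\emph{Step 1 is easier than you expect.} If $n(a,b_1)\in\cN_\phi$, then \cref{prop:local straight} makes the whole subarc from $a$ to $b_1$ straight. Hence $\{b: n(a,b)\in\cN_\phi\}$ is an initial interval of parameters on which $n(a,\cdot)$ is a single constant $\nu_k$. On the remaining connected parameter set, $n(a,\cdot)$ is continuous (since $\Gamma$ is simple) and avoids $\cN_\phi$, so it stays in one open sector, and continuity puts $\nu_k$ in that sector's closure. No analysis of crossings, returns to the line, or $-\nu_j$ is needed. Nor is \eqref{eq:phi assumption} needed, since consecutive normals of the bounded polygon $W_\phi$ are always less than $\pi$ apart. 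In fact your own intermediate value step already gives a contradiction: straightness from $a$ to $b_0$ forces $n(a,b)=\nu_j$ for every $b$ before $b_0$.

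\emph{Step 2 mixes up families.} The chord $[a,a']$ belongs to the forward family of $a$ but to the backward family of $a'$. So your generic comparison relates the forward sector at $a$ to the backward sector at $a'$, while your degenerate-case remark compares forward sectors. The clean fix is to anchor at the terminal point $x_1$:
\begin{enumerate}
\item If $n(x_0,x_1)\in\cN_\phi$, you are done.
\item Otherwise, take $i$ with $n(x_0,x_1)\in A(\nu_i,\nu_{i+1})$. The backward version of Step 1 at $x_1$ then puts every $n(a,x_1)$ in $A[\nu_i,\nu_{i+1}]$.
\item For each $a$, either $n(a,x_1)$ is interior to this sector, which forces the forward sector at $a$ to be $A[\nu_i,\nu_{i+1}]$. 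Or $n(a,x_1)\in\{\nu_i,\nu_{i+1}\}$, in which case the subarc from $a$ to $x_1$ is straight and every forward chord from $a$ has that normal.
\end{enumerate}
Letting $b\to a^+$ at differentiability points gives the $p_i$ cone condition, and \cref{lem:minimal path} finishes the proof.
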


\begin{proof}
    Let $\gamma:[0,\ell]\to\Gamma$ be an arclength parametrization and label $x_0 = \gamma(0)$ and $x_1 = \gamma(\ell)$. If $\nu := \nu_{[x_0,x_1]}$ belongs to $\cN_\phi$, then we are done by \cref{prop:local straight}. Otherwise, $\nu\in A(\nu_i,\nu_{i+1})$ for some $\nu_i,\nu_{i+1}\in\cN_\phi$, and we may express $\gamma(t) = x_0 + a(t)R^{-1}\nu_i + b(t)R^{-1}\nu_{i+1}$ where $a(\ell), b(\ell) > 0$. 

    We claim the bounds $0\leq a(t)\leq a(\ell)$ and $0\leq b(t)\leq b(\ell)$. Indeed, if say $a(t) < 0$ for some $t$, then there exists some other point $t'\in(t,\ell)$ such that $a(t')=0$. However, this would contradict \cref{prop:local straight} since $[\gamma(0),\gamma(t')]$ is orthogonal to $\nu_{i+1}$ and $\cH^1(\gamma([0,t'])) \leq c_0 r_0$, but $\gamma([0,t'])$ is not a line segment. Similarly, if there are any points $s<t$ such that $a(s) > a(t)$, then one may reach a contradiction to \cref{prop:local straight} by finding $t' \in [0,s)$ such that $a(t') = a(t)$. Thus we deduce that $a,b$ are non-decreasing. 

    It follows that for a.e. $t\in[0,\ell]$, we have $\gamma'(t) = a'(t) R^{-1} \nu_i + b'(t) R^{-1} \nu_{i+1}$ lies in the non-negative span of $R^{-1} \nu_i, R^{-1} \nu_{i+1}$, and thus $\nu_E(\gamma(t)) = R\gamma'(t) \in A[\nu_i, \nu_{i+1}]$.
\end{proof}

\medskip

\noindent \textbf{Proof of \cref{thm:lipreg}:}

\medskip 

By \cref{prop:cone lipreg} and \cref{cor:locally minimal components}, each boundary component of $E$ is Lipschitz $\phi$-regular. It suffices to show that the boundary components of $E$ are pairwise disjoint, which follows from the following \cref{prop:no intersection}. \hfill $\square$

\begin{proposition}
    \label{prop:no intersection}
    Let $E\sb\R^2$ be $(\Lambda,r_0)$-minimizer of $P_\phi$. Then, the boundary components of $E$ are pairwise disjoint, and for any two distinct boundary components $\Gamma_1, \Gamma_2 \sb\partial E$, 
    \begin{equation}
        \label{eq:component dist}
        d(\Gamma_1, \Gamma_2) \geq c(\phi)r_0.
    \end{equation}
\end{proposition}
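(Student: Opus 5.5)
We argue by contradiction with a barrier/energy competition argument. Suppose $\Gamma_1,\Gamma_2$ are distinct boundary components with $d(\Gamma_1,\Gamma_2) < c r_0$, where $c = c(\phi)$ will be chosen much smaller than the constant $c_0$ of \cref{prop:local straight}. Pick $x_1\in\Gamma_1$, $x_2\in\Gamma_2$ realizing the distance, or any pair of points at distance $\delta<cr_0$ if the curves touch. Work in a ball $B := B(x_1, C c r_0)$ with $C c \ll c_0$. By \cref{cor:locally minimal components}, $\Gamma_k\cap B$ consists of $\phi$-minimal arcs. So each satisfies a $p_{i_k}$ cone condition and is a Lipschitz graph in a cone direction, crossing $B$ (each $\Gamma_k$ has $P_\phi(\Gamma_k)\gtrsim r_0$ by the proof of \cref{lem:finite curve}, so it exits $B$).

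The key observation is about orientation. Both arcs bound $E$ with $\nu_E=\nu_{\Gamma_k}$, and they are close. Since they are distinct boundary components and locally separate $B$, the thin region $S$ between them lies either entirely outside $E$ (a thin channel of $E^c$ separating two parts of $E$) or entirely inside $E$. In either case the two arcs carry roughly opposite orientations across $S$. Treat the case $S\subset E^c$; the other case is symmetric, removing instead of adding mass and using the reflected integrand $\tilde\phi(\nu)=\phi(-\nu)$ as in \cref{lem:finite curve}.

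The competitor is $\tilde E := E\cup \Sigma$, where $\Sigma$ is a "bridge" filling the channel near $x_1,x_2$. Choose a small Wulff-type quadrilateral, or a strip with two sides of normals in $\cN_\phi$, of width $\rho$ with $\delta\ll\rho\ll cr_0$, joining the two arcs. The gain comes from \cref{prop:sharp minimality}. Filling the channel over a length $\rho$ deletes two pieces of boundary, $\Gamma_1\cap \Sigma$ and $\Gamma_2\cap\Sigma$, each of $\phi$-length $\gtrsim_\phi \rho$. It adds two crossing segments of length $\lesssim \delta$ plus at most $\delta$ of extra area. Hence
\[ P_\phi(\tilde E) \le P_\phi(E) - c_1\rho + C_1\delta, \qquad |\tilde E\Delta E|\lesssim \rho\,\delta, \]
since the opposite-facing arcs of length $\rho$ cost about $\phi(\nu)+\phi(-\nu)\ge c_1>0$ per unit length while the new sides cost only $O(\delta)$. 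Almost-minimality then gives $c_1\rho \le C_1\delta + \Lambda C\rho\delta$. Taking $\rho = K\delta$ with $K$ large, and using $\Lambda\delta\le \Lambda c r_0\le c$, this is a contradiction once $c$ is small. This yields both disjointness (the case $\delta\to0$) and \eqref{eq:component dist}.

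The main obstacle is making the local geometry rigorous. We must show that near $x_1,x_2$ the channel really has width $O(\delta)$ along a length $\rho=K\delta$, or else find a different bridge. Two $\phi$-minimal arcs at distance $\delta$ could diverge quickly: with cone angles under $180^\circ$ they separate at most linearly, but possibly with slope of order one. To handle this, I would use that both arcs are cone-condition graphs with respect to nearby cone directions. I would then replace the crude strip by the region $\Sigma$ bounded by the arcs themselves and two short $\phi$-minimal connecting segments. Its new boundary consists of two connectors, while it removes arcs of $\Gamma_1,\Gamma_2$ whose combined $\phi$-length exceeds the connector lengths by a definite amount, using \cref{lem:minimal path} and the strict inequality $\phi(\nu)+\phi(-\nu)>0$ for oppositely oriented pieces. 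The remaining care is checking that the bounded region between the arcs contains no other parts of $\partial E$. This should follow from the finiteness in \cref{lem:finite curve} together with choosing $x_1,x_2$ as a closest pair.
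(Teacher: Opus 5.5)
There is a genuine gap, and it sits at what you call the main obstacle. Your argument never uses assumption \eqref{eq:phi assumption}, but the statement is false without it. The remark after the proposition notes that two Wulff shapes touching at a corner, when adjacent normals in $\cN_\phi$ are exactly $90^\circ$ apart, form a $(0,r_0)$-minimizer whose boundary components intersect. Near the contact point, each pair of consecutive half-edges (one from each shape) satisfies a cone condition. So by \cref{lem:minimal path}, no bridge bounded by those two arcs and any connector can decrease $P_\phi$.

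Hence your claim that the removed arcs beat the connectors ``by a definite amount'' cannot follow from \cref{lem:minimal path} and $\phi(\nu)+\phi(-\nu)>0$. That inequality gives a gain of order $\rho$ only when the arcs stay $O(\delta)$ apart along the whole length $\rho=K\delta$. When they diverge with slope of order one (corner against corner, or corner against edge), the far connector has length comparable to $\rho$. Whether a net gain of order $\rho$ survives then depends on the angle between the arcs relative to the corner angles of $W_\phi$, and that is exactly what \eqref{eq:phi assumption} controls.

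Treating the contact case as ``$\delta\to0$'' has the same problem. At a contact point you have at least four half-arcs with alternating orientation, not a thin channel. The local structure you assume (each component meets a small ball in a single arc, and distinct components meet only at isolated points) also still has to be proved.

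The paper supplies the missing mechanism as follows. After excluding the other cone configurations by a closest-pair or crossing argument, the remaining case has arcs with cones $A[\nu_1,\nu_2]$ and $A[\nu_j,\nu_{j+1}]$, where $\nu_j\in A[-\nu_1,-\nu_2]$. By \eqref{eq:phi assumption}, the angle between $\nu_2$ and $-\nu_1$ is obtuse, so $\nu_3\in A(\nu_2,\nu_j)$. One then cuts (or fills) along a line with normal $\nu_3$ at height $r\sim d$. \cref{prop:sharp minimality} gives a gain $|p_3-p_2|r$, linear in $r$, against a loss $2L_\phi d$ and an area $O(d^2)$. This works uniformly, whether the arcs are nearly parallel or diverging.

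For actual contact, Steps 1--2 of the paper first establish the local structure. Consecutive half-arcs at the contact point then form $\phi$-minimal paths, so each of the at least four regions between them contains a sector of opening angle greater than $90^\circ$. Four such sectors cannot fit around a point.

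To repair your proof, replace the bridge with short connectors by this $\nu_3$ cut, and add the local-structure and sector-counting steps for intersecting components.
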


\begin{remark}
    The proof of \cref{prop:no intersection} is built on the simple observation that a quadruple junction is energetically expensive, and thus two boundary components of $E$ cannot get close to each other on a scale much smaller than $r_0$ without violating almost-minimality. Nonetheless, the proof is a bit lengthy, as one must first rule out the possibility that the intersection between boundary components can have cluster points, before one can construct a proper energy competitor.

    This is also the only place in the proof of \cref{thm:lipreg} where we need the assumption that neighboring vectors in $\cN_\phi$ differ in angle by less than 90 degrees. One can see that some restriction on $\phi$ is necessary to avoid the intersection of boundary components, by considering the example of two squares touching at a corner when $\phi=|\cdot|_\infty$ (see \cite[Remark 1]{ChambolleNovaga2015planar}), which satisfies $(0,r_0)$-minimality for small enough $r_0$.
\end{remark}

\begin{proof}
In what follows, $c_0$ is the constant from \cref{prop:local straight}. For a boundary component $\Gamma \sb \partial E$ and for any two points $x,y\in \Gamma$, we define \[ d^\phi_\Gamma(x,y) := \min\{P_\phi(\gamma_1),P_\phi(\gamma_2)\} \] where $\gamma_1,\gamma_2$ are the oriented subsegments of $\Gamma$ joining $x$ and $y$.

\textbf{Step 1.} We claim there exists $c_1 = c_1(\phi) >0$ such that for any boundary component $\Gamma$ of $E$ and $x,y\in\Gamma$,
    \begin{equation}
        |x-y| < c_1r_0 \quad\implies\quad d^\phi_{\Gamma}(x,y) < c_0r_0.
    \end{equation}
    It is equivalent to show \begin{equation}
        \label{eq:d}
        d := \inf\{|x-y|: d^\phi_{\Gamma}(x,y) \geq c_0r_0 \} \geq c_1 r_0.
    \end{equation}
    We proceed by assuming $d < c_1r_0$ and obtaining a contradiction if $c_1$ is sufficiently small. 
    
    Suppose $x_1,x_2\in\Gamma$ are such that $|x_1-x_2|=d$ and $d^\phi_\Gamma(x_1,x_2) \geq c_0r_0$. Let $\tGamma_1 \sb \Gamma$ be a segment satisfying $P_\phi(\tGamma_1) = c_0r_0$ and such that $\tGamma_1$ is bisected at $x_1$ (with respect to $P_\phi$). Similarly define $\tGamma_2$ which is bisected at $x_2$. By \cref{cor:locally minimal components}, $\tGamma_1$ and $\tGamma_2$ satisfy the $p_i$ and $p_j$ cone conditions respectively for some $p_i, p_j\in \tN_\phi$, implying $\tGamma_1 \sb x_1 + C_i$ and $\tGamma_2 \sb x_2 + C_j$ where 
    \[ 
        C_i := \{x\cdot \nu_i \leq 0 \leq x\cdot\nu_{i+1}\} \cup \{x\cdot\nu_{i+1} \leq 0\leq x\cdot \nu_i\}
    \]
    and $C_j$ is defined similarly with $\nu_j, \nu_{j+1}$. Note that if the cones $C_i$ and $C_j$ intersect only at the origin, then $\tGamma_1$ and $\tGamma_2$ must intersect if $c_1$ is sufficiently small, a contradiction. Thus, it must be the case that $A[\nu_j, \nu_{j+1}]$ nontrivially intersects $A[\nu_i,\nu_{i+1}]\cup A[-\nu_i,-\nu_{i+1}]$. Without loss of generality, we let $\nu_i = \nu_1$ and assume $\nu_j \in A[\nu_1, \nu_2] \cup A[-\nu_1, -\nu_2]$. 

    If $\nu_j \in A[\nu_1,\nu_2]$, then $\nu_j\in \{\nu_1, \nu_2\}$, so $\tGamma_2$ satisfies the $p_1$ or $p_2$ cone condition. In either case, both $\tGamma_1$ and $\tGamma_2$ can be viewed as graphs with vertical axis $\nu_2$, with overlapping domains for $c_1$ sufficiently small, and with upward orientation with respect to $\nu_2$. Then there must exist a distinct segment of $\Gamma$ in between $\tGamma_1$ and $\tGamma_2$, as the absence of such a segment forces $\tGamma_1$ and $\tGamma_2$ to have opposite orientations by a connected argument. However, this contradicts that $|x_1-x_2|=d$. 
    
    If $\nu_j \in A[-\nu_1,-\nu_2]$, then $\tGamma_1$ and $\tGamma_2$ are still graphs with respectively upward and downward orientation with respect to $\nu_2$. Without loss of generality, assume $\tGamma_1$ lies above $\tGamma_2$ with respect to $\nu_2$, as a symmetric argument works on the contrary. Note that $\nu_3 \in A(\nu_2, \nu_j)$ since the angle between $\nu_2$ and $-\nu_1$ is obtuse. Thus, provided $c_1$ is sufficiently small, then for all small enough $r\lesssim_\phi r_0$, the line $L_r := \{(x-x_1)\cdot \nu_3 = -r\}$ intersects both $\tGamma_1$ and $\tGamma_2$. We can then consider the open region $U$ bounded by $\tGamma_1$, $\tGamma_2$, the segment $[x_1,x_2]$, and $L_r$, such that $\nu_U = \nu_E$ along $(\tGamma_1 \cup \tGamma_2) \cap \partial^*U$ and $\nu_U = -\nu_3$ along $L_r \cap \partial^*U$; see \cref{fig:wedge competitor}.
    
    We propose the energy competitor $E \setminus U$. Let us assign $L_r$ to be oriented by $\nu_3$, in which case $L_r\cap \partial U$ is a $\phi$-minimal line segment. Thus \cref{prop:sharp minimality} implies
    \begin{equation}
        \label{eq:wedge gap}
        P_\phi(U; \tGamma_1 \cup \tGamma_2) + P_\phi([x_2,x_1]) \geq P_\phi(U^c; L_r) + |p_3-p_2|r.
    \end{equation}
    Invoking \eqref{eq:setminus} and applying \eqref{eq:wedge gap}, we deduce the bound
    \begin{align}
        \label{eq:wedge gap 2}
        P_\phi(E) - P_\phi(E\setminus U) &\geq  P_\phi(\{\nu_E = \nu_U\}) - P_\phi(U^c;E^{(1)}) \notag \notag \\
        &\geq P_\phi(U; \tGamma_1\cup\tGamma_2) - P_\phi([x_1,x_2]) - P_\phi(U^c; L_r) \notag \\
        &\geq |p_3-p_2|r - P_\phi([x_1,x_2]) - P_\phi([x_2,x_1]) \notag \\
        &\geq |p_3-p_2|r - 2L_\phi d.
    \end{align}
    Setting $r = \frac{2L_\phi + 1}{|p_3-p_2|}d$, it follows that $P_\phi(E) - P_\phi(E\setminus U) \geq d$ by \eqref{eq:wedge gap 2}. Moreover, $|U| \lesssim_\phi d^2$ since $U$ is confined to a quadrilateral region with sidelengths $O(d)$ by the cone conditions satisfied by $\tGamma_j$.
    For $c_1$ sufficiently small, we have $U \Subset B(x_1,r_0)$, and thus the $(\Lambda,r_0)$-minimality of $E$ yields the estimate
    \begin{equation}
        \label{eq:wedge almost minimality}
        d \leq P_\phi(E) - P_\phi(E\setminus U) \leq \Lambda |U| \lesssim_\phi \Lambda d^2.
    \end{equation}
    That is, $d \gtrsim_\phi \Lambda^{-1} \geq r_0$. Finally we reach a contradiction to $d < c_1r_0$ for $c_1$ sufficiently small. 
    
    \begin{figure}
        \centering
        \includegraphics[width=0.4\linewidth]{tikz/wedge.tikz}
        \caption{Energy competitor described in step 1 of \cref{prop:no intersection}.}
        \label{fig:wedge competitor}
    \end{figure}

\medskip

\textbf{Step 2.} We claim there exists $c_2 = c_2(\phi)$ such that if $\Gamma_1,\Gamma_2\sb\partial E$ are distinct boundary components, then $|x-y| \geq c_2r_0$ for all distinct $x,y \in \Gamma_1\cap\Gamma_2$. 

    We consider such $x,y$ satisfying $|x-y| < c_2r_0$ and reach a contradiction if $c_2$ is sufficiently small. For $j=1,2$, let $\tGamma_j\sb\Gamma_j$ be the paths joining $x$ and $y$ such that $d^\phi_{\Gamma_j}(x,y) = P_\phi(\tGamma_j)$. Provided $c_2\leq c_1$, step 1 implies $P_\phi(\tGamma_j) \leq c_0r_0$, and thus $\tGamma_j$ is $\phi$-minimal by \cref{cor:locally minimal components}. By restricting $\tGamma_1$ to a shorter curve in $\Gamma_1 \setminus \Gamma_2$ if necessary, we may assume that $\tGamma_1$ and $\tGamma_2 $ intersect only at $x$ and $y$.
    
    Let $U$ be the region bounded between $\tGamma_1$ and $\tGamma_2$. Note there are $\sigma_1,\sigma_2\in\{\pm1\}$ such that $\nu_U = \sigma_j \nu_E$ a.e. along $\tGamma_j$. We assume without loss of generality that $P_\phi(\Gamma_1) \geq P_\phi(\Gamma_2)$ and that $\sigma_1 = 1$; a symmetric argument holds if $\sigma_1 = -1$. 
    
    We observe that $P_\phi(U) \leq C P_\phi(\tGamma_1)$ where $C = \max\{2, 1 + L_\phi^2\}$. Indeed, if $\sigma_2 = 1$, then $P_\phi(U) \leq 2P_\phi(\tGamma_1)$, and if $\sigma_2 = -1$, then we consider the reflected anisotropy $\psi(\nu) := \phi(-\nu)$ to bound $P_\phi(U) = P_\phi(\tGamma_1) + P_\psi(\tGamma_2) \leq (1 + L_\phi^2)P_\phi(\tGamma_1)$. For $c_2$ sufficiently small, $U \Subset B(x,r_0)$, so by \eqref{eq:setminus} and almost-minimality,
        \begin{equation}
            \label{eq:no intersection step 2}
            \inv{C} P_\phi(U) \leq P_\phi(\tGamma_1) \leq P_\phi(\{\nu_E = \nu_U\}) \leq P_\phi(E) - P_\phi(E\setminus U) \leq \Lambda |U|.
        \end{equation}
    Combining \eqref{eq:no intersection step 2} with the Wulff inequality, we obtain $P_\phi(U) \gtrsim_\phi \Lambda^{-1} \geq r_0$ and thus $P_\phi(\tGamma_1) \geq \inv{C} P_\phi(U) \gtrsim_\phi r_0$. Finally, by $\phi$-minimality, $P_\phi(\tGamma_1) \leq P_\phi([x,y]) \leq L_\phi|x-y|$, so we have $|x-y| \gtrsim_\phi r_0$ and obtain a contradiction if $c_2$ is sufficiently small. 

\medskip

\textbf{Step 3.} Now we can show that the boundary components of $E$ are pairwise disjoint.

    Suppose for sake of contradiction that distinct boundary components $\Gamma_1$ and $\Gamma_2$ intersect at a point $x_0$, and let $\Gamma_1, \dots, \Gamma_m\sb\partial E$ be every boundary component of $E$ which contains $x_0$. Without loss of generality, let $x_0=0$. Let $0 < r \leq \min\{c_1/2,c_2\}r_0$. By step 1 and \cref{cor:locally minimal components}, for each $1\leq j\leq m$, the restriction $\tGamma_j := \Gamma_j \cap B(0,r)$ is a $\phi$-minimal path. Moreover, by step 2, distinct $\tGamma_j$ intersect only at the origin. 
    
    Each $\tGamma_j$ is split at the origin into two halves $\Gamma_j^-$ and $\Gamma_j^+$. We now take the set $\{\Gamma_1^\pm, \dots, \Gamma_m^\pm\}$ and list its elements as $I_1, \dots, I_{2m}$ in counterclockwise order around $0$. We observe that $I_j$ and $I_{j+1}$ must have ``opposite" orientations, in that $0$ is an initial point of $I_j$ if and only if $0$ is a terminal point of $I_{j+1}$. Indeed, if say $0$ were the initial point of both $I_j$ and $I_{j+1}$, then a connectedness argument would imply the existence of another path $\tilde{I} \sb \partial E$ in between $I_j$ and $I_{j+1}$ which contains $0$, contradicting that $\Gamma_1, \dots, \Gamma_m$ exhaust all boundary components passing through $0$.

    Thus, for all $j$, \cref{cor:locally minimal components} implies that $I_j\cup I_{j+1}$ is a $\phi$-minimal path, so satisfies the $p_{i_j}$ cone condition for some $p_{i_j} \in \tN_\phi$. If we define $U_j$ as the open region obtained by spanning $B(0,r)$ from $I_j$ to $I_{j+1}$ in counterclockwise order, then 
    \[ U_j \spq S_j := \{x\in B(0,r): x\cdot \nu_{i_j} \geq 0, x\cdot \nu_{i_j+1} \geq 0 \}. \]
    The regions $S_j$ must be pairwise disjoint, and there are at least $2m\geq 4$ such regions. However, this is a contradiction, since each $S_j$ spans an angle of strictly larger than 90 degrees. 

\medskip

\textbf{Step 4.} Letting $\Gamma_1, \dots, \Gamma_m$ be the boundary components of $E$, we know by step 3 that \[ d := \min_{i\neq j} d(\Gamma_i, \Gamma_j) > 0. \]

    Letting $x_1\in\Gamma_1$ and $x_2\in\Gamma_2$ attain $d = |x_1-x_2|$, and assuming $d \leq cr_0$ for $c$ sufficiently small, one obtains a contradiction using the same argument as in step 1. This proves \eqref{eq:component dist}. \qedhere 
\end{proof}

\subsection{Euler-Lagrange equation}

We now turn our attention to area-constrained minimizers of
\begin{equation}
    \cE(E) := P_\phi(E) + \int_E g(x)dx
\end{equation}
where $g$ is some continuous coercive potential. 

\begin{lemma}[Euler-Lagrange equation]
    \label{lem:EL}
    Let $E$ be an area-constrained minimizer of $\cE$. There exists a constant $\lambda\in\R$ so that along any edge $I\sb\partial E$,
    \begin{equation}
        \label{eq:EL}
         \kappa^\phi_E(I) + \dashint_I g\, dP_\phi = \lambda \qquad \text{where}\qquad \dashint_I g\,dP_\phi := \inv{P_\phi(I)}\int_I g\,dP_\phi.
    \end{equation}
    Moreover, \eqref{eq:EL} also holds if $I$ is a line segment within a fuzzy edge.
\end{lemma}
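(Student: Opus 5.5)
The plan is to compute one-sided first variations of $\cE$ under parallel translations of a single edge (or of a line segment inside a fuzzy edge), and to compensate for the area change by a second, fixed variation elsewhere on $\partial E$.

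\textbf{Regularity.} First, $E$ is a $(\Lambda,r_0)$-minimizer of $P_\phi$. To see this, compare $E$ with a competitor $G$ satisfying $E\Delta G\Subset B(x,r_0)$. Restore the area of $G$ by a small dilation, or by adding or removing a small Wulff ball far away. The area error costs at most $C|E\Delta G|$, using boundedness of $E$, coercivity of $g$, and continuity of $g$ near $E$. So \cref{thm:lipreg} applies: $E$ is Lipschitz $\phi$-regular, $\partial E$ splits into finitely many regular and fuzzy edges, and each boundary component is locally a $\phi$-minimal path.

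\textbf{Translating a regular edge.} Fix a regular edge $I$ with normal $\nu_I\in\cN_\phi$. For small $t$, define $E_t$ by translating $I$ by $t\nu^\phi_I$, which changes the line $\{x\cdot\nup_I=s_I\}$ to $s_I+t$. Extend or shorten the two adjacent edges so that they stay on their own supporting lines; near the endpoints these are the regular neighbours, or the pieces of the adjacent fuzzy edges closest to $I$. The sign convention in \cref{def:curvature} is set up so that $P_\phi(E_t)=P_\phi(E)+t\alpha_I+o(t)$, where $\alpha_I=\sigma_I L_I$. This computation is exactly the justification of \eqref{eq:curvature}: the length change of $I$ is governed by the Wulff facet $[p_{i-1},p_i]$. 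Since $I$ is straight with normal in $\cN_\phi$, the $\phi$-length of $I$ scales like $\phi(\nu_I)$ times its Euclidean length, and these two factors cancel against the normalisation $\nup_I$. The area and potential terms satisfy
\[ |E_t|-|E|=t\,P_\phi(I)+o(t), \qquad \int_{E_t}g-\int_E g=t\int_I g\,dP_\phi+o(t), \]
using continuity of $g$.

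\textbf{Choosing the compensating variation.} To keep the area fixed, pair this with a translation of a second edge $I'$. This gives a two-parameter family $E_{t,s}$ with area change $tP_\phi(I)+sP_\phi(I')+o(|t|+|s|)$. Choose $s=s(t)=-tP_\phi(I)/P_\phi(I')+o(t)$, for instance by the implicit function theorem on the area map, which is differentiable in $(t,s)$. Minimality for both signs of $t$ then yields
\[ \frac{\alpha_I+\int_I g\,dP_\phi}{P_\phi(I)}=\frac{\alpha_{I'}+\int_{I'}g\,dP_\phi}{P_\phi(I')}. \]
Both sides are $\kp_E+\dashint g\,dP_\phi$ for the respective edge, so the quantity is edge-independent; call it $\lambda$. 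When $I'$ is a fuzzy edge or a segment inside one, the same translation argument applies, as described next.

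\textbf{Fuzzy edges and segments inside them.} For a fuzzy edge $J$ with anisotropic normal $p_J$, and likewise for any line segment inside it, the correct perturbation moves the segment outward. The resulting $\phi$-perimeter change is then given by the divergence identity with the constant field $p_J$, as in \cref{lem:minimal path}. Because the adjacent pieces satisfy the same cone condition, the perimeter change comes only from the endpoints, where the slopes change. For a segment interior to a fuzzy edge this change is zero to first order in both directions of motion, which matches $\sigma=0$, that is $\kp=0$. This gives \eqref{eq:EL} for such segments, and averaging over a fuzzy edge gives it for the whole edge.

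\textbf{Main obstacle.} I expect the hardest step to be one-sided differentiability of $P_\phi$ at the endpoints of the moving segment. The difficulty is that the neighbouring curve is not straight when it is fuzzy. I would handle it by cutting off a tiny subsegment near the endpoint, which is possible by the cone condition, and using \cref{cor:phi min perturbation} together with the calibration to bound the error by $o(t)$.
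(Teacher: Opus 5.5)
Your route is the paper's. You translate one edge, compute the first variations of $P_\phi$, of the area and of the potential, compensate the area with a second translation, and move fuzzy edges so that $P_\phi$ is unchanged by $\phi$-minimality. The one structural difference is that you cancel the area exactly by solving for $s(t)$. The paper instead arranges $|E_{t,s}|=|E|+o(t)$ and repairs the defect with the volume-fixing lemma of \cite{Figalli2011}, at cost $o(t)$.

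\textbf{The perimeter variation is wrong as written.} Move the supporting line of $I$ from $\{x\cdot\nu^\phi_I=s_I\}$ to $\{x\cdot\nu^\phi_I=s_I+t\}$. This is a Euclidean displacement of $t\phi(\nu_I)$; translating by the vector $t\nu^\phi_I$ would instead reach level $s_I+t/\phi(\nu_I)^2$. Then
\[ P_\phi(E_t)=P_\phi(E)+t\,\alpha_I\,\phi(\nu_I)=P_\phi(E)+t\,\kappa^\phi_E(I)\,P_\phi(I), \]
as in the paper's computation and \cref{lem:translate perimeter}. The factor $\phi(\nu_I)$ does not cancel. Check on $rW_\phi$: moving every facet from level $r$ to $r+t$ gives $(r+t)W_\phi$, so $P_\phi$ grows by $tP_\phi(W_\phi)=t\sum_i L_i\phi(\nu_i)$, not $t\sum_i L_i$. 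Combined with your (correct) area variation $tP_\phi(I)$, your formula puts $\alpha_I/P_\phi(I)=\kappa^\phi_E(I)/\phi(\nu_I)$ on each side of your identity. So your claim that both sides equal $\kappa^\phi_E+\frac{1}{P_\phi(I)}\int_I g\,dP_\phi$ does not follow. Replacing $\alpha_I$ by $\alpha_I\phi(\nu_I)$ in the numerators fixes it.

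\textbf{Three smaller points.}

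First, restoring the area of a competitor by adding or removing a small Wulff shape of area $\delta$ costs perimeter of order $\delta^{1/2}$. That cannot give $P_\phi(E)\le P_\phi(G)+\Lambda|E\Delta G|$. Only the dilation, or a volume-fixing variation as in \cite{Figalli2011}, has cost linear in $\delta$.

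Second, the area map $(t,s)\mapsto|E_{t,s}|$ need not be $C^1$. When a regular edge has a fuzzy neighbour, its length can jump as it is translated, as the paper notes in \cref{sec:ODE flow}. So the classical implicit function theorem does not apply directly. What does work: for disjoint $I,I'$ the map splits as $a(t)+b(s)$, with $b$ continuous, strictly increasing and differentiable at $0$ with $b'(0)=P_\phi(I')$. Hence $s(t)=b^{-1}(-a(t))$ exists with the asymptotics you want.

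Third, a fuzzy edge is a general Lipschitz curve satisfying the $p_J$ cone condition and need not contain any segment. Averaging over segments therefore does not cover it. Translate the whole edge by $tp_J$, as the paper does. The new and old arcs satisfy the same cone condition between the same endpoints, so $P_\phi(E_t)=P_\phi(E)$ exactly by \cref{lem:minimal path}. The area variation then follows from the coarea formula.

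For the same reason, what you flag as the main obstacle is not one: the perimeter change under translation is exact by calibration. The delicate object is the area map discussed above.
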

\begin{proof}
It is straightforward to check that $E$ is an almost-minimizer of $P_\phi$ and thus is Lipschitz $\phi$-regular. Consider a line segment $I\sb \partial E \cap \{x\cdot\nup_I = s_I\}$. For sufficiently small $t$, we consider a perturbed Lipschitz $\phi$-regular set $E_t$ such that $I$ is translated to $\{x\cdot\nup_I = s_{I} + t\}$, and the portions of $\partial E$ adjacent to $I$ are correspondingly shortened or lengthened depending on $t$. It is straightforward to check the asymptotics 
    \begin{align}
        \label{eq:EL perimeter approx}
        P_\phi(E_t) &= P_\phi(E) + \alpha_I \phi(\nu_I)t = P_\phi(E) + \kappa^\phi_E(I)P_\phi(I)t \\
        \label{eq:EL area approx}
        |E_t| &= |E| + P_\phi(I)t + o(t)\\
        \label{eq:EL potential approx}
        \int_{E_t}g(x)dx &= \int_E g(x)dx + t\int_I g\, dP_\phi + o(t).
    \end{align}
    
If $I$ is a fuzzy edge, we similarly define $E_t$ so that $I$ is translated by $tp_I$, in which case $P_\phi(E_t) = P_\phi(E)$ by $\phi$-minimality, and one has the same asymptotics \eqref{eq:EL area approx} and \eqref{eq:EL potential approx}. For instance, we note that for $s\in[0,t]$, the level set $\{\sdp_E = s\} \cap (E_t\Delta E)$ coincides with $I + sp_I$ up to $o(t)$ error in length, and thus we can verify \eqref{eq:EL area approx} using the anisotropic coarea formula \eqref{eq:anisotropic coarea} and \cref{lem:eikonal}:
\begin{equation*}
    |E_t| - |E| = \int (\chi_{E_t} - \chi_E) \phi(\sdp_E) dx = \int_0^t P_\phi(\{\sdp_E \leq s\}\cap (E_t\Delta E))ds = t P_\phi(I) + o(t).
\end{equation*}
One similarly verifies \eqref{eq:EL potential approx} in this case using \eqref{eq:weighted coarea}.
    
Now let $I,J\sb\partial E$ be line segments or fuzzy edges which are disjoint. We can then define $E_{t,s}$ which translates $I, J$ respectively by heights $t, s$. Setting $ P_\phi(I)t = - P_\phi(J)s$, we have $|E_{t,s}| = |E| + o(t)$ and
\begin{equation}
    \cE(E_{t,s}) - \cE(E) = \ps{\okp_E(I) + \dashint_I g\,dP_\phi -\ps{\okp_E(J) + \dashint_J g\,dP_\phi}} P_\phi(I)t + o(t).
\end{equation}
Applying \cite[Lemma 3]{Figalli2011}, we may obtain a further perturbation $\tilde{E}_{t,s}$ such that $|\tilde{E}_{t,s}| = |E|$ and $\cE(\tilde{E}_{t,s}) = \cE(E_{t,s}) + o(t)$. Since $\cE(E_{t,s}) \geq \cE(E)$, it follows that
    \[ \okp_E(I) + \dashint_I g\, dP_\phi = \okp_E(J) + \dashint_J g\, dP_\phi. \]
Thus $\okp_E(I) + \dashint_I g\,dP_\phi$ is constant, as desired. 
\end{proof}

\section{Area-preserving $\phi$-curvature flow}
\label{sec:flow solutions}

\subsection{ODE flow}
\label{sec:ODE flow}
For Lipschitz $\phi$-regular initial data, we can define the area-preserving $\phi$-mean curvature flow as a system of ODEs parameterizing translates of $F$.

First let us clarify the notion of a translate. Fix a Lipschitz vector field $X$ on $\partial F$ such that $X \in \partial \phi(\nu_F)$ along $\partial^*F$. There exists some $\eps_0 = \eps_0(\phi,F)$, such that $\{\dd^\phic_F < \eps_0\}$ is a $\phic$-tubular neighborhood of $\partial F$, i.e. for all $|t| < \eps_0$, the function $x\mapsto x + tX(x)$ maps $\partial F$ bijectively onto $\{\sdp_F = t\}$. Denoting $U(\Gamma) := \set{x + tX(x): x\in \Gamma, |t| < \eps_0}$ for any $\Gamma\sb\partial F$, we observe that
    \begin{itemize}
        \item If $I \sb \partial F$ is a regular edge, then $U(I)$ is a trapezoidal region on which $\sdp_F(x) = x\cdot \nu_I^\phi - s_I$.
        \item If $J \sb \partial F$ is a fuzzy edge, then $\{\sdp_F = t\} \cap U(J) = J + tp_J$ for $|t| < \eps_0$. 
    \end{itemize}
We define another quantity of interest $\eps_1=\eps_1(\phi,F)$ as follows. For a fuzzy edge $J\sb\partial F$ with neighboring regular edges $I_1, I_2$, we define the height of $J$ to be $|s_{I_1} - s_{I_2}|$. We then define $\eps_1$ to be the minimum height of all type II fuzzy edges, and otherwise $\eps_1 := +\infty$ if there are no such edges.

Now consider tuples $h := (h_I)_I$, where $I$ ranges over all edges of $F$. We further require that whenever a regular edge $I$ is adjacent to a fuzzy edge $J$, 
\begin{equation}
    \label{eq:translate h requirement}
    \begin{cases}
        h_I \leq h_J &\text{if }I\text{ is positive}\\
        h_I \geq h_J &\text{if }I\text{ is negative}
    \end{cases}
\end{equation}
so that $\{x\cdot\nup_I = s_I + h_I\}$ and $J + h_J p_J$ must intersect. For all such $h$ satisfying $\|h\|_\infty < \min\{\eps_0, \inv{2} \eps_1\}$, there exists a unique Lipschitz regular set $F(h)$ with translated edges $I(h)$ satisfying
    \begin{equation}
        \label{eq:translate}
        \begin{cases}
            I(h) \sb \{x\cdot \nu_I^\phi =  s_I + h_I\} &\text{if }I\text{ is regular}\\
            I(h) \sb I + h_I p_I &\text{if }I\text{ is fuzzy.}
        \end{cases}
    \end{equation}
We call $F(h)$ a \textbf{translate} of $F$ (see \cref{fig:translate}). The bound $\|h\|_\infty < \inv{2} \eps_1$ ensures that type II fuzzy edges do not vanish, although we allow for type I fuzzy edges to vanish. Conveniently, the surface energy of translates is linear in $h$ (\cref{lem:translate perimeter}): 
\begin{equation}
    \label{eq:translate perimeter}
    P_\phi(F(h)) = P_\phi(F) + \sum_{I\sb\partial F\text{ regular}} \alpha_I \phi(\nu_I) h_I.
\end{equation}

\begin{figure}
    \centering
    \includegraphics[width=0.7\linewidth]{tikz/translate.tikz}
    \caption{Translate of a Lipschitz $\phi$-regular set. $J_1$ and $J_2$ are respectively type I and type II fuzzy edges.}
    \label{fig:translate}
\end{figure}

Within the space of translates, we express the evolution \eqref{eq:cr flow} of $F$ under area-preserving $\phi$-mean curvature flow as a system of ODEs:
\begin{equation}
    \label{eq:ODE flow h}
    \frac{dh_I}{dt} = - \kp_{F(h)}(I(h)) + \okp_{F(h)}
\end{equation}
recalling $\okp_F := \inv{P_\phi(F)} \int_{\partial F} \kp_F dP_\phi$. Indeed, we can verify the rate of energy dissipation
\begin{align*}
    \frac{d}{dt}P_\phi(F(h)) &= \sum_{I\sb\partial F \text{ regular}} \alpha_I \phi(\nu_I) (-\kp_{F(h)}(I(h)) + \okp_{F(h)})\\
    &= \int_{\partial F(h)} \kp_{F(h)} (-\kp_{F(h)} + \okp_{F(h)}) dP_\phi\\
    &= - \int_{\partial F(h)} |\kp_{F(h)} - \okp_{F(h)}|^2 dP_\phi
\end{align*}
and that \eqref{eq:ODE flow h} preserves area:
\begin{equation}
    \label{eq:ODE flow area pres}
    \frac{d}{dt}|F(h)| = \sum_{I\sb\partial F} \frac{dh_I}{dt} P_\phi(I) = \int_{\partial F(h)} (-\kp_{F(h)} + \okp_{F(h)}) dP_\phi = 0. 
\end{equation}

Since all fuzzy edges of $F(h)$ travel at the same velocity, they should occupy the same level set of $\sdp_F$ at any given time. Thus, it is convenient to define a new system of coordinates as follows: enumerating the regular edges of $F$ as $I_1, \dots, I_m$, we let $u := (\ol{u}, u_1, \dots, u_m)$, where $h_J = \ol{u}$ for all fuzzy edges $J$, and $h_{I_j} = \ol{u} + u_j$. Abusing notation, we refer to such translates as $F(u)$ and their regular edges as $I_j(u)$. In terms of $u$, \eqref{eq:ODE flow h} is equivalent to the evolution
\begin{equation}
    \label{eq:ODE flow}
    \begin{cases}
        \frac{du_j}{dt} = -\kp_{F(u)}(I_j(u))\\
        \frac{d\ol{u}}{dt} = \okp_{F(u)}.
    \end{cases}
\end{equation}
The vector field $Y(u) := (\okp_{F(u)}, -\kp_{F(u)}(I_j(u)))$ may be discontinuous if $F$ has fuzzy edges, since $|I_j(u)|$ may experience jumps with respect to $u_j$. Thus, \eqref{eq:ODE flow} is understood in the Filippov sense \cite{Filippov1988}: solutions to \eqref{eq:ODE flow} are absolutely continuous curves $u(t)$ satisfying the inclusion $\frac{du}{dt} \in \co Y(u(t))$ for a.e. $t$, where $\co Y(u)$ is the convex hull of limit points of $Y(u)$. We also note that the criterion \eqref{eq:translate h requirement} is equivalent to $\sigma_{I_j} u_j \leq 0$. Thus, we fix the domain $\Omega_F := \{\sigma_{I_j} u_j \leq 0\}$ and remark that solutions to \eqref{eq:ODE flow} remain in $\Omega_F$ since $\sigma_{I_j} \frac{du_j}{dt} \leq 0$.

\begin{proposition}[Well-posedness of ODE flow]
    \label{prop:ODE wellposed}
    Let $F\sb\R^2$ be a Lipschitz $\phi$-regular set. There exist constants $\eps, C$ such that for any $u^0 \in \Omega_F$ satisfying $|u^0| < \eps$, there is a unique local-in-time solution $u(t)$ to \eqref{eq:ODE flow} such that $u(0) = u^0$. Defining $\Psi^F_t$ to be the associated flow map, i.e. $\Psi^F_t(u^0) := u(t)$, then $\Psi^F_t$ obeys Lipschitz stability:
    \begin{equation}
        \label{eq:ODE flow stability}
        |\Psi^F_t(u) - \Psi^F_t(u')| \leq e^{Ct}|u-u'|
    \end{equation}
    for all $t$ such that both solutions exist. Moreover, $C$ depends only on $\phi$, $P_\phi(F)$, $\chi(F)$, and $\eps_0(F)$, while $\eps$ depends on the same parameters and also $\eps_1(F)$.
    
\end{proposition}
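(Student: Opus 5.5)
The plan is to verify the hypotheses of Filippov's theory for the vector field $Y(u)=(\okp_{F(u)},-\kp_{F(u)}(I_j(u)))$ on $\Omega_F\cap\{|u|<\eps\}$. Existence will come from local boundedness and measurability of $Y$. Uniqueness and the stability estimate \eqref{eq:ODE flow stability} will both come from a \emph{one-sided Lipschitz} bound
\begin{equation*}
    (y-y')\cdot(u-u') \leq C|u-u'|^2 \qquad \text{for all } y\in\co Y(u),\ y'\in\co Y(u').
\end{equation*}
Given this bound, any two Filippov solutions satisfy $\frac{d}{dt}|u(t)-u'(t)|^2\leq 2C|u(t)-u'(t)|^2$ for a.e.\ $t$. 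Gr\"onwall then gives $|\Psi^F_t(u)-\Psi^F_t(u')|\le e^{Ct}|u-u'|$, and uniqueness is the case $u=u'$. Invariance of $\Omega_F$ is already noted: $\sigma_{I_j}\frac{du_j}{dt}\le 0$.

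\textbf{Step 1: geometry of translates.} I first describe $|I_j(u)|$ as a function of $u$. For $|u|$ small relative to $\eps_0$ and $\eps_1$, each regular edge $I_j(u)$ is bounded by lines coming from its two neighbours: either a neighbouring regular edge, or the translated fuzzy edge $J+\ol{u}p_J$. Hence $|I_j(u)|$ is piecewise affine in $u$, with slopes controlled by $\phi$. The only non-affine behaviour occurs on the faces $\{u_j=0\}$. There the translated line of $I_j$ meets $J+\ol{u}p_J$ along any flat portion of $J$ with normal $\nu_{I_j}$, so the length jumps up. The key claim is that this jump has a definite sign: $|I_j(u)|$ is maximal exactly at $u_j=0$. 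Consequently $u_j\mapsto -\kp_{F(u)}(I_j(u))=-\sigma_jL_j/|I_j(u)|$ jumps in the \emph{dissipative} direction. That is, the one-dimensional restriction is monotone non-increasing in $u_j$ across the face, which is compatible with the one-sided bound.

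\textbf{Step 2: Lipschitz bounds away from the jumps.} On $|u|<\eps$, every regular edge satisfies $|I_j(u)|\gtrsim r(F)$, since edges have length $\ge 2r_0$ in the sense of the proof of \cref{prop:cone lipreg}, quantified through $\eps_0$. So on each region where all $|I_j|$ are affine, $-\kp(I_j)$ is Lipschitz with constant depending on $\phi$, $\eps_0$ and the number of edges. The number of edges is bounded through $P_\phi(F)$, $\chi(F)$ and $\eps_0$.

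For the $\ol{u}$-component, \eqref{eq:curvature} gives
\[
\okp_{F(u)}=\frac{1}{P_\phi(F(u))}\sum_I \alpha_I\phi(\nu_I).
\]
The numerator is a fixed constant, essentially $P_\phi(W_\phi)\,\chi(F)$ by turning-angle counting. The denominator is affine in $h$ by \eqref{eq:translate perimeter} and bounded below. Hence this component is genuinely Lipschitz.

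\textbf{Step 3: combining.} I combine Steps 1 and 2 to obtain the one-sided bound. I would split the segment $[u,u']$ into finitely many pieces on which the active jump faces are fixed. The affine, Lipschitz parts then contribute $C|u-u'|^2$. Each jump term contributes $(y_j-y_j')(u_j-u_j')\le 0$ by the monotonicity of Step 1. The cross-dependence of $|I_j|$ on neighbouring coordinates is Lipschitz and is absorbed into $C$.

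\textbf{Main obstacle.} The hard part is Step 1: showing that the discontinuities of $Y$ are only of this monotone type. In particular, the jump in $|I_j|$ must be driven by $u_j$ alone, not by the neighbouring coordinates or $\ol{u}$, and it must have the correct sign for both positive and negative edges. Type I fuzzy edges, which may shrink to a point, need extra care. I would handle this by an explicit case analysis of the local picture near each fuzzy edge, using the cone condition of \cref{prop:cone lipreg} and the constraint \eqref{eq:translate h requirement}.
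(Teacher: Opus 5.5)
Your overall plan is the paper's: Filippov existence, a one-sided Lipschitz bound on $Y$ extended to $\co Y$, Gr\"onwall for uniqueness and \eqref{eq:ODE flow stability}, and Lipschitz continuity of $\ol{Y}$ via Gauss--Bonnet and the affine formula \eqref{eq:translate perimeter}. The gap is in Step 1, which you correctly identify as the crux but state incorrectly in two ways.

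First, the sign is reversed. On $\Omega_F$ the length $|I_j(u_je_j)|$ is \emph{minimal}, not maximal, at $u_j=0$. For a negative edge ($u_j\ge 0$) the line moves outward, where the neighbouring fuzzy edges diverge, so the length increases with $u_j$. For a positive edge ($u_j\le 0$) the length increases as $u_j$ decreases. A jump of the sign you assert would make $(Y_j(u)-Y_j(u'))(u_j-u_j')$ positive and of order $|u_j-u_j'|$ for $u,u'$ on either side of it. No bound $C|u-u'|^2$ absorbs that. So your ``consequently'' does not follow from your claim, even though the conclusion you want ($Y_j$ non-increasing in $u_j$) is the right one.

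Second, the discontinuities are not where you place them. By maximality of $I_j$ and the cone condition, no flat piece of the adjacent fuzzy edge $J$ with normal $\nu_{I_j}$ lies at the height of $I_j$. So nothing jumps at $u_j=0$, and $\Omega_F$ only contains one side of that face anyway. Jumps occur at the nonzero values of $u_j$ matching the heights of such flat pieces inside $J$. A fuzzy edge is an arbitrary Lipschitz curve satisfying the $p_J$ cone condition. Hence $u_j\mapsto|I_j(u_je_j)|$ is merely monotone, not piecewise affine, and its jumps can accumulate. The finite splitting of $[u,u']$ in Step 3 is therefore not available.

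What closes the argument, as in the paper, is a decomposition for $I_j$ between two fuzzy edges: $|I_j(u)|=\ell_j(u_j)+\alpha_{I_j}\ol{u}$, where $\ell_j(s):=|I_j(se_j)|$. The function $\ell_j$ is monotone by the cone conditions: increasing if $I_j$ is negative, decreasing if positive. The $\ol{u}$-dependence is affine because every fuzzy edge moves by $\ol{u}p_J$. Regular neighbours only contribute affine terms. One then computes directly
\[
(Y_j(u)-Y_j(u'))(u_j-u_j')=\frac{\alpha_{I_j}\bigl(\ell_j(u_j)-\ell_j(u_j')\bigr)(u_j-u_j')+\alpha_{I_j}^2(\ol{u}-\ol{u}')(u_j-u_j')}{|I_j(u)|\,|I_j(u')|}.
\]
The first term of the numerator is $\le 0$ because $\alpha_{I_j}$ carries the sign of $I_j$. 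The second is $\le C|u-u'|^2$ since $|I_j|\gtrsim\eps_0$.

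This needs no knowledge of where the jumps are, and it tolerates accumulating jumps. It also answers your worry that the discontinuity might be driven by $\ol{u}$ or by neighbouring coordinates: it lives entirely in $\ell_j(u_j)$.
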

\begin{proof}
    Existence follows from \cite[Sec 2.7]{Filippov1988}. We claim it suffices to show that $Y$ satisfies the one-sided Lipschitz inequality
    \begin{equation}
        \label{eq:one sided lipschitz}
        (Y(u) - Y(u')) \cdot(u-u') \leq C|u-u'|^2.
    \end{equation}
    Indeed, by standard approximation, \eqref{eq:one sided lipschitz} implies that $(y-y')\cdot(u-u') \leq C|u-u'|^2$ for all $y\in \co Y(u)$ and $y' \in \co Y(u')$. In particular, for any two solutions $u(t), u'(t)$ and a.e. $t$,
    \[
    \frac{d}{dt}|u(t) - u'(t)|^2 = 2(\frac{du}{dt} - \frac{du'}{dt})\cdot(u - u') \leq 2C|u(t) - u'(t)|^2.
    \]
    Thus uniqueness and \eqref{eq:ODE flow stability} follow from Gr\"onwall's inequality. 
    
    It remains to check \eqref{eq:one sided lipschitz}. Using \eqref{eq:gauss bonnet} and \eqref{eq:translate perimeter}, we can express 
    \begin{equation}
        \label{eq:translate okp}
        \ol{Y}(u) = \okp_{F(u)} = \frac{\chi(F) P_\phi(W_\phi)}{P_\phi(F(u))} = \frac{\chi(F) P_\phi(W_\phi)}{P_\phi(F) + \chi(F) P_\phi(W_\phi) \ol{u} + \sum_j \alpha_{I_j} \phi(\nu_{I_j}) u_j}
    \end{equation}
    so $\ol{Y}(u)$ is Lipschitz.

    We now check that $Y_j(u) = -\frac{\alpha_{I_j}}{|I_j(u)|}$ satisfies an upper Lipschitz inequality with respect to $u_j$. For $\eps < \eps_0$ sufficiently small, we have $|I_j(u)| \geq \inv{2} \min_j |I_j| \gtrsim_\phi \eps_0$ for $|u| < \eps$. We denote $\ol{e}$ and $e_j$ to be the standard basis vectors for $u$. We break into cases:
    \begin{enumerate}
        \item If $I_j$ is adjacent to two regular edges, then we have the explicit formula 
        \begin{equation}
             |I_j(u)| = |I_j| + \alpha_{I_j} \ol{u} + [\csc\theta_j \phi(\nu_{I_{j-1}}) u_{j-1} + \csc\theta_{j+1} \phi(\nu_{I_{j+1}}) u_{j+1} - (\cot\theta_j + \cot\theta_{j+1})\phi(\nu_{I_{j}})u_j].
        \end{equation}
        where $\theta_j, \theta_{j+1}$ are the angles between the pairs $\nu_{I_{j-1}}, \nu_{I_j}$ and $\nu_{I_j}, \nu_{I_{j+1}}$ respectively. In particular, $|I_j(u)|$ is an affine function of $u$, so $Y_j(u)$ is Lipschitz.

        \item Suppose $I_j$ is adjacent to two fuzzy edges, say $J$ and $J'$, and assume $I_j$ is negative. Note that for $0\leq u_j < \eps_1$, $I_j(u)$ has endpoints on the translated fuzzy edges $J(\ol{u}\ol{e}) = J + \ol{u}p_J$ and $J'(\ol{u}\ol{e}) = J' + \ol{u} p_{J'}$, so we may express
        \begin{equation}
            \label{eq:length reg fuzzy neighbors}
            |I_j(u)| = |I_j(u_je_j)| + \alpha_{I_j} \ol{u}. 
        \end{equation}
        Moreover, $|I_j(u_je_j)|$ is monotone increasing in $u_j$ due to the cone conditions satisfied by $J$ and $J'$ via $\phi$-minimality. If $I_j$ is positive, then \eqref{eq:length reg fuzzy neighbors} also holds for $-\eps_1 < u_j \leq 0$, and $|I_j(u_je_j)|$ is monotone decreasing. In either case, recalling that $\alpha_{I_j}$ shares the same sign as $I_j$, it follows that
        \begin{align*}
            (Y_j(u) - Y_j(u'))(u_j - u_j') &= -\alpha_{I_j} \ps{\inv{|I_j(u)|} - \inv{|I_j(u')|}}(u_j - u_j') \\
            &= \alpha_{I_j} \frac{(|I_j(u_je_j)| - |I_j(u_j'e_j)|)(u_j - u_j')}{|I_j(u)| |I_j(u')|} + \alpha_{I_j}^2 \frac{(\ol{u} - \ol{u}')(u_j - u_j')}{|I_j(u)||I_j(u')|}\\
            &\leq \alpha_{I_j}^2 \frac{(\ol{u} - \ol{u}')(u_j - u_j')}{|I_j(u)||I_j(u')|}\\
            &\leq C|u-u'|^2.
        \end{align*}

        \item If $I_j$ is adjacent to both a regular edge and a fuzzy edge, then one may argue with a combination of the previous two cases. 
    \end{enumerate}
    
    We deduce \eqref{eq:one sided lipschitz}. \qedhere
\end{proof}

    So far, we have only defined the evolution $F_t := F(u(t))$ within a tubular neighborhood $\{\dd_F^\phic < \eps\}$. Provided $\eps_0(F_t)$ and $\eps_1(F_t)$ stay positive, one can continue the flow outside the tubular neighborhood by resetting $u(t) := \Psi_{t-t_0}^{F_{t_0}}(0) + u(t_0)$ for $t\geq t_0$ for a suitable choice of $t_0$, and $F_t$ can still be viewed as a translate of $F$ in the sense of \eqref{eq:translate}.

    If at some time $T$ one has $\lim_{t\to T} \eps_1(F_t) =0$ but $\liminf_{t\to T} \eps_0(F_t) > 0$, then there must be two parallel regular edges $I_1, I_2$ joined by a type II fuzzy edge $J$, such that $\lim_{t\to T} |s_{I_1(t)} - s_{I_2(t)}| = 0$. The limiting set $F_T := \lim_{t\to T} F_t$ (in Hausdorff distance) is still a Lipschitz $\phi$-regular set, where $J$ has vanished and $I_1, I_2$ have merged into a single line segment that is either a fuzzy edge or a segment of a fuzzy edge (depending on the orientation of the other neighbors of $I_1$ and $I_2$); see \cref{fig:vanishing fuzzy}. Importantly, $\eps_0(F_T) \geq \liminf_{t\to T} \eps_0(F_t)$, so one can (uniquely) restart the evolution \eqref{eq:ODE flow} with coordinates adapted to $F_T$. 

    \begin{figure}
        \centering
        \includegraphics[width=0.6\linewidth]{tikz/vanishing_fuzzy.tikz}
        \caption{Type II fuzzy edge vanishing in the ODE flow}
        \label{fig:vanishing fuzzy}
    \end{figure}

    We formalize this discussion by defining the ODE evolution to extend past vanishing type II fuzzy edges whenever possible. We only consider evolutions in which vanishing times are distinct, for sake of simplicity in studying the stability of the evolution. 

\begin{definition}
    \label{def:ODE flow}
    We say that a family of Lipschitz $\phi$-regular sets $(F_t)_{t\in[0,T)}$ satisfies the \textbf{\emph{area-preserving $\phi$-mean curvature ODE flow}} from $F$, and define the flow map $\Psi_t(F) := F_t$, if there exist times $0 = t_0 < t_1 < \cdots < t_n = T$ such that
    \begin{itemize}
        \item For all $0\leq i < n$ and $s\in [t_i, t_{i+1})$, there exists $\eps>0$ such that $F_t = F_s(\Psi^{F_s}_{t-s}(0))$ for all $t\in[s,s+\eps)$. 

        \item For all $1\leq i < n$, a single type II fuzzy edge of $F_t$ vanishes as $t\to t_i^-$, $\liminf_{t\to t_i^-}\eps_0(F_t) > 0$, and $F_{t_i}$ is the limit of $F_t$ in Hausdorff distance. 
    \end{itemize}
\end{definition}

We conclude this section with a slightly more general stability result than \eqref{eq:ODE flow stability}. First we introduce the notion of a rough translate, wherein we relax the requirement for fuzzy edges to be translates of those of $F$.

\begin{definition}
    Given Lipschitz $\phi$-regular sets $E,F$, we say that $E$ is a \textbf{\emph{rough translate}} of $F$ if $\partial E \sb \{\dd_F^\phic < \eps_0\}$ and the following hold:
    \begin{itemize}
        \item The regular edges $I\sb\partial F$ are in one-to-one correspondence with the regular edges $I^E\sb\partial E$, such that $\nu_{I^E} = \nu_I$.
        \item If $I_1$ and $I_2$ are adjacent or joined by a fuzzy edge $J$, then $I_1^E$ and $I_2^E$ are also either adjacent or joined by a fuzzy edge $J^E$ satisfying the $p_J$ cone condition.
    \end{itemize}
\end{definition}

\begin{proposition}
    \label{prop:ODE stability hausdorff}
    Let $E, F\sb\R^2$ be Lipschitz $\phi$-regular which are rough translates, and suppose $F_t := \Psi_t(F)$ and $E_t := \Psi_t(E)$ exist for $t\in[0,T]$ and satisfy $\min\{\eps_0(E_t),\eps_0(F_t)\}\geq\eps$ for all $t\in[0,T]$. Then there exist $\delta, C>0$ depending only on $\phi, \eps, P_\phi(F), |F|, \chi(F)$ such that if $d_H(E,F) < e^{-CT}\delta$, then
    \begin{equation}
        \label{eq:ODE stability hausdorff}
        d_H(E_t, F_t) \leq Ce^{Ct} d_H(E,F) \qquad\forall t\leq T.
    \end{equation}
    Moreover, if $F_t$ experiences a vanishing type II fuzzy edge at time $t_1 < T - Ce^{CT}d_H(E,F)$, then $E_t$ also experiences vanishing at time $t_1'$ such that $|t_1' - t_1| \leq Ce^{Ct_1}d_H(E,F)$. 
\end{proposition}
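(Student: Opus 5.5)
The plan is to reduce Hausdorff stability to stability of finitely many scalar coordinates, namely the support values $s_I$ of the regular edges together with the common fuzzy offset $\ol u$, and then run a Gr\"onwall-type argument in which the non-Lipschitz (BV) dependence of edge lengths on these coordinates is handled by one-sided monotonicity and by the lower bound on relative speeds.

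First I would fix the dictionary between Hausdorff distance and coordinates. Since $E$ is a rough translate of $F$, regular edges correspond as $I\leftrightarrow I^E$ with equal normals. I would set $v_I := s_{I^E}-s_I$ and let $w$ be the $\phic$-offset of the fuzzy edges $J^E$ relative to $J$, measured as a Hausdorff discrepancy. By the cone conditions and $\min\{\eps_0(E),\eps_0(F)\}\ge\eps$, the Hausdorff distance is comparable to the largest of these discrepancies:
\[ d_H(E,F)\lesssim_{\phi,\eps} \max_I|v_I| + d_H(\textstyle\bigcup J^E,\bigcup J) \lesssim_{\phi,\eps} d_H(E,F). \]
Along both flows, every fuzzy edge is rigidly translated by $\ol u(t)p_J$, with $\frac{d\ol u}{dt}=\okp$. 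The initial fuzzy discrepancy is therefore transported, and only the difference of the Lagrange multipliers $\okp_{E_t}-\okp_{F_t}$ adds to it. By \eqref{eq:translate okp}, $\okp=\chi P_\phi(W_\phi)/P_\phi$, and $\chi(E)=\chi(F)$ because the edge combinatorics agree. This difference is therefore Lipschitz in $P_\phi(E_t)-P_\phi(F_t)$, which is controlled by $\max_I|v_I|$ together with the fuzzy discrepancy, using \eqref{eq:translate perimeter} applied edge by edge.

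The core step is controlling $\frac{d}{dt}v_I = -\alpha_I\big(|I^E(t)|^{-1}-|I(t)|^{-1}\big)+(\okp_{E_t}-\okp_{F_t})$. As in the proof of \cref{prop:ODE wellposed}, an edge with two regular neighbours has length affine in its neighbours' offsets, so that term is Lipschitz. The difficulty is an edge ending on a fuzzy edge. Its endpoint is the intersection of the line $\{x\cdot\nup_I=s\}$ with a $p_J$-cone curve, and this is only monotone in $s$, not Lipschitz, since $J$ may contain segments parallel to $I$. Moreover $J^E$ and $J$ need not be translates of one another, so the one-sided trick of \cref{prop:ODE wellposed} does not apply verbatim. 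I would instead use a time-shift comparison. A regular edge has $|\kp|=L_I/|I|\ge c(\phi,\eps,P_\phi(F))>0$, so it moves monotonically relative to the fuzzy front at speed bounded below. If $I^E$ lags $I$ by $v_I$, then $I^E$ at time $t$ sits where $I$ sat at a time differing by $O(|v_I|)$. Combining this with the monotonicity of $|I(\cdot e_j)|$ and the fact that $J^E$ lies within $O(d_H)$ of $J$, I would get a sandwich
\[ |I(t-\tau)|-C\Delta \le |I^E(t)| \le |I(t+\tau)|+C\Delta, \qquad \tau\lesssim \Delta(t), \]
where $\Delta(t):=\max_I|v_I|+{}$fuzzy discrepancy. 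Since $|I(\cdot)|$ is Lipschitz in time (bounded speed), this gives $\big||I^E(t)|-|I(t)|\big|\lesssim\Delta(t)$ and hence $\dot\Delta\le C\Delta$. Gr\"onwall then yields \eqref{eq:ODE stability hausdorff}. The hypothesis $d_H(E,F)<e^{-CT}\delta$ keeps $\Delta$ below the size of the tubular neighbourhood, so the edge correspondence persists (no edge of $E_t$ vanishes prematurely) up to time $T$.

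Finally, for vanishing times, a type II fuzzy edge of $F_t$ between parallel edges $I_1,I_2$ vanishes when $s_{I_1}-s_{I_2}\to0$. Both edges have nonzero curvature of the same sign relative to the common motion, so $\frac{d}{dt}(s_{I_1}-s_{I_2})$ is bounded away from zero near $t_1$. Since $E$'s corresponding difference is within $Ce^{Ct}d_H$, the transversality gives $|t_1'-t_1|\lesssim Ce^{Ct_1}d_H$. After $t_1$, both flows restart with matching combinatorics and the argument iterates, with the constants absorbed into $C$. I expect the main obstacle to be the sandwich estimate for edges adjacent to non-translated fuzzy edges, where the BV dependence must be beaten by the speed lower bound.
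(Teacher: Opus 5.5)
\textbf{There is a genuine gap.} Your argument breaks at exactly the step you flagged as the main obstacle.

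The claim that $t\mapsto|I(t)|$ is Lipschitz in time is false. Bounded speed makes the \emph{position} $u_j(t)$ Lipschitz. But the \emph{length} of an edge adjacent to a fuzzy edge is $|I_j(t)|=\ell_j(u_j(t))+f_j(u(t))$, where $\ell_j$ is only monotone. Moreover, $\ell_j$ jumps whenever the supporting line of $I_j$ sweeps across a segment of the neighbouring fuzzy edge that is parallel to $I_j$ (e.g.\ a staircase fuzzy edge with steps of normals $\nu_1,\nu_2$). This is the BV behaviour you identified one paragraph earlier, and it is why \eqref{eq:ODE flow} is read in the Filippov sense. Such jumps have size comparable to the step length, independent of $d_H(E,F)$.

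Hence the pointwise bound $\bigl||I^E(t)|-|I(t)|\bigr|\lesssim\Delta(t)$ fails. Already for an exact translate $E=F(u^0)$, consider the time interval, of length $\sim|v_I|$, during which $u_j(t)$ and $u_j(t)+v_I$ lie on opposite sides of a jump of $\ell_j$. There the two lengths differ by the full jump, so $\dot v_I$ is of order one while $\Delta$ is small. Your sandwich therefore only yields $\bigl||I^E(t)|-|I(t)|\bigr|\le\operatorname{osc}_{[t-\tau,t+\tau]}|I|+C\Delta$. The oscillation term cannot be absorbed into $\dot\Delta\le C\Delta$. Integrating it in time instead gives a contribution like $C\sum_k J_k\,\Delta(t_k)$ over the jump times, with $O(1)$ coefficients, which does not close under Gr\"onwall.

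The paper never compares lengths pointwise. It first reduces to $s_{I_j^E}=s_{I_j}$ via \eqref{eq:ODE flow stability}. It then integrates the edge equation in the spatial variable rather than in time. Since $|I_j|\,\dot u_j=-\alpha_{I_j}$, both flows satisfy the exact identity $\int_0^{u_j(t)}|I_j(\tau_j(s))|\,ds=-\alpha_{I_j}t$, where $\tau_j$ inverts the monotone map $t\mapsto u_j(t)$; in words, the area swept by each edge is linear in time. Subtracting the two identities, the discontinuous parts enter only through $\int|\ell_j'-\ell_j|\,ds\lesssim|E\Delta F|\lesssim d_H(E,F)$, which is an $L^1$ quantity. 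The speed lower bound $|\dot u_j|\ge 1/C$ is used to compare $\tau_j$ with $\tau_j'$. This gives $|u'-u|(t)\le C\bigl(d_H(E,F)+\int_0^t|u'-u|\bigr)$, and Gr\"onwall finishes.

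Alternatively, your sandwich can be salvaged if you use it only one-sidedly, in the spirit of \cref{prop:ODE wellposed}. Once $|v_I|$ exceeds $c\,d_H(E,F)$, monotonicity of $\ell_j$ makes the leading edge at least as long up to $O(\Delta)$, hence not faster. So $(|v_I|-c\,d_H(E,F))_+$ obeys a Gr\"onwall inequality with an $O(d_H)$ source.

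Your treatment of $\bar u$, of $\chi(E)=\chi(F)$, and of the vanishing times is consistent with the paper.
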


\begin{remark}
    At first glance, it may seem that \eqref{eq:ODE stability hausdorff} is a straightforward extension of \eqref{eq:ODE flow stability}. However, the proximity of $E,F$ in Hausdorff distance only implies $L^1$ proximity of the corresponding vector fields $Y^E$ and $Y^F$ (see \eqref{eq:length diff area bound}), in which case DiPerna-Lions theory suggests only that the flow maps $\Psi_t^E$ and $\Psi_t^F$ are close in $L^1$. To obtain uniform proximity of trajectories, we exploit the fact that $|Y_j^E|$ and $|Y_j^F|$ are bounded below. 
\end{remark}

\begin{proof}   
    The constant $C$ has only the aforementioned dependencies and may change from line to line. First, we assume that $E_t$ and $F_t$ have no vanishing fuzzy edges for $t\leq T$. Then we can express $F_t = F(u(t))$ and $E_t = E(u'(t))$ where $u(t), u'(t)$ are respectively solutions to \eqref{eq:ODE flow}, and with a departure in notation from earlier in the section, we let $\{I_j(t)\}_{j=1}^m$ and $\{I_j'(t)\}_{j=1}^m$ denote the corresponding evolution of edges. By $\eps \lesssim_\phi |I_j(t)|, |I_j'(t)| \lesssim_\phi P_\phi(F)$, we have the bound $\inv{C} \leq |\frac{du_j}{dt}|, |\frac{du_j'}{dt}| \leq C$. Moreover by the Wulff inequality and recalling $|F(u(t))| = |F|$ by \eqref{eq:ODE flow area pres},
    \begin{equation*}
        \Big|\frac{d\ol{u}}{dt}\Big| = |\okp_{F(u(t))}| = \frac{|\chi(F)|P_\phi(W_\phi)}{P_\phi(F(u(t)))} \leq \frac{|\chi(F)|P_\phi(W_\phi)^{1/2}}{|F|^{1/2}} \leq C
    \end{equation*}
    and thus $|\frac{du}{dt}|, |\frac{du'}{dt}|\leq C$.

    We reduce to the assumption that $s_{I_j'} = s_{I_j}$ by considering the translate $\tilde{F} = F(u^0)$ such that $\ol{u}^0 =0$ and $u^0_j = s_{I_j'} - s_{I_j}$, in which case the regular edges $\tilde{I}_j$ of $\tilde{F}$ satisfy $s_{\tilde{I}_j} = s_{I_j'}$. Note that $|u^0| \lesssim_\phi d_H(E,F)$. By \cref{prop:ODE wellposed}, the translated flow $\tilde{F}_t := F(\Psi^F_t(u^0))$ satisfies
    \begin{equation}
        d_H(\tilde{F}_t, F_t) \lesssim_\phi |\Psi^F_t(u^0) - \Psi^F_t(0)| \leq Ce^Ct|u_0| \lesssim_\phi Ce^{Ct} d_H(E,F).
    \end{equation}
    If we are able to show $d_H(E_t, \tilde{F}_t) \leq Ce^{Ct} d_H(E, \tilde{F})$, then \eqref{eq:ODE stability hausdorff} follows by the triangle inequality.
    
    Thus we may assume $s_{I_j'} = s_{I_j}$ for all $j$. Recall from the proof of \cref{prop:ODE wellposed} that we can express
    \begin{align}
        |I_j(t)| &= \ell_j(u_j(t)) + f_j(u(t)) \label{eq:length formula} \\
        |I_j'(t)| &= \ell_j'(u_j(t)) + f_j(u(t)) \label{eq:length formula k}
    \end{align}
    where $\ell_j(s)$ is the length of $I_j$ in $F(se_j)$, $\ell_j'(s)$ is the length of $I_j'$ in $E(se_j)$, and $f_j$ is a linear function depending only on $\phi$. The key is to control the difference between $\ell_j$ and $\ell_j'$, which are not uniformly close, but are close in $L^1$. For $d_H(E,F)<\delta$ sufficiently small, we can bound
    \begin{equation}
        \label{eq:length diff area bound}
        \abs{\int_0^{h} \abs{\ell_j'(s) - \ell_j(s)} ds} \leq \inv{\phi(\nu_{I_j})} |E\Delta F| \lesssim_\phi P_\phi(F) d_H(E,F).
    \end{equation}
    To exploit \eqref{eq:length diff area bound}, we define inverse maps $\tau_j(s)$, $\tau'_j(s)$ such that $u_j(\tau_j(s)) = s$ and $u_j'(\tau_j'(s))=s$, which are well-defined since $u_j, u_j'$ are monotone. Note that 
    \begin{equation}
        \label{eq:ODE length integral}
        \int_0^{u_j(t)} |I_j(\tau_j(s))| ds = \int_0^t |I_j(u(t))| \dot{u}_j(t) dt = \int_0^t -\alpha_{I_j} dt = -\alpha_{I_j}t.
    \end{equation}
    The same identity holds for $u_j'(t)$, so we have the equivalence 
    \begin{equation}
        \label{eq:ODE length integral 2}
        \int_0^{u_j(t)} |I_j(\tau_j(s))|ds = -\alpha_{I_j}t = \int_0^{u_j'(t)} |I_j'(\tau_j'(s))|ds.
    \end{equation}
    By symmetry it makes no harm to assume $|u_j(t)| < |u_j'(t)|$ (for a fixed $t$), and we may estimate using \eqref{eq:length formula}-\eqref{eq:length diff area bound} and \eqref{eq:ODE length integral 2}
    \begin{align}
        \inv{C} |u_j'(t) - u_j(t)| &\leq \abs{\int_{u_j(t)}^{u'_j(t)} |I_j'(\tau_j'(s))|ds }\notag \\
        &= \abs{
            \int_0^{u_j(t)} |I_j'(\tau_j'(s))| - |I_j(\tau_j(s))|ds  
        }\notag\\
        &\leq \abs{\int_0^{u_j(t)} (\ell_j'(s) - \ell_j(s)) ds} + \abs{\int_0^{u_j(t)} f_j\big(u'(\tau_j'(s)) - u(\tau_j(s))\big)ds} \notag\\
        &\leq C\ps{d_H(E,F) + \int_0^{u_j(t)} |u'(\tau_j'(s)) - u(\tau_j(s))|ds}. \label{eq:uj inequality}
    \end{align}
    We can estimate the last integrand as 
    \begin{equation}
        \label{eq:uj inequality 2}
        |u'(\tau_j'(s)) - u(\tau_j(s))| \leq |u'(\tau_j'(s)) - u'(\tau_j(s))| + |u'(\tau_j(s)) - u(\tau_j(s))|
    \end{equation}
    and since $|\frac{du}{dt}| \leq C$, $-\sigma_{I_j}\frac{du'_j}{dt} \geq \inv{C}$, and $u_j(\tau_j(s)) = u_j'(\tau_j'(s))$, we can further estimate the first term
    \begin{align}
        \label{eq:uj inequality 3}
        |u'(\tau_j'(s)) - u'(\tau_j(s))| &\leq C |\tau_j'(s) - \tau_j(s)| \notag \\
        &\leq C|u_j'(\tau_j'(s)) - u_j'(\tau_j(s))| \notag \\
        &= C|u_j(\tau_j(s)) - u_j'(\tau_j(s))|.
    \end{align}
    By \eqref{eq:uj inequality 2} and \eqref{eq:uj inequality 3}, 
    \begin{align*}
        \int_0^{u_j(t)} |u'(\tau_j'(s)) - u(\tau_j(s))|ds &\leq C \int_0^{u_j(t)} |u'(\tau_j(s)) - u(\tau_j(s))| ds\\
        &\leq C \int_0^t |u'(t) - u(t)| dt.
    \end{align*}
    Thus \eqref{eq:uj inequality} yields 
    \begin{equation}
        |u_j'(t) - u_j(t)| \leq C \ps{d_H(E,F) +  \int_0^t |u'(t) - u(t)|dt}.
    \end{equation}
    
    Estimating $\ol{u}' - \ol{u}$ is more straightforward since $P_\phi(F(u))$ is an affine function of $u$:
    \begin{align}
        \label{eq:ol u inequality}
        \abs{\ol{u}'(t) - \ol{u}(t)} &\leq \chi(F) P_\phi(W_\phi) \int_0^t \abs{\inv{P_\phi(E(u'(t)))} - \inv{{P_\phi(F(u(t)))} } } dt \notag\\
        &\leq C \int_0^t |u'(t) - u(t)|dt.
    \end{align}
    By \eqref{eq:uj inequality} and \eqref{eq:ol u inequality}, we obtain
    \begin{equation}
        |u'(t) - u(t)| \leq C \ps{d_H(E,F) + \int_0^t |u'(t) - u(t)|dt }
    \end{equation}
    and thus $|u'(t) - u(t)| \leq Ce^{Ct} d_H(E,F)$ by Gr\"onwall's inequality. Finally, \eqref{eq:ODE stability hausdorff} follows from the fact that $d_H(E_t, F_t) \leq d_H(E,F) + C|u'(t) - u(t)|$. 

    Now let us consider the case that $F_t$ has a type II fuzzy edge $J$ which vanishes at $t_1 < T$. Then $J$ is adjacent to parallel regular edges $I_1$ and $I_2$ which are respectively negative and positive, such that $s_{I_1(t)} - s_{I_2(t)}$ increases to 0 as $t\to t_1^-$. By the previous argument, we can estimate 
    \begin{align*}
        s_{I_1'(t)} - s_{I_2'(t)} &= u_1'(t) - u_2'(t) + (s_{I_1'} - s_{I_2'}) \\
        &= u_1(t) - u_2(t) + (s_{I_1} - s_{I_2}) + O(e^{Ct}d_H(E,F))\\
        &= s_{I_1(t)} - s_{I_2(t)} + O(e^{Ct}d_H(E,F)).
    \end{align*}
    Since $\frac{d}{dt}(u_1' - u_2') = -\frac{\alpha_{I_1}}{|I_1(t)|} + \frac{\alpha_{I_2}}{|I_2(t)|} = \frac{L_{I_1}}{|I_1(t)|} + \frac{L_{I_2}}{|I_2(t)|} \geq \inv{C}$, it's clear that the fuzzy edge $J'$ joining $I_1'$ and $I_2'$ must also vanish at a time $t_1'$ such that $|t_1' - t_1| \leq Ce^{Ct_1}d_H(E,F)$. 

    By symmetry, it makes no harm to assume $t_1 \leq t_1'$. By the previous argument, one can apply \eqref{eq:ODE stability hausdorff} to deduce $d_H(E_{t_1}, F_{t_1}) \leq Ce^{Ct_1} d_H(E,F)$. Since vanishing times for $F$ are distinct, the restarted flow $F_t = F_{t_1}(\Psi^{F_{t_1}}_{t-t_1}(0))$ exists for $t\leq t_1'$, and we can estimate
    \begin{align*}
        d_H(E_{t_1'}, F_{t_1'}) &\leq d_H(E_{t_1}, F_{t_1}) + d_H(E_{t_1}, E_{t_1'}) + d_H(F_{t_1}, F_{t_1'}) \\
        &\leq d_H(E_{t_1}, F_{t_1}) + Ce^{C(t_1'-t_1)}(t_1'-t_1)\\
        &\leq Ce^{Ct_1'} d_H(E,F).
    \end{align*}
    Since $E_{t_1'}, F_{t_1'}$ are rough translates, one can re-apply the previous argument to obtain \eqref{eq:ODE stability hausdorff} until the next vanishing time of either flow. Iterating in this manner, we conclude the proof.
\end{proof}

\begin{corollary}
    \label{cor:ODE stability hausdorff 2}
    Let $F\sb\R^2$ be Lipschitz $\phi$-regular such that $F_t := \Psi_t(F)$ exists for $t\in[0,T)$. For all $T_0 < T$ and $\alpha\in (0,1)$, there exist $\delta, C > 0$ such that if $E$ is a rough translate of $F$ satisfying $d_H(E,F) < e^{-CT_0} \delta$ and $\eps_0(E) > \alpha \eps_0(F)$, then $E_t := \Psi_t(E)$ exists on $[0,T_0]$, and
    \begin{equation}
        \label{eq:ODE stability hausdorff 2}
        d_H(E_t, F_t) \leq Ce^{Ct} d_H(E,F) \quad\forall t < T_0.
    \end{equation}
    Moreover, $\delta, C$ depend only on $\phi, F, \alpha$, and $\inf_{t\in [0,T_0]} \eps_0(F_t)$.
\end{corollary}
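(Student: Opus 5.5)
The plan is to reduce \cref{cor:ODE stability hausdorff 2} to \cref{prop:ODE stability hausdorff} by a continuity (bootstrap) argument. The one hypothesis of the proposition that is not given directly is the uniform lower bound $\eps_0(E_t) \geq \eps$ along the evolution of $E$. The other ingredient missing from the corollary's hypotheses is existence of $E_t$ on $[0,T_0]$.

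First, set
\[ \eps_F := \inf_{t\in[0,T_0]} \eps_0(F_t) > 0, \]
which is positive because $F_t$ exists on $[0,T_0]\subset[0,T)$ and, by \cref{def:ODE flow}, $\eps_0$ stays bounded below across the finitely many vanishing times. Fix the target $\eps := \tfrac12 \min\{\alpha\eps_0(F), \eps_F\}$. Let $C,\delta$ be the constants of \cref{prop:ODE stability hausdorff} associated to this $\eps$ and to $\phi, P_\phi(F), |F|, \chi(F)$, with $\delta$ shrunk further as needed below. Define $T^*$ to be the supremum of times $s\le T_0$ such that $E_t$ exists on $[0,s]$ and $\eps_0(E_t)\ge\eps$ there. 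Since $\eps_0(E)>\alpha\eps_0(F)\ge 2\eps$, \cref{prop:ODE wellposed} gives local existence, and continuity of $\eps_0$ under small translates gives $T^*>0$.

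On $[0,T^*)$ the hypotheses of \cref{prop:ODE stability hausdorff} hold, so $d_H(E_t,F_t)\le Ce^{Ct}d_H(E,F)\le C\delta$. The proposition also shows that vanishing type II fuzzy edges of $E_t$ occur at times within $Ce^{Ct}d_H(E,F)$ of those of $F_t$. Hence $E_t$ and $F_t$ remain rough translates with the same combinatorial edge structure, outside short windows around the vanishing times.

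The key step, and the expected main obstacle, is a lower semicontinuity estimate for $\eps_0$ under Hausdorff proximity within a fixed combinatorial class. Concretely, I aim to show that if $G$ is a rough translate of $H$ with $d_H(G,H)\le\eta$, then
\[ \eps_0(G)\ge \eps_0(H) - C_\phi\,\eta. \]
The argument is that $\eps_0$ is controlled by the regular edge lengths together with the distances between non-adjacent boundary pieces. Each of these is $\eta$-stable for rough translates, since $G$'s regular edges are parallel to $H$'s and lie on lines at distance $O(\eta)$, and fuzzy edges obey the same cone condition. The one subtlety is that fuzzy edges of $G$ are not exact translates of those of $H$. For these I would use the cone condition to keep the tubular map injective: the normals along each fuzzy edge lie in a fixed arc $A[\nu_i,\nu_{i+1}]$, so $x+tp_J$ is injective on the fuzzy edge. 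Applying the estimate with $\eta=C\delta$ small gives $\eps_0(E_t)\ge \eps_F - C\delta\ge 2\eps - C\delta > \eps$ on $[0,T^*)$ away from the vanishing windows. Within a window, \cref{prop:ODE stability hausdorff} controls the drift by $Ce^{CT_0}d_H(E,F)$, and the same bound holds.

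Finally, because $\eps_0(E_t)$ stays strictly above $\eps$ up to $T^*$, \cref{prop:ODE wellposed} (whose constants depend only on $\eps_0$ and global quantities) lets us continue the flow and preserve $\eps_0>\eps$ slightly past $T^*$. This forces $T^*=T_0$, and \eqref{eq:ODE stability hausdorff 2} then follows from \eqref{eq:ODE stability hausdorff}. The dependence of $\delta, C$ only on $\phi, F, \alpha, \eps_F$ is inherited from the choice of $\eps$; the quantities $P_\phi(F), |F|, \chi(F)$ are determined by $F$.
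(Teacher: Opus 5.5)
\paragraph{Verdict: genuine gap.} Your bootstrap skeleton is the same as the paper's. However, the step you call the key one is false as stated. The inequality $\eps_0(G)\ge\eps_0(H)-C_\phi\eta$ fails for rough translates because regular edge lengths are not stable under Hausdorff perturbation within this class. This matters since $\eps_0(G)\le\ell_0(G):=\min_j|I_j^G|/L_{I_j}$: pushing an edge inward by $|I_j^G|/L_{I_j}$ collapses it.

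\paragraph{A counterexample to your lemma.} Let $I\subset\partial H$ be a negative regular edge with normal $\nu_1$, followed by a fuzzy edge $J$ with $p_J=p_1$. Build $G$ as follows:
\begin{itemize}
\item $I^G$ lies on the line of $I$ but has length $O(\eta)$;
\item $J^G$ then climbs a step of height $O(\eta)$ along a segment with normal $\nu_2$;
\item it then runs parallel to the rest of $I$ (normal $\nu_1$), and finally follows $J$.
\end{itemize}
The boundary lies on both sides of that horizontal shelf. So the shelf is not a regular edge but part of $J^G$, which still satisfies the $p_1$ cone condition. Hence $G$ is a rough translate of $H$ with $d_H(G,H)\lesssim\eta$, yet $\eps_0(G)=O(\eta)$. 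This is exactly why the paper remarks that $\alpha$ is needed. A symptom of the problem: you use $\eps_0(E)>\alpha\eps_0(F)$ only to get $T^*>0$, whereas your lemma would make that hypothesis automatic. Your static reasoning only yields $\eps_0(E_t)\ge\min\{\ell_0(E_t),\eps_0(F_t)-Ce^{Ct}d\}$ with $d:=d_H(E,F)$, which is where the paper starts.

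\paragraph{What is missing.} You need a dynamical lower bound on $\ell_0(E_t)$. Use the notation $\ell_j,\ell_j',u_j,u_j'$ from the proof of \cref{prop:ODE stability hausdorff}.
\begin{itemize}
\item The cone conditions give only the one-sided comparison $\ell_j'(s)\ge\ell_j(s-cd)$ for $s\ge cd$. In other words, shelves can make an edge of $E$ short only at levels within $O(d)$ of its initial position.
\item $\ell_j$ is monotone in the direction the edge moves.
\item We have $|u_j(t)-u_j'(t)|\le Ce^{Ct}d$ and $|du_j/dt|\ge 1/C$.
\end{itemize}
At the first time $t_*$ where $\eps_0(E_t)$ reaches the threshold, these give $t_1\in[t_*-Ce^{Ct_*}d,\,t_*]$ with $u_j(t_1)\le u_j'(t_*)-cd$ (negative-edge case). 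Monotonicity then gives $|I_j'(t_*)|\ge|I_j(t_1)|-Ce^{Ct_*}d$, hence $\ell_0(E_{t_*})\ge\eps_0(F_{t_1})-Ce^{Ct_*}d$. By that time the edge of $E_t$ has moved past any short shelves, and crossing them can only lengthen it.

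In the initial window $t\lesssim d$ no such $t_1$ exists. There, $\ell_j'$ is still monotone and the linear part $f_j$ changes by only $O(d)$. So edge lengths drop by at most $O(d)$ from those of $E$, and this is where $\eps_0(E)>\alpha\eps_0(F)$ is genuinely used.

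With this argument replacing your lemma, your threshold $\eps=\frac{1}{2}\min\{\alpha\eps_0(F),\eps_F\}$ leads to the contradiction $\eps\ge 2\eps-Ce^{CT_0}\delta$ for $\delta$ small, and the rest of your bootstrap goes through.
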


\begin{remark}
    The parameter $\alpha$ is needed since rough translates may have arbitrarily short regular edges.
\end{remark}

\begin{proof}    
    We employ a standard bootstrapping argument. Let $E$ be a rough translate of $F$ satisfying $d_H(E,F) < e^{-CT_0}\delta$ and $\eps_0(E) > \alpha \eps_0(F)$, and let $T'$ be the maximal time of existence for $E_t$. Let $\eps_* := \inf_{t\in [0,T_0]} \eps_0(F_t) > 0$ and consider
    \[ t_* := \inf\{t < T': \eps_0(E_t) < \aa\eps_*\} > 0.  \]
    Since $\eps_0(E_t) \geq \aa\eps_*$ for all $t\leq t_*$, \cref{prop:ODE stability hausdorff} implies the existence of $\delta, C$ such that \eqref{eq:ODE stability hausdorff 2} holds for $t\leq t_*$. We merely need to check that $t_* \geq T_0$ provided $\delta$ is sufficiently small. 
    
    Suppose for sake of contradiction that $t_* < T_0$. By continuity, $\eps_0(E_{t_*}) = \aa\eps_*$. For brevity we denote $d := d_H(E,F)$. Note that for any Lipschitz $\phi$-regular $F$, one has $\eps_0(F) \leq \ell_0(F) := \min_j \frac{|I_j|}{L_{I_j}}$. Moreover, $\partial E \sb \{\dd_F^\phic < Ce^{Ct}d\}$ implies 
    \begin{equation}
        \label{eq:eps0}
        \eps_0(E_t) \geq \min\{\ell_0(E_t), \eps_0(F_t) - Ce^{Ct}d\}. 
    \end{equation}

    To lower bound $\ell_0(E_t)$, let us consider a negative edge $I_j'$ of $E$. Borrowing the notations $\ell_j, \ell_j', u_j, u_j'$ from the proof of \cref{prop:ODE stability hausdorff}, it is straightforward to see $\ell_j'(s) \geq \ell_j(s - cd)$ for some $c=c(\phi)$. Since $|u_j(t) - u_j'(t)| \leq Ce^{Ct}d$ and $\frac{du_j}{dt} \geq \inv{C}$, we can find $t_1 \in [t_* - Ce^{Ct}d, t_*]$ such that $u_j(t_1) \leq u_j'(t_*) - cd$. By the monotonicity of $\ell_j$, we obtain
    \begin{align*}
        |I_j'(t_*)| &= |I_j(t_1)| + \ell_j'(u_j'(t_*)) - \ell_j(u_j(t_1)) + f_j(u'(t_*) - u(t_1)) \\
        &\geq |I_j(t_1)| + \ell_j'(u_j'(t_*)) - \ell_j(u_j'(t_*) - cd) - Ce^{Ct_*}d\\
        &\geq |I_j(t_1)| - Ce^{Ct_*}d.
    \end{align*}
    One obtains the same bound if $I_j'$ is positive, and thus $\ell_0(E_{t_*}) \geq \ell_0(F_{t_1}) - Ce^{Ct_*}d \geq \eps_0(F_{t_1}) - Ce^{Ct_*}d$. Combining this with \eqref{eq:eps0}, we find that $\aa\eps_* = \eps_0(E_{t_*}) \geq \eps_* - Ce^{Ct_*}d$ and thus $(1-\alpha)\eps_* \leq Ce^{CT_0}\delta$, from which we obtain a contradiction for $\delta$ sufficiently small. 
\end{proof}

\subsection{Flat flow}
Here we establish the existence of the flat flow solution to \eqref{eq:cr flow} and develop the estimates necessary for studying long-time behavior in \cref{sec:long time}. We closely follow the treatments in \cite{Mugnai2016, KimKwon2024}, though a key difference is that the Euler-Lagrange equation holds only in an averaged sense over edges, and thus the $L^2$ bound on the Lagrange multiplier (\cref{prop:lambda L2 bound}) requires a different argument.

\begin{definition}
    \label{def:flat flow}
    For a bounded set of finite perimeter $E_0\sb\R^2$ and a fixed timestep $\tau>0$, we define an \textbf{\emph{approximate flat $\phi$-flow}} of $E_0$ to be an evolution of sets $\Et: [0,\infty) \to BV(\R^2;\{0,1\})$ given by setting $\Et_0 := E_0$, iterating the minimization problem $\Et_{k+1} \in \argmin \cF_\tau(\cdot,\Et_k)$ where  
    \begin{equation}
        \label{eq:energy vol}
        \cF_\tau(E,F) := P_\phi(E) + \inv{\tau} \int_E \sdp_F(x) dx + \inv{\tau^{1/2}}\abs{|E|-|E_0|}
    \end{equation}
    and defining $\Et(t) := \Et_{\floor{t/\tau}}$. We then define a \textbf{\emph{flat $\phi$-flow}} $E(t)$ of $E_0$ to be an $L^1$-subsequential limit of approximate flat flows $\Etn(t)$ as $\tau_n\to 0$, i.e. $\lim_{n\to\infty} |\Etn(t)\Delta E(t)| = 0$ for all $t\geq0$. 
\end{definition}

\begin{theorem}
    \label{thm:flat flow existence}
    Let $E_0\sb\R^2$ be a set of finite perimeter. There exists a flat $\phi$-flow $\{E(t)\}_{t\geq0}$ starting from $E_0$. Moreover any such flow satisfies $P_\phi(E(t)) \leq P_\phi(E_0)$ and $|E(t)| = |E_0|$ for all $t\geq0$, and for $C=C(L_\phi)$,
    \begin{equation}
        \label{eq:holder cty in time}
        |E(t)\Delta E(s)| \leq CP_\phi(E)|t-s|^{1/2} \qquad \forall s,t\in[0,\infty)
    \end{equation}
\end{theorem}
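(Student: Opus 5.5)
We follow the Luckhaus--Sturzenhecker / Mugnai--Seis--Spadaro scheme, adapted to the anisotropic dissipation as in \cite{KimKwon2024}. The plan is: prove uniform-in-$\tau$ estimates for the discrete scheme, extract a limit by compactness and a diagonal argument, and then show that area is preserved in the limit.

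\textbf{Step 1: existence of minimizers.} For fixed $F$ bounded with finite perimeter, existence of a minimizer of $\cF_\tau(\cdot,F)$ follows from the direct method. First reduce to competitors contained in a large ball: intersecting with a Wulff shape $W_\phi(0,R)$ for $R$ large does not increase $P_\phi$ by \cref{cor:phi min perturbation}, and it decreases $\int_E \sdp_F$ since $\sdp_F>0$ far from $F$. The volume penalty changes only by a controlled amount, and this is absorbed because $\sdp_F/\tau$ grows linearly. Then $L^1$ compactness and lower semicontinuity of $P_\phi$ conclude.

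\textbf{Step 2: one-step estimates.} Comparing $\Et_{k+1}$ with $\Et_k$ gives
\[ P_\phi(\Et_{k+1}) + \tfrac1\tau \cD(\Et_{k+1},\Et_k) + \tau^{-1/2}\bigl||\Et_{k+1}|-|E_0|\bigr| \le P_\phi(\Et_k) + \tau^{-1/2}\bigl||\Et_k|-|E_0|\bigr|. \]
Telescoping, $P_\phi(\Et_k)\le P_\phi(E_0)$, $\sum_k \cD(\Et_{k+1},\Et_k)\le \tau P_\phi(E_0)$, and the volume defect is nonincreasing. Next we need the standard $L^1$--dissipation estimate
\[ |\Et_{k+1}\Delta \Et_k| \lesssim \ell\, P_\phi(\Et_k) + \tfrac1\ell\, \cD(\Et_{k+1},\Et_k) \qquad\text{for } \ell\lesssim\sqrt\tau. \]
To prove it, split $\Et_{k+1}\Delta\Et_k$ into the part within distance $\ell$ of $\partial \Et_k$, which is bounded by coarea and the density estimates, and the part farther out, which is bounded by $\cD/\ell$. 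The density estimates come from the fact that minimizers are $(\Lambda,r_0)$-minimizers with $\Lambda\sim\tau^{-1/2}$ locally, by \cref{thm:lipreg}-type arguments and \cref{lem:finite curve}. Choosing $\ell=\sqrt{\tau}$ (or the usual optimization over $\ell$) and summing over steps $k$ with Cauchy--Schwarz gives
\[ |\Et(t)\Delta\Et(s)|\le C P_\phi(E_0)|t-s|^{1/2} \qquad \text{for } |t-s|\ge\tau. \]

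\textbf{Step 3: compactness and passage to the limit.} The uniform perimeter bound and a uniform bound on the diameter (from comparison with Wulff-shape barriers, or from the Hölder estimate together with confinement) give precompactness in $L^1$ at each rational time. A diagonal argument plus the equi-Hölder estimate then yields a subsequence $\tau_n$ with $\Etn(t)\to E(t)$ for all $t\ge0$. Lower semicontinuity gives $P_\phi(E(t))\le P_\phi(E_0)$, and \eqref{eq:holder cty in time} passes to the limit.

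\textbf{Step 4: area preservation (main obstacle).} We must show that $\bigl||\Et_k|-|E_0|\bigr|\to0$ uniformly in $k$ as $\tau\to0$. The approach, following \cite{Mugnai2016}, is to argue that for $\tau$ small the volume penalty is ``exact'': if $|\Et_{k+1}|\ne|E_0|$, we perturb $\Et_{k+1}$ by translating a regular edge. This changes area at rate $P_\phi(I)$ and perimeter at rate $|\kp(I)|P_\phi(I)$, and dissipation only at higher order. The Euler--Lagrange relation \cref{lem:EL} bounds the multiplier by $\tau^{-1/2}$ on average, and a penalty coefficient $\tau^{-1/2}$ dominating the curvature plus the $\sdp/\tau$ cost then forces the defect to zero.

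If exactness fails, we fall back on the crude bound. The defect is nonincreasing, and at each step it is at most $\tau^{1/2}\bigl(P_\phi(\Et_k)-P_\phi(\Et_{k+1})\bigr)$ plus the previous defect. Since it starts at $0$, an inductive comparison with the competitor $\Et_k$ shows that it stays $\lesssim \tau^{1/2}P_\phi(E_0)$. Passing to the limit then gives $|E(t)|=|E_0|$.
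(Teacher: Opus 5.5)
Your skeleton matches the paper's proof. You use the dissipation inequality to get $P_\phi(\Et_k)\le P_\phi(E_0)$ and, telescoped from $k=0$, $\bigl||\Et_k|-|E_0|\bigr|\le\tau^{1/2}P_\phi(E_0)$. That is your ``fallback'' bound, and it is all the paper uses for the area. Your ``exactness'' argument before it does not go through as stated: nothing prevents the multiplier from reaching $\pm\tau^{-1/2}$, and the paper only shows this happens on a set of times of measure $O((T+1)\tau)$. Also, the defect itself is not monotone; only $P_\phi(\Et_k)+\tau^{-1/2}\bigl||\Et_k|-|E_0|\bigr|$ is. The remaining steps also agree with the paper: the H\"older bound from the $L^1$ estimate optimized in $\ell$, then diagonal extraction and lower semicontinuity.

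The genuine gap is Step 3: you need a bound on the support of $\Et(t)$ that is uniform in $\tau$ for each fixed $t$. Your second justification for it is circular. The first fails because this scheme has no comparison principle. Set $r_k:=\inf\{r:\Et_k\subset rW_\phi\}$ and compare $\Et_{k+1}$ with $\Et_{k+1}\cap W_\phi(0,R')$. The dissipation gain is at least $\tau^{-1}(R'-r_k)|\Et_{k+1}\setminus W_\phi(0,R')|$. The volume penalty may cost up to $\tau^{-1/2}|\Et_{k+1}\setminus W_\phi(0,R')|$. So the comparison only gives $r_{k+1}\le r_k+\tau^{1/2}$, hence $r_T\le r_0+T\tau^{-1/2}$, which degenerates as $\tau\to0$. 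This is not cosmetic. Without tightness you only get $L^1_{\mathrm{loc}}$ limits and $|E(t)|\le|E_0|$. Then neither the $L^1$ convergence required by \cref{def:flat flow} nor the area identity in your Step 4 is proved.

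The missing idea is time-integrated control of the Lagrange multiplier, \cref{prop:lambda L2 bound}.
First, $|\Et(t)|\ge|E_0|/2$ for small $\tau$. Together with the perimeter bound and the Wulff inequality, this forces a boundary component $\Gamma$ with $P_\phi(\Gamma)\ge1/C$.
Next, integrate \eqref{eq:EL flat} over $\Gamma$, using $\bigl|\int_\Gamma\kp\,dP_\phi\bigr|\le P_\phi(W_\phi)$ and Cauchy--Schwarz. Squaring and integrating in time against the $L^2$ velocity bound \eqref{eq:L2 velocity bound} gives $\int_0^T|\lt|^2\,dt\le C(T+1)$.
Then use the Lipschitz $\phi$-regularity of minimizers (\cref{thm:lipreg}): an outermost point of $\Et_k$ lies on a positive regular edge $I\subset\partial(r_kW_\phi)$. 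Applying \eqref{eq:EL flat} there gives $r_k-r_{k-1}\le\dashint_I\sdp_{\Et_{k-1}}\,dP_\phi<\lt(k\tau)\,\tau$.
Finally, summing over $k$ and applying Cauchy--Schwarz yields $\Et(T)\subset E_0+C(T+1)W_\phi$. This confinement is what makes the rational-time diagonal extraction produce genuine $L^1$ limits, after which your telescoped bound gives $|E(t)|=|E_0|$.
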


We define the dissipation 
\[ \cD(E,F) := \int_{E\Delta F} \dd_F^\phic(x)dx = \int_E \sdp_F(x) dx - \int_F \sdp_F(x)dx. \]
Then minimizers $E$ of $\cF_\tau(\cdot,F)$ satisfy the energy dissipation inequality 
\begin{equation}
    \label{eq:dissipation}
    P_\phi(E) + \inv{\tau} \cD(E,F) + \inv{\tau^{1/2}}||E|-|E_0|| \leq P_\phi(F) + \inv{\tau^{1/2}}||F|-|E_0||.
\end{equation}
Thus approximate flat flows satisfy an iterated dissipation inequality: for any $i\leq k$,
\begin{equation}
    \label{eq:iterated dissipation}
    P_\phi(\Et_k) + \inv{\tau} \sum_{j=i}^{k-1} \cD(\Et_{j+1},\Et_j) + \inv{\tau^{1/2}}||\Et_k| - |E_0|| \leq P_\phi(\Et_i) + \inv{\tau^{1/2}}||\Et_i|- |E_0||.
\end{equation}

The following estimates are proven in \cite[Appendix B]{KimKwon2024}, with no assumptions on the regularity of $\phi$. We note that the results in \cite{KimKwon2024} are stated only for even anisotropies (\emph{i.e.} norms), but the evenness assumption is not necessary for any of the proofs. 
\begin{lemma}
    \label{lem:standard estimate flat flow cr}
    Let $E\in\argmin \cF_\tau(\cdot, F)$. There exist constants $c_0,C>0$ depending only on $L_\phi$ such that
    \begin{itemize}
        \item[(i)] ($L^\infty$ estimate) \begin{equation}
            \label{eq:Linfty estimate cr}
            \sup_{E\Delta F} \dd^\phic_F \leq c_0\tau^{1/2}
        \end{equation}
        \item[(ii)] ($L^1$ estimate) If $F\in\argmin \cF_\tau(\cdot, G)$ for some $G$, then for all $\ell\leq c_0\tau^{1/2}$,
        \begin{equation}
            \label{eq:L1 estimate cr}
            |E\Delta F|\leq C\ps{\ell P_\phi(F) + \inv{\ell} \cD(E,F)}
        \end{equation}
        \item[(iii)] ($L^2$ estimate) \begin{equation}
            \label{eq:L2 estimate cr}
            \int_{\partial E} (\dd^\phic_F)^2 d\cH^1 \leq C\cD(E,F)
        \end{equation}
    \end{itemize}
    Moreover, if $\Et(t)$ is an approximate flat $\phi$-flow starting from $E_0$, then 
    \begin{itemize}
        \item[(iv)] (H\"older continuity in time) For all $\tau\leq s < t < \infty$,
        \begin{equation}
            \label{eq:Holder cty in time tau}
            |\Et(s)\Delta \Et(t)| \leq CP_\phi(E_0) \max\{t-s, \tau\}^{1/2}
        \end{equation}
        \item[(v)] ($L^2$ velocity bound)
        \begin{equation}
            \label{eq:L2 velocity bound}
            \int_\tau^\infty \int_{\partial \Et(t)} \bigg(\frac{\dd_{\Et(t-\tau)}^\phic}{\tau}\bigg)^2 d\cH^1 dt \leq CP_\phi(E_0). 
        \end{equation}
    \end{itemize}
\end{lemma}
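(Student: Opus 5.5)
The statement is the list of standard estimates for minimizers of $\cF_\tau(\cdot,F)$. The plan is to follow the classical Luckhaus–Sturzenhecker / Mugnai–Seis–Spadaro scheme, as adapted in \cite[Appendix B]{KimKwon2024}. The only inputs needed are the two-sided comparison $L_\phi^{-1}|\cdot|\le\phi\le L_\phi|\cdot|$, the same comparison for $\phic$, and the volume penalty having Lipschitz constant $\tau^{-1/2}$. No regularity or evenness of $\phi$ is used, and wherever a cut-and-paste argument would use the reflected anisotropy we simply pay a factor $L_\phi^2$.

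\emph{Step (i), $L^\infty$ estimate.} Suppose $x\in E\setminus F$ with $\dd^\phic_F(x)=r > c_0\tau^{1/2}$, so that $W_\phi(x,r)\cap F=\emptyset$ up to the comparison with Euclidean balls. Compare $E$ with $E\setminus B(x,s)$ for $s<r/L_\phi$. On $B(x,s)$ the integrand $\tau^{-1}\sdp_F\gtrsim r/\tau$. Minimality then gives
\[
P_\phi(E;B(x,s))\le C\,\cH^1(E\cap\partial B(x,s)) + \bigl(\tau^{-1/2}-c r\tau^{-1}\bigr)|E\cap B(x,s)|.
\]
For $r\ge c_0\tau^{1/2}$ with $c_0$ large, the volume term is absorbed. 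The relative isoperimetric inequality then yields the ODE $m(s)^{1/2}\lesssim m'(s)$ for $m(s)=|E\cap B(x,s)|$. Since $x$ is a density point, this forces $m(s)\gtrsim s^2$, and hence a negative contribution of order $-r s^2/\tau$ that contradicts minimality once $s\sim r$. The case $x\in F\setminus E$ is symmetric, using $E\cup B(x,s)$.

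\emph{Step (ii), $L^1$ estimate.} Split $E\Delta F$ into $\{\dd^\phic_F\ge\ell\}$, whose measure is at most $\ell^{-1}\cD(E,F)$, and $\{\dd^\phic_F<\ell\}\cap(E\Delta F)$, whose measure is at most $|\{\dd^\phic_F<\ell\}|$. The hypothesis that $F$ is itself a minimizer gives uniform density estimates for $F$ at scales up to $c_0\tau^{1/2}$: the argument of (i), now with the bounded term $\dd^\phic_G\lesssim\tau^{1/2}$, gives $|F\cap B|,|B\setminus F|\gtrsim s^2$ for balls $B$ of radius $s\le c_0\tau^{1/2}$ centred on $\partial F$. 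A Vitali covering of $\partial F$ by balls of radius $\ell$ then gives $|\{\dd^\phic_F<\ell\}|\lesssim \ell P_\phi(F)$.

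\emph{Step (iii), $L^2$ estimate.} Fix $\rho=\dd^\phic_F(x)$ at a point $x\in\partial E$. By (i) and density estimates for $E$, the set $E\Delta F$ fills a proportion of $B(x,\rho/2L_\phi)$, and on that ball $\dd^\phic_F\gtrsim\rho$. Hence
\[
\int_{B(x,\rho/2L_\phi)\cap(E\Delta F)}\dd^\phic_F \gtrsim \rho^3 \gtrsim \rho^2\,\cH^1\bigl(\partial E\cap B(x,\rho/2L_\phi)\bigr).
\]
The last inequality uses the upper density bound $P(E;B_s)\lesssim s$, which comes from comparing $E$ with $E\cup B_s$ or $E\setminus B_s$. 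A Besicovitch covering then sums these bounds to give \eqref{eq:L2 estimate cr}.

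\emph{Steps (iv)–(v).} Both follow by summation. For (iv), apply (ii) to consecutive steps, sum over $j$, use \eqref{eq:iterated dissipation} and $P_\phi(\Et_j)\le P_\phi(E_0)$, and choose $\ell=\max\{t-s,\tau\}^{1/2}$ (capped at $c_0\tau^{1/2}$, otherwise using the triangle inequality at scale $\tau$). For (v), apply (iii) to each step, divide by $\tau^2$, multiply by $\tau$, and sum using \eqref{eq:iterated dissipation}.

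The main obstacle is (ii): the density estimates must hold uniformly at scale $\tau^{1/2}$ despite the nonlocal volume penalty, and this is exactly where the $\tau^{-1/2}$ weight is calibrated.
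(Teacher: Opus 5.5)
Your overall route is the same as the paper's. The paper does not reprove these estimates; it cites \cite[Appendix B]{KimKwon2024}, which is the Luckhaus--Sturzenhecker / Mugnai--Seis--Spadaro scheme you describe. Your sketches of (i), (ii), (iii) and (v) are the standard ones:
\begin{itemize}
\item the $L^\infty$ bound via a ball cut-and-paste plus the isoperimetric ODE;
\item density estimates at scales $\lesssim\tau^{1/2}$, obtained using (i);
\item the $L^1$ bound via Chebyshev plus a covering of the $\ell$-neighbourhood of $\partial F$;
\item the $L^2$ bound via a Besicovitch covering at scale $\dd^\phic_F(x)$;
\item (v) by summing (iii).
\end{itemize}

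There is, however, an error in (iv). Apply (ii) with $F=E^\tau_j$ for $j\ge1$; this is why $s\ge\tau$ is needed. Summing over the $N=\lfloor t/\tau\rfloor-\lfloor s/\tau\rfloor$ steps and using \eqref{eq:iterated dissipation}, the dissipations are summed once, with total at most $\tau P_\phi(E_0)$. This gives
\[
|E^\tau(s)\Delta E^\tau(t)|\le C P_\phi(E_0)\Big(N\ell+\frac{\tau}{\ell}\Big).
\]
With your choice $\ell=\max\{t-s,\tau\}^{1/2}$, the constraint $\ell\le c_0\tau^{1/2}$ fails as soon as $t-s>c_0^2\tau$. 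Neither fallback rescues this. Capping $\ell$ at $c_0\tau^{1/2}$ makes $N\ell$ of order $(t-s)\tau^{-1/2}$. Splitting $[s,t]$ into pieces of length $\tau$ and using the triangle inequality gives the same order. Neither is $O((t-s)^{1/2})$ uniformly in $\tau$, and that uniformity is exactly what is used in the proof of \cref{thm:flat flow existence}.

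The fix is that $\ell$ must shrink, not grow, as $t-s$ increases. Take $\ell=\tau\max\{t-s,\tau\}^{-1/2}$. This satisfies $\ell\le\tau^{1/2}$, so it is admissible. Since $N\le 2\max\{t-s,\tau\}/\tau$, both $N\ell$ and $\tau/\ell$ are then $\lesssim\max\{t-s,\tau\}^{1/2}$, which gives \eqref{eq:Holder cty in time tau}.
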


By the $L^\infty$ estimate \eqref{eq:Linfty estimate cr}, it is straightforward to check that any $E\in\argmin \cF_\tau(\cdot, F)$ is a $(\Lambda,r_0)$-minimizer of $P_\phi$ for $\Lambda \sim_\phi \tau^{1/2}$, and thus by a slight variant of \cref{lem:EL} (owing to the soft volume constraint), $E$ satisfies the Euler-Lagrange equation
\begin{equation}
    \label{eq:EL flat}
    \kappa^\phi_E(I) = -\dashint_I \frac{\sdp_F}{\tau} dP_\phi + \lambda
\end{equation}
for all edges $I\sb\partial E$ where $\lambda$ is a Lagrange multiplier such that $\lambda = \tau^{-1/2}\text{sgn}(|E_0| - |E|)$ whenever $|E|\neq |E_0|$. Moreover $\partial^*E \sb \{\nu_E \in \cN_\phi\} \cup \{\sdp_F = \lambda\tau\}$.

\begin{proposition}
    \label{prop:lambda L2 bound}
    Let $\Et(t)$ be an approximate flat flow of $E_0$. There exist constants $C, \tau_0$ depending only on $\phi, |E_0|, P_\phi(E_0)$ such that for all $\tau\leq \tau_0$ and $T>0$,
    \begin{enumerate}
        \item[(i)] $\int_0^T |\lt(t)|^2 dt \leq C(T+1)$
        \item[(ii)] $\Et(T) \sb E_0 + C(T+1) W_\phi$
        \item[(iii)] $|\{t\in[0,T]: |\Et(t)| \neq |E_0|\}| \leq C(T+1)\tau$
    \end{enumerate}
\end{proposition}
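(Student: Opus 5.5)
The plan is to derive a pointwise-in-time bound of the form $|\lt(t)| \le C\bigl(1 + \|\sdp_{\Et(t-\tau)}/\tau\|_{L^2(\partial \Et(t))}\bigr)$ for $t\ge\tau$, and then integrate it using the $L^2$ velocity bound \eqref{eq:L2 velocity bound}. Items (ii) and (iii) then follow quickly from (i). Fix a step, write $E=\Et_{k+1}$, $F=\Et_k$, $\lambda=\lt$. By \eqref{eq:dissipation} and \eqref{eq:iterated dissipation} we have $P_\phi(E)\le P_\phi(E_0)$ and $||E|-|E_0||\le \tau^{1/2}P_\phi(E_0)$. So for $\tau\le\tau_0$ we get $|E|\ge |E_0|/2$.

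\textbf{Bound on $\lambda$.} I will not sum the Euler--Lagrange equation \eqref{eq:EL flat} over edges. Via \eqref{eq:gauss bonnet} that would produce $\chi(E)$, and $\chi(E)$ is only controlled by $P_\phi(E)/r_0$ with $r_0\sim\tau^{1/2}$, which is useless. Instead I use dilation competitors applied componentwise. By \cref{thm:lipreg}, \cref{lem:finite curve} and \cref{prop:no intersection}, $E$ is Lipschitz $\phi$-regular with finitely many boundary components, pairwise separated by $c r_0$. Group them into connected components $E^j$ of $\R^2$ minus the outer structure. Pick $x_j\in \partial E^j$; each has $\operatorname{diam}\lesssim_\phi P_\phi(E)\le P_\phi(E_0)$. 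Consider the competitor $E_t$ obtained by replacing each $E^j$ with $x_j+(1+t)(E^j-x_j)$. For small $|t|$ these stay disjoint, and since $P_\phi$ is $1$-homogeneous, $P_\phi(E_t)=(1+t)P_\phi(E)$. Also $|E_t|=|E|+2t|E|+O(t^2)$ and $\frac{d}{dt}\big|_{0}\int_{E_t}\sdp_F=\sum_j\int_{\partial E^j}\sdp_F\,(x-x_j)\cdot\nu_E\,d\cH^1$. Minimality of $E$ gives one-sided derivative inequalities in $t$. The volume penalty contributes $\pm 2|E|\tau^{-1/2}$, which matches the sign convention for $\lambda$; when $|E|=|E_0|$, both one-sided inequalities combine with a multiplier $\lambda\in[-\tau^{-1/2},\tau^{-1/2}]$ (I should check this matches the $\lambda$ of \eqref{eq:EL flat}; if not, I bound that $\lambda$ by the same test using the EL identity multiplied by $(x-x_j)\cdot\nu^\phi$, since $(x-x_j)\cdot\nu$ is constant on regular edges). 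The result is
\[ 2|E|\,|\lambda| \le P_\phi(E) + \frac{C P_\phi(E_0)}{\tau}\int_{\partial E}|\sdp_F|\,d\cH^1 \le C + C\Bigl(\int_{\partial E}\Bigl(\frac{\sdp_F}{\tau}\Bigr)^2 d\cH^1\Bigr)^{1/2}, \]
using Cauchy--Schwarz and $\cH^1(\partial E)\lesssim P_\phi(E_0)$. Squaring and integrating over $[\tau,T]$ with \eqref{eq:L2 velocity bound} gives $C(T+1)$. On the first interval $[0,\tau)$ I use the crude bound together with \eqref{eq:L2 estimate cr}: $\int(\sdp_F/\tau)^2\le C\cD/\tau^2\le CP_\phi(E_0)/\tau$, which contributes $O(1)$ after multiplying by $\tau$. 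This proves (i).

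\textbf{Item (iii).} Whenever $|\Et(t)|\ne|E_0|$ we have $|\lt(t)|=\tau^{-1/2}$. So $\tau^{-1}\,|\{t\le T:|\Et(t)|\ne|E_0|\}|\le\int_0^T|\lt|^2\le C(T+1)$, which is (iii).

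\textbf{Item (ii).} I compare support functions $h_E(\nu_i):=\sup_{E} x\cdot\nu_i^\phi$ for $\nu_i\in\cN_\phi$. At the extremal level of $E$ in direction $\nu_i$, $\partial E$ contains a positive regular edge $I$ with normal $\nu_i$ lying on the supporting line. On $I$ we have $\sdp_F(x)\ge x\cdot\nu_i^\phi-h_F(\nu_i)=h_E(\nu_i)-h_F(\nu_i)$. Since $\kp_E(I)>0$, \eqref{eq:EL flat} gives $\dashint_I\sdp_F\,dP_\phi\le\lambda\tau$, hence $h_{\Et_{k+1}}(\nu_i)\le h_{\Et_k}(\nu_i)+\tau\lambda^+$. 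Summing over steps and applying Cauchy--Schwarz to (i) gives $h_{\Et(T)}\le h_{E_0}+C(T+1)$ in every direction of $\cN_\phi$. This places $\Et(T)$ inside the intersection of the half-planes $\{x\cdot\nu_i^\phi\le h_{E_0}(\nu_i)+C(T+1)\}$, which is contained in $E_0+C'(T+1)W_\phi$ after absorbing $\operatorname{diam}E_0$, which is bounded for bounded $E_0$, into the constant. If the sharper non-convexified statement is needed, I would run the same argument separately on well-separated clusters.

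\textbf{Main obstacle.} I expect the main difficulty to be the first step: justifying the dilation variation rigorously, namely that componentwise dilations remain admissible and that the multiplier obtained coincides with or bounds the $\lambda$ in \eqref{eq:EL flat}. This is exactly where the crystalline setting prevents the pointwise Euler--Lagrange testing used in \cite{Mugnai2016, KimKwon2024}.
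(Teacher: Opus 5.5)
Your arguments for (ii) and (iii) are the paper's. Item (iii) is Markov's inequality using $|\lt|=\tau^{-1/2}$ whenever $|\Et(t)|\neq|E_0|$. Item (ii) is the same extremal-positive-edge argument: the paper tracks the gauge $r_t=\inf\{r:\Et(t)\subset rW_\phi\}=\max_i h_{\Et(t)}(\nu_i)$ rather than each support function separately, with the same implicit dependence on the size of $E_0$.

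For (i) you take a different route, and the step you flagged is a real gap as written. Write $D:=\sum_j\int_{\partial E^j}\sdp_F\,(x-x_j)\cdot\nu_E\,d\mathcal{H}^1$. When $|E|=|E_0|$, the pure dilation only yields $|P_\phi(E)+D/\tau|\le 2|E|\tau^{-1/2}$. That constrains a different quantity and says nothing about the multiplier $\lambda$ of \eqref{eq:EL flat}, which is the one you need later in (ii). Your fallback, testing \eqref{eq:EL flat} against the weight $(x-x_j)\cdot\nu_E$, works on regular edges, where the weight is constant. It fails on fuzzy edges $J$. There \eqref{eq:EL flat} only gives the average $\int_J\sdp_F\,dP_\phi=\lambda\tau P_\phi(J)$, whose weight is $\phi(\nu_E)=p_J\cdot\nu_E$. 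Your weight $(x-x_j)\cdot\nu_E$ varies along $J$ and is not proportional to it.

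The repair is to argue as in \cref{lem:EL}. Couple the dilation by $1+t$ with a translation of a single regular edge $I$ by $s=-2|E|t/P_\phi(I)$. The area then changes by $o(t)$, so the soft penalty contributes only $o(t)$. The edge translation has first variation $s\int_I(\kp_E+\sdp_F/\tau)\,dP_\phi=s\lambda P_\phi(I)$. Two-sided minimality in $t$ therefore gives the exact identity $2|E|\lambda=P_\phi(E)+D/\tau$ in all cases. With this, your bound $|\lambda|\le C(1+\|\sdp_F/\tau\|_{L^2(\partial E)})$ and the rest of (i) go through.

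Your reason for avoiding Gauss--Bonnet is valid only if one integrates over all of $\partial E$. The paper integrates \eqref{eq:EL flat} over a single boundary component $\Gamma$. This is a simple closed curve, so its total $\phi$-curvature is $\pm P_\phi(W_\phi)$ regardless of $\chi(E)$. The paper chooses $\Gamma$ with $P_\phi(\Gamma)\ge 1/C$. Such a $\Gamma$ exists, since otherwise the Wulff inequality on the regions bounded by the components gives $|E|\lesssim C^{-1}P_\phi(E_0)$, contradicting $|E|\ge|E_0|/2$. Cauchy--Schwarz then gives $|\lambda|P_\phi(\Gamma)\le P_\phi(W_\phi)+P_\phi(\Gamma)^{1/2}\|\sdp_F/\tau\|_{L^2(\Gamma)}$ directly.

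The paper's route uses only the edgewise-averaged Euler--Lagrange equation. It needs no competitor construction, no separation of components from \cref{prop:no intersection}, and no identification of multipliers. Your dilation route instead produces a Pohozaev/Minkowski-type identity for $\lambda$, at the price of the extra coupled variation above.
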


\begin{proof}
Fix $\tau\leq \tau_0$. By the dissipation inequality we have $\inv{\sqrt{\tau}}||\Et(t)| - |E_0|| \leq P_\phi(E_0)$, so in particular $|\Et(t)| \geq \inv{2}|E_0|$ for $\tau_0$ sufficiently small.

We claim that for any $t>0$, there exists a boundary component $\Gamma \sb \partial \Et(t)$ such that $P_\phi(\Gamma)\geq 1/C$. Indeed, suppose all boundary components $\Gamma_j \sb \partial \Et(t)$ satisfy $P_\phi(\Gamma_j) < \inv{C}$. Letting $E_j\sb\R^2$ be the region bounded by $\Gamma_j$, the Wulff inequality implies $\frac{P_\phi(E_j)}{|E_j|} \geq \frac{4|W_\phi|}{P_\phi(E_j)} \gtrsim_\phi C$. Then we can bound
\begin{equation}
    \inv{2}|E_0| \leq |\Et(t)| \leq \sum_j |E_j| \lesssim_\phi \inv{C}\sum_j P_\phi(\Gamma_j) = \inv{C}P_\phi(\Et(t)) \leq \inv{C}P_\phi(E_0)
\end{equation}
from which we reach a contradiction for $C$ sufficiently large, proving the claim.

Integrating the Euler-Lagrange equation \eqref{eq:EL flat} over $\Gamma$ and applying Cauchy-Schwarz yields the bound
\begin{align}
    \label{eq:lambda bound}
    |\lt(t)| P_\phi(\Gamma) &= \bigg|\int_\Gamma \Big(\kp_E + \frac{\sdp_{\Et(t-\tau)}}{\tau}\Big) dP_\phi \bigg| \notag \\
    &\leq P_\phi(W_\phi) + P_\phi(\Gamma)^{1/2} \bigg(\int_\Gamma \bigg(\frac{\sdp_{\Et(t-\tau)}}{\tau}\bigg)^2 dP_\phi\bigg)^{1/2}.
\end{align}
By integrating the square of \eqref{eq:lambda bound} and invoking \eqref{eq:L2 velocity bound}, we may bound
\begin{align*}
    \int_0^T |\lt(t)|^2 dt &\leq C\bigg(T + \int_\tau^T \int_{\partial\Et(t)} \bigg(\frac{\sdp_{\Et(t-\tau)}}{\tau}\bigg)^2 dP_\phi dt \bigg) \leq C(T+1)
\end{align*}
proving (i). Note that (iii) then follows from (i) by Markov's inequality: 
\begin{equation*}
    |\{t\in[0,T]: |\Et(t)| \neq |E_0|\}| \leq |\{t\in[0,T]: |\lt(t)| \geq \tau^{-1/2}\}| \leq \tau \int_0^T |\lt(t)|^2dt.
\end{equation*}

It remains to prove (ii). For fixed $\tau$, we define $r_t := \inf\{r: \Et(t) \sb rW_\phi\}$. Let $t\geq \tau$, and consider $x\in \partial \Et(t)$ such that $\phic(x) = r_t$. By the Lipschitz $\phi$-regularity of $\Et(t)$ and \cref{prop:cone lipreg}, $x$ must lie on some positive edge $I\sb\partial \Et(t) \cap \partial (r_t W_\phi)$. Moreover, since $\Et(t-\tau) \sb r_{t-\tau}W_\phi$, we have $r_t - r_{t-\tau} \leq \sdp_{\Et(t-\tau)}(y)$ for all $y\in I$. Then the Euler-Lagrange equation implies the bound \begin{equation}
    \label{eq:r diff}
    r_t - r_{t-\tau} \leq \dashint_I \sd_{\Et(t-\tau)}^\phic dP_\phi = C \tau (-\kappa^\phi_{\Et(t)}(I) + \lt(t)) < C\lt(t)\tau. 
\end{equation}
By summing \eqref{eq:r diff}, we thus obtain 
\begin{equation}
    \label{eq:rT}
    r_T - r_0 \leq C\int_0^T \lt(t) dt
\end{equation}
so (ii) follows from (i) and an application of Cauchy-Schwarz. 
\end{proof}

\medskip

\noindent \textbf{Proof of \cref{thm:flat flow existence}:}

\medskip 

By \cref{prop:lambda L2 bound}(ii) and the perimeter bound $P_\phi(\Et(t)) \leq P_\phi(E_0)$, we may find via diagonalization a subsequence $\tau_n\to0$ such that $\Etn(q)$ converges to some set $E(q)$ in $L^1$ for all $q\in \Q_{\geq 0}$. We then define $E(t) := L^1\text{-}\lim_{q\to t} \Etn(q)$ for all $t\geq0$, in which case \eqref{eq:Holder cty in time tau} implies $\Etn(t) \to E(t)$ for all $t\geq0$ and \eqref{eq:holder cty in time}. The perimeter bound $P_\phi(E(t)) \leq P_\phi(E_0)$ follows by lower-semicontinuity of $P_\phi$. Moreover the dissipation inequality implies the bound $ ||\Etn(t)| - |E_0|| \leq P_\phi(E_0)\tau_n^{1/2}$, yielding $|E(t)| = |E_0|$. \hfill $\Box$

\section{Weak-strong uniqueness}
\label{sec:weak strong uniqueness}

In this section we prove \cref{thm:weak strong informal}, namely the consistency of the flat flow solution with the ODE evolution for Lipschitz $\phi$-regular initial data. We restate the theorem more rigorously as follows:

\begin{theorem}
    \label{thm:weak strong uniqueness}
    Let $E_0\sb\R^2$ be Lipschitz $\phi$-regular and let $T$ be the maximal time of existence for the ODE evolution $\Psi_t(E_0)$. Further assume that type II fuzzy edges vanish in $\Psi_t(E_0)$ at distinct times. If $E(t)$ is an area-preserving flat flow starting from $E_0$, then $E(t) = \Psi_t(E_0)$ for all $t < T$. Moreover, for any $T_0 < T$, there exist $C, \tau_0$ depending only on $\phi,E_0,T_0$ such that
    \begin{equation}
        \label{eq:MM ODE approx}
        d_H(\Et(t), \Psi_t(E_0)) \leq Ce^{Ct} \tau^{1/2} \qquad \forall t\leq T_0, \tau\leq \tau_0.
    \end{equation}
\end{theorem}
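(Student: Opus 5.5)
The plan is to prove the discrete statement \eqref{eq:MM ODE approx} first; the identification $E(t)=\Psi_t(E_0)$ then follows by letting $\tau_n\to0$, since Hausdorff convergence of boundaries of uniformly Lipschitz $\phi$-regular sets implies $L^1$ convergence. The core is a one-step rigidity lemma. Let $F$ be a Lipschitz $\phi$-regular set that is a rough translate of $\Psi_{k\tau}(E_0)$, with $\eps_0(F)\ge\alpha\eps_*$, where $\eps_*:=\inf_{t\le T_0}\eps_0(\Psi_t(E_0))$. Then every $E\in\argmin\cF_\tau(\cdot,F)$ should again be a rough translate of $F$, obtained by translating each edge by an amount $h_I$ that solves the implicit Euler step of \eqref{eq:ODE flow h}. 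Concretely, we want
\[
h_I=\tau\bigl(-\kp_E(I(h))+\lambda\bigr),
\]
which comes out of the Euler--Lagrange equation \eqref{eq:EL flat} once we know that $\sdp_F$ equals $h_I$ on each edge of $E$. Fuzzy edges all move by the common value $\lambda\tau$, by the remark after \eqref{eq:EL flat} that $\partial^*E\sb\{\nu_E\in\cN_\phi\}\cup\{\sdp_F=\lambda\tau\}$.

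The geometric content of the lemma is that $E$ contains no extra oscillation or extra edges. This is where the $\phi$-minimal barriers are used. By \eqref{eq:Linfty estimate cr}, $\partial E\sb\{\dd_F^\phic\le c_0\tau^{1/2}\}$, and $\tau^{1/2}\ll\eps_0(F)$. Near each edge $I$ of $F$, Theorem~\ref{thm:lipreg} says $\partial E$ is a $\phi$-minimal path on scale $cr_0$ with $r_0\sim\tau^{1/2}$; this is too short on its own, so it has to be upgraded.

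Take a regular edge $I$. Compare $E$ with the competitor obtained by replacing $\partial E$ inside the trapezoid $U(I)$ by the straight segment at the level $\sup$ (respectively $\inf$) of $\sdp_F$ on $\partial E\cap U(I)$. Corollary~\ref{cor:phi min perturbation} keeps $P_\phi$ from increasing, and Proposition~\ref{prop:sharp minimality} gives a strict gain proportional to the oscillation height. The competitor changes area, so we compensate by adding or removing a thin strip along a fuzzy edge, or along another edge, where $\dd_F^\phic$ is smaller. This is the ``shift mass from regions further from $F$ to regions closer to $F$'' step. The dissipation then decreases, which contradicts minimality unless the boundary is flat at a single level $h_I$. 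For fuzzy edges $J$, the same construction with barriers satisfying the $p_J$ cone condition shows that $\partial E\cap U(J)$ lies in a single level set $J+\lambda\tau p_J$. Assumption \eqref{eq:phi assumption} and Proposition~\ref{prop:no intersection} exclude new corners or pinching. The result is that $E=F(h)$.

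Given the lemma, the translation parameters $u^\tau_k$ satisfy an implicit Euler scheme for \eqref{eq:ODE flow}. We also need $|E|=|E_0|$, since otherwise $\lambda=\pm\tau^{-1/2}$ would violate $|h_I|\lesssim\tau$. The one-sided Lipschitz bound \eqref{eq:one sided lipschitz} gives a local error of $O(\tau^{3/2})$ per step, the $\tau^{1/2}$ coming from the initial step where $E_0$ is not yet adapted. Together with the Grönwall-type stability of Propositions~\ref{prop:ODE wellposed} and \ref{prop:ODE stability hausdorff}, this yields $d_H\le Ce^{Ct}\tau^{1/2}$. A bootstrap as in Corollary~\ref{cor:ODE stability hausdorff 2} keeps $\eps_0(\Et(t))\ge\alpha\eps_*$, so the lemma can be reapplied at every step up to $T_0$.

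Vanishing type II fuzzy edges are handled separately. Near a vanishing time, the discrete edge shrinks; using distinct vanishing times and the vanishing-time stability in Proposition~\ref{prop:ODE stability hausdorff}, we restart from the merged configuration. The main obstacle I expect is the rigidity lemma, specifically building the area-compensated barrier competitor so that the sharp gain $|p_i-p_{i-1}|h$ beats the dissipation cost of shifting mass. I would first try to compensate along the longest edge at first order in $\tau$, and bound the cost by $\tau^{-1}\cdot$ (strip area) $\cdot$ (difference of $\sdp_F$ levels).
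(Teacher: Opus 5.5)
Your overall architecture matches the paper's: a one-step rigidity result saying minimizers are translates, then a comparison of the discrete translation parameters with the ODE, and a separate treatment of vanishing. But the rigidity step fails as you set it up, and it is exactly the step you flag as the obstacle.

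\textbf{The competitor in the rigidity step.} Your competitor flattens $\partial E\cap U(I)$ to the sup (or inf) level and then restores the area with a strip along another edge or a fuzzy edge.
\begin{itemize}
\item This changes $\cD/\tau$ by roughly $\tau^{-1}A\Delta$. Here $A\lesssim h|I|$ is the area moved and $\Delta$ is the gap between the signed $\sdp_F$-levels involved.
\item A priori $\Delta$ is only $O(\tau^{1/2})$ by \eqref{eq:Linfty estimate cr}, and you have no control of its sign. So this term can be of order $h|I|\tau^{-1/2}$.
\item The gain $|p_i-p_{i-1}|h$ from \cref{prop:sharp minimality} cannot pay for that. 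The budget you propose at the end, gain versus $\tau^{-1}\cdot$(strip area)$\cdot$(level gap), does not close.
\end{itemize}
The missing idea is to compensate inside the oscillation itself, moving mass from the highest level to the lowest. Then the dissipation change has the right sign and no perimeter gain is needed. This is \cref{lem:area preserving competitor}:
\begin{itemize}
\item If $s_+>s_-$ on $\partial E\cap U(\tilde I)$, shave a tiny region off $E$ near a top-level point, using a corner barrier $\Sigma^+(x_+,p_i,-\delta_+)$.
\item Add a tiny region near a bottom-level point, using $\Sigma^-(x_-,p_j,\delta_-)$, with $\delta_\pm$ tuned so the two areas agree.
\item Both barriers are $\phi$-minimal, so $P_\phi$ does not increase (\cref{cor:phi min perturbation}), and the volume is unchanged.
\item $\cD$ strictly drops, because mass moves from levels $\ge M-\delta_+$ to levels $\le m+\delta_-$.
\end{itemize}
The same recipe handles fuzzy edges, with barriers $\{\sdp_F\le t\}\cup H_1\cup H_2$, where $H_j$ are the half-planes below the neighbouring translated edges. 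You cannot skip this step. The remark $\partial^*E\subset\{\nu_E\in\cN_\phi\}\cup\{\sdp_F=\lambda\tau\}$ does not by itself put a fuzzy edge of $E$ on one level set: a staircase with normals in $\cN_\phi$ satisfies \eqref{eq:EL flat} only on average. Two further cases need separate arguments:
\begin{itemize}
\item Barriers do not exclude a type II fuzzy edge whose neighbours overshoot ($s_{I_1^E}\ge s_{I_2^E}$). The paper handles this with a localized projection (\cref{lem:projection}).
\item Small extra boundary components must be removed by translating them toward $F$ (\cref{lem:simple curve}). \cref{prop:no intersection} does not do this.
\end{itemize}

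\textbf{The time-stepping part.} Several claims here need repair.
\begin{itemize}
\item $\sdp_F$ is not identically $h_I$ on $I^E$. Near its endpoints $I^E$ enters the tubular neighbourhoods of the adjacent edges, and the length function $\ell_j$ can jump. So the step is not an implicit Euler step for the BV field $Y$. Also, \eqref{eq:one sided lipschitz} gives stability, not consistency.
\item The paper gets an $O(\tau^2)$ local error from the integrated form of \eqref{eq:EL flat}, in which the edge length cancels: $\int_{I^E}(\sdp_F-\lambda\tau)\,dP_\phi=-\alpha_I\phi(\nu_I)\tau$. An area computation turns this into $\int_0^{u_j^\tau}\ell_j(s)\,ds=-\alpha_{I_j}\tau+O(\tau^2)$. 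This matches the ODE identity $\int_0^{u_j(t)}|I_j(\tau_j(s))|\,ds=-\alpha_{I_j}t$ (\cref{prop:MM finite diff}).
\item That comparison also needs $\lambda=\okp_F+O(\tau)$ (\cref{prop:lagrange mult est}).
\item Your derivation of $|E|=|E_0|$ is circular, since $|h_I|\lesssim\tau$ already presupposes $\lambda=O(1)$. The paper gets $\lambda=O(1)$ from \eqref{eq:energy translate} together with $\cF_\tau(E,F)\le P_\phi(F)$.
\end{itemize}
The $\tau^{1/2}$ is not an initial-layer effect; without vanishing the error is $O(\tau)$. It comes from the $O(\tau^{1/2})$-windows around each type II vanishing (\cref{prop:MM finite diff}(ii)). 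In these windows the discrete and continuous vanishing are out of sync, and a bounded number of steps may violate the area constraint (\cref{prop:lambda L2 bound}(iii)). Each such step can move the set by up to $O(\tau^{1/2})$. Only after the window can one restart via \cref{cor:ODE stability hausdorff 2}, and your vanishing sketch skips exactly this.
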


The central task at hand is proving that the approximate flat flow $\Et(t)$ produces translates of $E_0$ for sufficiently small $\tau$. Once the approximate flow is known to produce translates, it is mostly standard to show that $\Et(t)$ is a discrete approximation to the ODE evolution with local truncation error $O(\tau^2)$ by using the Euler-Lagrange equation \eqref{eq:EL flat}. The argument is essentially that of Euler's method, although we note that Euler's method may fail to have quadratic local error in the presence of fuzzy edges, since then the velocity $-\frac{\alpha_I}{|I|} + \okp_E$ is only BV. 

\subsection{Minimizer is a translate}
\label{sec:MM translate}

\begin{theorem}
    \label{thm:MM translate}
    Let $F\sb\R^2$ be bounded and Lipschitz $\phi$-regular. There exists a constant $\tau_0$ such that for all $\tau \leq \tau_0$, any minimizer $E\in \argmin \cF_\tau(\cdot, F)$ is a translate of $F$, as defined in \cref{sec:ODE flow}, with the exception that fuzzy edges in $F$ may vanish in $E$. Moreover, $\tau_0$ depends only on $\phi$, $|E_0|$, $P_\phi(F)$, $\chi(F)$, and $\eps_0(F)$. 
\end{theorem}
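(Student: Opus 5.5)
The plan is to localize around each edge of $F$, using the $\phic$-tubular neighborhood $\{\dd_F^\phic<\eps_0\}$ and the regions $U(I)$, $U(J)$ from \cref{sec:ODE flow}. In each region I will show that $\partial E$ has exactly the shape prescribed by \eqref{eq:translate}.

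\textbf{Step 1: a priori structure of $E$.} By \eqref{eq:Linfty estimate cr}, $E\Delta F\sb\{\dd_F^\phic\le c_0\tau^{1/2}\}$. So for $\tau_0$ small, $\partial E$ lies in the tubular neighborhood, at $\phic$-distance $O(\tau^{1/2})\ll\eps_0$ from $\partial F$. Since $E$ is a $(\Lambda,r_0)$-minimizer with $\Lambda\sim\tau^{-1/2}$, $r_0\sim\tau^{1/2}$, \cref{thm:lipreg} and \cref{prop:no intersection} show that $E$ is Lipschitz $\phi$-regular and that $\partial E$ is locally a $\phi$-minimal path at scale $c\tau^{1/2}$. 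Combining this with $\partial E\sb\{\dd_F^\phic\lesssim\tau^{1/2}\}$ and a comparison at scale $\eps_0$ to discard small components (Wulff inequality, as in \cref{lem:finite curve}), each component of $\partial F$ should be shadowed by exactly one component of $\partial E$. Each such component is a graph over $\partial F$ in the coordinates $x\mapsto x+tX(x)$.

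\textbf{Step 2: fuzzy edges.} I will use the fact, stated after \eqref{eq:EL flat}, that $\partial^*E\sb\{\nu_E\in\cN_\phi\}\cup\{\sdp_F=\lambda\tau\}$. Inside $U(J)$ for a fuzzy edge $J$, the level set $\{\sdp_F=\lambda\tau\}$ is exactly $J+\lambda\tau p_J$. Hence $\partial E\cap U(J)$ consists of pieces of $J+\lambda\tau p_J$ together with segments with normals in $\cN_\phi$, and any such segment must compatibly satisfy the $p_J$ cone condition. A barrier argument with \cref{cor:phi min perturbation}, using the $\phi$-minimal path $J+\lambda\tau p_J$ as barrier, should exclude "stairs" that leave this level set away from the adjacent regular edges. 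The idea is that a portion of $E\Delta(F+\lambda\tau p_J)$ off the level set can be replaced by the barrier without increasing $P_\phi$, while strictly decreasing $\int_E\sdp_F$ after rebalancing the area. This gives $h_J=\lambda\tau$ for every fuzzy edge; a fuzzy edge is allowed to disappear in $E$ if it is swallowed by the adjacent regular edges.

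\textbf{Step 3: regular edges (main obstacle).} For a regular edge $I$, I need $\partial E\cap U(I)$, away from the corners, to be a single segment in $\{x\cdot\nup_I=s_I+h_I\}$. I would argue by contradiction using the sharp minimality estimate \cref{prop:sharp minimality}. Suppose $\partial E\cap U(I)$ oscillates with height $h>0$ relative to its best-fitting line $\{x\cdot\nup_I=s\}$. Replace it by that straight segment, with $s$ chosen so that the area is unchanged, in the spirit of the introduction's "shift mass from far to near". \cref{prop:sharp minimality} gives a perimeter gain $\gtrsim|p_i-p_{i-1}|h$. Because $\sdp_F$ is affine in $U(I)$, with $\sdp_F=x\cdot\nup_I-s_I$, the change in $\inv{\tau}\int_E\sdp_F$ for an area-preserving rearrangement within a strip of height $h$ is at most $\inv{\tau}h\cdot|E\Delta G|$. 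The difficulty is that $|E\Delta G|\lesssim h|I|$ only gives a dissipation cost $\lesssim h^2|I|/\tau$. This is useless unless $h\ll\tau/|I|$.

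My first attempt is to exploit the affinity of $\sdp_F$ more precisely. For an area-preserving replacement inside the trapezoid, the first-order part of $\inv{\tau}\int\sdp_F$ cancels, since its change equals $\inv{\tau}\int(x\cdot\nup_I)(\chi_G-\chi_E)$. By choosing $s$ as the level at which the first moment in the $\nup_I$ direction is preserved rather than the area, this term can be made zero. The resulting area mismatch is then absorbed by translating a fuzzy edge, or another edge, via the Euler--Lagrange balance \eqref{eq:EL flat}, at a cost of $o(h)$. That leaves the linear gain $\gtrsim h$ dominant and forces $h=0$.

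\textbf{Step 4: conclusion.} Near corners joining two regular edges, the cone conditions together with Steps 2--3 pin down the intersection point. It then remains to check the compatibility \eqref{eq:translate h requirement} (a positive edge cannot lie above its fuzzy neighbour's level, by the cone condition) and that all heights satisfy $|h|\lesssim\tau^{1/2}<\min\{\eps_0,\eps_1/2\}$. The constants depend only on $\phi,|E_0|,P_\phi(F),\chi(F),\eps_0(F)$ through Step 1 and the number of edges. This yields that $E=F(h)$.
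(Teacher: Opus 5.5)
\textbf{The proposal has a genuine gap, and it sits in Step 3, which is the heart of the theorem.}

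Your fix is to choose $s$ so that the first-order change of $\int_E\sdp_F$ vanishes, and then absorb the resulting area mismatch $\Delta A$ by moving another edge ``at a cost $o(h)$''. This is unsupported. By \eqref{eq:EL flat}, translating an edge of $E$ to restore $\Delta A$ changes $P_\phi+\tau^{-1}\int_E\sdp_F$ by $\lambda\,\Delta A+o(\Delta A)$. Leaving it unrestored costs up to $\tau^{-1/2}|\Delta A|$ through the penalty term. At this stage you only know $|\lambda|\le\tau^{-1/2}$; the bound $\lambda=O(1)$ of \cref{prop:lagrange mult est} is derived \emph{from} the translate structure.

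Nor is $\Delta A$ small. Write $y=x\cdot\nup_I$ and $g=\chi_G-\chi_E$. Then $\int(y-s_I)g=0$ forces $\Delta A=\int(y-s)g/(s_I-s)$, which is uncontrolled relative to $h$. Note also that $\int y\,g$ equals the change of $\int\sdp_F$ only when $\Delta A=0$.

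In fact the difficulty you were fighting does not exist. If $s$ is chosen to preserve area, then $\int(y-s_I)g=\int(y-s)g\le 0$. This holds because replacing $E$ by $\{y<s\}$ in a window where $\sdp_F$ is affine and increasing in $y$ is a bathtub rearrangement.

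What the argument actually lacks is perimeter bookkeeping at the two ends of the window:
\begin{itemize}
\item The segment at the area-matching level does not join the points where $\partial E$ enters and leaves $U(\tilde I)$.
\item \cref{prop:sharp minimality} only compares a curve with its chord when that chord has normal in $\cN_\phi$.
\item The connecting pieces cost an amount comparable to $h$, with no favorable constant against the gain $|p_i-p_{i-1}|h$.
\end{itemize}

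The paper avoids straightening altogether. At the highest and lowest points $x_\pm$ of $\partial E\cap U(\tilde I)$, it removes the small piece of $E$ outside the corner barrier $\Sigma^+(x_+,p_0,-\delta_+)$. It adds the equal-area piece of $\Sigma^-(x_-,p_j,\delta_-)$ outside $E$. Corner barriers are $\phi$-minimal, so $P_\phi$ does not increase (\cref{cor:phi min perturbation}). Meanwhile $\int_E\sdp_F$ strictly drops by roughly $(s_+-s_-)|E^+|$ (\cref{lem:area preserving competitor}). No quantitative estimate, no sharp minimality and no information on $\lambda$ is needed. The truncation $\tilde I$ is chosen exactly so that these corners stay compactly inside $U(I)$.

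\textbf{Steps 1, 2 and 4 also skip what \cref{lem:simple curve} and \cref{prop:fuzzy edges translate} actually have to prove.}

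In Step 1, $\Lambda\sim\tau^{-1/2}$, so the Wulff-inequality argument of \cref{lem:finite curve} only excludes components of perimeter $\ll\tau^{1/2}$. An island of size $\sim\tau^{1/2}$ next to $\partial F$ is compatible with almost-minimality. Deleting or filling it changes the perimeter, dissipation and penalty terms all by amounts of order $\tau^{1/2}$, so no contradiction follows. The paper instead slides such an island by $-\eps p_i$ inside a region $U(\tilde\Gamma)$ where $\sdp_F(x-\eps p_i)=\sdp_F(x)-\eps$. This keeps area and perimeter fixed and strictly lowers $\cD$. Also, the claim that each component of $\partial E$ is a graph over $\partial F$ is part of the conclusion, not an input; it follows from \cref{lem:reg edges translate} (cf.\ \cref{rem:no backtracking}).

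In Step 2, the inclusion $\partial^*E\subset\{\nu_E\in\cN_\phi\}\cup\{\sdp_F=\lambda\tau\}$ says nothing about staircases built from $\cN_\phi$-segments. So ``a barrier argument should exclude stairs'' is precisely the case analysis that must be carried out. The same applies at corners between adjacent regular edges, which is your Step 4 and the paper's \cref{prop:fuzzy edges translate}(a).

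The hardest case is a type II fuzzy edge that overshoots, i.e.\ $s_{I_1^E}\ge s_{I_2^E}$. There, barrier comparisons only give an inequality between the extreme values of $\sdp_F$ on the connecting path; they do not confine the path to a level set. The paper needs the localized projection \cref{lem:projection} to exclude this configuration, and your proposal has no substitute for that step.
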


The proof of \cref{thm:MM translate} is rather lengthy, but boils down to standard energy competition arguments. In most cases, our construction of energy competitors follows a consistent recipe, namely using $\phi$-minimal barriers to transport mass from a region $E^+$ where $\sdp_F$ is high to $E^-$ where $\sdp_F$ is low. This is an extension of corner barriers in \cite{Almgren1995}, adapted for constructing area-preserving perturbations. For convenience sake, we package the essential criteria for these constructions into the following lemma.

\begin{lemma}
    \label{lem:area preserving competitor}
    Let $E\sb\R^2$ be a set of finite perimeter and let $m < M$ be constants. Suppose that $U^+, U^-$ are open sets such that for all sufficiently small $\delta_+, \delta_- > 0$, there exist regions $\Sigma^+, \Sigma^-\sb\R^2$ satisfying the following:
    \begin{enumerate}
        \item $\partial \Sigma^\pm \cap U^\pm$ is a $\phi$-minimal path (when oriented by $\nu_{\Sigma^\pm}$).
        \item Defining the sets $E^+ := (E \setminus \Sigma^+) \cap U^+$ and $E^- := (\Sigma^- \setminus E) \cap U^-$, we have $E^\pm \Subset U^\pm$.
        \item $E^+ \sb \{\sdp_F \geq M - \delta_+\}$ and $E^- \sb \{\sdp_F \leq m + \delta_-\}$.
        \item $|E^\pm|$ is positive, depends continuously on $\delta_\pm$, and $\lim_{\delta_\pm\to 0} |E^\pm| = 0$. 
    \end{enumerate}
    Then there exists $\delta_+, \delta_-$ such that $\cF_\tau((E\cup E^-)\setminus E^+, F) < \cF_\tau(E, F)$. In particular, $E \not\in \argmin \cF_\tau(\cdot, F)$. 
\end{lemma}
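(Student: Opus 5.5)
The plan is to compare the three terms of $\cF_\tau$ for $E$ and for the competitor $\tilde E := (E\cup E^-)\setminus E^+$, tuning $\delta_\pm$ so that the areas of the added and removed pieces agree. First, observe that $E^+\sb E$ and $E^-\cap E=\emptyset$. Moreover $E^+\sb\{\sdp_F\ge M-\delta_+\}$ and $E^-\sb\{\sdp_F\le m+\delta_-\}$, and for small $\delta_\pm$ these regions are disjoint because $m<M$. Hence $E^+$ and $E^-$ are disjoint, and
\[
|\tilde E| = |E| - |E^+| + |E^-|.
\]

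For the perimeter term, apply \cref{cor:phi min perturbation} twice. Part 1 with $U=U^+$ and $\Sigma=\Sigma^+$ gives $P_\phi(E\setminus E^+;U^+)\le P_\phi(E;U^+)$, since $(E\setminus E^+)\cap U^+=E\cap\Sigma^+\cap U^+$ and $(E\setminus\Sigma^+)\cap U^+=E^+\Subset U^+$. Outside $U^+$ nothing changes, because $E^+\Subset U^+$. Part 2 with $U=U^-$ and $\Sigma=\Sigma^-$, applied to $E\setminus E^+$, then shows that adding $E^-$ does not increase $P_\phi$. One point needs care here: the corollary wants $\Sigma\setminus(E\setminus E^+)\Subset U$, whereas we only control the part inside $U^-$. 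I would handle this by restricting $\Sigma^-$ to $U^-$, so that the relevant set is $E^-$, and by noting that $E^+$ and $E^-$ are separated for small $\delta$ (they lie in disjoint level-set regions of the continuous function $\sdp_F$). Localizing the divergence-theorem argument of the corollary to $U^-$ then goes through. The conclusion is $P_\phi(\tilde E)\le P_\phi(E)$.

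For the dissipation term,
\[
\int_{\tilde E}\sdp_F-\int_E\sdp_F=\int_{E^-}\sdp_F-\int_{E^+}\sdp_F\le (m+\delta_-)|E^-|-(M-\delta_+)|E^+|.
\]
Fix $\delta_+$ small with $|E^+|=:a>0$. Since $|E^-|$ depends continuously on $\delta_-$, tends to $0$, and (as I will need to assume or arrange, by shrinking $\delta_+$ first) can reach values at least $a$, the intermediate value theorem gives $\delta_-$ with $|E^-|=a$. With this choice the volume term is unchanged and the change in the dissipation is at most
\[
\bigl(m-M+\delta_++\delta_-\bigr)a<0
\]
once $\delta_\pm<(M-m)/2$. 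Altogether $\cF_\tau(\tilde E,F)<\cF_\tau(E,F)$.

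The main obstacle is the area matching. It needs the ranges of $\delta_\pm\mapsto|E^\pm|$ to overlap near $0$. This follows from positivity, continuity and the limit $0$: take $\delta_-$ first with $b=|E^-|>0$, then choose $\delta_+'\le\delta_+$ with $|E^+|=\min(b,\cdot)$ as the continuous function sweeps $(0,|E^+(\delta_+)|]$. Picking whichever side is smaller and sweeping the other side yields equal areas with both $\delta_\pm$ small. A secondary technical point is the localized application of \cref{cor:phi min perturbation} once the other modification has been made, which is where the disjointness of $E^\pm$ is used.
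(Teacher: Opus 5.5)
Your proposal is correct and follows the paper's proof: the areas are matched by continuity and the intermediate value theorem from (4), $E^\pm$ are disjoint because they lie in separated level sets of $\sdp_F$, the dissipation bound is the same, and the perimeter is handled by two applications of \cref{cor:phi min perturbation} (you remove $E^+$ first and then add $E^-$, while the paper does the reverse). The compact-containment subtlety you flag in the second application is passed over silently in the paper, but note that localizing to $U^-$ alone does not resolve it, because $(\Sigma^-\cap U^-)\setminus(E\setminus E^+)$ also contains $\Sigma^-\cap E^+\cap U^-$; the clean fix is to apply both parts of the corollary to the original $E$, where (2) supplies the hypotheses after your replacement of $\Sigma^-$ by $\Sigma^-\cap U^-$, and then add the two perimeter changes using $\overline{E^+}\cap\overline{E^-}=\emptyset$ and the locality of $P_\phi$ on open sets.
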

\begin{proof}
    By (4), we may choose $\delta_\pm>0$ sufficiently small so that $m + \delta_- < M - \delta_+$ and $|E^+| = |E^-|$. Let $\tilde{E} := (E \cup E^-)\setminus E^+$. For such $\delta_\pm$, $E^+$ and $E^-$ are disjoint, so $|\tilde{E}| = |E|$. Moreover, by (3),
    \begin{align*}
        \int_{\tilde{E}} \sdp_F dx - \int_E \sdp_F dx &= \int_{E^-} \sdp_F dx - \int_{E^+} \sdp_F dx \\
        &\leq [(m + \delta_-) - (M - \delta_+)]\cdot |E^+|\\
        &< 0.
    \end{align*}
    Lastly we claim $P_\phi(\tilde{E}) \leq P_\phi(E)$. By (1) and (2), \cref{cor:phi min perturbation} implies
    \begin{align*}
        P_\phi(\tilde{E}; U^+) &= P_\phi((E\cup E^-) \cap \Sigma^+; U^+) \leq P_\phi(E \cup E^-; U^+)
    \end{align*}
    and thus $P_\phi(\tilde{E}) \leq P_\phi(E \cup E^-)$ since $E^+ \Subset U^+$. Similarly, by another application of \cref{cor:phi min perturbation}, $P_\phi(E\cup E^-; U^-) = P_\phi(E \cup \Sigma^-; U^-) \leq P_\phi(E; U^-)$. Thus $P_\phi(\tilde{E}) \leq P_\phi(E)$, so $\cF_\tau(\tilde{E}, F) < \cF_\tau(E, F)$, as desired.
\end{proof}
Often we will choose $\Sigma^\pm$ to be corner barriers of the form
\begin{align}
    \label{eq:corner barrier}
    \Sigma^+(x_0,p_i, t) &:= \{(x-x_0)\cdot\nup_i < t\} \cup \{(x-x_0)\cdot\nup_{i+1} < t\} \\
    \Sigma^-(x_0,p_i, t) &:= \{(x-x_0)\cdot\nup_i < t\} \cap \{(x-x_0)\cdot\nup_{i+1} < t\}.
\end{align}

The proof of \cref{thm:MM translate} is broken into the following steps:
\begin{enumerate}
    \item $\partial E$ contains no small ``bubbles". (\cref{lem:simple curve})
    
    \item Every regular edge $I\sb \partial F$ is parallel to a corresponding line segment $I^E\sb \partial E$. (\cref{lem:reg edges translate})

    \item If $I_1, I_2 \sb \partial F$ are adjacent regular edges, then $I_1^E$ and $I_2^E$ are also adjacent. (\cref{prop:fuzzy edges translate}(a))
    
    If $I_1, I_2 \sb \partial F$ are regular edges joined by a fuzzy edge $J$, then $I_1^E, I_2^E$ are joined by a fuzzy edge $J^E \sb \Gamma$ such that $J_E \sb J + \lambda\tau p_J$, with the exception that $I_1^E$ and $I_2^E$ may be adjacent or merge into a single line segment. (\cref{prop:fuzzy edges translate}(b))
\end{enumerate}
\noindent In the author's opinion, the proof of \cref{thm:MM translate} is far easier to intuit from pictures than from text, particularly for \cref{prop:fuzzy edges translate}. We refer the reader to Figures \ref{fig:reg edges translate} and \ref{fig:fuzzy edges translate} for some visualizations of the barriers we construct.

Throughout this section, we fix the neighborhood $U_0 := \{\dd^\phic_F < c_0\tau^{1/2}\}$ where $c_0$ is the constant from the $L^\infty$ estimate \eqref{eq:Linfty estimate cr}, so that $\partial E \sb U_0$ for $\tau\leq \tau_0$. For $\tau_0$ sufficiently small so that $c_0\tau^{1/2} < \eps_0(F)$, $U_0$ is a tubular neighborhood. We also fix the vector field $X:\partial F\to \R^2$ such that $X = p_J$ along any fuzzy edge $J$ and $X$ linearly interpolates between the values at its endpoints along a regular edge. Then $X$ parametrizes $U_0$ in that $\{\sdp_F = t\} = \{x + tX(x): x \in \partial F\}$ for all $|t| < c_0\tau^{1/2}$. Moreover, defining $U(I) := \{x + tX(x): x\in I, |t| < c_0 \tau^{1/2}\} \sb U_0$ for any $I\sb\partial F$, we will repeatedly use the facts that
\begin{itemize}
    \item $\sdp_F(x) = x \cdot \nup_I - s_I$ for $x\in U(I)$ for a regular edge $I\sb\partial F$
    \item $\{\sdp_F = t\} \cap U(J) = J + tp_J$ for a fuzzy edge $J\sb\partial F$ and $|t| < c_0\tau^{1/2}$
\end{itemize} 

\begin{lemma}
    \label{lem:simple curve}
    Let $E,F$ be as in \cref{thm:MM translate}. For each boundary component $\Gamma_0 \sb \partial F$, $\partial E \cap U(\Gamma_0)$ is a simple curve.
\end{lemma}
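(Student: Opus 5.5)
We reduce the statement to two facts. First, $\partial E\cap U(\Gamma_0)$ contains at least one boundary component of $E$ that winds once around the tubular neighborhood. Second, no other boundary component of $E$ meets $U(\Gamma_0)$. Since $E\in\argmin\cF_\tau(\cdot,F)$ is a $(\Lambda,r_0)$-minimizer with $\Lambda\sim_\phi\tau^{-1/2}$ and $r_0\sim_\phi \tau^{1/2}$, \cref{thm:lipreg} and \cref{lem:finite curve} give that $\partial E$ is a finite disjoint union of simple closed Lipschitz $\phi$-regular curves. By \cref{prop:no intersection}, these curves are mutually separated by distance $\gtrsim_\phi \tau^{1/2}$. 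By \eqref{eq:Linfty estimate cr}, all of them lie in $U_0$, and since $U_0$ is a tubular neighborhood, each lies in a single $U(\Gamma_0)$.

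\textbf{Step 1: existence of a winding component.} I would use the parametrization $x+tX(x)$ of $U(\Gamma_0)$ by $\Gamma_0\times(-c_0\tau^{1/2},c_0\tau^{1/2})$. By the $L^\infty$ estimate, $E$ contains the inner level set $\{\sdp_F=-c_0\tau^{1/2}\}\cap U(\Gamma_0)$ and misses the outer one, so any fibre $\{x+tX(x)\}$ must cross $\partial E$. A standard degree/connectedness argument then produces a boundary component separating the inner and outer boundaries of the annulus, i.e.\ a curve homotopic to $\Gamma_0$ in $U(\Gamma_0)$.

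\textbf{Step 2: no other components.} Suppose $\Gamma'\subset U(\Gamma_0)$ is another boundary component. There are two cases.

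(a) $\Gamma'$ is null-homotopic in the annulus, so it bounds a region $E'\subset U(\Gamma_0)$ of width $O(\tau^{1/2})$. Say $\nu_{\Gamma'}=\nu_{E'}$, so $E'$ is a component of $E$; the other orientation (a hole) is symmetric. I would remove $E'$ and compensate area using \cref{lem:area preserving competitor}, adding mass where $\sdp_F$ is lowest near the winding component. I expect a cleaner route: apply the corner-barrier construction directly. Take $E^+:=E'\setminus\Sigma^+$, where $\Sigma^+$ is a translated sublevel set $\{\sdp_F<M-\delta\}$ localized near the point of $\overline{E'}$ maximizing $\sdp_F$. That point lies on a regular edge or fuzzy edge of $F$, and the local pieces of $\{\sdp_F<t\}$ are $\phi$-minimal there (lines with normals in $\cN_\phi$, or translates $J+tp_J$). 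Take $E^-$ near the point of $\overline{E^c}\cap U(\Gamma_0)$ minimizing $\sdp_F$, using $\Sigma^-$ of the same form. This requires $M>m$, i.e.\ that the max of $\sdp_F$ over $E'$ exceeds the min of $\sdp_F$ over $E^c$ nearby. That holds unless everything is confined to a single level set, which is impossible for a region of positive area.

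(b) $\Gamma'$ also winds around the annulus. Then $E\cap U(\Gamma_0)$ contains a thin annular strip between two nested winding curves. Applying the same max/min barrier argument to this strip gives a contradiction unless the strip degenerates, and the separation from \cref{prop:no intersection} rules that out.

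\textbf{Main obstacle.} The hard part is verifying hypothesis (1) of \cref{lem:area preserving competitor} at extremal points lying at a corner of $F$ or on a regular edge adjacent to fuzzy edges: the level sets of $\sdp_F$ there must be $\phi$-minimal within a neighborhood $U^\pm$ that compactly contains $E^\pm$. I would first try taking $U^\pm=U(I)$ for the relevant edge, where $\sdp_F$ is affine or a union of translates. At transitions between edges, I would use the corner barriers \eqref{eq:corner barrier}, relying on \eqref{eq:phi assumption} to guarantee that the barrier orientation is correct.
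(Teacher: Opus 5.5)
\textbf{There is a genuine gap in the mass-adding half of your Step 2.} Your preliminaries and Step 1 are fine; the paper likewise just takes a component $\Gamma$ with $d_H(\Gamma,\partial F)\le C\tau^{1/2}$.

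A $\phi$-minimal barrier can only add area where $\partial E$ is locally concave or fuzzy. Along a straight edge with normal in $\cN_\phi$, \cref{prop:sharp minimality} makes every nontrivial outward perturbation strictly more expensive. At a sharp convex corner, every $p_i$-monotone path between the same endpoints lies in $\overline{E}$. So \cref{cor:phi min perturbation}(2) can only return $|E^-|=0$ at such places.

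Now test your argument on this configuration: $F$ is a Wulff shape, the winding component is exactly $\Gamma=\{\sdp_F=t\}$, and $E'$ is a tiny Wulff-shaped island outside $\Gamma$. Then $\min_{\overline{E^c}\cap U(\Gamma_0)}\sdp_F=t$ is attained only on $\Gamma$, which has only positive edges and convex corners. For small $\delta_-$ there is therefore no admissible $E^-\subset\{\sdp_F\le t+\delta_-\}\setminus E$, and hypotheses (1)--(4) of \cref{lem:area preserving competitor} cannot be met. Your alternative (delete $E'$ and compensate area) breaks at the same point. Step 2(b) inherits the same problem, and ``contradiction unless the strip degenerates'' is not an argument. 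Separately, null-homotopy in the annulus gives width $O(\tau^{1/2})$ but not small diameter, so you have no control placing $E'$ where the level sets of $\sdp_F$ are $\phi$-minimal.

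\textbf{The missing ingredients are a perimeter budget and a competitor that needs no area compensation.}
\begin{enumerate}
\item By calibration across each $U(I)$, the winding component satisfies $P_\phi(\Gamma)\ge P_\phi(F)-C\tau^{1/2}$.
\item By the dissipation inequality \eqref{eq:dissipation}, $P_\phi(E)\le P_\phi(F)$. Hence every other component has $P_\phi(\Gamma')\le C\tau^{1/2}$.
\item This disposes of your case (b) immediately, since a second winding component would cost another $\approx P_\phi(F)$.
\item It also places the region $G$ bounded by $\Gamma'$ inside $U(\tilde\Gamma)$ for a $\phi$-minimal piece $\tilde\Gamma\subset\partial F$ satisfying a $p_i$ cone condition. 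There, $\{\sdp_F=t\}\cap U(\tilde\Gamma)=\tilde\Gamma+tp_i$.
\item Now rigidly translate $E\cap G$ by $-\varepsilon p_i$. Area and perimeter are unchanged because boundary components are separated, while $\int\sdp_F$ drops by exactly $\varepsilon|E\cap G|$. This contradicts minimality.
\end{enumerate}
A rigid translation like this is not of the form covered by \cref{lem:area preserving competitor}: its removal and addition parts are not separately perimeter-nonincreasing. That is why your barrier framework cannot reach it.
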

\begin{proof}
We assume for simplicity that $\partial F$ is connected, for the argument can be straightforwardly adapted to the case of $F$ having several boundary components. Note that there exists some boundary component $\Gamma \sb \partial E$ such that $d_H(\Gamma, \partial F) \leq C\tau^{1/2}$. It suffices to show that $\partial E = \Gamma$. It is straightforward to see from $\phi$-minimality that $P_\phi(\Gamma;U(I)) \geq P_\phi(I) - C\tau^{1/2}$ for any edge $I\sb\partial F$, and thus $P_\phi(\Gamma) \geq P_\phi(F) - C\tau^{1/2}$. 
 
Suppose there exists another boundary component $\Gamma'\sb\partial E$. Since $P_\phi(\Gamma) \leq P_\phi(E) \leq P_\phi(F)$ by the dissipation inequality, it follows that $P_\phi(\Gamma') \leq C \tau^{1/2}$. Then we can obtain a strict energy competitor to $E$ by merely translating $\Gamma'$ closer to $F$, but let us make this precise.

Let $G\sb\R^2$ be bounded such that $\partial G = \Gamma'$, so $\nu_{G} = \sigma\, \nu_E$ $\cH^1$-a.e. along $\Gamma'$ for constant $\sigma\in\{\pm1\}$. Without loss of generality, let $\sigma = 1$. Since $P_\phi(G) \leq C\tau^{1/2}$, Lipschitz $\phi$-regularity of $F$ implies for sufficiently small $\tau_0$ that $G \sb U(\tilde{\Gamma})$ for some $\phi$-minimal $\tilde{\Gamma}\sb\partial F$. In particular, $\tilde{\Gamma}$ satisfies the $p_i$ cone condition for some $i$, so $\{\sdp_F=t\} \cap U(\tilde{\Gamma}) = \tilde{\Gamma} + tp_i$. 

Set $E' := E \cap G$, and consider the perturbation $\tilde{E} := (E \setminus E') \cup (-\eps p_i + E')$. Since the boundary components of $E$ are disjoint, surely $|\tilde{E}| = |E|$ and $P_\phi(\tilde{E}) = P_\phi(E)$ for sufficiently small $\eps$. Moreover,
\begin{align*}
    \cD(\tilde{E}, F) - \cD(E, F) &= \int_{-\eps p_i + E'} \sdp_F dx - \int_{E'} \sdp_F dx = -\eps |E'| <0.
\end{align*}
It follows that $\cF_\tau(\tilde{E}, F) < \cF_\tau(E, F)$, a contradiction.
\end{proof}

\begin{lemma}[Regular edges translate]
    \label{lem:reg edges translate}
    Let $E, F, \tau$ be as in \cref{thm:MM translate}. There exists $C = C(\phi,F)$ such that for every regular edge $I\sb\partial F$, there exists a parallel line segment $I_E \sb \partial E$ such that $\partial E \cap U(\tilde{I}) = I_E \cap U(\tilde{I})$ for some truncation $\tilde{I} \sb I$ such that $|\tilde{I}| \geq |I| - C\tau^{1/2}$. 
\end{lemma}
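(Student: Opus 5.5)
Let $I\sb\partial F$ be a regular edge with normal $\nu_I=\nu_i$. Without loss of generality $I$ is positive; the negative case is symmetric, with the roles of the two corner barriers in \eqref{eq:corner barrier} interchanged. The plan is to show that inside $U(\tilde I)$ the boundary $\partial E$ is forced to be a single segment on one level line $\{x\cdot\nup_i = s_I+t\}$. The argument has three stages, carried out in order.

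\textbf{Step 1: localize $\partial E$ near $I$.} By \cref{lem:simple curve}, $\partial E\cap U(\Gamma_0)$ is a single simple curve $\Gamma$ with $d_H(\Gamma,\partial F)\lesssim\tau^{1/2}$. By \eqref{eq:Linfty estimate cr}, $\Gamma\cap U(I)$ lies in the strip $\{|x\cdot\nup_i-s_I|<c_0\tau^{1/2}\}$. Since $E$ is an almost-minimizer at scale $r_0\sim\tau^{1/2}$, \cref{thm:lipreg} and \cref{cor:locally minimal components} say that $\Gamma$ is $\phi$-minimal on pieces of $P_\phi$-length $\lesssim\tau^{1/2}$. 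A piece of $\Gamma$ that crosses $U(I)$ transversally would have to leave the strip of width $O(\tau^{1/2})$, so the cone condition forces $\Gamma\cap U(\tilde I)$ to be a graph over the direction of $I$. Here $\tilde I$ is $I$ with a piece of length $C\tau^{1/2}$ removed at each end, so that the corner regions near the endpoints of $I$ are excluded.

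\textbf{Step 2: rule out oscillation.} The graph from Step 1 satisfies a $p_{i-1}$ or $p_i$ cone condition locally. A deviation from a straight segment of normal $\nu_i$ therefore has a local maximum or local minimum of $x\cdot\nup_i$. I would first try to kill a local maximum of height $M=\max_{\Gamma\cap U(\tilde I)}(x\cdot\nup_i-s_I)$ together with a lower point of height $m<M$ using \cref{lem:area preserving competitor}. Take $U^+$ to be a small neighborhood of the top part of the graph. Take $\Sigma^+$ to be the half-plane barrier $\{x\cdot\nup_i<s_I+M-\delta_+\}$, or the corner barrier $\Sigma^+(x_0,p_i,\cdot)$ at the maximizing point when the maximum is attained on a flat piece adjacent to cone pieces. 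Take $\Sigma^-$ to be a corner barrier $\Sigma^-(x_0',p_{i-1},\cdot)$ or $\Sigma^-(x_0',p_i,\cdot)$ at a minimizing point. In $U(I)$ one has $\sdp_F=x\cdot\nup_i-s_I$, so condition (3) of \cref{lem:area preserving competitor} is immediate. Conditions (1), (2) and (4) follow from the cone structure: the cut-off regions are thin wedges or strips whose area is continuous in $\delta_\pm$ and vanishes as $\delta_\pm\to0$. The lemma then gives a strict competitor, a contradiction. Hence $M=m$, and $\Gamma\cap U(\tilde I)$ lies on a single line $\{x\cdot\nup_i=s_I+t\}$. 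This gives a segment $I_E$ parallel to $I$.

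\textbf{Step 3: main obstacle.} The hard step is making the barriers in Step 2 compactly contained in $U^\pm\sb U(\tilde I)$. The max/min may be attained near the ends of $\tilde I$, or the region removed by $\Sigma^+$ may extend toward the corners where $\sdp_F$ is no longer the affine function $x\cdot\nup_i-s_I$. I would handle this by choosing $U^\pm$ as trapezoids with slopes given by $\nu_{i\pm1}$. The constant $C$ in $|\tilde I|\ge|I|-C\tau^{1/2}$ would be chosen, using assumption \eqref{eq:phi assumption} and the strip height $c_0\tau^{1/2}$, so that the corner barriers at heights in $[-c_0\tau^{1/2},c_0\tau^{1/2}]$ close up inside $U(I)$. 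Since all cone angles are bounded below in terms of $\phi$, this is only a constant-factor loss at each end. The constant therefore depends only on $\phi$ and on the angles and lengths of the edges of $F$, which gives the claimed dependence $C=C(\phi,F)$.
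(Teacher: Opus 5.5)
Your strategy is the paper's. You compare the extreme values $M>m$ of $x\cdot\nup_i-s_I$ (which equals $\sdp_F$ on $U(I)$) over $\partial E\cap U(\tilde I)$. You cut a sliver off the top of $E$ and add one at the bottom with $\phi$-minimal barriers, and balance the areas via \cref{lem:area preserving competitor}. You choose $\tilde I$ so that the lines with normals $\nu_{i\pm1}$ through points of $U(\tilde I)$ leave $U(I)$ through its top and bottom.

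Your Step 1, however, does not follow from what you cite. \cref{thm:lipreg} gives $\phi$-minimality only on balls of radius $cr_0$, with $c=c(\phi)$ small and $r_0\lesssim\tau^{1/2}$. That scale has no reason to dominate the strip width $2c_0\tau^{1/2}$, so local cone conditions do not exclude steep pieces or small folds of $\partial E$ inside the strip. In the paper, the absence of backtracking (\cref{rem:no backtracking}) is a \emph{consequence} of this lemma, not an input. Drop the graph claim; you do not need it. Suppose the maximum is attained inside $U(\tilde I)$. Then $E$ lies locally below the line $\{x\cdot\nup_i=s_I+M\}$: no point of $\partial E$ lies above it in $U(\tilde I)$, and the top of $U(I)$ is outside $E$ by \eqref{eq:Linfty estimate cr}. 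Moreover, $\partial E$ contains a segment of this line ending at an interior point $x_+$, unless it contains the whole crossing segment, which the paper excludes at the outset. Since that segment has normal $\nu_i$, Lipschitz $\phi$-regularity forces the $p_{i-1}$ or $p_i$ cone condition near $x_+$. That is all the corner barrier at $x_+$ requires.

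The substantive gap is hypothesis (4) of \cref{lem:area preserving competitor}, in exactly the situation of your Step 3. This is when $s_I+M$ is attained only at a point $x_+$ on the lateral boundary of $U(\tilde I)$; say $x_+$ is the initial endpoint, so the barrier is $\Sigma^+(x_+,p_{i-1},-\delta_+)$. Since $M$ is a maximum over $U(\tilde I)$ only, just outside $U(\tilde I)$ the boundary may rise above $s_I+M$. It may even rise above the $\nu_{i-1}$-side of the wedge $\R^2\setminus\Sigma^+(x_+,p_{i-1},-\delta_+)$. Then $E\setminus\Sigma^+$ keeps area bounded below as $\delta_+\to0$, so it cannot be balanced against an $E^-$ of vanishing area. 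Your assertion that the cut-off regions are thin wedges or strips of vanishing area is therefore false here. The half-plane barrier is worse: $E\cap\{x\cdot\nup_i>s_I+M-\delta_+\}$ runs out of $U(\tilde I)$ toward the corners of $F$. Your choice of $\tilde I$ secures only compact containment in $U(I)$, not small area.

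The paper's remedy is to translate the corner barrier along $I$, in the direction $R^{-1}\nu_i$ into $U(\tilde I)$, as $\delta_+\to0$. The $\nu_i$-side stays on $\{x\cdot\nup_i=s_I+M-\delta_+\}$, so $E^+\subset\{\sdp_F\geq M-\delta_+\}$ still holds. The $\nu_{i-1}$-side is pushed outward, so $|E^+|$ can be made positive, continuous, and tending to $0$. The same adjustment is needed for $E^-$ when the minimum sits on the lateral boundary.
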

\begin{proof}
    Let $\nu_I = \nu_1$ for simplicity. We fix a closed segment $\tilde{I} \sb I$ such that for any $x_0\in U(\tilde{I})$, the lines $\{(x-x_0)\cdot \nu_0 =0\}$ and $\{(x-x_0)\cdot \nu_2=0\}$ intersect $\partial U(I)$ only along $\{x\cdot\nup_1 = s_I \pm c_0\tau^{1/2}\} \cap \partial U(I)$. Note that we may find $\tilde{I}$ such that $\cH^1(I) - \cH^1(\tilde{I}) \lesssim_\phi c_0\tau^{1/2}$.

    It suffices to show $\partial E \cap U(\tilde{I}) \sb \{x \cdot \nup_1 = s\}$ for some $s$. Suppose the contrary. Then $s_+ > s_-$ where $s_+ := \max_{x\in\partial E\cap U(\tilde{I})} (x\cdot\nu_1^\phi)$ and $s_- := \min_{x\in\partial E\cap U(\tilde{I})} (x\cdot\nu_1^\phi)$. Moreover, neither of the line segments $I^\pm := \{x\cdot\nu_1^\phi = s_\pm \} \cap U(\tilde{I})$ is a subset of $\partial E$. We aim to reach a contradiction by invoking \cref{lem:area preserving competitor} to construct a strict energy competitor. Let us first describe a perturbation of $E$ along $I^+$ (depicted in \cref{fig:reg edges translate}), breaking into two cases: 

    \begin{figure}
        \centering
        \begin{tikzpicture}
            \node [anchor=south west, inner sep=0] at (0,0) {\includegraphics[width=0.8\linewidth]{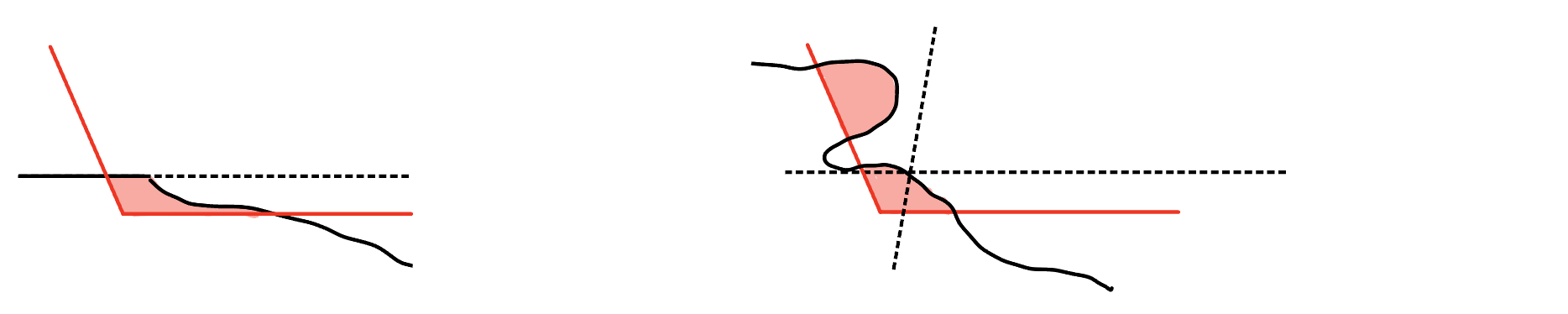}};
            \node [anchor=west] at (3.6,1.25) {$x\cdot\nup_1= s_+$};
            \node [anchor=west] at (11,1.25) {$x\cdot\nup_1= s_+$};
            \node at (1.4,1.4) {$x_+$};
            \filldraw [black] (1.26,1.17) circle (2pt);
            \node at (8.1,1.4) {$x_+$};
            \filldraw [black] (7.68,1.19) circle (2pt);
        \end{tikzpicture}
        \caption{Perturbations in case 1 (left) and case 2 (right) of \cref{lem:reg edges translate}.}
        \label{fig:reg edges translate}
    \end{figure}
    
    \begin{enumerate}
        \item \ul{$\partial E \cap \inter(I^+) \neq \emptyset$}: Since $I^+\not\sb \partial E$, there must exist some $x_+ \in\partial(\partial E\cap I^+) \cap \inter(I^+)$. By maximality of $s_+$, we have $E\cap B(x_+, r) \sb \{x\cdot\nup_1 \leq s_+\}$ for small $r>0$, so by Lipschitz $\phi$-regularity, $\partial E$ satisfies either $\nu_E \in A[\nu_0,\nu_1]$ or $\nu_E \in A[\nu_1, \nu_2]$ near $x_+$. We assume the former (a symmetric argument works in the latter case), so that $\partial E \cap B(x_+,r)$ satisfies $x\cdot \nu_1^\phi = s_+$ before $x_+$ and $x\cdot \nu_1^\phi < s_+$ after $x_+$. It follows that for sufficiently small $\delta_+ >0$, the region
        \begin{equation}
            E^+ := (E \setminus \Sigma^+(x_+,p_0, -\delta_+)) \cap B(x_+,r)
        \end{equation}
        is compactly contained in $B(x_+, r)$ for a suitable value of $r = o(\delta_+)$. Moreover, $|E^+|$ tends continuously to 0 as $\delta_+\to 0$. 

        \item \ul{$\partial E \cap \inter(I^+) = \emptyset$}: We take $x_+ \in \partial E \cap \partial I^+$ and suppose without loss of generality that $x_+$ is the initial endpoint of $I^+$, so that $[x_+, x_+ + \eps R^{-1}\nu_1] \sb I^+$ for some $\eps>0$. Then for sufficiently small $\delta_+>0$, $E^+ := (E \setminus \Sigma^+(x_+,p_0, -\delta_+))\cap U(I)$ is compactly contained in $U(I)$; the concern is that the portion of $\partial E$ before $x_+$ may exit $\partial U(I)$ along the ceiling $\{x\cdot\nup_1 = s_I + c_0\tau^{1/2}\}$, but this is ruled out by how we defined $\tilde{I}$. We observe that $\lim_{\delta_+ \to 0} |E^+|$ may be positive, but we may make $|E^+|$ tend continuously to 0 by also translating $\Sigma^+$ in the direction $R^{-1}\nu_1$ as $\delta_+\to 0$.
    \end{enumerate}
    Similarly, we may define a region $E^- \sb (\Sigma^-(x_-,p_j,\delta_-) \setminus E) \cap U(I)$ for $j\in\{0,1\}$ such that $|E^-|$ tends continuously to 0 as $\delta_-\to 0$ (with the possible exception of needing to also translate the barrier as in case 2). By construction, we have $E^+ \sb \{\sdp_F \geq s_+ - s_I - \delta_+\}$ and $E^- \sb \{\sdp_F \leq s_- - s_I + \delta_-\}$. Thus we obtain a contradiction by \cref{lem:area preserving competitor}.
\end{proof}

\begin{remark}
    \label{rem:no backtracking}
    A consequence of \cref{lem:reg edges translate} is that $P_\phi(\partial E \setminus I^E; U(I)) = O(\tau^{1/2})$ for any regular edge $I\sb\partial F$, i.e. $\partial E$ cannot backtrack in $U(I)$ by more than $O(\tau^{1/2})$ in length.
\end{remark}

\begin{proposition}[Fuzzy edges translate]
    \label{prop:fuzzy edges translate}
    Let $F, E, \tau$ be as in \cref{thm:MM translate}. Let $I_1, I_2\sb\partial F$ be regular edges, and $I^E_1, I^E_2 \sb \partial E$ be the corresponding regular edges obtained from \cref{lem:reg edges translate}. 
    \begin{enumerate}
        \item[(a)] If $I_1$ and $I_2$ are adjacent, then $I_1^E$ and $I_2^E$ are also adjacent.
        \item[(b)] If $I_1$ and $I_2$ are joined by a fuzzy edge $J$, then one of the following must hold:
        \begin{itemize}
            \item[(i)] $I_1^E$ and $I_2^E$ are joined by a fuzzy edge $J^E \sb \partial E \cap (J + \lambda\tau p_J)$
            \item[(ii)] $I_1^E$ and $I_2^E$ are adjacent, and $J$ is type I
            \item[(iii)] $I_1^E=I_2^E$ (i.e. the edges have merged) and $J$ is type II
        \end{itemize}
    \end{enumerate}
\end{proposition}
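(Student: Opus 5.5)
We work inside the tubular neighborhood $U(I_1\cup J\cup I_2)$ (with $J$ possibly a point in case (a)). By \cref{lem:simple curve}, $\partial E$ restricted there is a single simple curve. By \cref{lem:reg edges translate}, this curve contains $I_1^E\cap U(\tilde I_1)$ and $I_2^E\cap U(\tilde I_2)$. Let $\Gamma$ be the arc of $\partial E$ joining the terminal end of $I_1^E$ to the initial end of $I_2^E$. By \cref{rem:no backtracking}, $\Gamma$ has length $O(\tau^{1/2})$ outside $U(J)$. The task is to identify $\Gamma$, and throughout we use the same recipe as in \cref{lem:reg edges translate}: whenever $\partial E$ fails to have the claimed form, we find a point of $\partial E$ where $\sdp_F$ attains a strict local maximum (or minimum) over a region that can be cut off (or filled in) by a corner barrier $\Sigma^\pm(x_0,p_i,t)$ from \eqref{eq:corner barrier}. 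We then pair it with a region of lower (higher) $\sdp_F$ elsewhere and invoke \cref{lem:area preserving competitor}.

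\textbf{Case (a).} Say $\nu_{I_1}=\nu_1$, $\nu_{I_2}=\nu_2$, meeting at a convex corner (the concave case is symmetric, with $\Sigma^-$). We extend the lines of $I_1^E,I_2^E$ to their intersection point $y$. Suppose $\Gamma$ is not the corner $[\cdot,y]\cup[y,\cdot]$. By \cref{thm:lipreg} and \cref{lem:minimal path}, $\Gamma$ is $\phi$-minimal near the corner, hence satisfies some $p_i$ cone condition. The only cone compatible with connecting normals $\nu_1$ to $\nu_2$ is $p_1$. So $\Gamma$ lies inside the wedge $\{x\cdot\nup_1\le s_1^E,\ x\cdot\nup_2\le s_2^E\}$ and misses $y$. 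In $U(\text{corner})$ we have $\sdp_F=\max(x\cdot\nup_1-s_{I_1},x\cdot\nup_2-s_{I_2})$. The corner region of $E$'s complement near $y$ lies at the level $\max(h_1,h_2)$, where $h_i=s_i^E-s_{I_i}$. Consider the sublevel set $\{\sdp_F\le\max(h_1,h_2)-\delta\}$ cut by $\Sigma^-(\cdot,p_1,\cdot)$ at the point of $\Gamma$ maximizing $\sdp_F$ along $\Gamma$. Comparing with the Euler--Lagrange equation \eqref{eq:EL flat} on $I_1^E,I_2^E$ (or directly by competitors) to order the levels $h_1,h_2$ against the level of $\Gamma$, we find that either a strictly higher part of $E$ can be shaved or a strictly lower gap can be filled. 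Pairing this with a small corner perturbation on $I_1^E$ gives a contradiction via \cref{lem:area preserving competitor}. So $\Gamma$ is the corner and the edges are adjacent.

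\textbf{Case (b).} On $U(J)$ the level sets of $\sdp_F$ are $J+tp_J$, and $\Gamma\cap U(J)$ satisfies some cone condition near each point. First we show that $\Gamma\cap U(J)$ lies on a single level set $J+tp_J$. Let $t_+=\max_\Gamma\sdp_F$ and $t_-=\min_\Gamma\sdp_F$ over the interior part of $\Gamma$. If $t_+>t_-$, take a point $x_+$ of $\Gamma$ at level $t_+$ that is a boundary point of the contact set, as in cases 1--2 of \cref{lem:reg edges translate}. Since $J+t_+p_J$ is itself $\phi$-minimal with cone $p_J$, the barrier $\Sigma^+(x_+,p_J,-\delta)$, which has boundary normals $\nu_J,\nu_{J+1}$, cuts off a small region of $E$ at level $\ge t_+-\delta$. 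Symmetrically, $\Sigma^-$ at $x_-$ adds a region at level $\le t_-+\delta$, giving a contradiction. Hence $\Gamma\cap U(J)\subset J+tp_J$. Then \eqref{eq:EL flat} on the fuzzy edge (or a segment of it, allowed by \cref{lem:EL}) gives $\kp_E=0=-t/\tau+\lambda$, so $t=\lambda\tau$, which is (i). The degenerate outcomes are the remaining possibilities. If the translated lines of $I_1^E,I_2^E$ meet before reaching $J+\lambda\tau p_J$, the argument of (a) applies. If $J$ is type I, the lines cross and we get (ii). If $J$ is type II, the lines are parallel, so they must coincide and we get (iii).

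\textbf{Main obstacle.} The hardest step is the endpoint and junction bookkeeping. We must ensure that the extremal point $x_\pm$ can be chosen so that the barrier region is compactly contained in $U(J)$ (or in a neighborhood of the junction with $I_k^E$), and that $|E^\pm|$ varies continuously to $0$ as required in \cref{lem:area preserving competitor}. This needs the translation trick from case 2 of \cref{lem:reg edges translate}, together with assumption \eqref{eq:phi assumption}, which guarantees that the barrier edges do not exit through the sides of $U(J)$.
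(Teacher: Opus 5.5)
\textbf{There is a genuine gap, and it sits in the type II case you dismiss in one line.}

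You write that if $J$ is type II, ``the lines are parallel, so they must coincide and we get (iii).'' This does not follow. The configuration that must be excluded is the \emph{overshoot} $s_{I_1^E}>s_{I_2^E}$. It is possible a priori whenever the height $s_{I_2}-s_{I_1}$ of $J$ is below $2c_0\tau^{1/2}$, and choosing $\tau_0$ small does not rule it out, since $\tau_0$ does not depend on $\eps_1(F)$.

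In the overshoot configuration $\Gamma$ cannot lie on a single level set $J+tp_J$, so your level-set step would have to produce a contradiction. It does not, because it silently assumes two orientations:
\begin{itemize}
\item at a maximum point $x_+$ of $\sdp_F$ on $\Gamma$, $E$ lies on the low side, so that $E\setminus\Sigma^+(x_+,p_J,-\delta)$ is a small piece;
\item at a minimum point, $E^c$ lies on the high side.
\end{itemize}
Once $\Gamma$ may overhang, both can fail. For example, a stretch of $\Gamma$ may run leftward beneath $I_1^E$ with $E$ lying above it; then the wedge region runs along $I_1^E$ and leaves $V$.

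The paper's localized barrier arguments give only $\sdp_F(x_1)\le M\le m\le \sdp_F(x_2)$ in this subcase, which does not confine $\Gamma$. The missing idea is the localized projection \cref{lem:projection}. Push each slice of $E$ along a direction $\nu$ with $\nu\cdot\nu_1\ge C\tau^{1/2}$ and $\nu\cdot p_1>0$ to the bottom of the strip. This does not increase $P_\phi$, preserves area, and does not increase $\cD(\cdot,F)$, since $\sdp_F$ increases along $\nu$. By minimality, these slices must therefore be intervals. The resulting bound on $\arg\nu_E$ is what makes the corner-barrier comparison between $x_1$ and the auxiliary point $x_3$ work. The no-overhang property for $\nu=\nu_1$ then contradicts $(x_1-x_2)\cdot R^{-1}\nu_1>0$, which the overshoot forces. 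Nothing in your proposal plays this role.

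\textbf{The localization you call bookkeeping is the substance of the level-set step.} Resolving it is exactly what leaves the overshoot open. A corner barrier at an extremal point of $\sdp_F$ on ``the interior part of $\Gamma$'' (which you do not define) need not give a region compactly contained in $V$. If $I_1^E$ or $I_2^E$ sits at a level within $\delta$ of $t_\pm$, the region runs along that edge. It can also run along a long stretch of $J+t_\pm p_J$ parallel to a side of the wedge.

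The paper instead uses sublevel sets of $\sdp_F$ themselves as $\phi$-minimal barriers in $V$. These are cut by the half-planes $H_j=\{x\cdot\nup_j\le s_{I_j^E}\}$ so that the edges $I_j^E$ are never touched:
\begin{itemize}
\item type I: $\Sigma_t=\{\sdp_F\le t\}\cup H_1\cup H_2$;
\item type II: $\{\sdp_F\le t\}\cup H_1$ and $\{\sdp_F\le t\}\cap H_2$.
\end{itemize}
The extremes $M$ and $m$ are taken over $\Gamma\setminus(H_1\cup H_2)$ (type I), or over $\Gamma\setminus H_1$ and $\Gamma\cap\operatorname{int}(H_2)$ (type II). Separate corner barriers at $x_1$ and $x_2$ then show $\sdp_F(x_j)=m$. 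That last step is what places all of $\Gamma$ on one level set, not just $\Gamma\cap U(J)$ as in your proposal, and hence gives $x_1,x_2\in U(J)$.

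Finally, your case (a) is not actually argued: ``comparing with the Euler--Lagrange equation \dots\ we find'' names no competitor. In the paper, (a) is simply the type I argument with $J$ reduced to a point.
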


\begin{remark}
    Note that case (i) is the expected evolution for a fuzzy edge $J$, while cases (ii) and (iii) correspond to the vanishing of $J$. We also note that (iii) is the only case in which $I_1^E$ is not a regular edge. All of our energy competitors are constructed according to \cref{lem:area preserving competitor}. The only exception is that when $J$ is of type II and expected to vanish, we instead apply a projection argument to ensure that $I_1^E$ and $I_2^E$ do not ``overshoot", i.e. that $s_{I_2^E} - s_{I_1^E}$ and $s_{I_2} - s_{I_1}$ do not have opposite sign.
\end{remark}

\begin{proof}
    We remark that (a) will follow as a special case of the argument for (b) by formally treating adjacent edges $I_1, I_2$ as being joined by a type I fuzzy edge of zero length. Thus we only prove (b).
    
    Let $I_1, I_2 \sb \partial F$ be regular edges joined by a fuzzy edge $J$. Suppose without loss of generality that $I_2$ comes after $I_1$, and $I_1$ is a negative edge with $\nu_{I_1} = \nu_1$, in which case $p_J=p_1$. 
    
    We assume (ii) and (iii) do not hold and will show (i) follows. Then there is a nontrivial closed path $\Gamma \sb \partial E$ which joins $I_1^E$ to $I_2^E$, and let $x_1\in I_1^E, x_2\in I_2^E$ be the endpoints of $\Gamma$. 
    
    We claim it suffices to show $\Gamma \sb \{\sdp_F = t\}$ for some constant $t$. Indeed, if $\Gamma\sb\{\sdp_F = t\}$, then $\Gamma$ satisfies the $p_1$ cone condition, so is a fuzzy edge by the orientations of $I_1, I_2$. Thus $t = \lambda\tau$ by the Euler-Lagrange equation \eqref{eq:EL flat}. Moreover, we must have $x_1, x_2 \in U(J)$. Indeed, if $x_1$ is in the interior of $U(I_1)$, then $\Gamma$ would extend $I_1^E$ near $x_1$, violating the maximality of $I_1^E$. Similarly, if $x_1 \in U(I_2)$, then one violates the maximality of either $I_1^E$ or $I_2^E$ depending on the value of $\nu_{I_2}\in \{\nu_1,\nu_2\}$. Because $\Gamma$ is simple, $\Gamma$ must coincide with the path in $\{\sdp_F = \lambda\tau\} \cap U(J)$ which joins $x_1$ and $x_2$. Thus $\Gamma \sb J + \lambda\tau p_J$, proving (i). 
    
    We split the proof into cases depending on the type of $J$. By \cref{rem:no backtracking}, we may fix a neighborhood $V := U(\tilde{J})$ such that $\Gamma \Subset V$, where $\tilde{J} \sb \partial F$ is an extension of $J$ satisfying $\cH^1(\tilde{J}) \leq \cH^1(J) + C\tau^{1/2}$. We also denote the half-spaces $H_j := \{x\cdot\nup_j \leq s_{I_j^E}\}$ for $j=1,2$. 

    \medskip
    
    \noindent \textbf{Case 1:} $J$ is type I, so $I_2$ is a negative edge with $\nu_{I_2} = \nu_2$. 

        Let $m := \min_{x\in\Gamma} \sdp_F(x)$ and $M := \sup_{x\in\Gamma \setminus (H_1\cup H_2)} \sdp_F(x)$. We claim $m=M$. Suppose on the contrary that $m<M$. Then $\Gamma\setminus \Sigma_m\neq\emptyset$, where for $|t|< c_0\tau^{1/2}$, we define the barrier
        \begin{equation*}
            \Sigma_t := \{\sdp_F \leq t\} \cup H_1 \cup H_2.
        \end{equation*}
        
        Note that the perturbation
        \begin{equation}
            E^+ := (E \setminus \Sigma_{M-\delta_+}) \cap V
        \end{equation}
        is contained in $\{\sdp_F \geq M - \delta_+\}$, and $E^+\Subset V$ since $\Gamma\Subset V$ and $H_1 \cup H_2 \sb \Sigma_{M-\delta_+}$.
        
        If $m < \min\{\sdp_F(x_1), \sdp_F(x_2)\}$, then we define 
        \begin{equation}
            \label{eq:E- type I}
            E^- := (\{\sdp_F \leq m + \delta_-\} \setminus E) \cap V
        \end{equation}
        where $\delta_- > 0$ is such that $m + \delta_- < \min\{\sdp_F(x_1), \sdp_F(x_2)\}$. Note that we have the bound $\sdp_F(x_j) \leq s_{I_j^E} - s_{I_j}$ since $\tilde{J}$ satisfies the $p_1$ cone condition, which implies $E^-\Subset V$ since $\{\sdp_F \leq m + \delta_-\} \cap U(I_j) \sb H_j$.
        
        On the other hand, if $\sdp_F(x_j) = m$ for some $j$, then $\partial E$ must satisfy the $p_1$ cone condition near $x_j$ since $I_j^E \cap V \sb \{\sdp_F \geq m\}$. In this case, we can define for a suitable value of $r = o(\delta_-)$ the perturbation
        \begin{equation}
            E^- := (\Sigma^-(x_j, p_1, \delta_-) \setminus E) \cap B(x_j,r) \Subset V
        \end{equation}
        which still satisfies $E^- \sb \{\sdp_F \leq m + \delta_-\}$.
        
        The barriers $\Sigma_{M-\delta_+}$, $\{\sdp_F \leq m + \delta_-\}$, and $\Sigma^-(x_j,p_1,\delta_-)$ all have $\phi$-minimal boundary in $V$ since they satisfy the $p_1$ cone condition. Moreover $|E^\pm| >0$ and tends continuously to 0 as $\delta_\pm\to 0$, so we obtain a contradiction by \cref{lem:area preserving competitor}. Thus we deduce $M=m$, so $\Gamma \setminus (H_1 \cup H_2) \sb \{\sdp_F = m\}$. 
        
        We further claim that $\sdp_F(x_j)=m$ for $j=1,2$. Indeed, if $\sdp_F(x_1) > m$, then $E \cap B(x_1,r) \sb \{(x-x_1)\cdot\nu_1 \leq 0\}$ for small $r>0$, so $\partial E$ must satisfy the $p_0$ cone condition near $x_0$. Then one obtains a contradiction from \cref{lem:area preserving competitor} with $E^+ := (E \setminus \Sigma^+(x_1,p_0,-\delta_+)) \setminus B(x_1,r)$ and $E^-$ defined as in \eqref{eq:E- type I}.
        
        It follows that $\Gamma$ contains the path in $\{\sdp_F=m\}$ connecting $x_1$ and $x_2$. By simplicity, $\Gamma$ must coincide with this path, so $\Gamma \sb \{\sdp_F=m\}$, as desired. 

    \definecolor{myblue}{RGB}{0,111,255}
    \begin{figure}
        \centering
        \begin{subfigure}{\textwidth}
            \centering
            \begin{tikzpicture}
                \node [anchor=south west, inner sep=0] at (0,0) {\includegraphics[width=0.8\linewidth]{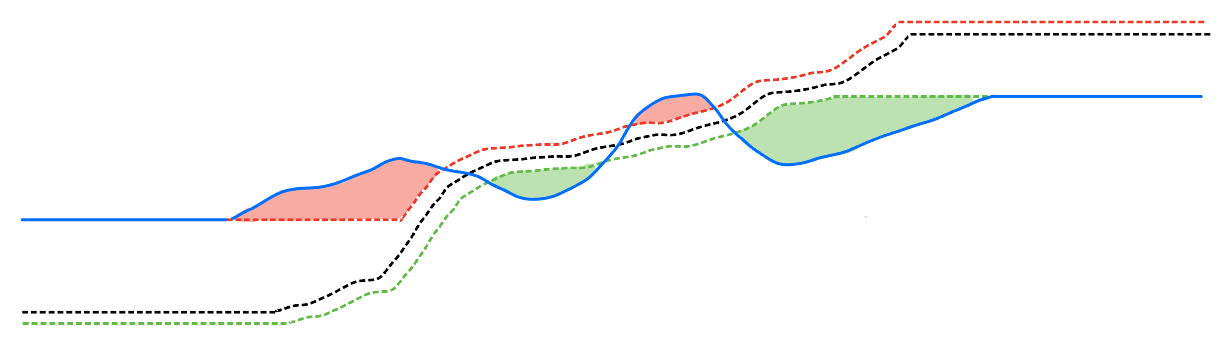}};
                \node [anchor=east] at (0,1.4) {\color{myblue} $\partial E$};
                \node [anchor=east] at (0,0.43) {$\sdp_F =t$};
            \end{tikzpicture}
            \caption{Barriers used to show $m \geq M$ in subcase 2a}
            \label{fig:fuzzy edges translate 2a 1}
        \end{subfigure}
        
        \medskip
        \begin{subfigure}{\textwidth}
            \centering
            \begin{tikzpicture}
                \node [anchor=south west, inner sep=0] at (0,0) {\includegraphics[width=0.8\linewidth]{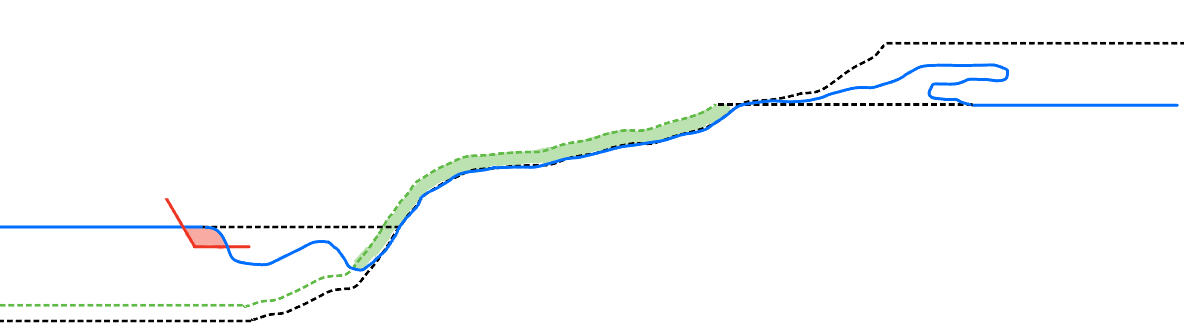}};
                \node [anchor=east] at (-0.2,1.2) {\color{myblue} $\partial E$};
                \node [anchor=east] at (-0.2,0.15) {$\sdp_F =t$};
            \end{tikzpicture}
            \caption{Barriers used to show $\sdp_F(x_1)=m$ in subcase 2a}
            \label{fig:fuzzy edges translate 2a 2}
        \end{subfigure}
        
        \bigskip
        \begin{subfigure}{\textwidth}
            \centering
            \begin{tikzpicture}
                \node [anchor=south west, inner sep=0] at (0,0) {\includegraphics[width=0.8\linewidth]{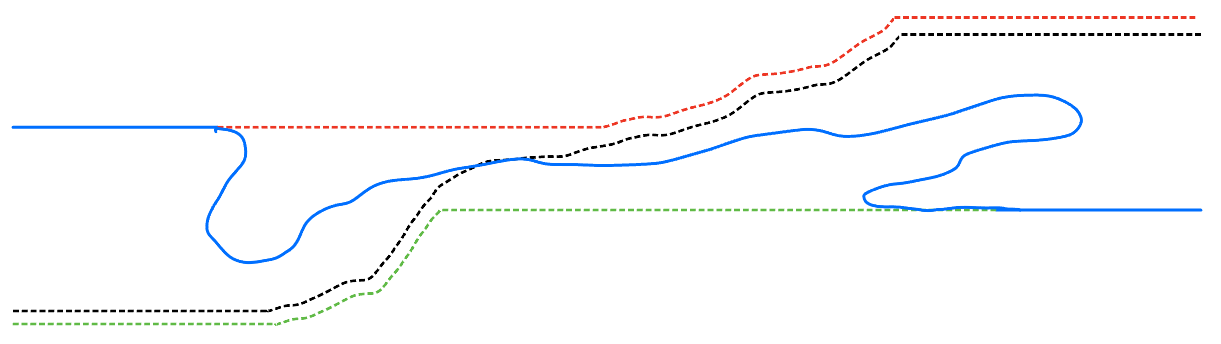}};
                \node [anchor=east] at (0,2.35) {\color{myblue} $\partial E$};
                \node [anchor=east] at (0,0.4) {$\sdp_F =t$};
            \end{tikzpicture}
            \caption{$\Gamma$ is not constrained to a level set of $\sdp_F$ in subcase 2b}
            \label{fig:fuzzy edges translate 2b 1}
        \end{subfigure}

        \bigskip
        \begin{subfigure}{\textwidth}
            \centering
            \begin{tikzpicture}
                \node [anchor=south west, inner sep=0] at (0,0) {\includegraphics[width=0.8\linewidth]{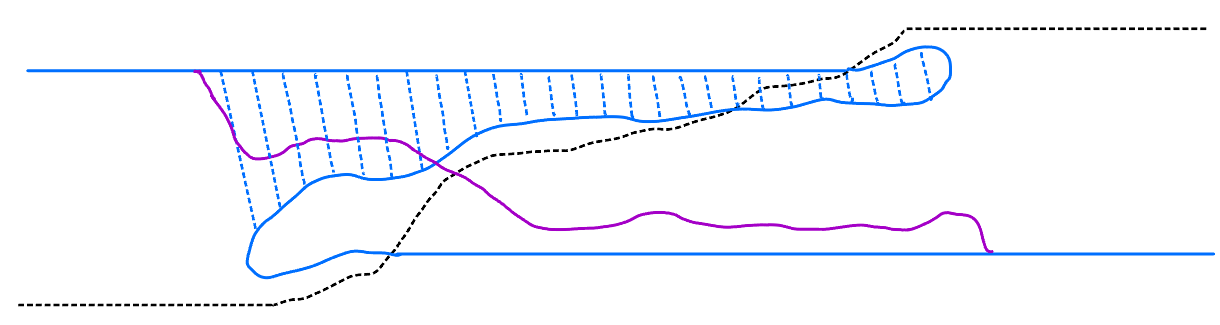}};
                \node [anchor=east] at (0.1,2.75) {\color{myblue} $\partial E$};
                \node [anchor=east] at (0.1,0.25) {$\sdp_F =t$};
            \end{tikzpicture}
            \caption{Energy competitor by localized projection in subcase 2b}
            \label{fig:fuzzy edges translate 2b 2}
        \end{subfigure}
        \caption{Energy competitors in case 2 of the proof of \cref{prop:fuzzy edges translate}}
        \label{fig:fuzzy edges translate}
    \end{figure}

        \medskip
        \noindent \textbf{Case 2:} $J$ is type II, so $I_2$ is a positive edge with $\nu_{I_2} = \nu_1$ and $s_{I_2} > s_{I_1}$. 

        For brevity we denote $h_j := s_{I_j^E} - s_{I_j}$. Note that in this case, $I_1$ and $I_2$ are parallel, $\sdp_F(x_1) \leq h_1$, and $\sdp_F(x_2) \geq h_2$. We define the barriers 
        \begin{align}
            \Sigma^+_t &:= \{\sdp_F \leq t\} \cup H_1 \qquad \text{for } t \geq h_2 \\
            \Sigma^-_t &:= \{\sdp_F \leq t\} \cap H_2 \qquad\text{for } t \leq h_1
        \end{align}
        Note that $\partial\Sigma^\pm_t$ have $\phi$-minimal boundary in $V$. Moreover, for $t\geq \sdp_F(x_2)$, the set $(E\setminus \Sigma^+_t) \cap V$ is compactly contained in $V$ since $\Gamma\Subset V$ and $H_j \cap U(I_j) \sb \Sigma^+_t$. Indeed, the latter inclusion is trivial for $j=1$, and for $j=2$, one has 
        \begin{equation*}
            H_2 \cap U(I_2) = \{\sdp_F \leq h_2\} \cap U(I_2) \sbq \{\sdp_F \leq t\}.
        \end{equation*}
        One similarly sees that $(\Sigma^-_t \setminus E)\cap V \Subset V$ for $t\leq \sdp_F(x_1)$.  

        Similar to Case 1, we consider the quantities $m := \inf_{x\in\Gamma \cap \inter(H_2)} \sdp_F(x)$ and $M := \sup_{x\in\Gamma \setminus H_1} \sdp_F(x)$. 

        \medskip

        \noindent \textbf{Subcase 2a:} $s_{I_1^E} < s_{I_2^E}$.
        
        Note that $m\leq M$ since $\Gamma \cap (\inter(H_2)\setminus H_1)$ is nonempty. Suppose that $m < M$. 

        If $m < h_1$ and $M > h_2$, then for sufficiently small $\delta_\pm>0$, the perturbations 
        \begin{align}
            \label{eq:E+ type II}
            E^+ &:= (E\setminus \Sigma^+_{M-\delta_+}) \cap V\\
            \label{eq:E- type II}
            E^- &:= (\Sigma^-_{m+\delta_-} \setminus E) \cap V
        \end{align}
        are compactly contained in $V$ by the prior discussion; see \cref{fig:fuzzy edges translate 2a 1}.
        
        In the case that $m = h_1$, then the minimality of $m$ implies $m \leq \sdp_F(x_1) \leq h_1$ since $x_1\in \inter(H_2)$ and thus $\sdp_F(x_1) = m$. Moreover $\partial E$ must satisfy the $p_1$ cone condition near $x_1$, so we can replace the definition of \eqref{eq:E- type II} with 
        \begin{equation}
            E^- := (\Sigma^-(x_1, p_1, \delta_-) \setminus E) \cap B(x_1,r) \Subset V
        \end{equation}
        Similarly, if $M = h_2$, then we replace \eqref{eq:E+ type II} with 
        \begin{equation}
            E^+ := (E \setminus \Sigma^+(x_2,p_1,-\delta_+)) \cap B(x_2,r) \Subset V
        \end{equation}

        In all cases, one has $E^+ \sb \{\sdp_F \geq M - \delta_+\}$ and $E^- \sb \{\sdp_F \leq m + \delta_-\}$, so we reach a contradiction by \cref{lem:area preserving competitor}. It follows that $m=M$ and $\Gamma \cap (\inter(H_2)\setminus H_1) \sb \{\sdp_F =m\}$. We further claim $\sdp_F(x_j) =m$ for $j=1,2$. Indeed if $\sdp_F(x_1) > m$, then $\partial E$ satisfies the $p_0$ cone condition near $x_1$, and we reach a contradiction with $E^+ := (E\setminus \Sigma^+(x_1,p_0,-\delta_+)) \setminus B(x_1,r)$ and $E^-$ defined as in \eqref{eq:E- type II}; see \cref{fig:fuzzy edges translate 2a 2}. One similarly reaches a contradiction if $\sdp_F(x_2) > m$. As in Case 1, it follows that $\Gamma$ coincides with the path in $\{\sdp_F =m\}$ connecting $x_1$ and $x_2$.

        \medskip 
        
        \noindent \textbf{Subcase 2b:} $s_{I_1^E} \geq s_{I_2^E}$.

        We expect this subcase to never hold (unless $I_1^E=I_2^E$, which we have assumed does not hold), so we seek a genuine contradiction.

        First we claim $m\geq M$. This inequality is trivial if $\Gamma \cap \inter(H_2)$ or $\Gamma \setminus H_1$ is empty, so assume both are nonempty. Then we may bound
        \begin{equation}
            m \leq \inf_{x\in \Gamma\cap \inter(H_2)} (x\cdot\nup_1 - s_{I_1}) < s_{I_2^E} - s_{I_1} \leq h_1
        \end{equation}
        and similarly $M > h_2$. If $m< M$, then we reach a contradiction by using the barriers $E^\pm$ defined in \eqref{eq:E+ type II} and \eqref{eq:E- type II} as in the previous subcase, hence proving the claim. Unlike the previous subcase, the bound $m\geq M$ does not confine $\Gamma$ to a level set of $\sdp_F$ (see \cref{fig:fuzzy edges translate 2b 1}).
        
        We now take a slight detour and employ a projection argument to constrain the range of possible values of $\nu_E$ in $V$ (see \cref{fig:fuzzy edges translate 2b 2}). Fix a vector $\nu\in\S^1$ such that $\nu\cdot\nu_1 \geq C\tau^{1/2}$ and $\nu\cdot p_1 > 0$; such a vector exists since $\nu_1\cdot p_1 > 0$. We also note that the assumption $s_{I_1^E} > s_{I_2^E}$ forces $0 < s_{I_2} - s_{I_1} < 2c_0\tau^{1/2}$ since $|s_{I_j^E} - s_{I_j}| < c_0\tau^{1/2}$. Then for suitably large $C = C(\phi,F)$, we may find a segment $L \sb \{x\cdot\nup_1 = s_{I_1} - c_0\tau^{1/2}\}$ and $\eps>0$ such that the map $f(x,t) = x + t\nu$ satisfies $V \sb f(L\times(0,\eps)) \cap U_0 \sb U(\tI_1\cup J\cup \tI_2)$, where $\tI_j \sb I_j$ are the truncations from the proof of \cref{lem:reg edges translate}. For $x\in L$, we define the slice $E_x := \{t\in (0,\eps): x + t\nu \in E\}$, and further define the ``projection" $\tilde{E}$ such that $\tilde{E}_x = (0,\cH^1(E_x))$ for all $x\in L$ and $(\tilde{E}\Delta E)\setminus f(L\times(0,\eps)) = \emptyset$. Then $P_\phi(\tilde{E}) \leq P_\phi(E)$ by \cref{lem:projection}, and $|\tilde{E}| = |E|$ by Fubini's theorem and the fact that $\cH^1(\tilde{E}_x) = \cH^1(E_x)$ for $x\in L$. Moreover, by $\nu\cdot p_1 > 0$ and the minimality of $E$, we can bound 
        \begin{align}
            \label{eq:projection dissipation comparison}
            0 \leq \cD(\tilde{E},F) - \cD(E,F) &= \int_{L} \ps {\int_{\tilde{E}_x} \sdp_F(x+t\nu)dt - \int_{E_x}\sdp_F(x+t\nu)dt} \nu\cdot\nu_1\, dx \notag \\
            &= \int_L \ps{ \int_{\tilde{E}_x \setminus E_x} \sdp_F(x+t\nu) dt - \int_{E_x\setminus \tilde{E}_x} \sdp_F(x+t\nu)dt} \nu\cdot\nu_1\, dx \notag \\
            &\leq 0 
        \end{align}
        Thus $E_x = \tilde{E}_x$ for almost every $x\in L$, so $\nu_E \cdot \nu \geq 0$. Applying this argument for all choices of $\nu$ satisfying $\nu\cdot\nu_1\geq C\tau^{1/2}$ and $\nu\cdot p_1 >0$, we deduce $\arg \nu_E \in [\arg\nu_1 - C\tau^{1/2}, \arg p_1]$ or $\arg\nu_E \in [\arg p_1, \arg\nu_1 + C\tau^{1/2}]$. In particular, $\arg\nu_E \geq \min\{\arg p_1, \arg\nu_1 - C\tau^{1/2}\}$.
        
        Let us now return to the task of confining $\Gamma$. We claim that $\sdp_F(x_1) \leq M$ and $\sdp_F(x_2) \geq m$. Suppose $\sdp_F(x_1) > M$, in which case $\partial E$ satisfies the $p_0$ condition at $x_1$. Let $x_3\in\Gamma$ be the first point after $x_1$ such that $\nu_E=\nu_1$ for a short duration after $x_3$; such a point must exist, else $\Gamma$ will never connect to $I_2^E$. Then $\partial E$ also satisfies the $p_0$ condition at $x_3$, and by the previously obtained lower bound on $\arg\nu_E$, $x_3 \in \Sigma^-(x_1,p_1,0)$. In particular, $\sdp_F(x_3) < \sdp_F(x_1)$, so one obtains a contradiction with the barriers $\Sigma^+(x_1,p_0,-\delta_+)$ and $\Sigma^-(x_3, p_0,\delta_-)$. Thus $\sdp_F(x_1) \leq M$, and one similarly deduces $\sdp_F(x_2) \geq m$. 
        
        Altogether, we have shown $\sdp_F(x_1) \leq M \leq m \leq \sdp_F(x_2)$. If $s_{I_1^E} = s_{I_2^E}$, then $I_1^E$ and $I_2^E$ must intersect, contradicting our initial assumption that $I_1^E \neq I_2^E$. If $s_{I_1^E} > s_{I_2^E}$, then the $p_1$ cone condition of $\tilde{J}$ implies $(x_1-x_2)\cdot R^{-1}\nu_1 > 0$, so in particular the family of slices $(x+\R\nu_1)\cap E\cap U_0$ ranging from $x_1$ to $x_2$ are not intervals, contradicting the conclusion of our projection argument for $\nu=\nu_1$. This concludes the proof of Case 2.
\end{proof}

\subsection{Convergence to the ODE flow} 

Now that we know the approximate flow typically produces translates, we are ready to show that these translates are a close approximation to \eqref{eq:ODE flow}. We need only establish such estimates under the assumption that no vanishing occurs, since vanishing can only occur at a bounded number of timesteps.

\begin{lemma}
    Let $E_0,F,\tau_0$ be as in \cref{thm:MM translate} and suppose $|F| = |E_0|$. For $|t| < c_0 \tau^{1/2}$ and $\tau \leq \tau_0$, $F_t := \{\sdp_F \leq t\}$ satisfies
    \begin{equation}
        \label{eq:energy translate}
        \cF_\tau(F_t, F) = P_\phi(F) \ps{1 + \frac{|t|}{\tau^{1/2}} + \frac{t^2}{2\tau}} + O(\tau^{1/2}).
    \end{equation}
    where the implicit constant depends only on $\phi$ and $\chi(F)$. 
\end{lemma}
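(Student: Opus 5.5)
The plan is to compute the three terms of $\cF_\tau(F_t,F)$ separately, using that $F_t=\{\sdp_F\le t\}$ is a uniform translate of $F$. For $\tau\le\tau_0$ with $c_0\tau^{1/2}<\eps_0(F)$, the map $x\mapsto x+sX(x)$ sends $\partial F$ bijectively onto $\{\sdp_F=s\}$ for $|s|\le|t|$. Hence $F_s=F(h)$ with $h_I=s$ for every edge $I$, and this $h$ satisfies \eqref{eq:translate h requirement} with equality. No type II fuzzy edge can vanish, since all edges move by the same height.

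For the perimeter term I would apply \eqref{eq:translate perimeter} (\cref{lem:translate perimeter}) and then the Gauss--Bonnet identity \eqref{eq:gauss bonnet}, $\sum_I\alpha_I\phi(\nu_I)=\chi(F)P_\phi(W_\phi)$, already used in \eqref{eq:translate okp}. This gives
\[
P_\phi(F_s)=P_\phi(F)+\chi(F)P_\phi(W_\phi)\,s \qquad\text{for } |s|\le|t|.
\]
In particular $P_\phi(F_t)=P_\phi(F)+O_{\phi,\chi(F)}(\tau^{1/2})$.

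For the volume and dissipation terms I would use the anisotropic coarea formula \eqref{eq:anisotropic coarea} together with the eikonal property $\phi(\nabla\sdp_F)=1$ on the tubular neighbourhood (\cref{lem:eikonal}), exactly as in the proof of \cref{lem:EL}. This yields
\[
|F_t|-|F|=\int_0^t P_\phi(F_s)\,ds=P_\phi(F)\,t+O(t^2).
\]
Since $|F|=|E_0|$, the penalty term becomes $\tau^{-1/2}\,\big||F_t|-|E_0|\big|=P_\phi(F)\,|t|\tau^{-1/2}+O(t^2\tau^{-1/2})$, and the error is $O(\tau^{1/2})$ because $|t|<c_0\tau^{1/2}$. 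Similarly, using \eqref{eq:weighted coarea} with weight $\dd^\phic_F$,
\[
\cD(F_t,F)=\int_0^{|t|} s\,P_\phi(F_{\pm s})\,ds=\tfrac12 P_\phi(F)\,t^2+O(|t|^3).
\]
Dividing by $\tau$ gives $P_\phi(F)\,t^2/(2\tau)+O(\tau^{1/2})$.

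I read the dissipation part of $\cF_\tau$ as $\cD(\cdot,F)$. This differs from $\int_E\sdp_F$ only by the constant $\int_F\sdp_F$, which is independent of the competitor, and that is the normalization under which \eqref{eq:energy translate} is meant. Summing the three expansions gives \eqref{eq:energy translate}, with constants depending on $\phi$ (through $P_\phi(W_\phi)$ and $c_0$) and on $\chi(F)$.

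The step I expect to need the most care is justifying that every level set $\{\sdp_F=s\}$ with $|s|<c_0\tau^{1/2}$ is exactly $\partial F_s$ of an admissible translate. The same applies to the negative side $s<0$, where positive regular edges shrink and fuzzy edges move inward along $-p_J$. This is what makes $P_\phi(F_s)$ exactly affine in $s$, and hence makes the $s$-integrals above explicit. It should follow from the tubular-neighbourhood description of $U(I)$ and $U(J)$ recalled before \cref{lem:simple curve}.
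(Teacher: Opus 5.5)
Your proposal is correct and takes essentially the same route as the paper. The perimeter is made affine in $t$ via \cref{lem:translate perimeter} together with Gauss--Bonnet, and the area and dissipation terms are computed by the anisotropic coarea formula with $\phi(\nabla\sdp_F)=1$; the resulting higher-order terms are $O(\tau^{1/2})$ because $|t|<c_0\tau^{1/2}$. Your reading of the dissipation term as $\cD(\cdot,F)$ is also exactly what the paper's proof implicitly uses.
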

\begin{proof}
    We first note that 
    \begin{equation}
        \label{eq:perimeter Ft}
        P_\phi(F_t) = P_\phi(F) + \sum_j \alpha_{I_j} t = P_\phi(F) + t\int_{\partial F} \kp_F\,dP_\phi =  P_\phi(F)[1 + \okp_F t].
    \end{equation}
    By \cref{lem:eikonal} and the anisotropic coarea formula, we can compute the area and dissipation terms:
    \begin{align}\
        \label{eq:area Ft}
        |F_t| &= \int_{F_t} \phi(\nabla \sdp_F) dx
        = |F| + \int_0^{t} P_\phi(F_s) ds
        = |F| + P_\phi(F) (t + \inv{2} \okp_F t^2)\\
        \label{eq:dissipation Ft}
        \cD(F_t, F) &= \int_{F_t} \sdp_F dx - \int_F \sdp_F dx 
        = \int_0^t s P_\phi(F_s) ds
        = P_\phi(F) (\frac{t^2}{2} + \inv{3} \okp_F t^3). 
    \end{align}
    By summing \eqref{eq:perimeter Ft}, \eqref{eq:area Ft}, \eqref{eq:dissipation Ft}, we obtain the desired asymptotic
    \begin{align*}
        \cF_\tau(F_t, F) &= P_\phi(F)\ps{
            1 + \okp_F t + \inv{\tau}\Big(\frac{t^2}{2} + \frac{\okp_F}{3} t^3\Big) + \inv{\tau^{1/2}}\Big|t + \frac{\okp_F}{2}t^2\Big|
        }\\
        &= P_\phi(F) \ps{1 + \frac{t^2}{2\tau} + \frac{|t|}{\tau^{1/2}} } + O(\tau^{1/2})
    \end{align*}
    and the implicit constant depends only on $c_0$ and $P_\phi(F)\okp_F = P_\phi(W_\phi)\chi(F)$.
\end{proof}

\begin{proposition}
    \label{prop:lagrange mult est}
    Let $E_0, E, F$ be as in \cref{thm:MM translate}. Suppose $|F| = |E_0|$ and that no type II fuzzy edges have vanished in $E$. Then $|E| = |E_0|$ and $|\lambda - \okp_F| \leq C\tau$ where $C$ depends only on $\phi, |E_0|, P_\phi(F), \chi(F), \eps_0(F)$.
\end{proposition}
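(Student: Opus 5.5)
By \cref{thm:MM translate}, $E$ is a translate of $F$. Since no type II fuzzy edge has vanished, we can write $E = F(h)$ with edge heights $h_I$, and every fuzzy edge sits at the common level $h_J = \lambda\tau$ (case (i) of \cref{prop:fuzzy edges translate}). The plan has two stages. First, we show that $|E| = |E_0|$, i.e. the soft volume constraint is inactive. Second, we read off $\lambda$ from the Euler–Lagrange equation \eqref{eq:EL flat} edge by edge and sum over edges.

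\textbf{Volume is preserved.} Suppose $|E|\neq|E_0|$. Then $\lambda = \pm\tau^{-1/2}$. On a regular edge $I$ of $E$ we have $\sdp_F \equiv h_I$ (inside $U(I)$), so \eqref{eq:EL flat} gives $h_I = \tau(\lambda - \kp_E(I))$. The edge lengths of $E$ stay $\gtrsim \eps_0(F)$, so $|\kp_E(I)|\le C$, and the same holds on fuzzy edges, where $h_J = \lambda\tau$. Hence every edge is displaced by $\pm\tau^{1/2} + O(\tau)$ with a common sign. By \eqref{eq:translate perimeter} and the area formula for translates, this means
\[
\bigl||E| - |E_0|\bigr| \ge P_\phi(F)\,\tau^{1/2} - C\tau ,
\]
and the energy is essentially that of $F_t$ with $|t| = \tau^{1/2}$. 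By \eqref{eq:energy translate}, this costs at least $P_\phi(F)\,(1 + 1 + \tfrac12) + O(\tau^{1/2})$. Now compare with the competitor $F_{t^*}$, where $t^* = O(\tau)$ is chosen so that $|F_{t^*}| = |E_0|$; this is solvable by \eqref{eq:area Ft}. Its energy is $P_\phi(F) + O(\tau^{1/2})$. For $\tau\le\tau_0$ this strictly undercuts $E$, contradicting minimality. Therefore $|E| = |E_0|$.

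\textbf{Estimating $\lambda$.} With $|E| = |E_0| = |F|$, we use $h_I = \tau(\lambda - \kp_E(I))$ for regular edges and $h_J = \lambda\tau$ for fuzzy edges. By the area asymptotics \eqref{eq:EL area approx}, the area of a translate is
\[
|F(h)| = |F| + \sum_I P_\phi(I)\,h_I + O(|h|^2).
\]
Since $|h| = O(\tau)$, the error term is $O(\tau^2)$. Setting the area change to zero gives
\[
\lambda\tau P_\phi(F) - \tau\sum_I P_\phi(I)\,\kp_E(I) = O(\tau^2).
\]
The sum equals $\sum_I \alpha_I \phi(\nu_I) = P_\phi(F)\,\okp_F$, up to replacing the edge lengths of $E$ by those of $F$ in $\kp_E$; that replacement costs only $O(\tau)$ relative error, since $|h| = O(\tau)$ and the lengths are bounded below. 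Dividing by $\tau P_\phi(F)$ yields $|\lambda - \okp_F|\le C\tau$.

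\textbf{Main obstacle.} The delicate step is the first one. We must quantify the area change of a translate carefully enough to exclude the case $|\lambda| = \tau^{-1/2}$. In particular, we need $|\kp_E|$ bounded uniformly in terms of $\eps_0(F)$, which requires that no regular edge shrinks substantially. This follows from $|h| = O(\tau^{1/2})$ together with the edge-length lower bound $\gtrsim\eps_0(F)$, but for type I fuzzy edges that shrink to zero one must check that the area formula still holds to the stated order.
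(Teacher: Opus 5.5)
There is a genuine gap in how you pass from heights to area and curvature. Your second stage, and the displacement estimate in your first, rests on three claims:
\begin{itemize}
\item $h_I=\tau(\lambda-\kp_E(I))$;
\item $|F(h)|=|F|+\sum_I P_\phi(I)h_I+O(|h|^2)$;
\item replacing the edge lengths of $E$ by those of $F$ costs only a relative $O(\tau)$.
\end{itemize}
All three presuppose that a translate depends Lipschitz-continuously on $h$, with constants controlled by $\eps_0(F)$. That is false at fuzzy edges, where $|I(h)|$ is only BV in $h$.

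For instance, let $I$ be a negative edge with $\nu_I=\nu_1$ whose fuzzy neighbour $J$ is a staircase: a riser of height $\eta$ with normal $\nu_2$, then a tread of length $\ell\sim 1$ with normal $\nu_1$. This is allowed by the $p_1$ cone condition and is invisible to $\eps_0(F)$. Since $h_I-\lambda\tau\ge\tau L_I/|I^E|$ is of order $\tau$, taking $\eta$ a small multiple of $\tau$ makes $I^E$ swallow the tread. Then:
\begin{itemize}
\item $|I^E|\ge|I|+\ell$, so $\kp_E(I^E)$ and $\kp_F(I)$ differ at order one;
\item on the swallowed part $\sdp_F=h_I-\eta$, so $h_I$ exceeds the average of $\sdp_F$ over $I^E$ (the only thing the Euler--Lagrange equation fixes) by about $\eta\ell/|I^E|$;
\item the area gained above the tread is $(h_I-\eta)\phi(\nu_1)\ell$, while your expansion books it as $\lambda\tau\,\phi(\nu_1)\ell$.
\end{itemize}
Each discrepancy is of order $\tau$, not $\tau^2$. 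Your final identity $\lambda P_\phi(F)=\sum_I\alpha_I\phi(\nu_I)+O(\tau)$ is in fact true, but only because these errors cancel. Your argument does not show that.

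The missing idea is to do the bookkeeping in terms of $\int_{\partial E}\sdp_F\,dP_\phi$, which is exactly what the Euler--Lagrange equation controls, so that $|I^E|$ is never compared with $|I|$. The paper proves $|E|=|F|+\int_{\partial E}\sdp_F\,dP_\phi+O(\tau^2)$ edge by edge. On each $U(I)$ this holds up to $O(\tau^2)$. On each $U(J)$ it holds \emph{exactly}, by slicing along $p_J$ over the parts of the $I_j^E$ lying in $U(J)$ (using $\nu_{I_j}\cdot p_J=\phi(\nu_{I_j})$), and by the coarea formula under $J^E\subset J+\lambda\tau p_J$. Integrating the Euler--Lagrange equation over all edges of $E$ then gives $\int_{\partial E}\sdp_F\,dP_\phi=\tau\lambda P_\phi(E)-\tau P_\phi(W_\phi)\chi(E)$ exactly. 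Combined with $\chi(E)=\chi(F)$ and $P_\phi(E)=P_\phi(F)+O(\tau)$ (the perimeter of a translate is affine in $h$), this gives $|\lambda-\okp_F|\le C\tau$.

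Your first stage has the right structure; it is the paper's energy comparison with $F_{\lambda\tau}$. Two repairs are needed there.
\begin{itemize}
\item Replace the false equality by a two-sided bound. For a negative edge, $\sdp_F\le h_I$ on $I^E$ gives $h_I>\lambda\tau$. Since $\sdp_F\ge\lambda\tau$ on $I^E$ and $\sdp_F=h_I$ on $I^E\cap U(\tilde{I})$, you get $(h_I-\lambda\tau)|\tilde{I}|\le\tau L_I$. No bound on $\kp_E$ is needed, since $\kp_E(I^E)|I^E|=\alpha_I$.
\item $|E|=|E_0|$ alone does not give $|h|=O(\tau)$, because then any $|\lambda|\le\tau^{-1/2}$ is admissible. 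You need $\lambda=O(1)$. The same comparison provides it: $P_\phi(F)\bigl(1+|\lambda|\tau^{1/2}+\frac{1}{2}\lambda^2\tau\bigr)+O(\tau^{1/2})=\cF_\tau(E,F)\le\cF_\tau(F,F)=P_\phi(F)$. Your competitor $F_{t^*}$ is just $F$, since $|F|=|E_0|$.
\end{itemize}
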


\begin{proof}
    All implicit constants depend only on the quantities mentioned in the statement. 

    First we claim that $s_{I^E} = s_I + \lambda\tau + O(\tau)$ for every regular edge $I^E\sb\partial E$. Suppose without loss of generality that $I^E$ is negative. Then $\sdp_F(x) \leq s_{I^E} - s_I$ for all $x\in I^E$ due to the cone conditions satisfied by $I$'s neighbors, so the Euler-Lagrange equation \eqref{eq:EL flat} implies
    \begin{equation*}
        s_{I^E} - s_I \geq \dashint_{I^E} \sdp_F d\cH^1 = (-\kp_E(I^E) + \lambda)\tau > \lambda\tau.
    \end{equation*}
    Moreover, \cref{prop:fuzzy edges translate} implies $I^E \sb \{\sdp_F \geq \lambda\tau\}$. Since $I^E \cap U(\tI) \sb \{\sdp_F = s_{I^E} - s_I \}$, where $\tI$ is the truncation of $I$ from \cref{lem:reg edges translate}, we obtain the opposite bound
    \begin{align*}
       -\alpha_I \tau = \int_{I^E} (\sdp_F -\lambda \tau) d\cH^1 
       &\geq \int_{I^E \cap U(\tI)} (\sdp_F -\lambda\tau) d\cH^1
       = (s_{I^E} - s_I - \lambda\tau)[|I| + O(\tau^{1/2})]
    \end{align*}
    from which the claim follows. 
    
    By \cref{prop:fuzzy edges translate} and the hypothesis, the fuzzy edges of $\partial E$ live in $\{\sdp_F = \lambda\tau\}$, so the previous claim implies the estimates \begin{align*}
        |P_\phi(E) - P_\phi(F_{\lambda\tau})| &= \bigg|\sum_{j=1}^m \alpha_{I_j} (s_{I_j^E} - s_{I_j} - \lambda\tau)\bigg| \leq C\tau\\
        |E \Delta F_{\lambda\tau}| &\leq C\tau \\
        |\cD(E, F) - \cD(F_{\lambda\tau}, F)| &\leq \|\sdp_F\|_{L^\infty(E\Delta F_{\lambda\tau})} |E\Delta F_{\lambda\tau}| \leq C\tau^{3/2}
    \end{align*}
    and thus by invoking \eqref{eq:energy translate},
    \begin{align}
        \label{eq:energy estimate for lambda tau}
        \cF_\tau(E, F) &= \cF_\tau(F_{\lambda\tau}, F) + O(\tau^{1/2}) \notag \\
        &= P_\phi(F)\ps{1 + |\lambda|\tau^{1/2} + \inv{2}\lambda^2\tau} + O(\tau^{1/2}).
    \end{align}
    Since $\cF_\tau(E, F) \leq P_\phi(F)$ by minimality, \eqref{eq:energy estimate for lambda tau} implies $\lambda = O(1)$, and thus $s_{I^E} - s_I = O(\tau)$ and $P_\phi(E) = P_\phi(F) + O(\tau)$. Moreover, since $|\lambda| < \tau^{-1/2}$ for sufficiently small $\tau$, the area constraint $|E| = |E_0|$ must be satisfied. We also note that the truncations $\tilde{I}\sb I$ from \cref{lem:reg edges translate} can now be chosen to satisfy $|\tI| \geq |I| - C\tau$. 
    
    It remains to estimate $\lambda = \okp_F + O(\tau)$. To this end, we claim a more precise area estimate:
        \begin{equation}
            \label{eq:area diff MM}
            |E| = |F| + \int_{\partial E} \sdp_F dP_\phi + O(\tau^2).
        \end{equation}
    Indeed, by the Euler-Lagrange equation and the fact that $\int_{\partial E} \kp_E dP_\phi = P_\phi(W_\phi)\chi(E)$ is a topological invariant, \eqref{eq:area diff MM} implies
        \begin{align*}
            0 = |E| - |F| &= \tau \int_{\partial E} (\lambda - \kp_E)dP_\phi + O(\tau^2) \\
            &= P_\phi(E) \lambda\tau - \tau \int_{\partial F} \kp_F dP_\phi + O(\tau^2) \\
            &= P_\phi(F) (\lambda - \okp_F)\tau + O(\tau^2)
        \end{align*}
    and thus $\lambda - \okp_F = O(\tau)$.

    Let $I\sb\partial F$ be a regular edge. We remark by \cref{prop:fuzzy edges translate} that $I^E$ either extends outside of $U(I)$ or has endpoints with distance at most $O(\tau)$ from the boundary of $U(I)$. In either case, $E\cap U(I)$ differs in area from $\{x\cdot\nup_I \leq s_{I^E}\}\cap U(I)$ by at most $O(\tau^2)$, so
    \begin{align}
        \label{eq:area diff I}
        \int_{U(I)} (\chi_E - \chi_F) &= |\{x\cdot\nup_{I^E}\leq s_{I^E}\}\cap U(I)| - |\{x\cdot\nup_I\leq s_I\}\cap U(I)| + O(\tau^2) \notag \\
        &= (s_{I^E} - s_I) \phi(\nu_I) |I| + O(\tau^2) \notag\\
        &= \int_{I^E \cap U(I)}\sdp_F dP_\phi + O(\tau^2)\notag\\
        &= \int_{\partial E \cap U(I)} \sdp_F dP_\phi + O(\tau^2)
    \end{align}
    where the last step follows from the fact that $\cH^1((\partial E \setminus I^E)\cap U(I)) = O(\tau)$. 
    
    If $J\sb\partial F$ is a fuzzy edge, then we may decompose $J$ into $J_1\cup \tilde{J}\cup J_2$ where $J^E = \tilde{J} + \lambda\tau p_J$. Let $I_j$ be the regular edges adjacent to $J_j$. Recall that for any $x \in \partial E \cap U(J)$, $x - \sdp_F(x)p_J \in J$. By slicing along $p_J$ and using the identity $\nu_{I_j} \cdot p_J = \phi(\nu_{I_j})$, we have the exact formula
    \begin{align}
        \label{eq:area diff J 1}
        \int_{U(J_j)} (\chi_E - \chi_F) &= \int_{I_j^E \cap U(J)} \ps{\int_{(x+\R p_J)\cap U(J)}\chi_E - \chi_F} \nu_{I_j}\cdot \frac{p_J}{|p_J|} d\cH^1(x) \notag\\
        &= \int_{I_j^E \cap U(J)} \sdp_F(x) (\nu_{I_j}\cdot p_J) d\cH^1(x) \notag\\
        &= \int_{I_j^E \cap U(J)} \sdp_F dP_\phi.
    \end{align}
    For the contribution over $\tilde{J}$, we recall that $J^E = \tilde{J} + \lambda\tau p_J$ and apply the anisotropic coarea formula \eqref{eq:sdf coarea}
    \begin{align}
        \label{eq:area diff J 2}
        \int_{U(\tilde{J})} (\chi_E - \chi_F) &= \int_0^{\lambda\tau} P_\phi(\{\sdp_F \leq t\}; U(\tilde{J})) dt = \lambda\tau P_\phi(\tilde{J}) = \int_{J^E} \sdp_F dP_\phi.
    \end{align}
    By \eqref{eq:area diff J 1} and \eqref{eq:area diff J 2} we obtain 
    \begin{equation}
        \label{eq:area diff J}
        \int_{U(J)} (\chi_E - \chi_F) = \int_{\partial E \cap U(J)} \sdp_F dP_\phi
    \end{equation}
    and thus \eqref{eq:area diff MM} follows from summing \eqref{eq:area diff I} and \eqref{eq:area diff J} over all edges in $F$. 
\end{proof}

\begin{proposition}
    \label{prop:MM finite diff}
    Let $E, F, \tau$ be as in \cref{thm:MM translate} and suppose $|F| = |E_0|$. Let $u(t) := \Psi_t^F(0)$.
    \begin{itemize}
        \item[(i)] If $E$ is a translate $F(u^{\tau})$, then $u^\tau = u(\tau) + O(\tau^2)$.
        \item[(ii)] There exists a constant $C'$ such that if $u(t)$ is defined for all $t\leq C'\tau^{1/2}$, then no type II fuzzy edges vanish in $E$.
    \end{itemize}
\end{proposition}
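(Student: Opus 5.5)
For (i), I would compare the discrete translate coordinates $u^\tau=(\ol u^\tau,u^\tau_1,\dots,u^\tau_m)$ with one explicit Euler step of \eqref{eq:ODE flow}, then control the gap between that Euler step and $u(\tau)$.

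\textbf{Step 1: identify the coordinates of $E$.} By \cref{prop:fuzzy edges translate}(b)(i), every surviving fuzzy edge of $E$ lies in $J+\lambda\tau p_J$, so $\ol u^\tau=\lambda\tau$. By \cref{prop:lagrange mult est}, $\lambda=\okp_F+O(\tau)$, hence $\ol u^\tau=\tau\okp_F+O(\tau^2)$. Since $\okp_{F(u)}$ is Lipschitz in $u$ by \eqref{eq:translate okp} and $|u(t)|\lesssim t$, we also have $\ol u(\tau)=\tau\okp_F+O(\tau^2)$. This settles the $\ol u$ component.

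\textbf{Step 2: the regular edges.} For a regular edge $I_j$, the height of $I_j^E$ is $s_{I_j^E}-s_{I_j}=\ol u^\tau+u^\tau_j=O(\tau)$. On $I_j^E\cap U(\tI_j)$ we have $\sdp_F=\ol u^\tau+u^\tau_j$. The part of $I_j^E$ outside $U(\tI_j)$ has length $O(\tau)$ (after the improvement to $|\tI|\ge|I|-C\tau$ in \cref{prop:lagrange mult est}), and there $\sdp_F=O(\tau)$. The Euler--Lagrange equation \eqref{eq:EL flat} then gives
\[
\ol u^\tau+u^\tau_j=\tau\Big(\lambda-\frac{\alpha_{I_j}}{|I_j^E|}\Big)+O(\tau^2),
\]
so $u_j^\tau=-\tau\alpha_{I_j}/|I_j(u^\tau)|+O(\tau^2)$. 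Because $|u^\tau|=O(\tau)$ and $|I_j(\cdot)|$ is bounded below by $\eps_0$, this is an implicit Euler step.

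To compare it with $u_j(\tau)=-\int_0^\tau \alpha_{I_j}/|I_j(u(s))|\,ds$, I would use the monotone structure from the proof of \cref{prop:ODE wellposed}: $|I_j(u)|=\ell_j(u_j)+f_j(u)$ with $f_j$ affine and $\ell_j$ monotone. On the same time scale both $u_j$ and $u_j^\tau$ move monotonically at speed $\sim1$, and $\ell_j$ is evaluated only on the interval between $0$ and $u_j(\tau)$ or $u_j^\tau$. The one-sided Lipschitz inequality then gives
\[
(u^\tau_j-u_j(\tau))\Big(\frac{-\alpha_{I_j}}{|I_j(u^\tau)|}+\frac{\alpha_{I_j}}{|I_j(u(\tau))|}\Big)\le C|u^\tau-u(\tau)|^2 .
\]
With the integral form $u_j(\tau)=-\tau\alpha_{I_j}/|I_j(u(\tau))|+\tau\,\mathrm{osc}$, a Gronwall-type absorption (valid for $\tau C<1/2$) yields $|u^\tau-u(\tau)|\lesssim\tau^2+\tau\cdot\mathrm{osc}$.

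\textbf{Main obstacle.} The oscillation of $1/|I_j(u(s))|$ on $[0,\tau]$ is $O(\tau)$ when $\ell_j$ is Lipschitz. However, $\ell_j$ is only monotone (BV), since fuzzy edges may contain corners. I would handle this with the implicit/explicit sandwich: monotonicity of $\ell_j$ means the implicit value $1/|I_j(u^\tau)|$ and the integral average both lie between the endpoint values. That alone gives only an $O(\tau)$ gap. To reach $O(\tau^2)$, I would instead use the exact identity \eqref{eq:ODE length integral},
\[
\int_0^{u_j(\tau)}|I_j(\tau_j(s))|\,ds=-\alpha_{I_j}\tau,
\]
and derive its discrete analogue $\int_0^{u_j^\tau}(\ell_j(s)+f_j)\,ds=-\alpha_{I_j}\tau+O(\tau^2)$ directly from the area calculation in \eqref{eq:area diff I}. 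The area swept by the edge is $\int_0^{u_j^\tau}\ell_j$ exactly, and the EL equation was already used in exactly this averaged form. Comparing the two integrals, with $\ell_j+f_j\gtrsim\eps_0$, gives $|u_j^\tau-u_j(\tau)|=O(\tau^2)$.

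\textbf{Part (ii).} Suppose a type II fuzzy edge $J$ vanished in $E$, so case (iii) of \cref{prop:fuzzy edges translate} holds: $I_1^E=I_2^E$, forcing $s_{I_2}-s_{I_1}\le 2\max_j|s_{I_j^E}-s_{I_j}|$. Without yet knowing that $E$ is a translate, \cref{prop:lagrange mult est} need not apply, but the $L^\infty$ estimate gives $|s_{I_j^E}-s_{I_j}|\le c_0\tau^{1/2}$. Hence the height of $J$ is at most $2c_0\tau^{1/2}$.

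Along the ODE flow, the height of $J$ decreases at rate $L_{I_1}/|I_1|+L_{I_2}/|I_2|\le C$, as computed at the end of \cref{prop:ODE stability hausdorff}. So the vanishing time of $J$ is at most $C''\tau^{1/2}$. Taking $C'>C''$ contradicts the assumption that $u(t)$ exists without vanishing up to time $C'\tau^{1/2}$.
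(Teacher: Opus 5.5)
Your final route is the paper's: $\overline{u}^\tau=\lambda\tau$ with \cref{prop:lagrange mult est} for the first component, the discrete swept-area identity $\int_0^{u_j^\tau}\ell_j\,ds=-\alpha_{I_j}\tau+O(\tau^2)$ compared with the exact ODE identity \eqref{eq:ODE length integral}, and for (ii) the height of $J$ against the $c_0\tau^{1/2}$ bound on $|s_{I_j^E}-s_{I_j}|$. Two points need repair, however.

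\textbf{Step 2.} You assert that $I_j^E\setminus U(\tI_j)$ has length $O(\tau)$. This is false in exactly the case you then single out as the obstacle. Suppose $\ell_j$ jumps at some level $a$ between $0$ and $u_j^\tau$. Then $I_j^E$ has absorbed the translate of a piece of an adjacent fuzzy edge $J$ that is parallel to $I_j$. That piece can have length of order one, and on it $\sdp_F-\lambda\tau$ equals $u_j^\tau-a$ rather than $u_j^\tau$. Consequences:
\begin{itemize}
\item Your implicit-Euler display is wrong there, not merely $O(\tau)$-inaccurate.
\item More importantly, the swept-area identity cannot be derived from \eqref{eq:area diff I} alone. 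That computation only sees $I_j^E\cap U(I_j)$, which leaves an $O(\tau)$ discrepancy with the Euler--Lagrange integral over all of $I_j^E$.
\end{itemize}
The missing ingredient is to work on $U(I_j)$ together with $U(J)$ for the adjacent fuzzy edges $J$. On $I_j^E\cap U(J)$ you use the exact $p_J$-slicing formula \eqref{eq:area diff J 1}. Over that enlarged region this gives $\phi(\nu_{I_j})^{-1}\int(\chi_E-\chi_{F_{\lambda\tau}})=\int_{I_j^E}(\sdp_F-\lambda\tau)\,d\cH^1+O(\tau^2)=-\alpha_{I_j}\tau+O(\tau^2)$. Slicing the same region parallel to $I_j$ then turns the left side into $\int_0^{u_j^\tau}\ell_j\,ds+O(\tau^2)$. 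With this in place, the implicit-Euler and one-sided-Lipschitz detour can be dropped entirely.

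\textbf{Part (ii).} An upper bound on the closing rate of $J$ only bounds its vanishing time from below, which is the wrong direction. You need $L_{I_1}/|I_1(t)|+L_{I_2}/|I_2(t)|\ge 1/C$. This holds because $|I_j(t)|\lesssim P_\phi(F)$, and it is the inequality actually computed at the end of \cref{prop:ODE stability hausdorff}.
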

\begin{remark}
    We note that it is not immediate that one should expect quadratic local error in (i). For instance, the Euler scheme will generally fail to approximate $u(t)$ up to quadratic error in the presence of fuzzy edges, since then the vector field is only BV. To obtain quadratic error, we rely on the fact that the Euler-Lagrange equation encodes the local area difference between $E$ and $F_{\lambda\tau}$. The estimates are similar to those in \cref{prop:ODE stability hausdorff}.
\end{remark}

\begin{proof}
    Let $I_1, \dots, I_m$ be the regular edges of $F$. First we prove (ii). Suppose $J\sb\partial F$ is a type II fuzzy edge, and say $J$ is adjacent to $I_1$ and $I_2$ which are respectively negative and positive. Recall that the vanishing of $J$ occurs when $u_1(t) - u_2(t)$ increases to $s_{I_2} - s_{I_1}$. Since $\frac{d}{dt}(u_1 - u_2) \geq \inv{C}$, we can bound $s_{I_2} - s_{I_1} \geq u_1(t) - u_2(t) \geq t/C$ for any $t\leq C'\tau^{1/2}$. For $C'$ sufficiently large, it follows that $s_{I_2} - s_{I_1} \geq \frac{C'}{C}\tau^{1/2} \geq 2 c_0\tau^{1/2}$. Since $|s_{I_j^E} - s_{I_j}| < c_0\tau^{1/2}$, we must have $s_{I_1^E} < s_{I_2^E}$, i.e. $J$ cannot vanish in $E$.
    
    Now we prove (i). By \cref{thm:MM translate} we can express $E = F(u^\tau)$ where $u^\tau = (\lambda\tau, u_1^\tau, \dots, u_m^\tau)$. Moreover $u_j^\tau = O(\tau)$ by the proof of \cref{prop:lagrange mult est}, and $u(\tau) = O(\tau)$ since the vector field $Y(u)$ is bounded. 

    Recall the function $\ell_j(s)$ which is the length of the translation of $I_j$ in $F(se_j)$. We claim that 
    \begin{equation}
        \label{eq:MM finite diff}
        \int_0^{u_j^\tau} \ell_j(s) ds = -\alpha_{I_j}\tau + O(\tau^2).
    \end{equation}
    Define $\tilde{U}(I_j)$ to be the union of $U(I_j)$ with $U(J)$ for any fuzzy edges $J$ adjacent to $I_j$. Following the same calculations as in \eqref{eq:area diff I} and \eqref{eq:area diff J 1} and invoking the Euler-Lagrange equation, one obtains the estimate
    \begin{align*}
        \inv{\phi(\nu_{I_j})} \int_{\tilde{U}(I_j)} (\chi_E - \chi_{F_{\lambda\tau}}) &= \int_{I_j^E} (\sdp_F - \lambda\tau) d\cH^1 + O(\tau^2) \notag\\
        &= -\kp_E(I_j^E) |I_j^E| \tau + O(\tau^2) \notag\\
        &= -\alpha_{I_j}\tau + O(\tau^2).
    \end{align*}
    By slicing along lines parallel to $I_j$, we can also express 
    \begin{align*}
        \inv{\phi(\nu_{I_j})} \int_{\tilde{U}(I_j)} (\chi_E - \chi_{F_{\lambda\tau}}) &= \int_0^{u_j^\tau} |(E\Delta F_{\lambda\tau}) \cap \tilde{U}(I_j) \cap \{x\cdot\nup_{I_j} = s_I + \lambda\tau + s\}| ds \\
        &= \int_0^{u_j^\tau} (\ell_j(s) + O(|u^\tau|)) ds\\
        &= \int_0^{u_j^\tau} \ell_j(s) ds + O(\tau^2)
    \end{align*}
    from which \eqref{eq:MM finite diff} follows. 
    
    Meanwhile for the ODE flow, let $I_j(t)$ denote the edges in the evolution $F(u(t))$ and define $\tau_j(s)$ so that $u_j(\tau_j(s)) = s$. Recall from the proof of \cref{prop:ODE stability hausdorff} that $|I_j(\tau_j(s))| = \ell_j(s) + O(\tau)$ and 
    \begin{equation}
        \label{eq:ODE length integral 3}
        \int_0^{u_j(t)} |I_j(\tau_j(s))| ds = -\alpha_{I_j} t.
    \end{equation}
    By \eqref{eq:MM finite diff} and \eqref{eq:ODE length integral 3}, it follows that 
    \begin{align*}
        0 &= \int_0^{u_j^\tau} \ell_j(s) ds - \int_0^{u_j(\tau)} |I_j(\tau_j(s))| ds + O(\tau^2)\\
        &= \int_{u_j(\tau)}^{u_j^\tau} \ell_j(s) ds + O(\tau^2)
    \end{align*}
    and thus $u_j^\tau - u_j(\tau) = O(\tau^2)$ since $\ell_j(s) \geq |I_j|$.
    
    It remains to check $\ol{u}^\tau = \ol{u}(\tau) + O(\tau^2)$. By \cref{prop:lagrange mult est}, $\ol{u}^\tau = \lambda\tau = \okp_F \tau + O(\tau^2)$. Moreover, since $P_\phi(F(u))$ is affine in $u$,
    \begin{equation*}
        \ol{u}(\tau) = \int_0^\tau \okp_{F(u(t))} dt = \int_0^\tau \okp_F \frac{P_\phi(F)}{P_\phi(F(u(t))}dt =  \int_0^\tau \okp_F(1+ O(t))dt = \okp_F \tau + O(\tau^2). \qedhere
    \end{equation*}
\end{proof}

\bigskip 

\noindent\textbf{Proof of \cref{thm:weak strong uniqueness}:}

    Fix $\tau \leq \tau_0$ and $T_0 < T$, and let $E^\tau_{k+1} \in \argmin \cF_\tau(\cdot, E^\tau_k)$ where $\Et_0 := E_0$. We first assume there is no vanishing in $\Psi_t(E_0)$. Let $u(t) := \Psi_t^{E_0}(0)$. By iterating Propositions \ref{prop:lagrange mult est} and \ref{prop:MM finite diff}, we have $|\Et_k| = |E_0|$ and may express $E^\tau_k = E_0(u_k^\tau)$ such that $u_{k+1}^\tau - u_k^\tau = \Psi_\tau^{\Et_k}(0) + O(\tau^2)$ and thus $|u_{k+1}^\tau - \Psi^{E_0}_\tau(u_k^\tau)| \leq C\tau^2$. By Lipschitz stability of $\Psi^{E_0}_\tau$ from \eqref{eq:ODE flow stability}, a standard numerical analysis argument yields the estimate
    \begin{equation}
        |u_k^\tau - u(k\tau)| \leq e^{Ck\tau}\tau.
    \end{equation}
    Defining $u^\tau(t) := u^\tau_{\floor{t/\tau}}$ for $t\geq0$, we obtain \eqref{eq:MM ODE approx}:
    \begin{equation}
        \label{eq:MM ODE approx tau}
        d_H(\Et(t), \Psi_t(E_0)) \leq C|u^\tau(t) - u(t)| \leq Ce^{Ct}\tau \qquad\forall t\leq T_0.
    \end{equation}

    Now suppose a type II fuzzy edge $J$ vanishes in $\Psi_t(E_0)$ at time $t=t_*$. By \cref{prop:MM finite diff}(ii), there exist $t_1 < t_* < t_2$ such that $t_2 - t_1 \leq C\tau^{1/2}$, \eqref{eq:MM ODE approx tau} holds for $t\leq t_1$, and so that $J$ has not yet vanished in $\Et(t_1)$ and has vanished in $\Et(t_2)$. Note that $d_H(\Psi_{t_1}(E_0), \Psi_{t_2}(E_0)) \leq C(t_2-t_1) \leq C\tau^{1/2}$ since the flow map is Lipschitz in time.
    
    Let $k_1 \in [\lfloor t_1/\tau\rfloor, \lfloor t_2/\tau\rfloor)$ be the first index such that $J$ has vanished in $\Et_{k_1+1}$, and let $k_2$ be the first index after $k_1$ satisfying $|\Et_{k_2}| = |E_0|$. By \cref{prop:lambda L2 bound}(iii) and \cref{prop:lagrange mult est}, we have $k_2 - k_1 \leq C$ and $|\Et_k| = |E_0|$ for $k\geq k_2$. In particular, it makes no harm to assume $k_2 \leq \lfloor t_2/\tau\rfloor$ by increasing $t_2$ by at most $O(\tau)$. By \cref{prop:MM finite diff}(i), we have $d_H(\Et_{k+1}, \Et_k) \leq C\tau$ for all $k\in [\lfloor t_1/\tau\rfloor, \lfloor t_2/\tau\rfloor) \setminus [k_1,k_2)$, while for $k\in [k_1,k_2)$, we have the weaker bound $d_H(\Et_{k+1},\Et_k) \leq C\tau^{1/2}$ from the $L^\infty$ estimate \eqref{eq:Linfty estimate cr}. By the triangle inequality, it follows that 
    \begin{equation*}
        d_H(\Et(t_1), \Et(t_2)) \leq (k_2-k_1)C\tau^{1/2} + \frac{t_2-t_1}{\tau}C\tau \leq C\tau^{1/2}.
    \end{equation*}
    
    By applying \eqref{eq:MM ODE approx tau} for $t=t_1$ and recalling that $\Psi_t$ is Lipschitz in time, 
    \begin{align}
        \label{eq:MM ODE t1 to t2}
        d_H(\Et(t_2), \Psi_{t_2}(E_0)) &\leq d_H(\Et(t_1), \Psi_{t_1}(E_0)) + d_H(\Et(t_1), \Et(t_2)) + d_H(\Psi_{t_1}(E_0), \Psi_{t_2}(E_0)) \notag \\
        &\leq Ce^{Ct_1}\tau + C\tau^{1/2} \leq Ce^{Ct_1}\tau^{1/2}.
    \end{align}
    Now suppose no vanishing occurs for $\Et(t)$ or $\Psi_t(E_0)$ for $t\in[t_2,T_0]$. By repeating the derivation for \eqref{eq:MM ODE approx tau}, one obtains 
    \begin{equation}
        \label{eq:MM ODE past t2}
        d_H(\Et(t), \Psi_{t-t_2}(\Et(t_2))) \leq Ce^{C(t-t_2)}\tau \qquad \forall t\in [t_2, T_0].
    \end{equation}
    Moreover, $\Et(t_2)$ and $\Psi_{t_2}(E_0)$ are rough translates, so for $\tau_0$ sufficiently small, \cref{cor:ODE stability hausdorff 2} implies
    \begin{equation}
        \label{eq:MM ODE stability past t2}
        d_H(\Psi_{t-t_2}(\Et(t_2)), \Psi_t(E_0)) \leq Ce^{C(t-t_2)} d_H(\Et(t_2), \Psi_{t_2}(E_0)) \leq Ce^{Ct}\tau^{1/2} \qquad\forall t\in[t_2,T_0]
    \end{equation}
    Then \eqref{eq:MM ODE approx} once again follows from \eqref{eq:MM ODE past t2} and \eqref{eq:MM ODE stability past t2}. 
    
    In the case of several vanishing times, one can obtain \eqref{eq:MM ODE approx} by iterating the previous argument. Then for all $t\leq T_0$, $\Et(t)$ converges in Hausdorff distance to $\Psi_t(E_0)$; the convergence is surely also in $L^1$, so $E(t) = \Psi_t(E_0)$. Since $T_0$ is arbitrary, we obtain the first statement. \hfill $\Box$

\section{Long-time convergence of flat flow}
\label{sec:long time}

Here we prove \cref{thm:convergence flat flow cr}, whose argument is similar to those found in \cite{Julin2022, KimKwon2024}. The main difference is a crystalline quantitative Alexandrov theorem, whose proof is in fact simpler than its counterparts in \cite{Julin2022, KimKwon2024} due to the existence of negative edges being ruled out by a small $L^2$ deviation in curvature. A second key difference is the eventual regularity statement, which follows from the stability of the ODE flow with respect to Hausdorff distance. 

\begin{proposition}[Crystalline QAT]
    \label{prop:QAT}
    Fix $m,M>0$. There exist constants $\eps, C >0$ depending only on $\phi,m,M$ such that for any Lipschitz $\phi$-regular set $E\sb\R^2$ satisfying $P_\phi(E)\leq M$, $|E|=m$ and $\|\kappa^\phi_E - \ol{\kappa}^\phi_E\|_{L^2(\partial E)} \leq \eps$, there exists a disjoint union $F$ of equally sized Wulff shapes such that $|E| = |F|$ and
    \begin{align*}
        \label{eq:QAT}
        d_H(E,F) &\leq C\|\kappa^\phi_E - \ol{\kappa}^\phi_E\|_{L^2(\partial E)}\\
        |P_\phi(E) - P_\phi(F)| &\leq C\|\kappa^\phi_E - \ol{\kappa}^\phi_E\|_{L^2(\partial E)}^2.
    \end{align*} 
    Moreover, the edges of $E$ consist of (1) positive edges which are translations of those of $F$ and (2) possibly fuzzy edges whose collective length is at most $C\|\kp_E - \okp_E\|_{L^2}$.
\end{proposition}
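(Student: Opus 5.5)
Set $\eta := \|\kp_E - \okp_E\|_{L^2(\partial E)}$. I would first reduce to a purely combinatorial statement about the edges of $E$, using the explicit form $\kp_E(I) = \alpha_I/\cH^1(I)$ from \cref{def:curvature}.

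\textbf{Step 1: size of $\okp_E$.} By Gauss--Bonnet, $\okp_E = \chi(E)P_\phi(W_\phi)/P_\phi(E)$. Each edge carries $|\kp_E - \okp_E|^2 P_\phi(I)$ in the $L^2$ norm, and on fuzzy edges $\kp_E = 0$, so the fuzzy edges contribute $\okp_E^2 P_\phi(\text{fuzzy})$. I would bound $\okp_E$ from below by showing $\chi(E) \geq 1$. If $\okp_E \leq 0$, then every positive edge $I$ has $\kp_E(I) \geq L_I/\cH^1(I) \gtrsim 1/M$. Such an edge contributes $\gtrsim (L_I/\cH^1(I))^2\cH^1(I) \gtrsim L_I^2/M \gtrsim 1/M$ to $\eta^2$. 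Every component bounding a region has some positive edge, so this contradicts $\eta \leq \eps$ once $\eps$ is small. Hence $\okp_E \geq c(\phi,M) > 0$. Then $m \leq |E|$ and the Wulff inequality give $\chi(E) \lesssim M^2/m$, so $\okp_E \sim_{\phi,m,M} 1$.

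\textbf{Step 2: edge-level structure.} A negative edge has $\kp_E < 0$, so $|\kp_E - \okp_E| \geq \okp_E \gtrsim 1$. It would contribute at least $c\,\cH^1(I)$ to $\eta^2$. For an almost-minimizer-free Lipschitz $\phi$-regular set, the length of a negative edge is not bounded below a priori, so I would argue by topology instead. Along each boundary component, the signed sum $\sum \sigma_I L_I$ equals $\pm P_\phi(W_\phi)$, by Gauss--Bonnet applied to the component. A negative edge forces extra positive edges with the same normal, and each of those must have length close to $L_I/\okp_E$. Counting total perimeter and total turning then shows that negative edges and fuzzy edges together have length $\lesssim \eta^2/\okp_E^2 + \eta$; the Cauchy--Schwarz bound gives length $\lesssim \eta$ for fuzzy edges. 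I would then show that negative edges cannot exist at all: a negative edge between two positive edges produces a type I or II configuration whose neighbouring positive edges of the same normal are forced to have length $L/\okp_E$, which is too long to fit in the turning budget. This step is the most delicate, and I would argue it component by component. It leaves each component as a convex polygon with normals exactly $\cN_\phi$, in order, plus fuzzy pieces of total length $\lesssim\eta$.

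\textbf{Step 3: comparison with Wulff shapes.} On such a component, each positive edge $I$ satisfies $|\cH^1(I) - L_I/\okp_E| \lesssim \eta$. Indeed, $|\kp_E(I) - \okp_E|\,P_\phi(I)^{1/2} \leq \eta$ and the lengths are comparable to $1$, so this is Cauchy--Schwarz edgewise. Take $r := 1/\okp_E$ and place $x_j + rW_\phi$ by matching one vertex of the component. The edge lengths differ by $O(\eta)$ and the fuzzy gaps have length $O(\eta)$, so the Hausdorff distance is $O(\eta)$. Different components are disjoint, and so, for $\eta$ small, are the Wulff shapes, since the components are separated. I would then rescale $r$ slightly so that $|F| = |E|$; the area mismatch is $O(\eta)$, so this changes $d_H$ only by $O(\eta)$.

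\textbf{Step 4: the quadratic perimeter estimate.} The only remaining claim is $|P_\phi(E) - P_\phi(F)| \lesssim \eta^2$, which is the main obstacle since the first-order terms must cancel. I would write $E$ as a translate-like perturbation of $F$ with edge heights $h_I$. Because $F$ has constant curvature $1/r$ on every edge, the first variation of $P_\phi - \frac1r|\cdot|$ at $F$ vanishes. Equal area then kills the first-order term. Alternatively, use the identity $P_\phi(E) = \sum_I \phi(\nu_I)\cH^1(I)$ and expand both perimeter and area in the length deviations $\delta_I = \cH^1(I) - L_I r$. By the formula for translates \eqref{eq:translate perimeter}, the perimeter is affine in the heights and the area is quadratic. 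The constraint $|E| = |F|$ fixes the linear part, leaving $|P_\phi(E) - P_\phi(F)| \lesssim \sum \delta_I^2 + (\text{fuzzy length})^2$. Finally, $\sum\delta_I^2 \lesssim \eta^2$ by Step 3, which gives the claimed estimate.
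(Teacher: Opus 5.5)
The genuine gap is in Step 2: you never actually exclude negative edges. The ``turning budget'' sentence is not an argument, and the counting you describe only bounds the total \emph{length} of negative edges, which does not stop short negative edges from existing. Everything afterwards depends on their absence. Step 3's Hausdorff comparison and Step 4's translate structure both use that each component is simply connected, with only positive regular edges, one for each normal in $\cN_\phi$ in order, joined by type I fuzzy edges.

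Your difficulty comes from using only $|\kp_E-\okp_E|\ge\okp_E$ on a negative edge. That gives a contribution $\gtrsim\cH^1(I)$ and makes short negative edges look cheap. But $\kp_E(I)=-L_I/\cH^1(I)$ blows up as the edge shrinks. Since $\okp_E>0$ by your Step 1,
\[
\big(\kp_E(I)-\okp_E\big)^2\,\cH^1(I)\;\geq\;\frac{L_I^2}{\cH^1(I)}\;\geq\;\frac{\min_i L_i^2}{L_\phi M},
\]
independently of the length of $I$. This is the same estimate you already used for positive edges in Step 1 when $\okp_E\le 0$. It is exactly the paper's argument: for $\eps$ small there are no negative edges at all.

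Your counting idea can also be made to work, but it must bound the \emph{number} of negative edges rather than their length. Gauss--Bonnet gives $\okp_E P_\phi(E)=\sum_{\mathrm{pos}}L_I\phi(\nu_I)-\sum_{\mathrm{neg}}L_I\phi(\nu_I)$. On the other hand, $\okp_E P_\phi(E)\ge\sum_{\mathrm{pos}}\okp_E\cH^1(I)\phi(\nu_I)\ge\sum_{\mathrm{pos}}(L_I-C\eta)\phi(\nu_I)$. Hence $\sum_{\mathrm{neg}}L_I\phi(\nu_I)\lesssim\eta\cdot\#\{\text{positive edges}\}\lesssim\eta$, which forces the count to be zero. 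Note also that fuzzy edges have $\kp_E=0$, so their total length is at most $\eta^2/\okp_E^2$ directly, as you observed in Step 1.

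Once this is repaired, your Steps 3--4 follow the paper's route. You use edgewise Cauchy--Schwarz for the lengths, then extend the positive edges across the fuzzy ones, which preserves $P_\phi$ by $\phi$-minimality and changes area by $O(\eta^4)$. Finally you apply the linearization of \cref{lem:translated Wulff}. Your bookkeeping differs harmlessly. You fix a common radius and let the first-order terms cancel after summing over components using $|F|=|E|$. The paper instead matches areas componentwise with radii $r_k$ and then proves $|dr-\sum_k r_k|\lesssim\eta^2$.

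One minor point: disjointness of the Wulff shapes does not follow from ``the components are separated'', since distinct components can be arbitrarily close. You need to shift the centers by $O(\eta)$, which changes neither the perimeter nor the $O(\eta)$ Hausdorff bound.
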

\begin{proof}
    Let $C = C(\phi,m,M)$ be a constant which may change line by line. For brevity we denote $\eps := \|\kappa^\phi_E - \ol{\kappa}^\phi_E\|_{L^2}$. By \cref{lem:gauss bonnet} and following the argument in \cite[Prop 2.1]{Julin2022} or \cite[App A]{KimKwon2024},
    we may take $\eps$ sufficiently small so that $E$ has connected components $E_1, \dots, E_d$ which are all simply connected such that $d\leq C$ and $1/C \leq \ol{\kappa}^\phi_E \leq C$.

    Fix a component $E_k$, let $I_1,\dots, I_m$ be all of the regular edges in $\partial E_k$, and let $\nu_{i_j} := \nu_E(I_j)$. By expanding
    \begin{align}
        \eps^2 = \int_{\Gamma} |\kappa^\phi_E - \ol{\kappa}^\phi_E|^2 d\cH^1 &= \sum_{\sigma_{I_j}=1} \ps{\frac{L_{i_j}}{|I_j|} - \ol{\kappa}^\phi_E}^2 |I_j| + \sum_{\sigma_{I_j}=-1} \ps{\frac{L_{i_j}}{|I_j|} + \ol{\kappa}^\phi_E}^2 |I_j| + (\ol{\kappa}^\phi_E)^2 \cH^1(\partial E\cap \{\kappa^\phi_E=0\})),
    \end{align}
    we deduce the following:
    \begin{enumerate}
        \item $\cH^1(\partial E\cap \{\kappa^\phi_E=0\})) \leq C\eps^2$ 

        \item For $\eps$ sufficiently small, $\partial E$ has no edges of negative curvature. Indeed, for any $I_j$ such that $\sigma_{I_j} = -1$, 
        \[ \frac{L_{i_j}^2}{|I_j|} \leq \ps{\frac{L_{i_j}}{|I_j|} + \ol{\kappa}^\phi_E}^2 |I_j| \leq \eps^2 \qquad \implies \qquad \eps^2 \geq \frac{\min_{\nu_i\in\cN_\phi}L_i^2}{P(E)} \] from which we obtain a contradiction if $\eps$ is small.

        \item Since $\partial E$ contains only positive edges or type I fuzzy edges, we may reindex in such a way that $\nu_E(I_j) = \nu_j$. For each $j$, we have \begin{equation}
            \label{eq:QAT edge length}
            \abs{|I_j| - (\ol{\kappa}^\phi_E)^{-1} L_j} \leq C\eps. 
        \end{equation}
        Indeed, 
        \begin{align*}
            \sum_{j=1}^m |L_j - \ol{\kappa}^\phi_E |I_j|| &\leq \ps{\sum_{j=1}^m |I_j|}^{1/2} \ps{\sum_{j=1}^m \ps{L_j - \ol{\kappa}^\phi_E |I_j|}^2\inv{|I_j|} }^{1/2} \leq P(E)^{1/2} \eps. 
        \end{align*}

    \end{enumerate}
    
    It follows from observation 1 and \eqref{eq:QAT edge length} that $d_H(E_k, W_\phi(x_k,(\ol{\kappa}^\phi_E)^{-1})) \leq C\eps$ for some $x_k\in\R^2$. Moreover there exist $r_k \in [(\ol{\kappa}^\phi_E)^{-1}-C\eps,(\ol{\kappa}^\phi_E)^{-1}+C\eps]$ such that $|E_k| = |F_k|$ where $F_k := W_\phi(x_k,r_k)$. Letting $\tilde{E}_k \spq E_k$ be the translate of $F_k$ given by extending the positive edges of $E_k$, we notice that $P_\phi(\tilde{E}_k) = P_\phi(E_k)$ by $\phi$-minimality and that $|\tilde{E}_k\setminus E_k| \leq C\eps^4$ by observation 1. Applying \cref{lem:translated Wulff} to $\tilde{E}_k$, we have the estimate
    \begin{align}
        \abs{P_\phi(E) - P_\phi\ps{\cup_{k=1}^d F_k}} &\leq \sum_{k=1}^d |P_\phi(E_k) - P_\phi(F_k)|\notag\\
        &= \sum_{k=1}^d |P_\phi(\tilde{E}_k) - P_\phi(F_k)|\notag\\
        &\leq \sum_{k=1}^d \inv{r_k}(||\tilde{E}_k| - |F_k|| + C\eps^2)\notag\\
        &\leq C\eps^2. \label{eq:QAT rk}
    \end{align}

    Finally, we claim the desired estimate $|P_\phi(E) - P_\phi(F)| \leq C\eps^2$ holds for the set $F:= \cup_{k=1}^d W_\phi(x_k,r)$, where $r$ is chosen so that $|E|=|F|$. By \eqref{eq:QAT rk}, it suffices to show $|P_\phi(F) - P_\phi(\cup_{k=1}^d F_k)| \leq C\eps^2$, which is equivalent to showing
    \begin{equation}
        \label{eq: rk vs r}
        \bigg| dr - \sum_{k=1}^d r_k\bigg| \leq C\eps^2.
    \end{equation}
    The previous inequality can be derived algebraically using the identity $dr^2 = \sum_{k=1}^d r_k^2$ as in \cite{Julin2022} or \cite{KimKwon2024}.
\end{proof}

\begin{corollary}
    \label{cor:QAT}
    Under the same assumptions as \cref{prop:QAT} and letting $P_d := 2\sqrt{|W_\phi|md}$ be the surface energy of $d$ Wulff shapes of area $m/d$, then for $C = C(\phi,m,M)$, 
    \begin{equation}
        \label{eq:QAT cor}
        \min_{d\in\N} |P_\phi(E) - P_d| \leq C\|\kappa^\phi_E - \ol{\kappa}^\phi_E\|_{L^2(\partial E)}^2.
    \end{equation}
\end{corollary}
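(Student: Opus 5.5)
The corollary should follow directly from \cref{prop:QAT}. Apply the proposition to $E$ to obtain a disjoint union $F$ of $d$ equally sized Wulff shapes with $|F| = |E| = m$ and
\[
|P_\phi(E) - P_\phi(F)| \leq C\|\kp_E - \okp_E\|_{L^2(\partial E)}^2 .
\]
Then it only remains to identify $P_\phi(F)$ with $P_d$.

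Each Wulff shape in $F$ has area $m/d$, so it is $W_\phi(x_k,r)$ with $r^2|W_\phi| = m/d$. By scaling, $P_\phi(rW_\phi) = rP_\phi(W_\phi)$, and equality in the Wulff inequality \eqref{eq:wulff ineq} for $W_\phi$ gives $P_\phi(W_\phi) = 2|W_\phi|$. Hence each shape has surface energy $2|W_\phi|^{1/2}(m/d)^{1/2}$.

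The shapes in $F$ are disjoint, up to boundary contact. I would note that $F$ is taken with disjoint closures, or at least that touching boundaries are $\cH^1$-negligible in the reduced boundary count. Since these are open Wulff shapes of positive area, disjoint interiors give $P_\phi(F) = \sum_k P_\phi(F_k)$ except possibly where two facets coincide with opposite normals. I expect to argue that the construction in the proposition yields separated shapes, since each $F_k$ lies within $C\eps$ of a distinct connected component $E_k$.

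Summing over the $d$ shapes,
\[
P_\phi(F) = d\cdot 2|W_\phi|^{1/2}(m/d)^{1/2} = 2\sqrt{|W_\phi|md} = P_d .
\]
Therefore $\min_{d'\in\N}|P_\phi(E)-P_{d'}| \leq |P_\phi(E)-P_d| \leq C\|\kp_E-\okp_E\|_{L^2}^2$.

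The only subtle point is the additivity of perimeter over the shapes in $F$. If $F$ is defined literally as the union in the proposition, and two shapes could share a facet, I would instead compute the perimeter of the formal disjoint union. That quantity is what the estimate \eqref{eq:QAT rk} in the proof of \cref{prop:QAT} controls, since there the comparison is made componentwise through the quantities $P_\phi(F_k)$ and $\sum r_k$ versus $dr$.
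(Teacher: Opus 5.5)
Your argument is correct in the regime where \cref{prop:QAT} applies, namely $\|\kp_E-\okp_E\|_{L^2(\partial E)}\le\varepsilon$, and there it is the paper's argument. The paper just says ``done by \cref{prop:QAT}''. Your check that $P_\phi(F)=d\cdot 2|W_\phi|^{1/2}(m/d)^{1/2}=P_d$ is the implicit step. Your caveat about reading $F$ as a formal disjoint union is consistent with the proof of \cref{prop:QAT}, which also treats $P_\phi(F)$ as $dr\,P_\phi(W_\phi)$.

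What is missing is the complementary regime. The corollary is meant to hold for every Lipschitz $\phi$-regular $E$ with $P_\phi(E)\le M$ and $|E|=m$, with no smallness assumption on the curvature deviation. That is why it is stated as a global inequality with $\min_{d}$, and why the paper's proof has an ``otherwise'' case. It is used this way in Step 1 of the proof of \cref{thm:convergence flat flow cr}, applied to $E^{\tau_n}(t)$ at every $t\in[T_0,T]$ with $|E^{\tau_n}(t)|=m$. Nothing there guarantees $\|\kp-\okp\|_{L^2}\le\varepsilon$. Under the literal reading that includes the smallness hypothesis, your proof would be complete, but that version is not enough for the application. When $\|\kp_E-\okp_E\|_{L^2}>\varepsilon$, \cref{prop:QAT} gives nothing, so your proof does not cover this case. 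The fix is one line. By the Wulff inequality \eqref{eq:wulff ineq}, $P_1=2\sqrt{|W_\phi|m}\le P_\phi(E)\le M$, hence
\[
\min_{d\in\N}|P_\phi(E)-P_d|\le P_\phi(E)-P_1\le M\le \frac{M}{\varepsilon^2}\,\|\kp_E-\okp_E\|_{L^2(\partial E)}^2 .
\]
So the corollary holds with $C=\max\{C_{\mathrm{QAT}},M/\varepsilon^2\}$. This constant still depends only on $\phi,m,M$, since $\varepsilon$ does.
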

\begin{proof}
    For sets such that $\|\kp_E - \okp_E\|_{L^2} \leq \eps$, where $\eps$ is the constant from \cref{prop:QAT}, we are done by \cref{prop:QAT}. Otherwise, \eqref{eq:QAT cor} holds trivially with the constant $C := M/\eps^2$.
\end{proof}

\begin{lemma}
    \label{lem:L2 dev bound}
    Let $E\in\argmin \cF_\tau(\cdot, F)$. Then for $C =C(L_\phi)$, \[ \int_{\partial^*E} |\kappa^\phi_E - \ol{\kappa}^\phi_E|^2 dP_\phi \leq \frac{C}{\tau^2}\cD(E,F). \]
\end{lemma}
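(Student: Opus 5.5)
We will combine the Euler--Lagrange equation \eqref{eq:EL flat} with the $L^2$ estimate \eqref{eq:L2 estimate cr}. Since $E$ is an almost-minimizer, \cref{thm:lipreg} shows that $E$ is Lipschitz $\phi$-regular, so $\kp_E$ is defined edgewise. On every edge $I\sb\partial E$ we have
\[ \kp_E(I) - \lambda = -\dashint_I \frac{\sdp_F}{\tau}\,dP_\phi, \]
with $\lambda$ a constant independent of $I$.

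The key observation is that $\okp_E$ is the $P_\phi$-weighted mean of $\kp_E$. Hence, among all constants $c$, the value $c=\okp_E$ minimizes $\int_{\partial E}|\kp_E - c|^2\,dP_\phi$. Taking $c=\lambda$ then gives
\[ \int_{\partial^*E}|\kp_E-\okp_E|^2\,dP_\phi \leq \int_{\partial^* E} |\kp_E-\lambda|^2\,dP_\phi = \sum_I P_\phi(I)\Big(\dashint_I \frac{\sdp_F}{\tau}\,dP_\phi\Big)^2 .\]
This step removes the Lagrange multiplier entirely, which is why we never need to control $\lambda$ (including the case $|E|\neq|E_0|$, where $\lambda=\pm\tau^{-1/2}$).

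Next, apply Jensen (or Cauchy--Schwarz) on each edge:
\[ P_\phi(I)\Big(\dashint_I \sdp_F\,dP_\phi\Big)^2 \le \int_I (\sdp_F)^2\,dP_\phi .\]
Summing over the edges, which partition $\partial E$ up to finitely many points, and using $dP_\phi\le L_\phi\,d\cH^1$ together with $|\sdp_F|=\dd^\phic_F$, we get
\[ \int_{\partial^*E}|\kp_E-\okp_E|^2\,dP_\phi \le \frac{L_\phi}{\tau^2}\int_{\partial E}(\dd_F^\phic)^2\,d\cH^1 \le \frac{C}{\tau^2}\cD(E,F) \]
by \eqref{eq:L2 estimate cr}, with $C=C(L_\phi)$.

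The only point that needs care is the validity of \eqref{eq:EL flat} on every edge, fuzzy edges included. This was noted after \cref{lem:standard estimate flat flow cr} as a variant of \cref{lem:EL}, which covers both regular and fuzzy edges. There is also a possible subtlety if $\partial E$ has infinitely many edges, but \cref{prop:cone lipreg} shows that regular edges have length bounded below, so the edge sum is finite. Beyond this, I expect no real obstacle: the argument is a two-line variance-minimization and Jensen bound.
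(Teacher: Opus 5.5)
Your proposal is correct and follows the paper's proof essentially line for line. You replace $\okp_E$ by the Lagrange multiplier $\lambda$, using that the $P_\phi$-weighted mean minimizes the $L^2(dP_\phi)$ deviation, rewrite each edge contribution via the Euler--Lagrange equation \eqref{eq:EL flat}, apply Cauchy--Schwarz edgewise, and conclude with the $L^2$ estimate \eqref{eq:L2 estimate cr}; your explicit use of $dP_\phi\le L_\phi\,d\cH^1$, needed because that estimate is stated with respect to $\cH^1$, just spells out a step the paper leaves implicit.
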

\begin{proof}
    Letting $I$ index over all edges in $\partial E$, we may bound
    \begin{align*}
        \|\kappa^\phi_E- \ol{\kappa}^\phi_E\|_{L^2(\partial E, dP_\phi)}^2 &\leq \|\kappa^\phi_E- \lambda\|_{L^2(\partial E, dP_\phi)}^2 = \sum_I (\kappa^\phi_E(I) - \lambda)^2 P_\phi(I) \\
        &= \sum_I \inv{P_\phi(I)} \bigg(\int_I \frac{\sdp_F}{\tau}dP_\phi \bigg)^2 \qquad\text{by }\eqref{eq:EL flat}\\
        &\leq \sum_I \int_I \bigg(\frac{\sdp_F}{\tau}\bigg)^2 dP_\phi \qquad\text{by Cauchy-Schwarz}\\
        &= \inv{\tau^2} \int_{\partial E} (\dd_F^\phic)^2 d P_\phi
    \end{align*}
    and \eqref{eq:L2 estimate cr} concludes the proof.
\end{proof}

\medskip

\noindent \textbf{Proof of \cref{thm:convergence flat flow cr}}:

\medskip

Let $E(t)$ be a flat flow of $E_0$ and $\Etn(t)$ a sequence of approximate flat flows converging to $E(t)$. Let $m = |E_0|$. In what follows, $C$ will be a constant which may change from line to line and depends only on $\phi$, $E_0$, and the flat flow $E(t)$. $C_0$ is a constant with the same dependencies but will remain static. We will frequently pass to subsequences of $\tau_n$ without relabeling.

Due to the dissipation inequality \eqref{eq:dissipation}, the functions \[ p_n(t) := P_\phi(\Etn(t)) + \inv{\tau^{1/2}}||\Etn(t)| - m| \]
are monotone decreasing, and thus there exists a subsequential pointwise limit $p(t)$. 

We assume $p(t)$ is not eventually constant (the alternative can be argued exactly as in \cite{Julin2022}). Recall that $P_d := 2\sqrt{|W_\phi|md}$ is the surface energy of $d$ Wulff shapes of area $m/d$. Note that $\lim_{t\to\infty} p(t) \in [P_d, P_{d+1})$ for some $d$, and thus there exists $\delta>0$ and $T_0>0$ such that 
\begin{equation}
    \label{eq:Pd}
    P_d < p_n(t) < P_{d+1}-\delta \quad \text{for all } t\geq T_0.
\end{equation}

\textbf{Step 1.} Here we show the exponential decay of cumulative dissipations. The argument is identical to those found in \cite{Julin2022, KimKwon2024}, though we repeat it here to highlight the application of the quantitative Alexandrov theorem. More precisely, we claim that for fixed $T > T_0$ and for all $n\geq n_0(T)$ and $t \in [T_0,T]$,
    \begin{equation}
        \label{eq:dissipation decay}
        \inv{\tau_n} \sum_{j=\floor{t/\tau_n}}^{\floor{T/\tau_n}-1} \cD(\Etn_{j+1},\Etn_j) \leq 2P_\phi(E_0) e^{-t/C_0}.
    \end{equation}
    
    First we show that the lefthand side obeys a discrete differential inequality \begin{equation}
        \label{eq:dissipation diff ineq}
        \sum_{j=\floor{t/\tau_n}}^{\floor{T/\tau_n}-1} \cD(\Etn_{j+1},\Etn_j) \leq \frac{C}{\tau_n}\cD(\Etn(t),\Etn(t-\tau_n))
    \end{equation}
    for all $t\in [T_0,T]\setminus Z_n$ where $Z_n := \{t\in [T_0,T]: |\Etn(t)| \neq m\}$. Indeed, whenever $|\Etn(t)| = m$, \eqref{eq:dissipation diff ineq} follows from the iterated dissipation inequality \eqref{eq:dissipation} and the quantitative Alexandrov theorem:
    \begin{align*}
       \inv{\tau_n}\sum_{j=\floor{t/\tau_n}}^{\floor{T/\tau_n}-1} \cD(\Etn_{j+1},\Etn_j) &\leq P_\phi(\Etn(t)) - P_\phi(\Etn(T))\\
        &\leq P_\phi(\Etn(t)) - P_d \by\eqref{eq:Pd}\\
        &\leq C\|\kappa^\phi_{\Etn(t)} - \ol{\kappa}^\phi_{\Etn(t)}\|_{L^2(\partial \Etn(t))}^2 \by \text{\cref{cor:QAT}} \\
        &\leq \frac{C}{\tau_n^2} \cD(\Etn(t), \Etn(t-\tau_n)) \by\text{\cref{lem:L2 dev bound}}.
    \end{align*}
    The exponential decay \eqref{eq:dissipation decay} then follows from applying the algebraic lemma \cite[Lemma 3.1]{Julin2022} to the lefthand side of \eqref{eq:dissipation diff ineq} and the fact that $|Z_n| \leq C\tau_n$. 

\medskip

\textbf{Step 2.} Using the $L^1$ estimate \eqref{eq:L1 estimate cr} in tandem with \eqref{eq:dissipation decay}, it is standard to show exponential $L^1$-convergence of the flat flow: there exists a set $E_\infty$ such that for $t\geq T_0$,
    \begin{equation}
        \label{eq:exp L1 convergence II}
        |E(t)\Delta E_\infty| \leq Ce^{-t/2C_0}.
    \end{equation}
Let $\cC(d,m)$ denote the collection of sets $F\sb\R^2$ which are a disjoint union of $d$ equally sized Wulff shapes such that $|F|=m$. 
Let us now prove \eqref{eq:exp L1 convergence} by showing that $E_\infty$ belongs to $\cC(d,m)$. 

For fixed $t$ such that $t-e^{-t/4C_0} \geq T_0$, we may invoke \cref{lem:L2 dev bound} and \eqref{eq:dissipation decay} to bound
    \begin{align*}
        \int_{t-e^{-t/4C_0}}^t \|\kp_{\Etn(s)} - \okp_{\Etn(s)}\|_{L^2}^2 ds & \leq \frac{C}{\tau_n^2} \int_{t-e^{-t/4C_0}}^t \cD(\Etn(s), \Etn(s-\tau_n))ds \leq Ce^{-t/C_0}.
    \end{align*}
By applying Markov's inequality to the previous bound, and taking $n$ sufficiently large so that 
    \[ \big|\{s\in [t-e^{-t/4C_0}, t]: |\Etn(s)| \neq m\}\big| \leq C\tau_n \leq \inv{2} e^{-t/4C_0}, \]
there exists some $t_n \in [t-e^{-t/4C_0},t]$ such that $|\Etn(t_n)| = m$ and $\|\kp_{\Etn(t_n)} - \okp_{\Etn(t_n)}\|_{L^2} \leq Ce^{-t/4C_0}$. 

For $T_0$ sufficiently large, the QAT in the form of \cref{prop:QAT} implies 
    \begin{align*}
        d_H(\Etn(t_n), F_n(t)) &\leq Ce^{-t/4C_0}
    \end{align*}
for some $F_n(t) \in \cC(d,m)$, and thus $|\Etn(t_n) \Delta F_n(t)| \leq Ce^{-t/4C_0}$. Since $F_n(t)$ is bounded by \cref{prop:lambda L2 bound}(ii), upon passing to a subsequence we have $t_n\to t'\in [t-e^{-t/4C_0}, t]$ and $F_n(t)$ converges in $L^1$ to some set $F(t)$. Note that $F(t)\in \cC(d,m)$ since $\cC(d,m)$ is closed under $L^1$ convergence. Moreover, $\Etn(t_n)\to E(t')$ in $L^1$ by the H\"older continuity in time \eqref{eq:Holder cty in time tau}: 
\[ 
    \limsup_{n\to\infty} |E(t') \Delta \Etn(t_n)| \leq \limsup_{n\to\infty} |E(t') \Delta \Etn(t')| + C|t'-t_n|^{1/2} = 0. 
\]
Thus $|E(t') \Delta F(t)| \leq Ce^{-t/4C_0}$, and we may bound
\begin{equation}
    |F(t)\Delta E_\infty| \leq |F(t) \Delta E(t')| + |E(t')\Delta E_\infty| \leq Ce^{-t/4C_0}.
\end{equation}
Now seeing $t$ as arbitrary, we find that $E_\infty$ is an $L^1$ limit point of $\cC(d,m)$, and thus $E_\infty \in \cC(d,m)$.

\medskip 

\textbf{Step 3.} It remains to show that if the Wulff shapes in $E_\infty$ have positive pairwise distance, then $E(t)$ eventually becomes the ODE evolution and satisfies the stronger convergence \eqref{eq:exp hausdorff convergence}.

We can express $E_\infty = \cup_{j=1}^d W_\phi(x_j,r)$ and assume $\delta := \min_{i\neq j} d(W_\phi(x_i,r), W_\phi(x_j,r)) > 0$. We fix a time $t_0 \geq T_0$ and the length scale $\eps := e^{-t_0/4C_0}$. Repeating the construction from step 2 and recalling \cref{prop:QAT}, we may find a sequence $\Etn(t_n)$ such that $t_n\in [t_0-\eps, t_0]$ converges to some $t'$, $d_H(\Etn(t_n), E_\infty) \leq C\eps$, and $\Etn(t_n)$ is a rough translate of $E_\infty$ whose fuzzy edges have length $O(\eps)$. By Arzel\`a-Ascoli, we may pass to a subsequence such that $\Etn(t_n)$ converges in Hausdorff distance to $E(t')$. More specifically, we apply standard compactness to the positive edges of $\Etn(t_n)$ and Arzel\`a-Ascoli to $O(\eps)$ neighborhoods of corners/fuzzy edges, which may be seen as uniformly Lipschitz graphs over $E_\infty$.

By setting $t_0$ sufficiently large so that $e^{-t_0/4C_0} \ll \delta$, we can ensure that all $O(\eps)$ perturbations of $E_\infty$ still consist of $d$ disjoint components. In particular, the ODE evolutions of each $\Etn(t_n)$ exist for at least $2\eps$ time. By \cref{thm:weak strong uniqueness} and \cref{cor:ODE stability hausdorff 2}, we may estimate for $t\in (t', t_0 + \eps]$
\begin{align*}
    d_H\big(\Etn(t), \Psi_{t-t'}(E(t'))\big) &\leq d_H\big(\Etn(t), \Psi_{t-t_n}(\Etn(t_n))\big) + d_H\big(\Psi_{t-t_n}(\Etn(t_n)), \Psi_{t-t'}(E(t'))\big)\\
    &\leq Ce^{C(t-t_n)}(\tau_n^{1/2} + d_H(\Etn(t_n), E(t')) + |t'-t_n|).
\end{align*}
Thus as $n\to\infty$, $\Etn(t)$ converges to $\Psi_{t-t'}(E(t'))$ in Hausdorff distance and in $L^1$, so $E(t) = \Psi_{t-t'}(E(t'))$. In particular, $E(t) = \Psi_{t-t_0}(E(t_0))$ for all $t\in [t_0, t_0 + e^{-t_0/4C_0}]$. Because $t_0$ is arbitrary, we conclude that $E(t)$ eventually coincides with the ODE flow.

To verify the estimate \eqref{eq:exp hausdorff convergence}, we observe that for all large enough $t$, the bound $d_H(E(t'), E_\infty) \leq Ce^{-t/4C_0}$ holds for some $t'\in [t- e^{-t/4C_0},t]$, and thus $d_H(E(t), E_\infty) \leq Ce^{-t/4C_0}$ since solutions of the ODE flow are Lipschitz in time. \hfill $\Box$

\appendix
\section{Anisotropic identities}

\begin{lemma}[Gauss-Bonnet]
    \label{lem:gauss bonnet}
    Let $E\sb\R^2$ be a simply connected Lipschitz $\phi$-regular set. Then \begin{equation}
        \label{eq:gauss bonnet}
        \int_{\partial^* E} \kappa^\phi_E dP_\phi = 2|W_\phi| = P_\phi(W_\phi).
    \end{equation}
\end{lemma}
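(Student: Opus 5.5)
The plan is to compute $\int_{\partial^*E}\kp_E\,dP_\phi$ edge by edge and reduce it to a signed count of regular edges weighted by facet lengths, which the turning of the normal then identifies with one full traversal of $\partial W_\phi$.

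By \cref{def:curvature}, on each edge $I$ we have $\kp_E(I)\,P_\phi(I) = \frac{\sigma_I L_I}{\cH^1(I)}\phi(\nu_I)\cH^1(I) = \sigma_I L_I\phi(\nu_I)$. Fuzzy edges contribute nothing since $\sigma_J=0$. Hence
\begin{equation*}
\int_{\partial^*E}\kp_E\,dP_\phi = \sum_{I\text{ regular}} \sigma_I\, L_I\,\phi(\nu_I).
\end{equation*}
Each term $L_I\phi(\nu_I)$ is the $P_\phi$-energy of the facet of $W_\phi$ with normal $\nu_I$. Summing over all facets once gives $P_\phi(W_\phi)$, and $P_\phi(W_\phi)=2|W_\phi|$ is standard from the divergence theorem applied to $x$ on $W_\phi$, since $x\cdot\nu=\phi(\nu)$ on $\partial W_\phi$. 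So it suffices to show the combinatorial identity
\begin{equation*}
\sum_{I:\nu_I=\nu_i}\sigma_I = 1 \qquad \text{for every } \nu_i\in\cN_\phi .
\end{equation*}

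To prove this, I would use \cref{prop:cone lipreg} and its proof. Traversing the single boundary curve $\Gamma=\partial E$ (one curve, since $E$ is simply connected) counterclockwise, the anisotropic normal field $X$ is a continuous map $\Gamma\to\partial W_\phi$. It is constant $p_J$ on fuzzy edges and moves linearly along the facet $[p_{i-1},p_i]$ across a regular edge with normal $\nu_i$. The direction of this motion records the sign of the edge: a positive (convex) edge moves $X$ from $p_{i-1}$ to $p_i$, counterclockwise, and a negative edge moves it from $p_i$ to $p_{i-1}$. Between regular edges, $X$ sits at a vertex.

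The signed count $\sum_{\nu_I=\nu_i}\sigma_I$ is therefore the net number of counterclockwise crossings of the facet $[p_{i-1},p_i]$ by $X$, i.e.\ the degree of $X:\Gamma\to\partial W_\phi\cong\S^1$. This count is the same for every facet. The degree of $X$ equals the degree of the Gauss map $\nu_E$, because $X\in\partial\phi(\nu_E)$ and the map $\nu\mapsto\partial\phi(\nu)$ is a monotone correspondence $\S^1\to\partial W_\phi$. For a simple closed Lipschitz curve the Gauss map has degree $1$ by the Umlaufsatz (Hopf's rotation theorem).

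The main obstacle is justifying this degree/rotation-index step rigorously for Lipschitz curves with corners and fuzzy edges. Fuzzy edges have wiggling $\nu_E$, but they stay inside an arc $A[\nu_i,\nu_{i+1}]$, so they contribute no net turning. My first attempt would replace each fuzzy edge by the chord between its endpoints. This produces a polygon whose sides have normals in the same arcs, and it has no self-intersections if the modification is done inside the tubular neighbourhood. I would then apply the polygonal turning-angle theorem: exterior angles sum to $2\pi$, and this sum is attributed facet by facet to the signed crossings.

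Alternatively, I would check the orientation conventions directly, including that negative edges correspond to reversed motion of $X$, using the cone conditions from \cref{prop:cone lipreg}.
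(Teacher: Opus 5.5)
Your proposal is correct and follows the paper's proof. The paper also reduces $\int_{\partial^*E}\kp_E\,dP_\phi$ to $\sum_I\sigma_I L_I\phi(\nu_I)$ and then asserts $\sum_{\nu_I=\nu_i}\sigma_I=1$ by ``a simple combinatorial argument'' on the adjacent sequence $p_{i_j}$; your degree count for $X$ plus the Umlaufsatz is exactly what that phrase leaves implicit.

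One repair is needed in how you justify $\deg X=1$. The chord of a long fuzzy edge need not lie in the tubular neighbourhood, so your chord polygon can self-intersect, for example when the chord cuts across a narrow inlet of the complement. Instead, replace each fuzzy edge by a sufficiently fine inscribed polygon. This stays simple, keeps all normals in the same arcs, and leaves the regular edges and their signs unchanged, hence gives the same $\deg X$. Then apply the polygonal turning theorem to that polygon.
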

\begin{proof}
    Let $I_1, \dots, I_m$ be the regular edges of $\partial E$. Then \[ \int_{\partial^*E}\kappa^\phi_E dP_\phi = \sum_{j=1}^m \int_{I_j} \kappa^\phi_E(I_j) dP_\phi = \sum_{j=1}^m \alpha_{I_j} \phi(\nu_{I_j}). \]
    Since the vectors $\nu_{i_j}$ are adjacent with respect to $j$, a simple combinatorial argument yields $\sum_{i_j = i} \sigma_{I_j} = 1$ for each $i$, and hence \[ \int_{\partial^* E} \kappa^\phi_E dP_\phi = \sum_{i=1}^{N} L_i \phi(\nu_i) = \sum_{i=1}^{N} |p_i - p_{i-1}| \phi(\nu_i) = 2|W_\phi| \]
    where the last inequality follows from recalling $W_\phi = \{x: x\cdot \nu_i \leq \phi(\nu_i)\ \forall i\}$ and that $[p_{i-1}, p_i]$ is the edge of $W_\phi$ with outer normal $\nu_i$.
\end{proof}

\begin{lemma}[Anisotropic coarea formula]
    Let $u$ be Lipschitz on an open domain $A\sb\R^2$. Then
    \begin{equation}
        \label{eq:anisotropic coarea}
        \int_A \phi(\nabla u(x))dx = \int_{-\infty}^\infty P_\phi(\{u\leq t\};A) dt.
    \end{equation}
    Moreover, for an integrable function $g:A\to\R$, 
    \begin{equation}
        \label{eq:weighted coarea}
        \int_A g(x) \phi(\nabla u(x))dx = \int_{-\infty}^\infty \int_{\{u=t\}\cap A} g\, dP_\phi dt
    \end{equation}
    where $\{u=t\}$ is given the orientation of $\{u\leq t\}$.
\end{lemma}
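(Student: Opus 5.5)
The plan is to reduce both identities to the classical coarea formula for Lipschitz functions, applied level set by level set. The classical formula reads
\[ \int_A g\,|\nabla u|\,dx = \int_{-\infty}^\infty \int_{\{u=t\}\cap A} g\, d\cH^1\, dt, \]
where for a.e.\ $t$ the level set $\{u=t\}\cap A$ agrees, up to an $\cH^1$-null set, with $\partial^*\{u\le t\}\cap A$. The first identity \eqref{eq:anisotropic coarea} is the special case $g\equiv1$ of \eqref{eq:weighted coarea}, since $\int_{\{u=t\}\cap A} dP_\phi = P_\phi(\{u\le t\};A)$ once the orientation is fixed as in the statement. So I would prove only \eqref{eq:weighted coarea}.

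The first key step is to identify the normal on level sets. At $\cH^1$-a.e.\ point of $\{u=t\}$ with $\nabla u\neq 0$, and for a.e.\ $t$, the outer normal of $\{u\le t\}$ is $\nu = \nabla u/|\nabla u|$. This is standard: for a.e.\ $t$, $\{u\le t\}$ has finite perimeter in $A$, and its reduced boundary carries this normal $\cH^1$-a.e.; see \cite{Maggi2012}. Positive $1$-homogeneity of $\phi$ then gives
\[ \phi(\nabla u) = |\nabla u|\,\phi(\nu). \]

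The second step is to apply the classical coarea formula with the integrand $\tilde g := g\,\phi(\nabla u/|\nabla u|)$ on $\{\nabla u\neq0\}$, and $\tilde g := 0$ elsewhere. This $\tilde g$ is measurable, and it is integrable against $|\nabla u|$ because $\phi\le L_\phi|\cdot|$. We get
\[ \int_A g\,\phi(\nabla u)\,dx = \int_A \tilde g\,|\nabla u|\,dx = \int_{-\infty}^\infty \int_{\{u=t\}\cap A} g\,\phi(\nu_{\{u\le t\}})\,d\cH^1\,dt, \]
and the inner integral is exactly $\int_{\{u=t\}\cap A} g\,dP_\phi$.

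The main obstacle is the critical set $\{\nabla u=0\}$, together with the a.e.-$t$ identification of level sets with reduced boundaries. The left side vanishes on the critical set because $\phi(0)=0$. On the right side, the coarea formula applied to the indicator of $\{\nabla u=0\}$ shows that $\cH^1(\{u=t\}\cap\{\nabla u=0\})=0$ for a.e.\ $t$, so this set contributes nothing either. For sign issues, note that $\phi$ need not be even, so the orientation matters. Taking $\{u\le t\}$ means the outer normal points toward increasing $u$, namely $+\nabla u/|\nabla u|$. This is precisely what makes $\phi(\nabla u)$, and not $\phi(-\nabla u)$, appear on the left side.
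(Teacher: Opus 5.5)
Your argument is correct. The paper states this lemma without proof, as a standard fact, and your reduction to the Euclidean coarea formula is exactly the standard justification it implicitly relies on: you write $\phi(\nabla u)=|\nabla u|\,\phi(\nabla u/|\nabla u|)$, identify $\{u=t\}\cap A$ with $\partial^*\{u\le t\}\cap A$ (outer normal $+\nabla u/|\nabla u|$) for a.e.\ $t$, and discard the critical set. One minor adjustment: for unbounded $A$ the choice $g\equiv1$ is not integrable, so read off \eqref{eq:anisotropic coarea} from the coarea formula for nonnegative measurable integrands, where both sides may be $+\infty$ and the sublevel sets are only of locally finite perimeter in $A$.
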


\begin{lemma}
\label{lem:eikonal}
Let $F\sb\R^2$ be a closed set of finite perimeter. Then $\sdp_F$ satisfies the anisotropic Eikonal equation $\phi(\nabla \sdp_F) = 1$ a.e. In particular, for any Borel set $E\sb\R^2$, 
\begin{equation}
    \label{eq:sdf coarea}
    |E| = \int_{-\infty}^\infty P_\phi(\{\sdp_F\leq t\};E)dt.
\end{equation}
\end{lemma}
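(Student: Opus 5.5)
The plan is to establish the eikonal identity $\phi(\nabla\sdp_F)=1$ a.e. by proving the two inequalities separately. Both will come from the duality between $\phi$ and $\phic$, namely $\phi(\nu)=\sup\{\nu\cdot v:\phic(v)\le 1\}$. The formula \eqref{eq:sdf coarea} will then follow from the weighted coarea formula \eqref{eq:weighted coarea}.

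\textbf{Step 1 (Lipschitz bound, $\phi(\nabla\sdp_F)\le 1$).} From the definition of $\sdp_F$ and the triangle inequality for the convex, positively $1$-homogeneous function $\phic$, I will show the one-sided bound $\sdp_F(x+v)-\sdp_F(x)\le\phic(v)$ for all $x,v$. When $x$ and $x+v$ lie on the same side of $F$ this is immediate. When they lie on opposite sides, the segment $[x,x+v]$ meets $\partial F$ at some point $z$, and the two pieces $\phic(z-x)$ and $\phic(x+v-z)$ add up to $\phic(v)$ because the segment is straight. In particular $\sdp_F$ is Lipschitz, so by Rademacher it is differentiable a.e. At any point of differentiability this gives $\nabla\sdp_F\cdot v\le\phic(v)$ for every $v$. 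Taking the supremum over $\phic(v)\le1$ yields $\phi(\nabla\sdp_F)\le 1$.

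\textbf{Step 2 (lower bound, $\phi(\nabla\sdp_F)\ge 1$ off $\{\sdp_F=0\}$).} Take $x\notin F$ with $d:=\sdp_F(x)>0$. Since $F$ is closed, there is a minimizer $y\in F$ with $\phic(x-y)=d$. Along the segment $y+s(x-y)$, $s\in[0,1]$, I will show that $\sdp_F=sd$ exactly. The upper bound $\sdp_F\le sd$ holds because $y\in F$. The lower bound follows from Step 1 applied between the points $y+s(x-y)$ and $x$. Hence at a differentiability point $x$, setting $v=(x-y)/d$ gives $\nabla\sdp_F\cdot v=1$ with $\phic(v)=1$, so $\phi(\nabla\sdp_F)\ge1$. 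For $x$ in the interior of $F$ the argument is symmetric: take a nearest point outside $F$ and measure distance with $\phic(y-x)$.

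\textbf{Step 3 (the set $\{\sdp_F=0\}$).} The remaining issue, which I expect to be the only delicate point, is the level set $\{\sdp_F=0\}\supseteq\partial F$. Here Stampacchia's theorem gives $\nabla\sdp_F=0$ a.e., so the eikonal equation can hold a.e. only if this set is Lebesgue-null. I would use the convention that $F$ is identified with its Lebesgue representative to argue that $\partial F$ is null. This is certainly the case for every set to which the lemma is applied in the paper: those $F$ are Lipschitz, or sublevel sets $\{\sdp\le t\}$ with Lipschitz-type boundary. If necessary I would add this as a hypothesis rather than try to deduce it from finite perimeter alone.

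\textbf{Step 4 (coarea identity).} With $\phi(\nabla\sdp_F)=1$ a.e., I apply \eqref{eq:weighted coarea} with $u=\sdp_F$, $A=\R^2$ and $g=\chi_E$, truncating first to bounded sets and using monotone convergence if $E$ is unbounded. This gives
\[ |E|=\int\chi_E\,\phi(\nabla\sdp_F)\,dx=\int_{-\infty}^\infty\int_{\{\sdp_F=t\}\cap E}dP_\phi\,dt. \]
For a.e. $t$, the level set $\{\sdp_F=t\}$ coincides $\cH^1$-a.e. with $\partial^*\{\sdp_F\le t\}$, with the orientation of $\{\sdp_F\le t\}$. The right-hand side is therefore $\int P_\phi(\{\sdp_F\le t\};E)\,dt$, which is \eqref{eq:sdf coarea}.
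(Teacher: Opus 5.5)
Your proposal is correct and follows essentially the same route as the paper: the upper bound $\phi(\nabla\sdp_F)\le 1$ comes from the triangle inequality for $\phic$ (the paper phrases this as the inclusion $W_\phi(x+tv,r-t)\subset W_\phi(x,r)$), the lower bound comes from a nearest-point segment along which $\sdp_F$ changes at unit rate, and the identity then follows from the coarea formula. Your Step 1 additionally proves the Lipschitz bound that the paper takes for granted.

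Your caution in Step 3 is justified. The paper simply asserts $|\partial F|=0$, and this does not follow from closedness and finite perimeter, even with the Lebesgue-representative convention. For example, take $K$ to be a closed square minus a dense union of small open disks with $\sum_i r_i$ small. Then $K$ has empty interior, finite perimeter and positive area, so $\sdp_K\equiv 0$ on $K$ and the eikonal equation fails there. Adding $|\partial F|=0$ as a hypothesis is therefore the right fix, and it holds for every set to which the lemma is applied.
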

\begin{proof}
    Since $\sdp_F$ is Lipschitz, it suffices to show the result at points $x$ where $\sdp_F$ is differentiable, and since $|\partial F| = 0$ we may further assume $x\not\in \partial F$. 
    
    Assume that $\sdp_F(x) < 0$, in which case $\sdp_F(x) = -\sup\{r>0: W_\phi(x,r) > 0\}$. For any $v\in\R^2$ such that $\phic(v)=1$ and $0 < t < r < |\sdp_F(x)|$, we have $W_\phi(x+tv, r-t) \sb W_\phi(x, r)$ and thus 
    \[ |\sdp_F(x+tv)| \geq |\sdp_F(x)| - t \qquad\implies\qquad \sdp_F(x+tv) \leq \sdp_F(x) + t. \]
    Sending $t\to0$, it follows that $\nabla \sdp_F(x) \cdot v \leq 1$. In particular, 
    \[ \phi(\nabla \sdp_F(x)) = \sup_{\phic(v)=1} \nabla\sdp_F(x) \cdot v \leq 1. \]
    Moreover, there exists $y\in \partial F$ such that $y \in \partial W_\phi(x,|\sdp_F(x)|)$. Setting $v := \frac{y-x}{|\sdp_F(x)|}$, then we have $\phic(v) =1$ and $\sdp_F(x+tv) = \sdp_F(x) + t$, so $\sdp_F(x)\cdot v = 1$, and hence $\phic(\nabla \sdp_F(x))=1$ as desired.

    One argues similarly in the case $\sdp_F(x) > 0$ using the definition $\sdp_F(x) = \sup\{r>0: -W_\phi(x,r) \sb F^c\}$. The identity \eqref{eq:sdf coarea} follows from \eqref{eq:anisotropic coarea}.
\end{proof}

\begin{lemma}
    \label{lem:translate perimeter}
    Let $F\sb\R^2$ be Lipschitz $\phi$-regular, and let $F(h)$ be a translate of $F$ as defined in \cref{sec:ODE flow}.
    Then 
    \begin{equation}
        \label{eq:translate perimeter A}
        P_\phi(F(h)) = P_\phi(F) + \sum_{I\sb\partial F \text{ regular}} \alpha_I \phi(\nu_I) h_I.
    \end{equation}
\end{lemma}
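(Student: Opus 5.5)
The plan is to compute $P_\phi(F(h))$ edge by edge, using the fact that only regular edges change their anisotropic length to first order, while fuzzy edges contribute the same energy before and after translation. I would first show that $P_\phi(F(h))$ is affine in $h$ on the admissible domain, and then identify its coefficients. Write $\partial F(h)$ as the union of the translated edges $I(h)$ from \eqref{eq:translate}. For each regular edge $I$, $P_\phi(I(h)) = \phi(\nu_I)|I(h)|$. For each fuzzy edge $J$, $J(h)$ satisfies the $p_J$ cone condition, so by \cref{lem:minimal path} its energy equals $p_J\cdot\nu$ integrated along the path. Applying the divergence theorem to the constant field $p_J$ then gives $P_\phi(J(h)) = p_J\cdot R(b-a)$, where $a,b$ are its endpoints. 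Hence the total energy depends only on the corner and endpoint positions, which are affine (piecewise, but consistently on the admissible set) in $h$.

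Next I would use a calibration-type identity instead of tracking each corner. Pick the Lipschitz field $X$ on $\partial F$ with $X = p_J$ on fuzzy edges and linear interpolation along regular edges; do the same on $\partial F(h)$, with $X_h$ sharing the endpoint values $p_{i-1},p_i$ on each regular edge $I(h)$. Then $P_\phi(F(h)) = \int_{\partial F(h)} X_h\cdot\nu\,d\cH^1$. On a regular edge $I(h)$ this equals $\phi(\nu_I)|I(h)|$, since $X_h\in[p_{i-1},p_i]=\partial\phi(\nu_I)$. The cleanest route is to differentiate in a single coordinate $h_I$ with the others fixed.
\begin{itemize}
\item If $I$ is fuzzy, moving it by $t p_J$ only lengthens or shortens the adjacent regular edges at their ends. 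The first variation is then $(X(\text{end}) - p_J)\cdot(\cdots)$, and this vanishes because $X = p_J$ at the shared endpoint. So the derivative is $0$, consistent with \eqref{eq:translate perimeter A}.
\item If $I$ is regular, moving it by $t$ in the $\nu^\phi_I$ direction changes its length and the neighbors' lengths. The total rate is the standard polygon formula: the derivative of $\phi(\nu_I)|I(h)|$ plus the neighbor contributions. This equals $\phi(\nu_I)\,\nu_I^\phi\cdot(X(y)-X(x))$ up to orientation, i.e. the jump of $X$ across $I$ projected on the tangent.
\end{itemize}

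The remaining step, which I expect to be the main obstacle, is this tangential projection. One must show that the tangential projection of $X(y)-X(x)$ equals $\sigma_I|p_i-p_{i-1}| = \sigma_I L_I$ after dividing by $\phi(\nu_I)$. This holds because at a positive edge $X$ runs from $p_{i-1}$ to $p_i$ in the direction of traversal, and at a negative edge it runs in reverse; this is the same sign bookkeeping that appears in the proof of \cref{prop:cone lipreg} and in \cref{lem:gauss bonnet}. Summing gives $\partial_{h_I}P_\phi(F(h)) = \alpha_I\phi(\nu_I)$ uniformly in $h$, since the endpoint values of $X$ on $I(h)$ do not depend on $h$. Integrating from $0$ to $h$ then yields \eqref{eq:translate perimeter A}. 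The case of a vanishing type I fuzzy edge needs a separate check: there the two regular neighbors become adjacent, but continuity of $P_\phi(F(h))$ in $h$ handles the transition.
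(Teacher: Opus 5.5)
This is essentially the paper's argument. In both versions, the constant field $p_J$ calibrates a fuzzy edge, so $h_J$ drops out by $\phi$-minimality. The coefficient of $h_I$ then comes from a field interpolating between $p_{i-1}$ and $p_i$ across $I$.

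The paper does this in integrated form. It uses $\phi$-minimality to replace fuzzy edges by corners or straight segments, then computes $P_\phi(F(te_I))-P_\phi(F)$ exactly with the divergence theorem on the swept trapezoid. You compute a first variation and integrate it, and that is where your write-up has two soft spots.

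\begin{itemize}
\item The expression $\phi(\nu_I)\,\nu_I^\phi\cdot(X(y)-X(x))$ is identically zero, because $X(y)-X(x)=\pm(p_i-p_{i-1})$ is tangent to $I$. You need its tangential component, as your ``i.e.'' indicates.
\item More substantively, the junction points are not (piecewise) affine in $h$ when a fuzzy edge is curved. The endpoint of $I(h)$ slides along $J+h_Jp_J$, and $|I(h)|$ can jump in $h_I$ (when $J$ contains a segment parallel to $I$, as the paper notes in the ODE section) or fail to be absolutely continuous. So differentiating $\phi(\nu_I)|I(h)|$ and the neighbours' energies term by term and then integrating is not justified as written.
\end{itemize}

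The repair uses only your own endpoint formula $P_\phi(J(h))=p_J\cdot R(b-a)$ and needs no derivatives. Use the paper's orientation, with $I(h)$ traversed from $a$ to $c$ in the direction $\tau_I=R^{-1}\nu_I$. Let $J(h)$ be the next fuzzy edge, from $c$ to $b$, so that $X(c)=p_J$. Decompose $c$ along $\tau_I$ and $\nu_I$, and use $R\tau_I=\nu_I$, $R\nu_I=-\tau_I$ and $p_J\cdot\nu_I=\phi(\nu_I)$. Then the $c$-dependent parts of $\phi(\nu_I)\,\tau_I\cdot(c-a)$ and $p_J\cdot R(b-c)$ combine to $(c\cdot\nu_I)\,p_J\cdot\tau_I$, so the tangential position of $c$ cancels exactly.

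Treat the initial endpoint $a$ the same way. A corner between two regular edges can be handled as a zero-length fuzzy edge carrying the common subgradient. Then use $a\cdot\nu_I=c\cdot\nu_I=\phi(\nu_I)(s_I+h_I)$ and $\tau_I\cdot(X(c)-X(a))=\sigma_I L_I$. The boundary sum telescopes to
\[
P_\phi(F(h))=\sum_{I\ \text{regular}}\alpha_I\,\phi(\nu_I)\,(s_I+h_I).
\]
This gives the lemma directly, independent of the shape of the fuzzy edges. It also needs no separate continuity argument when type I fuzzy edges vanish.
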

\begin{proof}
    It is straightforward to check by $\phi$-minimality that $P_\phi(F(h))$ is constant with respect to $h_J$ for any fuzzy edge $J\sb\partial F$. Thus, it makes no harm to assume that $F$ has no type I fuzzy edges, and that type II fuzzy edges are line segments with outer normal in $\cN_\phi$. Moreover, it suffices to show \eqref{eq:translate perimeter A} holds for sufficiently small $h$. 

    Fix a regular edge $I\sb\partial F$ and suppose without loss of generality that $I$ is positive and $\nu_I = \nu_1$. For small enough $t > 0$, the set $U := F(te_I)\setminus F$ is a trapezoidal region with parallel sidelengths $I$ and $I(te_I)$. There exists a vector field $X\in C^1_c(\R^2;\R^2)$ such that $X = p_1$ along $I_1(te_I)\cap \partial U$, $X = p_0$ along $I_2(te_I)\cap \partial U$, and $X$ interpolates linearly along line segments in $U$ parallel to $I$. By a similar application of the divergence theorem as in \eqref{eq:sharp minimality div}, we find 
    \begin{equation*}
        P_\phi(F(te_I)) - P_\phi(F) = P_\phi(U\setminus I) - P_\phi(I) = \int_U \div X\,dx = |p_1 - p_0| \phi(\nu_1) t = L_I \phi(\nu_I)t.
    \end{equation*}
    By reversing the roles of $F$ and $F(te_I)$ in the case $t<0$, we find that for all $t$, 
    \begin{equation*}
        P_\phi(F(te_I)) = P_\phi(F) + L_I \phi(\nu_I) t. 
    \end{equation*}
    A similar argument holds in the case where $I$ is negative, yielding 
    \begin{equation*}
        P_\phi(F(te_I)) = P_\phi(F) + \alpha_I \phi(\nu_I) t.
    \end{equation*}
    By iterating the prior argument over all regular edges, we obtain \eqref{eq:translate perimeter A}.
\end{proof}

\begin{lemma}
    \label{lem:translated Wulff}
    Let $E = W_\phi(h)$ be a translate of the Wulff shape as defined in \cref{sec:ODE flow} such that $|h| < \inv{2}$. Then 
    \begin{equation}
        \label{eq:translated Wulff}
        |E| - |W_\phi| = P_\phi(E) - P_\phi(W_\phi) + O(|h|^2)
    \end{equation}
    where the implicit constant depends only on $\phi$.
\end{lemma}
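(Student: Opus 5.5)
We would compute both $|E|$ and $P_\phi(E)$ as explicit functions of $h$ and compare their first-order terms. Since $W_\phi$ has only positive regular edges $I_1,\dots,I_N$ (with $\nu_{I_i}=\nu_i$) and no fuzzy edges, a translate is determined by $h=(h_1,\dots,h_N)$, and $E=W_\phi(h)=\{x: x\cdot\nu_i^\phi\le \phi(\nu_i)/\phi(\nu_i)+h_i\}$. More precisely, $E=\{x\cdot \nu_i \le \phi(\nu_i)(1+h_i)\ \forall i\}$, using the convention $s_I+h_I$ for the level of $x\cdot\nup_I$. For $|h|<\tfrac12$ all edges survive. (If some edge vanished, one would need $|h|$ to be comparable to the edge length; type I vanishing is allowed in translates, but the formulas below are continuous through it, so we may treat it by continuity or simply restrict to the regime where the formulas hold.)

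\textbf{Perimeter.} By \cref{lem:translate perimeter}, $P_\phi(E)$ is exactly affine in $h$:
\[ P_\phi(E)=P_\phi(W_\phi)+\sum_i L_i\phi(\nu_i)h_i , \]
with $\alpha_{I_i}=L_i$ since all edges are positive.

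\textbf{Area.} We use the coarea/slicing argument from \eqref{eq:area Ft}. Consider the segment $t\mapsto th$, $t\in[0,1]$, and differentiate $|W_\phi(th)|$. Translating edge $i$ by $d h_i$ in $\nu_i^\phi$-units changes the area by $\phi(\nu_i)|I_i(th)|\,dh_i$, to first order. Hence
\[ \frac{d}{dt}|W_\phi(th)| = \sum_i \phi(\nu_i)|I_i(th)|h_i . \]
In the regular-neighbor case of the proof of \cref{prop:ODE wellposed}, $|I_i(th)|$ is affine in $th$. It satisfies $|I_i(th)| = L_i + O(t|h|)$, since $|I_i|=L_i$ for the unit Wulff shape. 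Integrating over $t\in[0,1]$ gives
\[ |E|-|W_\phi|=\sum_i\phi(\nu_i)L_ih_i+O(|h|^2). \]
Subtracting the perimeter formula yields \eqref{eq:translated Wulff}.

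\textbf{Main obstacle.} The key point is justifying that $|I_i(h)|$ has Lipschitz dependence with a constant depending only on $\phi$, uniformly for $|h|<\tfrac12$. This rests on the explicit formula involving $\csc\theta$ and $\cot\theta$ of the angles between adjacent normals. Those angles are bounded away from $0$ and $\pi$ by \eqref{eq:phi assumption}, so the constant depends only on $\phi$.

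If an edge length hits zero inside $|h|<\tfrac12$, then $|I_i(h)|$ becomes $\max(\cdot,0)$ of an affine function. This is still Lipschitz, so the bound $\big||I_i(th)|-L_i\big|\lesssim_\phi t|h|$ persists, and the $O(|h|^2)$ error survives.
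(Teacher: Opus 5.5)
Your proposal is correct and takes the same route as the paper: the perimeter is exactly affine in $h$ by \cref{lem:translate perimeter}, and you supply the first-order area expansion $|E|=|W_\phi|+\sum_I L_I\phi(\nu_I)h_I+O(|h|^2)$ that the paper simply calls straightforward. You can drop the vanishing-edge discussion, whose $\max(\cdot,0)$ description is also not accurate for the neighbours of a vanished edge: for $|h|\le c(\phi)$ no edge vanishes because $|I_i(h)|\ge L_i-C|h|$, and for $c(\phi)\le|h|<\tfrac12$ both sides of \eqref{eq:translated Wulff} are bounded by a constant depending on $\phi$, so the $O(|h|^2)$ bound holds trivially there.
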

\begin{proof}
    It is straightforward to check the asymptotic 
    \begin{equation*}
        |E| = |W_\phi| + \sum_{I\sb\partial W_\phi} L_I \phi(\nu_I)h_I + O(|h|^2).
    \end{equation*}
    Then \eqref{eq:translated Wulff} follows from \cref{lem:translate perimeter}.
\end{proof}

\begin{lemma}[Localized projection]
    \label{lem:projection}
    Let $E\sb\R^n$ be a set of finite perimeter, $\phi$ an anisotropic surface energy, and $\Omega\sb\R^{n-1}$ an open domain. Let also $f,g:\Omega\to\R$ be bounded continuous functions such that $f<g$, and consider the domain $U := \{(z,y): z\in \Omega, y\in(f(z),g(z))\}$. For $z\in\Omega$, define the \emph{slice} $E_z := \{y\in(f(z),g(z)): (z,y)\in E\}$. Suppose there exists $\delta>0$ such that $E_z$ is an interval whenever $d(z,\partial\Omega) < \delta$, and such that $(f(z),f(z)+\delta) \sbq E_z \sbq (f(z), g(z)-\delta)$ for all $z\in\Omega$. Define the projection $\tilde{E}\sb\R^2$ such that $\tilde{E}$ and $E$ coincide outside of $U$, and for all $z\in\Omega$, $\tilde{E}_z = (f(z),f(z)+\cH^1(E_z))$. Then $P_\phi(\tilde{E}) \leq P_\phi(E)$.
\end{lemma}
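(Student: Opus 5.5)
The plan is a slicewise rearrangement argument: writing the anisotropic surface energy as a supremum over constant-direction calibrations fails, since $\phi$ is not linear. So instead I will decompose $P_\phi(E;U)$ slice by slice in the $y$-direction and compare it with $\tilde{E}$, using the convexity and positive homogeneity of $\phi$. Write $\nu=(\nu_z,\nu_y)$ with $\nu_z\in\R^{n-1}$. First I would reduce to the case where $E\cap U$ is a finite union of sets whose slices are finite unions of intervals with piecewise smooth endpoint functions. This is done by a standard approximation: smooth $\chi_E$ by convolution, take superlevel sets, and use Reshetnyak continuity of $P_\phi$ under strict convergence. The hypotheses near $\partial\Omega$ and near $y=f,g$ (slices equal to intervals containing $(f,f+\delta)$ and bounded away from $g$) ensure that the modification is compactly supported in $U$ in the $y$-direction. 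They also ensure that no boundary is created on the graphs of $f$ and $g$ or across $\partial\Omega$, and these properties are stable under the approximation for small mollification.

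For such regular $E$, the slice $E_z$ is $(f(z),b_1(z))\cup(a_2(z),b_2(z))\cup\dots\cup(a_k(z),b_k(z))$, where the first interval starts at $f$ by hypothesis. The boundary portion inside $U$ consists of the graphs of $b_i$ (outer normal proportional to $(-\nabla b_i,1)$) and of $a_i$ (outer normal proportional to $(\nabla a_i,-1)$). By homogeneity, its energy over $z\in\Omega$ equals
\[
\int_\Omega \sum_i \Big[\phi(-\nabla b_i,1)+\phi(\nabla a_i,-1)\Big]\,dz .
\]
The boundary of $\tilde{E}$ is the graph of $f+\ell$ with $\ell(z)=\cH^1(E_z)=\sum_i(b_i-a_i)$, where $a_1:=f$. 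Its energy is
\[
\int_\Omega\phi\big(-\nabla f-\textstyle\sum_i\nabla(b_i-a_i),\,1\big)\,dz .
\]

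The key pointwise inequality is then
\[
\phi\Big(-\nabla b_1-\sum_{i\ge2}(\nabla b_i-\nabla a_i),\,1\Big)\le \phi(-\nabla b_1,1)+\sum_{i\ge2}\Big[\phi(-\nabla b_i,1)+\phi(\nabla a_i,-1)\Big].
\]
This follows from subadditivity of $\phi$, since the right-hand arguments sum to $(-\nabla b_1-\sum_{i\ge2}(\nabla b_i-\nabla a_i),\,1+(k-1)(1-1))$, which is exactly the left argument. Integrating over $\Omega$, and handling the set where the slice structure changes (a null set in $z$ for generic approximations), gives $P_\phi(\tilde{E};U)\le P_\phi(E;U)$. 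Outside $\overline{U}$ the two sets coincide. On $\partial U$, neither set has reduced boundary, because near $y=f$ and $y=g$ the slices are $(f,\cdot)$ with the endpoint bounded away from $g$, and near $\partial\Omega$ the slices are already intervals, so $\tilde{E}=E$ there.

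The main obstacle is the approximation step: guaranteeing that the regularized sets still satisfy the interval and margin hypotheses and that $P_\phi(E_k)\to P_\phi(E)$. If that proves delicate, I would instead argue directly via BV slicing theory for general finite-perimeter sets. That route would use the slice formula for $D\chi_E$ with the decomposition of $|D_y\chi_E|$ and the lower semicontinuity of $P_\phi$, showing $\tilde{E}$ has the stated slices, and applying the subadditivity inequality at the level of the measures $(D_z\chi_E,D_y\chi_E)$ pushed forward along slices.
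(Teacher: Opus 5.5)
Your proposal is correct and follows essentially the same route as the paper: reduce by approximation and Reshetnyak continuity to regular sets, write $P_\phi(E;U)$ as a sum of graph integrals over the slice endpoints, and conclude by subadditivity of $\phi$ against the single graph of $f+\cH^1(E_z)$. The paper approximates by polyhedral sets with $\nu_E\cdot e_n\neq 0$ and piecewise affine $f$, splitting $\Omega$ into pieces on which the slice structure is constant, where you use mollified superlevel sets and discard a null set of $z$; your normals $(-\nabla b_i,1)$ and $(\nabla a_i,-1)$ carry the correct signs (the paper's display has them swapped).
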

\begin{proof}
    Since $\tilde{E}\Delta E \Subset U$, it suffices to show $P_\phi(\tilde{E};U) \leq P_\phi(E;U)$. By a standard approximation argument (see \cite[Remark 13.13]{Maggi2012}) and Reshetnyak's continuity theorem, it suffices to show the result for polyhedral $E$ such that $\nu_E \cdot e_n \neq 0$ everywhere. It also makes no harm to assume that $f$ is piecewise affine.

    We may decompose $\Omega$ into relatively polyhedral regions $\Omega_1, \dots, \Omega_m \sb\Omega$ such that over each $\Omega_k$, there are piecewise affine functions $f = f_1 < g_1 < \cdots < f_{N_k} < g_{N_k}$ such that 
    \begin{equation*}
        E \cap U = \bigcup_{k=1}^m \bigg\{
            (z,y): z\in \Omega_k, y\in \bigcup_{j=1}^{N_k} (f_j(z), g_j(z))
        \bigg\}.
    \end{equation*}
    Then for $z\in\Omega_k$, $\cH^1(E_z) = \sum_{j=1}^{N_k} (f_j(z) - g_j(z))$, and by convexity of $\phi$, we may bound
    \begin{align*}
        P_\phi(E; U) &= \sum_{k=1}^m \int_{\Omega_k} \bigg(\phi((-\nabla g_1,1)) + \sum_{j=2}^{N_k} \phi((-\nabla f_j,1)) + \phi((\nabla g_j,-1))\bigg)dz \\
        &\geq \sum_{k=1}^m \int_{\Omega_k} \phi\bigg(-\nabla \Big(g_1 + \sum_{j=2}^{N_k} (f_j - g_j)\Big), 1 \bigg) dz\\
        &= \sum_{k=1}^m \int_{\Omega_k} \phi\big(-\nabla (f(z) + \cH^1(E_z)), 1\big)dz\\
        &= P_\phi(\tilde{E};U). \qedhere
    \end{align*}
\end{proof}

\bibliographystyle{alpha}
\bibliography{main}

\end{document}